\title[]{Quasi-BPS categories for Higgs bundles}
\author{Tudor P\u adurariu and Yukinobu Toda}
\newtheorem{thm}{Theorem}[section]
\newtheorem{cor}[thm]{Corollary}
\newtheorem{prop}[thm]{Proposition}
\newtheorem{conj}[thm]{Conjecture}
\newtheorem{lemma}[thm]{Lemma}
\theoremstyle{definition}
\newtheorem{defn}[thm]{Definition}
\newtheorem{thm*}[thm]{Theorem$^*$}
\newtheorem{remark}[thm]{Remark}
\newtheorem{example}[thm]{Example}
\newtheorem{assum}[thm]{Assumption}
\newcommand{\comment}[1]{}
\renewcommand{\leq}{\leqslant}
\renewcommand{\geq}{\geqslant}
\newcommand{\X}{\mathcal{X}}
\newcommand{\Coh}{\operatorname{Coh}}
\newcommand{\Aut}{\operatorname{Aut}}
\newcommand{\Ker}{\operatorname{Ker}}
\newcommand{\id}{\operatorname{id}}
\newcommand{\Ext}{\operatorname{Ext}}
\newcommand{\Hom}{\operatorname{Hom}}
\newcommand{\rank}{\operatorname{rank}}
\newcommand{\Spec}{\operatorname{Spec}}
\newcommand{\GL}{\operatorname{GL}}
\newcommand{\inclusion}{\ar@<-0.3ex>@{^{(}->}[r]}
\newcommand{\ssslash}{/\!\!/}
\newcommand{\colim}{\operatorname{colim}}
\tikzstyle{block}=[draw=black, width=1cm, minimum height=2cm, align=center] 
\tikzstyle{block2}=[draw=black, text width=2cm, minimum height=1cm, align=center] 
\tikzstyle{block3}=[draw=black, text width=2cm, minimum height=1cm, align=center] 
\begin{document}

\begin{abstract}
We introduce quasi-BPS categories for twisted Higgs bundles, which are building blocks of the derived category of coherent sheaves on the moduli stack of semistable twisted Higgs bundles 
over a smooth projective curve. 
Under some condition (called BPS condition), 
the quasi-BPS categories are non-commutative 
analogues of Hitchin integrable systems. 

We begin the study of these quasi-BPS categories by focusing on a conjectural symmetry which swaps the Euler characteristic and the weight. 
Our conjecture is inspired by the Dolbeault Geometric Langlands equivalence of Donagi--Pantev, by the Hausel--Thaddeus mirror symmetry, and by the $\chi$-independence phenomenon for BPS invariants of curves on Calabi-Yau threefolds.

We prove our conjecture in the case of rank two and genus zero. In higher genus, 
we prove a derived equivalence of rank two stable twisted Higgs moduli spaces as a 
special case of our conjecture. 

In a separate paper, we prove a version of our conjecture for the topological K-theory of 
quasi-BPS categories and we discuss the relation between quasi-BPS categories and BPS invariants of the corresponding local Calabi-Yau threefold. 
\end{abstract}

	\maketitle

 \setcounter{tocdepth}{2}
\tableofcontents

\section{Introduction}
Let $C$ be a complex smooth projective curve of genus $g$ and let
$L$ be a line bundle on $C$
such that either $\deg L>2g-2$ or $L=\Omega_C$. In what follows, we consider
the tuple of integers $(r, \chi, w)$
consisting of 
the $\textit{rank}$, $\textit{Euler characteristic}$, and $\textit{weight}$, respectively, of a $L$-Higgs bundle on $C$. 
The purpose of this paper is to study the 
categories 
\begin{align}\label{intro:qbps}
    \mathbb{T}^L(r, \chi)_w
\end{align}
called \textit{quasi-BPS categories}, and their reduced versions for $L=\Omega_C$. 
When $(r, \chi)$ are coprime, the category (\ref{intro:qbps})
is equivalent to the derived category of coherent sheaves on the moduli space of 
$L$-twisted Higgs bundles on $C$ of rank $r$ and Euler characteristic $\chi$. If furthermore $L=\Omega_C$, this moduli space is a holomorphic symplectic manifold with an integrable system,  
called \textit{Hitchin system}~\cite{Hitchin}. 
However, if $(r, \chi)$ are not coprime, 
but the tuple $(r, \chi, w)$ satisfies the \textit{BPS condition}, 
the category (\ref{intro:qbps}) provides a categorical smooth and proper
Calabi-Yau fibration over the Hitchin base, which we regard as a 
non-commutative analogue of Hitchin's integrable system. 
Similar categories 
have already been introduced and studied for K3 surfaces in our previous paper~\cite{PTK3}
as a categorification of BPS invariants/cohomologies for local K3 surfaces~\cite{JS, K-S, DM, D, KinjoKoseki, DHSM, DHSM2}. 

The main purpose of this paper is to propose a conjectural equivalence 
between quasi-BPS categories which swaps the Euler characteristic $\chi$ and the
weight $w$. Our conjecture is inspired by the Dolbeault Geometric Langlands equivalence of Donagi--Pantev~\cite{DoPa} (and extended by Arinkin \cite{Ardual} and Melo--Rapagnetta--Viviani \cite{MRVF, MRVF2}) and 
by the mirror symmetry for Higgs bundles of Hausel--Thaddeus~\cite{HauTha}. 

We can also regard our conjecture as a categorical version of the $\chi$-independence phenomenon for BPS invariants on Calabi-Yau threefolds~\cite{TodGV, DMJS, KinjoKoseki}. We discuss the connection between quasi-BPS categories and the BPS cohomology/ invariants of the local Calabi-Yau threefold $\mathrm{Tot}_C(L\oplus L^{-1}\otimes \Omega_C)$ and provide further evidence of our main conjectures in \cite{PThiggs2}.

\subsection{Quasi-BPS categories}
We first mention some properties of quasi-BPS categories for Higgs bundles which are analogous (and proved similarly) to properties of quasi-BPS categories for K3 surfaces introduced in \cite{PTK3}.
For a smooth projective curve $C$ of genus $g$ 
and a line bundle $L$ on $C$, a $L$-\textit{twisted Higgs bundle} 
\begin{align*}
    (F, \theta), \ \theta \colon F \to F\otimes L
\end{align*}
consists of a vector bundle $F$ on $C$ and a morphism $\theta$. 
We always assume that \[l:=\deg L>2g-2\text{ or }L=\Omega_C.\] 
Let $\mathcal{M}^L(r, \chi)$ be the (derived)
moduli stack of Higgs bundles $(F, \theta)$ such that 
$\rank(F)=r\geq 1$ and $\chi(F)=\chi$. 
Consider the maps 
\begin{align*}
    \mathcal{M}^L(r, \chi)^{\rm{cl}} \to M^L(r, \chi) \to B,
\end{align*}
where the first map is the good moduli space map~\cite{MR3237451} of the classical truncation of $\mathcal{M}^L(r, \chi)$ 
and the second map is the Hitchin fibration. 
The moduli space $M^L(r, \chi)$ is a quasi-projective 
variety, non-singular when $(r, \chi)$ are coprime, 
but singular for general $(r, \chi)$. 

Quasi-BPS categories are used to decompose the derived category of coherent sheaves on the stack $\mathcal{M}^L(r, \chi)$. Before we state the first main property of quasi-BPS categories, we mention two fundamental structures of the category $D^b(\mathcal{M}^L(r, \chi))$.
First, there is an orthogonal decomposition
\[D^b(\mathcal{M}^L(r, \chi))=\bigoplus_{w\in\mathbb{Z}}D^b(\mathcal{M}^L(r, \chi))_w,\] where $D^b(\mathcal{M}^L(r, \chi))_w$ is the subcategory of $D^b(\mathcal{M}^L(r, \chi))$ of weight $w$ complexes with respect to the action of the scalar automorphisms $\mathbb{C}^*$. Second, for a partition $(r_1,\chi_1)+\cdots+(r_k,\chi_k)=(r,\chi)$ with $\chi/r=\chi_i/r_i$ for all $1\leq i\leq k$, there is a categorical Hall product (defined by Porta--Sala~\cite{PoSa}):
\begin{equation}\label{Hallproductdef}
\boxtimes_{i=1}^k D^b(\mathcal{M}^L(r_i,\chi_i))\to D^b(\mathcal{M}^L(r,\chi)).
\end{equation}

The following result is 
proved for K3 surfaces in~\cite{PTK3},
and the proof for the Higgs bundle case is the same. 
Note that the semiorthogonal decomposition below can be also used to define quasi-BPS categories inductively on $(r,\chi)$. 
\begin{thm}\emph{(Theorem~\ref{thm:sod})}\label{thm:intro0}
Let $(r,\chi)\in \mathbb{Z}_{\geq 1}\times\mathbb{Z}$ and let $w\in\mathbb{Z}$.
There exists a subcategory (called quasi-BPS category)
\begin{align}\label{intro:T}
  \mathbb{T}^L(r, \chi)_w \subset D^b(\mathcal{M}^L(r, \chi))_w
\end{align}
such that there is a 
semiorthogonal decomposition 
\begin{align}\label{SODthm11}
    D^b(\mathcal{M}^L(r, \chi))_w
    =\left\langle \boxtimes_{i=1}^k \mathbb{T}^L(r_i, \chi_i)_{w_i} \,\bigg|\, 
    \frac{v_1}{r_1}<\cdots<\frac{v_k}{r_k}\right\rangle. 
\end{align}
The right hand side is after all partitions 
$(r, \chi, w)=(r_1, \chi_1, w_1)+\cdots+(r_k, \chi_k, w_k)$ with 
$\chi/r=\chi_i/r_i$ for $1\leq i\leq k$, and 
$v_i \in \frac{1}{2}\mathbb{Z}$ are defined by 
\begin{align*}
    v_i:=w_i-\frac{l}{2}r_i \left(\sum_{i>j}r_j-\sum_{i<j}r_j   \right). 
\end{align*}
The (fully faithful) functor from the summands of the right hand side of \eqref{SODthm11} is the Hall product \eqref{Hallproductdef}. The order of the summands is as in the analogous theorems in \cite{PTK3, PT0}.
\end{thm}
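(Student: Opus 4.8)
The plan is to transfer the proof of the corresponding statement for moduli of sheaves on K3 surfaces in \cite{PTK3}; its entire architecture applies once the relevant local structure of the Higgs moduli stack is in place. The first step is to reduce the statement to a computation on quotient stacks. Using the \'etale-local structure of stacks admitting a good moduli space, near a point of $M^L(r,\chi)$ represented by a polystable Higgs bundle $\bigoplus_i (F_i,\theta_i)^{\oplus m_i}$ with $(F_i,\theta_i)$ pairwise non-isomorphic stable of slope $\chi/r$, the stack $\mathcal{M}^L(r,\chi)$ is equivalent to a (derived) quotient stack $[R/G]$ with $G=\prod_i \GL(m_i)$ the automorphism group of the polystable object and $R$ the associated Ext-quiver representation space, built from the groups $\Ext^\bullet$ between the stable factors. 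The decisive feature --- exactly as for K3 surfaces --- is that this Ext-quiver is \emph{symmetric}: Serre duality on the local Calabi--Yau threefold $\mathrm{Tot}_C(L\oplus L^{-1}\otimes \Omega_C)$ identifies $\Ext^1(F_i,F_j)$ with $\Ext^1(F_j,F_i)^\vee$, so the local model is the representation space of a symmetric quiver (with a potential in the symplectic case $L=\Omega_C$), and the scalar $\mathbb{C}^*$ equips it with the weight grading defining $D^b(\mathcal{M}^L(r,\chi))_w$.

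Second, on the local model one defines $\mathbb{T}^L(r,\chi)_w$ intrinsically as the window subcategory of $D^b([R/G])_w$ cut out by the magic-window weight conditions: for every antidominant cocharacter $\lambda$ of $G$, the $\lambda$-weights of an object restricted to the $\lambda$-fixed locus must lie in a half-open interval whose length is the $\lambda$-weight of $\det N_\lambda^{+}$, centred symmetrically. The semiorthogonal decomposition \eqref{SODthm11} for $[R/G]$ is then an instance of the decomposition of the derived category of a stack with a $\Theta$-stratification established for symmetric quivers in \cite{PT0, PTK3}. The Kempf--Ness strata (with slope fixed) are indexed by the antidominant cocharacters $\lambda$, equivalently by the ordered partitions $(r,\chi,w)=\sum_i(r_i,\chi_i,w_i)$ with $\chi_i/r_i=\chi/r$, and derived Kirwan surjectivity produces the summands; globalising over the good moduli space $M^L(r,\chi)$ is harmless since the window conditions are local on $M^L(r,\chi)$ and compatible under \'etale gluing.

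Third, I would identify the summand attached to a partition with the Hall product $\boxtimes_i \mathbb{T}^L(r_i,\chi_i)_{w_i}$. For the cocharacter $\lambda$ realising the partition, the $\lambda$-fixed stack is the product $\prod_i \mathcal{M}^L(r_i,\chi_i)$, and restriction to the $\lambda$-attracting locus followed by pushforward to $\mathcal{M}^L(r,\chi)$ is precisely the categorical Hall product \eqref{Hallproductdef} of Porta--Sala \cite{PoSa}; its full faithfulness on the window subcategories is forced by the magic-window inequalities. The shift $v_i$ is the weight $w_i$ corrected by one half of the $\lambda$-weight of the determinant of the attracting normal bundle carried by the $i$-th factor. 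By Riemann--Roch on $C$, the deformation complex $[\lHom(F_i,F_j)\to\lHom(F_i,F_j)\otimes L]$ between pieces of ranks $r_i,r_j$ has Euler characteristic $-r_ir_jl$, so the off-diagonal $\Ext^1$ has dimension $r_ir_jl$; assigning these with sign according to whether a factor sits to the left or the right in the ordered partition produces exactly $\tfrac{l}{2}r_i\big(\sum_{i>j}r_j-\sum_{i<j}r_j\big)$, whence $v_i$. The ordering $v_1/r_1<\cdots<v_k/r_k$ is precisely the one under which these weight inequalities force the Homs between distinct summands to vanish in one direction, giving the claimed semiorthogonality.

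The main obstacle is the first step. Everything downstream is formal once the local model is a symmetric quiver with the correct $\lambda$-weights, so the real work is to pin down the derived deformation theory of twisted Higgs bundles --- controlling the obstruction complex, its symmetry under Serre duality on $\mathrm{Tot}_C(L\oplus L^{-1}\otimes\Omega_C)$, and the scalar $\mathbb{C}^*$-weights --- uniformly in both cases $\deg L>2g-2$ and $L=\Omega_C$, and to verify that this matches the symmetric local model used for K3 surfaces. Once this identification is made, the proof of \cite{PTK3} applies with no essential change, which is the sense in which the two proofs are the same.
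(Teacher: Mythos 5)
Your proposal is correct and follows essentially the same route as the paper: both reduce, via the \'etale slice description of $\mathcal{M}^L(r,\chi) \to M^L(r,\chi)$ in terms of symmetric Ext-quivers, to the semiorthogonal decomposition for quiver moduli (the paper cites \cite[Theorem~4.2]{PTquiver}, with the quasi-BPS summands cut out by the same weight-window conditions you describe), and then globalize over the good moduli space exactly as in \cite[Theorem~5.1]{PTK3}. The only cosmetic difference is that the paper streamlines the local-to-global step with Lemma~\ref{lem:equiver}, which lets one verify the decomposition only at points of the deepest stratum, whereas you propose checking the window conditions chart by chart; both come to the same thing.
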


In Subsection \ref{subsec:bun}, we also discuss a version of Theorem \ref{thm:intro0} for the moduli of semistable vector bundles on a curve that may be of independent interest.

Locally on $M^L(r, \chi)$, the category \eqref{intro:T} is described in terms of a non-commutative crepant resolutions of GIT quotients defined by \v{S}penko--Van den Bergh~\cite{SVdB}, or its variants for matrix factorizations
when $L=\Omega_C$. 
Below we omit the notation $L$ in the case of $L=\Omega_C$. 
Note that one can also consider quasi-BPS categories for $r=0$, when a pair $(F, \theta)$ corresponds to a zero-dimensional sheaf on the surface $\mathrm{Tot}_C(L)$. In this case, the 
analogous result of Theorem~\ref{thm:intro0} is proved in~\cite{P2}. 

We also consider 
the reduced version 
\begin{align}\label{intro:Tred}
    \mathbb{T}(r, \chi)^{\rm{red}}_w \subset D^b(\mathcal{M}(r, \chi)^{\rm{red}})_w
\end{align}
by removing a redundant derived structure from $\mathcal{M}(r, \chi)$. 
The categories (\ref{intro:T}), (\ref{intro:Tred}) recover the BPS cohomology/ invariants of the local Calabi-Yau threefold $\mathrm{Tot}_C(L\oplus L^{-1}\otimes \Omega_C)$ if the tuple $(r, \chi, w)$ satisfies the
\textit{BPS condition}, 
that is, if the vector 
\begin{align}\label{intro:bpscond}
    (r, \chi, w+1-g^{\rm{sp}}) \in \mathbb{Z}^3
\end{align}
is primitive (alternatively, that $\gcd(r, \chi, w+1-g^{\rm{sp}})=1$). Here $g^{\rm{sp}}$ is the genus of the spectral curve, see the formula (\ref{formula:gD}). 
For details on the comparison of topological K-theory of quasi-BPS categories with BPS cohomology/ invariants, see \cite{PThiggs2}.

If the integers $(r,\chi,w)$ satisfy the BPS condition, the categories $\mathbb{T}^L(r,\chi)_w$ and $\mathbb{T}(r,\chi)_w^{\mathrm{red}}$ are called \textit{BPS categories}. 
The BPS categories are categorical versions of twisted crepant resolutions of $M^L(r,\chi)$ (as in \cite{PTK3} for K3 surfaces). 
We remark that, in general, the moduli spaces $M^L(r,\chi)$ do not admit (geometric) crepant resolutions \cite{KaLeSo}. In this paper, we regard BPS categories 
as categorical analogues of \textit{abelian fibrations over the Hitchin base}.

\begin{thm}\emph{(Corollary~\ref{cor:proper0}, Corollary~\ref{cor:proper})}\label{thm:intro1}
Suppose that $(r, \chi, w)$ satisfies the BPS condition. 

(1) If $l>2g-2$, then $\mathbb{T}^L(r, \chi)_w$ is a smooth dg-category over $\mathbb{C}$, 
which is proper and Calabi-Yau over $B$.

(2) If $L=\Omega_C$, then $\mathbb{T}(r, \chi)^{\rm{red}}_{w}$
is a smooth dg-category over $\mathbb{C}$, 
which is proper and Calabi-Yau over $B$.
\end{thm}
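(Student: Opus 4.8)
The plan is to derive smoothness over $\mathbb{C}$, properness over $B$, and the relative Calabi--Yau structure from a single local model of the quasi-BPS category, following the strategy used for K3 surfaces in \cite{PTK3}. First I would apply the local structure theory for stacks with good moduli spaces to describe $\mathcal{M}^L(r,\chi)$ étale-locally over $M^L(r,\chi)$: near a polystable Higgs bundle $(F,\theta)$ with automorphism group $G$ (a product of $\GL$'s), the stack is modeled by a quotient $[R/G]$ with $R$ the first hypercohomology $\mathbb{H}^1$ of the deformation complex. When $l>2g-2$ the obstruction space $\mathbb{H}^2$ vanishes, so this local model is a \emph{smooth classical} quotient stack; when $L=\Omega_C$ the self-dual (symplectic) deformation complex forces a quasi-smooth derived model, and passing to the reduced stack $\mathcal{M}(r,\chi)^{\mathrm{red}}$ removes the one redundant derived direction coming from $H^1(\mathcal{O}_C)$. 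This dichotomy is exactly why part (1) treats $\mathbb{T}^L(r,\chi)_w$ directly while part (2) requires the reduced category.

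Second, I would identify the quasi-BPS category in these models. By Theorem~\ref{thm:intro0} the weight-$w$ piece is cut out by a window/magic-category condition on the $\mathbb{C}^*$-weights, which locally matches the \v{S}penko--Van den Bergh noncommutative crepant resolution of the affine GIT quotient $R\ssslash G$ (respectively its matrix-factorization analogue when $L=\Omega_C$). The crucial input is that primitivity of $(r,\chi,w+1-g^{\mathrm{sp}})$ guarantees the chosen window is a \emph{genuine} NCCR---a single magic weight set with no strictly semistable overlap---so that the local category is a smooth dg-category. Since smoothness of a dg-category (perfectness of the diagonal bimodule) is étale-local, it then glues to give smoothness of $\mathbb{T}^L(r,\chi)_w$ and $\mathbb{T}(r,\chi)^{\mathrm{red}}_w$ over $\mathbb{C}$.

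Third, for properness over $B$ I would push the relative Hom-complexes forward along the composite $\mathcal{M}^L(r,\chi)^{\mathrm{cl}}\to M^L(r,\chi)\to B$. The stack itself is not proper, but the good moduli space map is cohomologically affine and the Hitchin map is proper; under the BPS condition the window category admits a compact generator whose relative endomorphisms are coherent, so that $R\mathcal{H}om$ between any two objects becomes a perfect complex on $B$. This is where primitivity is used a second time: it prevents the non-separated directions of the stack (the non-closed orbits) from producing non-coherent contributions. I expect this reduction from the non-proper stack to the proper Hitchin base to be the main technical obstacle.

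Finally, for the Calabi--Yau property over $B$ I would construct a relative Serre functor on the quasi-BPS category and show it is the identity up to shift. For $L=\Omega_C$ this should follow from the relative holomorphic symplectic structure of the Hitchin system together with Koszul/dimension-reduction duality relating $\mathcal{M}(r,\chi)^{\mathrm{red}}$ to the local Calabi--Yau threefold $\mathrm{Tot}_C(L\oplus L^{-1}\otimes\Omega_C)$; for $l>2g-2$ the same threefold supplies the relative orientation. The delicate point, matching the K3 computation of \cite{PTK3}, is that the naive relative Serre functor is twisted by a determinant line bundle depending on $w$, and one must verify that the BPS condition trivializes precisely this twist, so that the category is honestly Calabi--Yau over $B$ rather than only twisted Calabi--Yau.
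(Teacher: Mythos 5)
Your first three paragraphs follow essentially the same route as the paper: Proposition~\ref{prop:TA} matches the quasi-BPS category étale-locally with \v{S}penko--Van den Bergh NCCRs at deepest-stratum points, uses the BPS condition (the strict-interior weight condition) to get maximal Cohen--Macaulayness and homological homogeneity, and then globalizes by building a tilting bundle $\mathbb{V}$ from Schur powers of $\mathcal{U}=p_{\mathcal{M}\ast}(\mathcal{O}_C(mp)\boxtimes \mathcal{F})$, yielding $\mathbb{T}^L(r,\chi)_w\simeq D^b(\mathscr{A})$ with $\mathscr{A}=\pi_{\ast}\mathcal{E}nd(\mathbb{V})$; smoothness and properness over $M$, hence over $B$ via the proper Hitchin map, follow (Theorem~\ref{thm:proper0}), with the matrix-factorization analogue (Propositions~\ref{prop:MFA}, \ref{prop:MFA2}) handling $L=\Omega_C$. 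One caveat: dg-smoothness does not simply ``glue'' étale-locally; it is precisely the existence of the global generator $\mathbb{V}$ that converts the local NCCR statements into a global one, and constructing it is the content of Proposition~\ref{prop:TA}.

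The genuine gap is your Calabi--Yau paragraph. The paper's Serre-functor computation does not invoke a symplectic structure or a threefold orientation, and there is no $w$-dependent determinant twist that the BPS condition must cancel. It rests on two inputs you omit. First, Lemma~\ref{lem:gorenstein}: the \emph{singular} good moduli space $M=M^L(r,\chi)$ is Gorenstein with trivial dualizing sheaf $\omega_M\cong\mathcal{O}_M$; this is a concrete computation with the local models $\mathfrak{gl}(V)^{\oplus(1+lr_0^2)}\ssslash GL(V)$ plus a codimension-two argument, and it is what makes Grothendieck duality along the proper morphism $h_M\colon M\to B$ produce exactly the shift $[g^{\rm{sp}}]$ (Corollaries~\ref{cor:proper0} and \ref{cor:proper}) rather than a twist by a nontrivial dualizing complex. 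Second, the trace-pairing self-duality $\mathscr{A}\stackrel{\cong}{\to}\mathcal{H}om_M(\mathscr{A},\mathcal{O}_M)$, valid because $\mathscr{A}$ is maximal Cohen--Macaulay and $\pi$ is a $\mathbb{C}^{\ast}$-gerbe outside codimension two; this is what gives $S_{\mathbb{T}/M}\cong\id$ (Theorem~\ref{thm:proper0}, Lemma~\ref{assum:perf}, and for $L=\Omega_C$ the global critical-locus version in Theorem~\ref{thm:proper}). Note also that your proposed mechanism cannot even get started in part (1): for $l>2g-2$ the moduli space carries no holomorphic symplectic structure. The BPS condition enters the Calabi--Yau statement only indirectly, through the maximal Cohen--Macaulay property of $\mathscr{A}$; on the stable locus the weight $w$ appears only as a Brauer twist $\alpha^w$, which does not affect the relative canonical bundle, so there is no determinant twist for primitivity to kill.
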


We refer to Subsection~\ref{subsec:sheaves} for the notions of 
smooth, proper, and Calabi-Yau for dg-categories. 
We note that if $(r, \chi)$ are not coprime, then 
$D^b(\mathcal{M}^L(r, \chi))_w$ is not smooth, and
not proper over $B$ for $L=\Omega_C$. 
The BPS categories take into account both the singularities of $M^L(r, \chi)$ and the stacky structure of $\mathcal{M}^L(r, \chi)$, but, in contrast to these (geometric) spaces, they have nice properties reminiscent of smooth varieties.

\subsection{Main conjectures}\label{subsec:intro:main}
We now state the main conjectures in this paper.

\begin{conj}\label{conj:intro}
    Suppose the tuple $(r, \chi, w)$ satisfies the BPS condition. Then there is an equivalence
    \begin{align}\label{equivT:intro}
 \mathbb{T}^L(r, w+1-g^{\rm{sp}})_{-\chi+1-g^{\rm{sp}}} \stackrel{\sim}{\to}
\mathbb{T}^L(r, \chi)_w. 
    \end{align}
\end{conj}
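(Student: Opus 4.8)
The plan is to realize both sides as (twisted, categorical) derived categories fibered over the Hitchin base $B$ and to produce the equivalence by a \emph{relative Fourier--Mukai transform}, with the exchange of $\chi$ and $w$ appearing as the degree/weight shift intrinsic to Fourier--Mukai duality on the abelian fibers. First I would pass through the spectral correspondence, identifying $\mathcal{M}^L(r,\chi)$ with a moduli stack of pure one-dimensional sheaves on the surface $S=\mathrm{Tot}_C(L)$ supported on $r$-fold spectral covers of $C$; under this identification the composite of the good moduli space map with the Hitchin map becomes the support morphism $M^L(r,\chi)\to B$ onto the linear system of spectral curves. Over the open locus $B^{\circ}\subset B$ of smooth spectral curves $C_a$, the fiber is a torsor under the Jacobian $\operatorname{Jac}(C_a)$, an abelian variety of dimension $g^{\rm{sp}}$, with $\chi$ recording a connected component/degree and $w$ recording a line-bundle (Poincar\'e/gerbe) datum on the dual.

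The classical autoduality $D^b(\operatorname{Jac}^d(C_a))\simeq D^b(\operatorname{Jac}^d(C_a)^{\vee})$ exchanges skyscrapers with line bundles and shifts degree data by $g^{\rm{sp}}=\dim\operatorname{Jac}(C_a)$; combined with the Riemann--Roch relation $\chi=\deg+1-g^{\rm{sp}}$ on $C_a$, this is precisely the numerical shape of \eqref{equivT:intro}, in which Euler characteristic and weight are swapped up to a sign and a shift by $1-g^{\rm{sp}}$. I would assemble these fiberwise transforms into a relative kernel, a Poincar\'e-type sheaf on the fiber product $M^L(r,w+1-g^{\rm{sp}})\times_B M^L(r,\chi)$, and extend it across the locus of integral spectral curves with planar singularities using Arinkin's autoduality of compactified Jacobians \cite{Ardual}, in the Dolbeault geometric Langlands framework of \cite{DoPa, MRVF, MRVF2} and the mirror symmetry of \cite{HauTha}. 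Since the BPS condition forces both sides to be smooth, proper and Calabi--Yau over $B$ (Theorem \ref{thm:intro1}), the relative transform is \emph{a priori} well-behaved, and it suffices to check that it is an equivalence generically and has no pathology over the boundary.

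The essentially new work is to upgrade this from (singular) moduli spaces to the quasi-BPS categories, since for non-coprime $(r,\chi)$ the space $M^L(r,\chi)$ is singular and $\mathbb{T}^L(r,\chi)_w$ is a twisted noncommutative crepant resolution in the sense of \cite{SVdB} (or its matrix-factorization variant when $L=\Omega_C$). Here I would build the kernel at the level of the stacks $\mathcal{M}^L$, and verify that Fourier--Mukai intertwines the window subcategories cutting out the quasi-BPS pieces in the semiorthogonal decomposition of Theorem \ref{thm:intro0}; concretely, that it carries the weight-$w$ window for $(r,\chi)$ onto the weight-$(-\chi+1-g^{\rm{sp}})$ window for $(r,w+1-g^{\rm{sp}})$, compatibly with the Hall product and with the functions $v_i$ that order the summands. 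Tracking how the $\mathbb{C}^{*}$-weight grading shifts against the degree on the dual fibers is exactly what pins down the precise numerology of \eqref{equivT:intro}.

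The hard part will be the behaviour over the deepest strata of $B$, where the spectral curve is non-reduced or reducible: precisely there Arinkin's autoduality is not available directly, and precisely there the quasi-BPS formalism --- rather than the derived category of the singular Hitchin fiber --- is indispensable. I expect the decisive step to be a \emph{categorical} extension of compactified-Jacobian autoduality compatible with the semiorthogonal decomposition and the weight grading, reducing the general case to the BPS/primitive strata via Theorem \ref{thm:intro0}. As sanity checks, when $(r,\chi)$ and $(r,w+1-g^{\rm{sp}})$ are both coprime one recovers an honest derived equivalence $D^b(M^L(r,w+1-g^{\rm{sp}}))\simeq D^b(M^L(r,\chi))$ of smooth Higgs moduli spaces from the relative transform alone; and in the rank two, genus zero case the Hitchin base and spectral geometry are explicit enough that the windows and the kernel can be compared directly, which is the route I would take to prove the conjecture unconditionally there.
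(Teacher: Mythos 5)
Your proposal reproduces, essentially, the heuristic that the paper itself gives for this statement (Subsection 1.2 and Section 4): a relative Fourier--Mukai transform over $B^{\mathrm{sm}}$ built from the Poincar\'e bundle on dual Picard stacks, which one hopes to extend over all of $B$ because both sides are smooth, proper and Calabi--Yau over $B$ under the BPS condition. But this statement is a \emph{conjecture} in the paper, proved there only in special cases --- $r=2$, $g=0$ (Theorem~\ref{thm:intro3}, via Theorem~\ref{thm:CMext}) and $r=2$ with $\chi$, $w+l$ odd and $l>\mathrm{max}\{3g-2,2g\}$ (Corollary~\ref{cor:r=2}) --- and the steps you wave through are precisely the points that keep it open, so as a proof the proposal has genuine gaps.

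Concretely: (i) the extension of the Poincar\'e kernel across non-reduced spectral curves is not available from \cite{Ardual, MRVF, MRVF2}, which cover integral, respectively reduced, spectral curves; the maximal Cohen--Macaulay extension the paper uses (Theorem~\ref{thm:mao}, due to Li) exists only for $r=2$, $l>2g$, and no analogue is known for $r\geq 3$. (ii) Even granting an extension, one must show the kernel lies in the product of quasi-BPS categories --- the window compatibility you mention. This is Assumption~\ref{conj:CMext}, which is open in general even for $r=2$; the paper verifies it only for $C=\mathbb{P}^1$ or when the windows are the whole derived category (both $\chi$ and $w+1-g^{\mathrm{sp}}$ odd). (iii) Your claim that it ``suffices to check that it is an equivalence generically and has no pathology over the boundary'' because both sides are smooth, proper and Calabi--Yau is not an argument: Theorem~\ref{thm:intro1} is motivation by analogy with the D/K conjecture, not a mechanism for extending equivalences. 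The paper's actual mechanism (Proposition~\ref{prop:CMQ}) requires the kernel of $\Phi^L_{\mathcal{P}}\circ\Phi_{\mathcal{P}}$ to be Cohen--Macaulay and, crucially, the codimension estimate $\mathrm{codim}(B\setminus B^{\mathrm{red}})>g^{\mathrm{sp}}$, which by Lemma~\ref{lem:codim} is established only for $r=2$, $l>3g-2$ or $r=3$, $l>4g-3$; for general $(r,l)$ this propagation of fully-faithfulness from the reduced locus fails outright. (iv) Likewise, your coprime ``sanity check'' --- that both sides coprime yields an honest derived equivalence ``from the relative transform alone'' --- is itself a nontrivial theorem: the paper obtains it only for $r=2$ (Remark~\ref{rmk:M21}), using the full Cohen--Macaulay-kernel machinery, and notes it is the first such equivalence extending across the global nilpotent cone.
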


The left hand side in (\ref{equivT:intro})
is equivalent to $\mathbb{T}^L(r, w)_{-\chi}$ if $l$ is even. 
In the case of $L=\Omega_C$, we also propose the following:
\begin{conj}\label{conj:intro2}
Suppose that $(r, \chi, w)$ is primitive. Then there is an equivalence 
\begin{align}\label{equivT:intro2}
    \mathbb{T}(r, w)^{\rm{red}}_{-\chi} \stackrel{\sim}{\to} \mathbb{T}^L(r, \chi)_w^{\rm{red}}. 
\end{align}
    
\end{conj}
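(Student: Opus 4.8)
The plan is to realize the equivalence \eqref{equivT:intro2} as a \emph{relative Fourier--Mukai transform} along the Hitchin fibration $M(r,\chi)^{\rm red}\to B$, built from the autoduality of the (compactified) Hitchin system of Arinkin \cite{Ardual}, Donagi--Pantev \cite{DoPa}, and Melo--Rapagnetta--Viviani \cite{MRVF, MRVF2}. The underlying idea is the spectral correspondence: an $\Omega_C$-Higgs bundle of rank $r$ is a $1$-dimensional sheaf on $T^*C$ supported on a spectral curve of arithmetic genus $g^{\rm sp}$, the Euler characteristic $\chi$ records the degree of that sheaf, while the weight $w$ records the $\mathbb{C}^*$-gerbe class, i.e.\ a point of the dual Picard. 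The Fourier--Mukai transform along the Jacobian fibration exchanges degree and dual point, and hence $\chi$ and $w$. Since $l=\deg\Omega_C=2g-2$ is even, the $g^{\rm sp}$-dependent shift appearing in the general Conjecture \ref{conj:intro} is absorbed by a determinant twist, which is precisely why the reduced statement \eqref{equivT:intro2} takes the clean form in which $\chi$ and $w$ are swapped up to sign, with no residual $g^{\rm sp}$-shift.

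First I would establish the equivalence over the open locus $B^\circ\subset B$ of smooth connected spectral curves. There $M(r,\chi)^{\rm red}|_{B^\circ}$ is a smooth torsor over the Jacobian fibration $\mathrm{Pic}(\mathcal{C}/B^\circ)\to B^\circ$ of the universal spectral curve $\mathcal{C}\to B^\circ$ (rank-$r$ Higgs bundles over this locus being line bundles on the spectral curve), and by Theorem \ref{thm:intro0} the weight-$w$ quasi-BPS category restricts to a (gerbe-)twisted $D^b$ of this smooth fibration. Here the desired equivalence is the honest relative Fourier--Mukai transform with kernel the normalized Poincaré bundle on $\mathrm{Pic}\times_{B^\circ}\widehat{\mathrm{Pic}}$, and the matching of the pair $(\chi,w)$ with $(w,-\chi)$ is a direct computation of how this kernel acts on the relative degree and on the gerbe weight.

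Next I would extend the kernel over all of $B$. Over the discriminant the spectral curves acquire planar singularities, become reducible, or non-reduced, so the fibers of the Hitchin map become compactified Jacobians, and the naive transform on $D^b$ of the \emph{singular} space $M(r,\chi)^{\rm red}$ is no longer an equivalence. This is exactly where the quasi-BPS/BPS categories intervene: by Theorem \ref{thm:intro1}(2) the category $\mathbb{T}(r,\chi)^{\rm red}_w$ is smooth, proper, and Calabi--Yau over $B$, so it behaves like the derived category of a smooth abelian fibration even over the bad locus. Using Arinkin's extension of the Poincaré sheaf to compactified Jacobians of integral locally planar curves, together with the Melo--Rapagnetta--Viviani extension to the non-integral case, I would produce a global Fourier--Mukai kernel $\mathcal{P}$ and show that the induced functor carries $\mathbb{T}(r,w)^{\rm red}_{-\chi}$ into $\mathbb{T}(r,\chi)^{\rm red}_w$. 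Compatibility with the semiorthogonal decomposition \eqref{SODthm11} and with the categorical Hall product \eqref{Hallproductdef} then permits an induction on $r$, reducing the global statement to the generic computation above together with the behaviour of $\mathcal{P}$ on the maximally degenerate stratum.

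The main obstacle is this last extension: upgrading the generic derived equivalence over $B^\circ$ to a genuine equivalence of quasi-BPS categories over all of $B$. Concretely one must (i) control the singular support of $\mathcal{P}$ so that the transform preserves the subcategories cut out in Theorem \ref{thm:intro0}, and (ii) prove full faithfulness and essential surjectivity once the degenerate fibers are included. I expect this to require a local analysis near points of $M(r,\chi)^{\rm red}$ through the \v{S}penko--Van den Bergh description of the BPS category as a noncommutative (twisted) crepant resolution and its matrix-factorization variant for $L=\Omega_C$, matching the two resolutions under a window/Koszul-duality equivalence and reconciling the two Calabi--Yau structures furnished by Theorem \ref{thm:intro1}(2). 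The coprime (stable) derived equivalence already available in the paper serves as the base case of the induction and fixes the normalization of $\mathcal{P}$.
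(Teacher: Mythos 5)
You have set out to prove a statement that the paper itself does not prove: \eqref{equivT:intro2} is Conjecture~\ref{conj:intro2}, one of the paper's main open conjectures. The paper only verifies it in the trivial case $g=0$ (Example~\ref{exam:1}, where $\mathbb{T}(r,\chi)_w^{\rm{red}}$ is $D^b(\Spec\mathbb{C})$ or $0$), and for higher genus obtains only a much weaker cousin — a deformation equivalence of $\mathbb{Z}/2$-periodic categories (Theorem~\ref{thm:eq:can}) — by a route entirely different from yours, namely embedding $M_{\mathrm{SL}(2)}(1)$ into a twisted Higgs moduli space with $l>2g$, applying the twisted-case equivalence there, and then deforming categories of matrix factorizations via Teleman's theorem. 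So your proposal cannot be "the same as the paper's proof"; it has to stand alone, and it does not.

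The concrete gap is your second step, extending the Fourier--Mukai kernel from $B^{\rm{sm}}$ to all of $B$. The extensions you invoke exist only for integral (Arinkin \cite{Ardual}) or reduced (Melo--Rapagnetta--Viviani \cite{MRVF, MRVF2}) locally planar spectral curves, but for $L=\Omega_C$ and $(r,\chi)$ non-coprime the Hitchin base contains non-reduced spectral curves — indeed the global nilpotent cone sits over $0\in B$ — and no extension of the Poincar\'e sheaf across that locus is known. This is precisely the bottleneck: even in the twisted case the paper must appeal to Li's maximal Cohen--Macaulay extension (Theorem~\ref{thm:mao}, \cite{MLi}), which requires $r=2$ \emph{and} $l>2g$, a hypothesis that fails for $\Omega_C$ since $\deg\Omega_C=2g-2$; moreover the dimension estimates driving Proposition~\ref{prop:CMQ} need $\mathrm{codim}(B\setminus B^{\rm{red}})>g^{\rm{sp}}$, which by Lemma~\ref{lem:codim} again forces $l>3g-2$ for $r=2$. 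Two further problems: for $L=\Omega_C$ the moduli stack is a derived critical locus, so the whole kernel/transform formalism must be run through matrix factorizations and the reduced derived structure (Propositions~\ref{prop:MFA}, \ref{prop:MFA2}), which your "singular support control" only gestures at; and the proposed induction on $r$ via the semiorthogonal decomposition \eqref{SODthm11} is not meaningful as stated, since the conjectured equivalence concerns a single quasi-BPS summand for fixed $(r,\chi,w)$, not the full $D^b$ of the stack, and no compatibility of a (nonexistent) global kernel with the Hall product is available. In short, your outline reproduces the paper's heuristic motivation and its proof template for the $l>2g$ twisted case, but the inputs that make that template work are exactly the ones missing for $\Omega_C$ — which is why this remains a conjecture.
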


We next give a heuristic explanation for the equivalences (\ref{equivT:intro}), (\ref{equivT:intro2}), and in particular we explain that the above equivalences are inspired by the work of Donagi--Pantev~\cite{DoPa} (and its extensions \cite{Ardual, MRVF, MRVF2}).
It is useful to use the following parametrization
\begin{align*}
    \widetilde{\mathbb{T}}^L(r, e)_w :=\mathbb{T}^L(r, \chi=e+1-g^{\rm{sp}})_w
\end{align*}
where $e$ is the degree of the bundle on the spectral curve. 
Let $B^{\rm{sm}} \subset B$ be the open subset corresponding 
to smooth and connected spectral curves. Over the locus $B^{\rm{sm}}$, 
the equivalence 
\begin{align}\label{equivT:intro3}
    \widetilde{\mathbb{T}}^L(r, w)_{-e}|_{B^{\rm{sm}}} \stackrel{\sim}{\to} \widetilde{\mathbb{T}}^L(r, e)_{w}|_{B^{\rm{sm}}}
\end{align}
is constructed as a relative Fourier-Mukai transform of dual smooth abelian 
fibrations~\cite{Mu1}. Indeed, both sides in (\ref{equivT:intro3}) are derived categories of 
twisted coherent sheaves over
relative Picard schemes over the smooth family of spectral curves \cite{DoPa}. 
Now, from Theorem~\ref{thm:intro1}, the two sides in (\ref{equivT:intro})
are \textit{categorical smooth Calabi-Yau compactifications over the Hitchin base $B$} of the two sides of \eqref{equivT:intro3}. 
Thus, by analogy to the D/K equivalence in birational geometry~\cite{B-O2, MR1949787} or to SYZ mirror symmetry~\cite{SYZ}, we expect that 
the equivalence (\ref{equivT:intro3}) extends to an equivalence
\begin{align*}
    \widetilde{\mathbb{T}}^L(r, w)_{-e}\stackrel{\sim}{\to} \widetilde{\mathbb{T}}^L(r, e)_{w}. 
\end{align*}
By unraveling the definition, we obtain the equivalence (\ref{equivT:intro}). The equivalence (\ref{equivT:intro2}) has a similar heuristic explanation. 

For $G=\mathrm{SL}(r)$ or $G=\mathrm{PGL}(r)$, we introduce quasi-BPS categories
\begin{align*}
    \mathbb{T}_G^L(\chi)_w \subset D^b(\mathcal{M}_G^L(\chi)),
\end{align*}
where $\mathcal{M}_G^L(\chi)$ is the moduli stack of $G$-Higgs bundles with 
Euler characteristic $\chi$, see Subsection~\ref{subsec:SLPGL}. 
Inspired by the mirror symmetry conjecture of Hausel--Thaddeus \cite{HauTha},
we also propose the following $\mathrm{SL}/\mathrm{PGL}$-version of the conjecture. 
\begin{conj}\label{conj:intro3}
Suppose the tuple $(r, \chi, w)$ satisfies the BPS condition. 
Then there is an equivalence 
\begin{align*}
    \mathbb{T}^L_{\mathrm{PGL}(r)}(w+1-g^{\rm{sp}})_{-\chi+1-g^{\rm{sp}}} \stackrel{\sim}{\to}
    \mathbb{T}^L_{\mathrm{SL}(r)}(\chi)_w. 
\end{align*}
\end{conj}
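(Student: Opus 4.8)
The plan is to mirror the strategy sketched above for Conjecture~\ref{conj:intro}: first construct the equivalence over the smooth locus $B^{\mathrm{sm}}$ of the Hitchin base as a relative Fourier--Mukai transform, and then extend it across all of $B$ using the smooth, proper, Calabi--Yau structure of the BPS categories (the $\mathrm{SL}/\mathrm{PGL}$ analogue of Theorem~\ref{thm:intro1}). The input over $B^{\mathrm{sm}}$ is exactly the abelian duality underlying the Hausel--Thaddeus mirror symmetry: for the Langlands dual pair $(\mathrm{SL}(r),\mathrm{PGL}(r))$, the Hitchin fibers over a point $b\in B^{\mathrm{sm}}$ with smooth connected spectral curve $\widetilde{C}_b$ are, respectively, a torsor under the Prym variety $\mathrm{Prym}(\widetilde{C}_b/C)$ and a torsor under its dual abelian variety. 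The degree datum $e$ (equivalently the Euler characteristic $\chi$) selects the connected component of the Prym, while the weight $w$ records a character, i.e.\ a point of the dual; the Fourier--Mukai kernel is designed to interchange these two data, which is precisely the swap $e\leftrightarrow w$ (equivalently $\chi\leftrightarrow w+1-g^{\rm{sp}}$) of the statement.

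First I would set up the $\mathrm{SL}/\mathrm{PGL}$ spectral correspondence carefully. Writing $\pi\colon\widetilde{C}_b\to C$ for the spectral cover and $\mathrm{Nm}\colon\mathrm{Pic}(\widetilde{C}_b)\to\mathrm{Pic}(C)$ for the norm map, the $\mathrm{SL}$-fiber is a fiber of $\mathrm{Nm}$ (a Prym torsor) and the $\mathrm{PGL}$-fiber is the quotient of $\mathrm{Pic}(\widetilde{C}_b)$ by the action of $\mathrm{Jac}(C)$ by pullback-and-tensor, which recovers the dual Prym. A crucial subtlety is that the universal sheaf does not descend globally, so the $\mathrm{PGL}$-side is a category of \emph{twisted} coherent sheaves for a $\mu_r$-gerbe $\alpha_b$; this gerbe is the categorical shadow of the $B$-field in Hausel--Thaddeus, and over $B^{\mathrm{sm}}$ one must show it glues into a relative gerbe whose class is matched by the kernel. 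Establishing the relative equivalence
\begin{align*}
    \widetilde{\mathbb{T}}^L_{\mathrm{SL}(r)}(e)_w|_{B^{\mathrm{sm}}} \stackrel{\sim}{\to} \widetilde{\mathbb{T}}^L_{\mathrm{PGL}(r)}(w)_{-e}|_{B^{\mathrm{sm}}}
\end{align*}
then proceeds as a relative Fourier--Mukai transform of dual abelian fibrations, exactly as in the $\mathrm{GL}$-case \eqref{equivT:intro3}, with the twist $\alpha_b$ carried along.

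The extension from $B^{\mathrm{sm}}$ to all of $B$ is where the quasi-BPS formalism does the essential work. By the $\mathrm{SL}/\mathrm{PGL}$ analogue of Theorem~\ref{thm:intro1}, both BPS categories are smooth and proper Calabi--Yau over $B$ and restrict over $B^{\mathrm{sm}}$ to the two sides above. Following the D/K-equivalence philosophy, one expects the kernel over $B^{\mathrm{sm}}$ to extend to an object of the relevant product category, defining a functor over all of $B$; smoothness and properness should then force this functor to be fully faithful and essentially surjective, since these properties can be tested on relative $\Hom$-complexes over $B$, which agree with the smooth-locus answer by the Calabi--Yau and compactification properties. A clean way to organize this would be via the local description of the categories as \v{S}penko--Van den Bergh noncommutative resolutions (or their matrix-factorization variant when $L=\Omega_C$), reducing the gluing to an explicit duality statement for these local models.

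The hardest step will be the global extension across the discriminant in $B$, where the spectral curves degenerate and the Pryms cease to be abelian varieties. There the naive kernel degenerates, and one must show that the BPS category is the correct replacement for the singular dual fiber --- equivalently, that the noncommutative resolution on the $\mathrm{SL}$-side is Fourier--Mukai dual to the \emph{gerby} noncommutative resolution on the $\mathrm{PGL}$-side. Controlling the $\mu_r$-gerbe across the discriminant, and matching it with the correct twist of the $\mathrm{SL}$-category, is the main obstacle; it is also the place where Conjecture~\ref{conj:intro3} genuinely refines, rather than formally follows from, Conjecture~\ref{conj:intro}. A possible route around the direct gluing is to deduce Conjecture~\ref{conj:intro3} from Conjecture~\ref{conj:intro} by making the $\mathrm{GL}$-equivalence equivariant for the $\mathrm{Jac}(C)$-action (and its $r$-torsion) by tensoring, and then passing to invariants and coinvariants to land on the $\mathrm{PGL}$- and $\mathrm{SL}$-categories; here too the gerbe appears as the obstruction to linearizing the torsion action, so this reduction isolates the main difficulty rather than removing it.
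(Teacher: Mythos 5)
The statement you are addressing is a conjecture, and the paper does not prove it in general; it only establishes special cases (Theorem~\ref{thm:SLPGL2} under Assumption~\ref{conj:CMext}, hence Corollary~\ref{thm:r=2:PS} for $r=2$, $l>2g$ with $\chi$, $w+l$ odd, Corollary~\ref{r=2:PS:P1} for $C=\mathbb{P}^1$, and the deformation statement of Theorem~\ref{thm:eq:can}). Your proposal reproduces the paper's \emph{heuristic} (Fourier--Mukai duality of Prym torsors with a $\mu_r$-gerbe over $B^{\rm{sm}}$, then extension over $B$ guided by the smooth/proper/Calabi--Yau structure), but the step you rely on to close the argument is a genuine gap: smoothness, properness and the Calabi--Yau property over $B$ do \emph{not} force a functor that is an equivalence over $B^{\rm{sm}}$ to extend to, or to be, an equivalence over $B$. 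That implication is precisely the conjectural content. In the paper's actual partial proofs, the extension of the kernel is not produced categorically at all: it is the maximal Cohen--Macaulay extension $\mathcal{P}$ of the Poincar\'e line bundle, whose existence is a hard geometric theorem of Li (Theorem~\ref{thm:mao}, for $r=2$, $l>2g$), and one must additionally verify that $\mathcal{P}$ lies in the product of quasi-BPS categories (Assumption~\ref{conj:CMext}), which is automatic only under parity hypotheses and is proved for $C=\mathbb{P}^1$ by an explicit Abel--Jacobi argument. The Calabi--Yau property enters only at the very end, through Lemma~\ref{lem:sodT} (a BPS category admits no nontrivial $B$-linear semiorthogonal decomposition), to upgrade fully-faithfulness to an equivalence via Proposition~\ref{prop:equivT}.

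A second concrete gap is your choice of base locus for the gluing. The complement of $B^{\rm{sm}}$ is a divisor in $B$, so no codimension-versus-support argument can propagate an equivalence known only over $B^{\rm{sm}}$. The paper instead establishes fully-faithfulness over the much larger locus $B^{\rm{red}}$ of \emph{reduced} (possibly singular, possibly reducible) spectral curves, using the duality theorems of Melo--Rapagnetta--Viviani and of \cite{FHR} for compactified Jacobians of reduced planar curves together with a window-theorem identification of the quasi-BPS category with $D^b(\widehat{M}'_b)$ (Lemma~\ref{lem:eq:window}); only then does the Cohen--Macaulay kernel argument (Proposition~\ref{prop:CMQ}, in the style of Arinkin) apply, because $\mathrm{codim}(B\setminus B^{\rm{red}})$ exceeds the relative dimension $g^{\rm{sp}}$ (resp. $g^{\rm{sp}}-g$ in the $\mathrm{SL}/\mathrm{PGL}$ case) only for $r=2,3$ and large $l$ (Lemma~\ref{lem:codim}). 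Your reliance on abelian duality alone therefore misses the essential input beyond $B^{\rm{sm}}$. Finally, your alternative route (making the $\mathrm{GL}$-equivalence $\mathrm{Jac}(C)[r]$-equivariant and passing to invariants/coinvariants) is close in spirit to what the paper actually does at the level of kernels: Lemma~\ref{P:descend} descends the Cohen--Macaulay sheaf $\mathcal{P}$ along the $\mathcal{P}ic^0(C)$-action, using equivariance over the elliptic locus and maximal Cohen--Macaulayness to extend it; but this is descent of a concrete sheaf, not of an abstract equivalence, and it again presupposes the existence of $\mathcal{P}$ and Assumption~\ref{conj:CMext}.
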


\subsection{Main results}
The first main result towards the conjectures mentioned above is an analogue of the main theorem in \cite{ArFe}. 
We consider open substacks 
\begin{align*}
    \mathcal{M}^L(r, \chi)^{\rm{sreg}} \subset \mathcal{M}^L(r, \chi)^{\rm{reg}} \subset \mathcal{M}^L(r, \chi),
\end{align*}
where $(-)^{\rm{reg}}$ means the regular part that corresponds to 
line bundles on spectral curves and $(-)^{\rm{sreg}}$ means the stable and regular part. 
There are good moduli space maps
\begin{align*}
      \mathcal{M}^L(r, \chi)^{\rm{sreg}} \to M^L(r, \chi)^{\rm{sreg}} \subset M^L(r, \chi),
\end{align*}
where the first map is a $\mathbb{C}^{\ast}$-gerbe and the second 
one is an open immersion. 
Following~\cite{Ardual}, consider the Poincaré line bundle 
\begin{align*}
\mathcal{P}^{\rm{reg}} \to \mathcal{M}^L(r, w+1-g^{\rm{sp}})^{\rm{reg}}
\times_B \mathcal{M}^L(r, \chi)   
\end{align*}
whose fiber at $(F, E)$ over the spectral curve $\mathcal{C}_b$ for $b\in B$ is 
\begin{align*}
    \mathcal{P}^{\rm{reg}}|_{(F, E)}=\det R\Gamma(F \otimes E)\otimes \det R\Gamma(F)^{-1}
    \otimes \det R\Gamma(E)^{-1}\otimes \det R\Gamma(\mathcal{O}_{\mathcal{C}_b}). 
\end{align*}
We will consider the category 
$\mathbb{T}_{\rm{qcoh}}^L(r, \chi)_w$, which is a 
quasi-BPS category for quasi-coherent sheaves, and prove the following: 
\begin{thm}\emph{(Theorem~\ref{thm:ff})}\label{thm:intro1.5}
The line bundle $\mathcal{P}^{\rm{reg}}$ induces the functor 
\begin{align}\label{intro:ff}
\mathbb{T}_{\rm{qcoh}}^L(r, w+1-g^{\rm{sp}})_{-\chi+1-g^{\rm{sp}}}|_{M^L(r, w+1-g^{\rm{sp}})^{\rm{sreg}}}\to 
    \mathbb{T}_{\rm{qcoh}}^L(r, \chi)_w. 
\end{align}
Moreover, if either $r=2, l>3g-2$ or $r=3, l>4g-3$, then the above 
functor is fully-faithful. 
    \end{thm}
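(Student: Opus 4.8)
The plan is to realize the functor \eqref{intro:ff} as a relative Fourier--Mukai transform with kernel $\mathcal{P}^{\rm{reg}}$ and to prove fully-faithfulness by computing the convolution of this kernel with its adjoint, reducing fibrewise over the Hitchin base $B$ to Mukai's theorem over $B^{\rm{sm}}$ and to Arinkin's autoduality over the integral spectral curves. Writing $p_1, p_2$ for the projections from
\[
\mathcal{M}^L(r, w+1-g^{\rm{sp}})^{\rm{reg}} \times_B \mathcal{M}^L(r, \chi)
\]
to its two factors, I would set $\Phi(-) = \R p_{2*}(p_1^*(-) \otimes \mathcal{P}^{\rm{reg}})$. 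The first point is that $\Phi$ sends weight $-\chi+1-g^{\rm{sp}}$ complexes to weight $w$ complexes, which is forced by the $\mathbb{C}^*$-equivariant structure of $\mathcal{P}^{\rm{reg}}$: its fibrewise determinant formula is engineered precisely to implement the Euler-characteristic/weight swap. Restricting the source to the stable regular locus $M^L(r, w+1-g^{\rm{sp}})^{\rm{sreg}}$, the good moduli space map is a $\mathbb{C}^*$-gerbe and objects of the source identify with twisted sheaves on the relative Picard scheme of the family of integral spectral curves, so the kernel is genuinely a line bundle and $\Phi$ is well-defined. I would then check that the image lands in $\mathbb{T}_{\rm{qcoh}}^L(r, \chi)_w$ by means of the inductive semiorthogonal decomposition of Theorem~\ref{thm:intro0}: regular points parametrise rank-one sheaves on spectral curves, so no proper sub-Higgs-bundles of equal slope intervene and the transform respects the quasi-BPS filtration.

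To prove fully-faithfulness I would compute $\R\Hom(\Phi A, \Phi B)$ through the convolution formalism, so that after flat base change over $B$ its comparison with $\R\Hom(A, B)$ is controlled by a convolution kernel on
\[
M^L(r, w+1-g^{\rm{sp}})^{\rm{sreg}} \times_B M^L(r, w+1-g^{\rm{sp}})^{\rm{sreg}}.
\]
Over the open locus $B^{\rm{sm}}$ of smooth connected spectral curves the Hitchin map is a smooth family of abelian varieties, the convolution kernel is the structure sheaf of the diagonal, and $\Phi$ restricts to the Fourier--Mukai equivalence \eqref{equivT:intro3}. The real content is to extend this identification across the discriminant: over the dense locus of \emph{integral} spectral curves the compactified Jacobians are no longer abelian varieties, but Arinkin's autoduality (the analogue of the main theorem of \cite{ArFe}, see also \cite{Ardual}) provides a Poincaré sheaf whose transform is again an equivalence, and the determinant formula for $\mathcal{P}^{\rm{reg}}$ matches this Poincaré sheaf. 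Because the source has been restricted to the stable regular --- hence integral spectral-curve --- locus, these two inputs together identify the convolution kernel with the diagonal over the whole integral locus.

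The main obstacle is to show that the \emph{non-integral} (reducible or non-reduced) spectral curves contribute no extra morphisms in the target, since there the compactified Jacobian degenerates and Arinkin's transform is unavailable. This is exactly where the hypotheses $r=2,\ l>3g-2$ and $r=3,\ l>4g-3$ are used: in these low ranks one can stratify the discriminant explicitly by the singularity type of the spectral curve, and the inequalities on $l$ are the precise thresholds ensuring that the non-integral locus has codimension large enough to contribute no off-diagonal cohomology to the convolution kernel. Concretely, I expect to reduce the claim to a fibrewise $\Ext$-vanishing statement on compactified Jacobians of integral spectral curves together with such a codimension bound in $B$, after which the orthogonality built into the semiorthogonal decomposition of Theorem~\ref{thm:intro0} absorbs any residual boundary terms. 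Establishing this codimension estimate and the accompanying vanishing in ranks two and three is the technical heart of the argument, and the reason the restriction to small $r$ and large $l$ is essential.
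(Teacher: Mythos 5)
There is a genuine gap, and it sits exactly where you locate ``the technical heart'' of your argument. You propose to handle the discriminant by combining Arinkin's autoduality over the \emph{integral} spectral curves with a codimension bound showing that the \emph{non-integral} locus (reducible or non-reduced) is too small to contribute, and you assert that the hypotheses $r=2,\ l>3g-2$ (resp. $r=3,\ l>4g-3$) are the thresholds making this codimension large enough. This is false for $g\geq 1$. For $r=2$ the locus of reducible (but reduced) spectral curves is the image of $B_1\times B_1\to B_2$, of codimension
\begin{align*}
\dim B_2-2\dim B_1=(3l+2-2g)-2(l+1-g)=l,
\end{align*}
while $g^{\rm{sp}}=l+2g-1$; so for every $g\geq 1$ the reducible locus has codimension $\leq g^{\rm{sp}}$, and no dimension estimate of the type in Proposition~\ref{prop:CMQ} can excise it. The inequalities $l>3g-2$, $l>4g-3$ in Lemma~\ref{lem:codim} are calibrated to excise only the \emph{non-reduced} locus $B\setminus B^{\rm{red}}$. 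Consequently the paper must, and does, prove the kernel identification over all of $B^{\rm{red}}$, including reducible reduced spectral curves, where Arinkin's theorem does not apply: it invokes the Melo--Rapagnetta--Viviani duality \cite{MRVF} for fine compactified Jacobians of reduced planar curves, and since over a reducible curve the stack $\mathcal{M}^L(r,\chi)$ has strictly semistable points and no universal sheaf, this requires perturbing the polarization to obtain a fine relative compactified Jacobian $\widehat{M}_b'$ and a window theorem (Lemma~\ref{lem:eq:window}) identifying the quasi-BPS category near such a point with $D^b(\widehat{M}_b')$. None of this machinery is replaceable by the codimension count you propose.

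A second, smaller gap: your justification that $\Phi$ lands in $\mathbb{T}^L_{\rm{qcoh}}(r,\chi)_w$ does not engage the actual condition. The quasi-BPS membership is a weight bound checked at the \emph{polystable points of the target stack} $\mathcal{M}^L(r,\chi)$, which has strictly semistable objects precisely because $(r,\chi)$ need not be coprime; the fact that the source parametrizes rank-one sheaves on spectral curves, or that the Hall-product summands of Theorem~\ref{thm:intro0} are semiorthogonal, does not by itself constrain the weights of $\mathcal{P}^{\rm{reg}}_x$ at such points. The paper's Proposition~\ref{prop:Px} proves this bound by an explicit computation: restricting the line bundle $F$ on the spectral curve $\mathcal{C}_b$ to unions of components $D_1+\cdots+D_m$, and using the stability of $F$ to pin $\chi(F|_{D_1+\cdots+D_m})$ between the two bounds $\frac{w+1-g^{\rm{sp}}}{r}r'$ and $\frac{w+1-g^{\rm{sp}}}{r}r'+lr'r''$. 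Your proposal would need this computation (or an equivalent one) to get off the ground, before any fully-faithfulness question arises.
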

    Note that the left hand side of \eqref{intro:ff} is $D_{\rm{qcoh}}(M^L(r, w+1-g^{\rm{sp}})^{\rm{sreg}})_{-\chi+1-g^{\rm{sp}}}$.
    Assume Conjecture~\ref{conj:intro} is true.
Then there is a fully-faithful functor 
\begin{align*}
   \mathbb{T}_{\rm{qcoh}}^L(r, w+1-g^{\rm{sp}})_{-\chi+1-g^{\rm{sp}}}|_{M^L(r, w+1-g^{\rm{sp}})^{\rm{sreg}}}
    \hookrightarrow \mathbb{T}_{\rm{qcoh}}^L(r, w+1-g^{\rm{sp}})_{-\chi+1-g^{\rm{sp}}},
\end{align*}
so its composition with 
the ind-completion of (\ref{equivT:intro}) may coincide with 
the functor in Theorem~\ref{thm:intro1.5}.
Thus, Theorem~\ref{thm:intro1.5} provides evidence for Conjecture~\ref{conj:intro}. 

We next try to extend the functor in Theorem~\ref{thm:intro1.5} to the non-regular locus. 
The line bundle $\mathcal{P}^{\rm{reg}}$ naturally extends to a line bundle 
\begin{align*}
    \mathcal{P}^{\sharp} \to (\mathcal{M}^L(r, w+1-g^{\rm{sp}})\times_B \mathcal{M}^L(r, \chi))^{\sharp}
\end{align*}
where $(-)^{\sharp}$ means that either one of the two factors is regular. 
If $r=2$ and $l>2g$, 
Li~\cite{MLi} proved that the above line bundle $\mathcal{P}^{\sharp}$ 
uniquely extends to a maximal Cohen-Macaulay sheaf
\begin{align}\label{maxP:intro}
    \mathcal{P} \in \Coh(\mathcal{M}^L(r, w+1-g^{\rm{sp}})\times_B \mathcal{M}^L(r, \chi)). 
\end{align}
We use the above Cohen-Macaulay sheaf to prove the following: 
\begin{thm}\emph{(Corollary~\ref{cor:r=2}, Corollary~\ref{thm:r=2:PS})}\label{thm:intro2}
Let $r=2$ and assume both $\chi$ and $w+l$ are odd.  
Then Conjecture~\ref{conj:intro} holds for $l>\mathrm{max}\{3g-2, 2g\}$, 
and Conjecture~\ref{conj:intro3} holds for $l>2g$. 
\end{thm}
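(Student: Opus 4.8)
The plan is to prove Theorem~\ref{thm:intro2} by combining Theorem~\ref{thm:intro1.5} with the geometric input from Li~\cite{MLi}, using the maximal Cohen--Macaulay sheaf $\mathcal{P}$ of \eqref{maxP:intro} as an integral kernel and checking that the resulting Fourier--Mukai functor is an equivalence. In the rank two case with $\chi$ odd, the parity hypothesis on $w+l$ guarantees that $(r,\chi,w)$ satisfies the BPS condition, so Conjecture~\ref{conj:intro} is meaningful; moreover, since one of the two Euler characteristics is coprime to $r=2$, one of the two quasi-BPS categories in \eqref{equivT:intro} is simply the derived category of the smooth Hitchin moduli \emph{space} $M^L(2,\bullet)$, while the other is a genuine quasi-BPS category on the singular/stacky side. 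First I would set up the Fourier--Mukai functor with kernel $\mathcal{P}$ on the product $\mathcal{M}^L(r,w+1-g^{\rm{sp}})\times_B\mathcal{M}^L(r,\chi)$, verify that it sends the appropriate weight-graded piece into the target quasi-BPS subcategory (this is exactly the content of Theorem~\ref{thm:intro1.5} over the stable regular locus, which under $l>\max\{3g-2,2g\}$ gives full faithfulness on the regular part), and then argue that the Cohen--Macaulay extension $\mathcal{P}$ forces the functor to remain well defined and fully faithful across the non-regular locus.

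The key steps, in order, are as follows. I would first reduce to a \emph{local} statement over the good moduli space $M^L(r,\chi)$: both quasi-BPS categories are, \'etale-locally on $M^L$, described by noncommutative crepant resolutions \`a la \v{S}penko--Van den Bergh (or their matrix-factorization variants when $L=\Omega_C$), so it suffices to establish the equivalence after base change to \'etale (or formal/analytic) charts of the Hitchin base, where the spectral curve degenerates in a controlled way. Second, over the regular locus $B^{\rm{sm}}$ the functor is the classical relative Fourier--Mukai transform between dual abelian fibrations (compactified Jacobians of the spectral curves), which is an equivalence by Mukai~\cite{Mu1}; Theorem~\ref{thm:intro1.5} already upgrades the kernel $\mathcal{P}^{\rm{reg}}$ to a fully faithful functor on quasi-BPS categories over the stable regular part. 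Third, I would use the maximal Cohen--Macaulay property of $\mathcal{P}$ from Li~\cite{MLi} together with a codimension/reflexivity argument: because $\mathcal{P}$ is the unique MCM extension of $\mathcal{P}^{\sharp}$ and the non-regular locus has codimension at least two in the relevant moduli, Hom-spaces and the semiorthogonal structure are controlled by their restriction to the regular open part, so full faithfulness propagates from the regular locus to the whole category. Finally, essential surjectivity follows by the usual adjoint-functor argument: the inverse kernel (built symmetrically from $\mathcal{P}$, using the self-duality of the construction under swapping the two factors) provides a quasi-inverse, and one checks the two compositions are isomorphic to the identity by reducing, via the codimension argument, to the already-known equivalence on $B^{\rm{sm}}$.

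For Conjecture~\ref{conj:intro3}, the $\mathrm{SL}/\mathrm{PGL}$ statement under $l>2g$, I would descend the $\mathrm{GL}$-equivalence just constructed along the quotient by the relative Jacobian action. Concretely, the $\mathrm{SL}(r)$-Higgs moduli and the $\mathrm{PGL}(r)$-Higgs moduli arise from $\mathcal{M}^L(r,\chi)$ by fixing the determinant and by quotienting by translation by $\mathrm{Pic}^0(C)$, respectively; these are dual operations under the Hausel--Thaddeus picture, and the Poincar\'e kernel $\mathcal{P}$ intertwines them. The plan is to verify that $\mathcal{P}$ is equivariant for the relevant Jacobian/torsor actions so that the $\mathrm{GL}$-level Fourier--Mukai transform restricts and descends to an equivalence $\mathbb{T}^L_{\mathrm{PGL}(r)}(w+1-g^{\rm{sp}})_{-\chi+1-g^{\rm{sp}}}\xrightarrow{\sim}\mathbb{T}^L_{\mathrm{SL}(r)}(\chi)_w$, with the gerbe/twisting data matching up exactly as predicted by Hausel--Thaddeus mirror symmetry.

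The main obstacle I expect is the third step: controlling the functor across the \emph{non-regular} (non-reduced/non-line-bundle) locus of the compactified Jacobian fibration, where the spectral curve is singular and the Fourier--Mukai duality of abelian fibrations fails to be an honest duality of abelian schemes. Here the full strength of Li's maximal Cohen--Macaulay extension is essential, and the delicate point is to show that $\mathcal{P}$ is not merely a sheaf but has the right homological properties (e.g.\ flatness over each factor, or the correct behavior of $\mathcal{E}\mathit{xt}$-sheaves) to yield full faithfulness rather than just a well-defined functor. The restriction $l>\max\{3g-2,2g\}$ for $r=2$ is precisely what makes the codimension of the bad locus large enough, and what forces $\mathcal{P}^{\rm{reg}}$ to extend uniquely; checking that these numerical bounds suffice to run the reflexivity argument, and that no higher-codimension pathologies obstruct essential surjectivity, is where the real work lies.
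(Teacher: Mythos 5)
Your proposal has the right skeleton—Li's maximal Cohen--Macaulay kernel $\mathcal{P}$, a Fourier--Mukai functor between the two (quasi-)BPS categories, full faithfulness plus a formal argument for essential surjectivity, then descent for $\mathrm{SL}/\mathrm{PGL}$—but the mechanism you propose for crossing the non-regular locus is exactly the step that fails, and it is where the paper's real work lies. First, a smaller point: under the stated parity hypotheses \emph{both} $(2,\chi)$ and $(2,w+1-g^{\rm{sp}})$ are coprime (since $w+1-g^{\rm{sp}}\equiv w+l \pmod 2$), so both sides are honest derived categories of smooth moduli spaces; the whole role of the parity assumption is that Assumption~\ref{conj:CMext} (membership of $\mathcal{P}$ in the product of quasi-BPS categories) becomes vacuous, which the paper can otherwise verify only for $C=\mathbb{P}^1$. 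Second, and more seriously, your step 3 does not work as stated: full faithfulness is the assertion that the kernel $\mathcal{Q}$ of $\Phi_{\mathcal{P}}^L\circ\Phi_{\mathcal{P}}$ is isomorphic to $\mathcal{O}_{\Delta_T}$, and knowing this over an open set with codimension-two complement does not determine $\mathcal{Q}$—reflexivity of $\mathcal{P}$ says nothing about transforms of skyscrapers at points of the bad locus. The paper's Proposition~\ref{prop:CMQ} (an Arinkin-style Cohen--Macaulay criterion) needs two inputs: cohomological amplitude bounds on $\mathcal{Q}$ and its dual, coming from flatness of $\mathcal{P}$ and $\mathcal{P}^{\vee}$ over \emph{both} factors (Lemmas~\ref{lem:CMP}, \ref{lem:MCM}, \ref{lem:Qvanish}); and the isomorphism $\mathcal{Q}\cong\mathcal{O}_{\Delta}$ over an open locus of $B$ whose complement has codimension \emph{bigger than $g^{\rm{sp}}$}, not merely two. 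The only locus that qualifies is $B^{\rm{red}}$ (Lemma~\ref{lem:codim}; this is precisely where $l>3g-2$ enters for $r=2$), and establishing the isomorphism there requires the Melo--Rapagnetta--Viviani duality for fine compactified Jacobians of \emph{reduced, singular} spectral curves \cite{MRVF2}, accessed through the window theorem (Lemma~\ref{lem:eq:window}). Your plan invokes only Mukai duality over $B^{\rm{sm}}$ and Theorem~\ref{thm:ff}: since $B\setminus B^{\rm{sm}}$ is a divisor, no codimension argument can start from $B^{\rm{sm}}$, and Theorem~\ref{thm:ff} concerns the source restricted to the stable-regular locus, so it says nothing about objects living over the non-regular part.

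The same defect recurs in your step 4. The paper never constructs an inverse kernel: once $\Phi_{\mathcal{P}}$ is fully faithful, it induces a $B$-linear semiorthogonal decomposition of the target, and Proposition~\ref{prop:equivT} together with Lemma~\ref{lem:sodT} shows a BPS category admits \emph{no} non-trivial $B$-linear semiorthogonal decomposition—this uses the relative Calabi--Yau property (Corollary~\ref{cor:proper0}) to convert the SOD into an orthogonal decomposition and then the indecomposability of the maximal Cohen--Macaulay algebra $\mathscr{A}$. Verifying that your symmetric "quasi-inverse" composes to the identity "by reducing to $B^{\rm{sm}}$" hits the same codimension-one obstruction as above. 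Finally, for $\mathrm{SL}/\mathrm{PGL}$ your equivariance-and-descent idea for the kernel is essentially the paper's Lemma~\ref{P:descend} (equivariance over the elliptic locus from \cite{GS}, extended using the MCM property), but one cannot simply descend the $\mathrm{GL}$-equivalence: the paper re-runs the entire dimension-estimate argument over $B_{\geq 2}$, where the Hitchin maps have relative dimension $g^{\rm{sp}}-g$ and the reduced-locus input is \cite{FHR}; the drop in fiber dimension is why the weaker bound $l>2g$ suffices in that case, a numerical point your proposal leaves unexplained.
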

In the situation of Theorem~\ref{thm:intro2}, 
there are no contributions from strictly semistable Higgs bundles, 
so we 
prove a derived equivalence for $L$-twisted Higgs moduli 
spaces. 
For example, when $l>2g$ is even, 
Theorem~\ref{thm:intro2} implies an equivalence 
\begin{align*}
    D^b(M^L_{\mathrm{PGL(2)}}(1), \alpha)\stackrel{\sim}{\to} D^b(M_{\mathrm{SL}(2)}^L(1))
\end{align*}
induced by the maximal Cohen-Macaulay sheaf (\ref{maxP:intro}), where $\alpha$ is a natural Brauer class. 

Equivalence as above have been expected in the study (of mirror symmetry) of 
Higgs bundles. So far, the study of such derived equivalences has been restricted to 
the locus where the spectral curves are reduced~\cite{Ardual, MRVF, MRVF2}, and 
to our knowledge the above equivalence is the first result which extends 
such an equivalence to the global nilpotent cone (i.e. the Hitchin fiber at $0 \in B$). 

Quasi-BPS categories are new categories if there exist strictly semistable Higgs bundles. 
In the rank two and genus zero case, we also prove equivalences which involve 
strictly semistable Higgs bundles: 
\begin{thm}\label{thm:intro3}
Conjecture~\ref{conj:intro} and
Conjecture~\ref{conj:intro3} hold for $r=2$ and $g=0$. 
\end{thm}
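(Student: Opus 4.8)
The plan is to prove Conjecture~\ref{conj:intro} (the $\mathrm{GL}$-statement) by realizing the desired equivalence as a Fourier--Mukai transform along Li's Cohen--Macaulay kernel, and then to descend to the fixed-determinant picture to obtain Conjecture~\ref{conj:intro3}. In genus zero with $r=2$ one has $g^{\rm{sp}}=l-1$, so the two tuples appearing in \eqref{equivT:intro} are $(2,w+2-l)_{-\chi+2-l}$ and $(2,\chi)_w$, and the BPS condition forces $w+l$ to be odd whenever $\chi$ is even. When $\chi$ is odd the pair $(r,\chi)$ is coprime and the statement is already contained in Theorem~\ref{thm:intro2} (whose hypotheses $l>\mathrm{max}\{3g-2,2g\}$ and $w+l$ odd specialize in genus zero to $l>0$). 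Thus the main new content is the case of \emph{even} $\chi$, where strictly semistable Higgs bundles—governed by the single partition $(1,\chi/2)+(1,\chi/2)$—appear and the good moduli space $M^L(2,\chi)$ is genuinely singular along the image of the global nilpotent cone (together with the small values of $l$ excluded by Theorem~\ref{thm:intro2}). The first step I would take is to make all of this explicit: on $\mathbb{P}^1$ every bundle splits, so after stratifying by splitting type each chart of $\mathcal{M}^L(2,\chi)$ becomes a quotient of an explicit $\Hom$-space $\{\theta\colon F\to F\otimes\mathcal{O}(l)\}$ by $\Aut(F)$, the Hitchin base is the affine space $H^0(\mathcal{O}(l))\oplus H^0(\mathcal{O}(2l))$, and the spectral curves are explicit bisections inside the ruled surface $\mathrm{Tot}_{\mathbb{P}^1}(\mathcal{O}(l))$.

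The second step is to build the functor. Following Theorem~\ref{thm:intro1.5} and Theorem~\ref{thm:intro2}, I would take the maximal Cohen--Macaulay sheaf $\mathcal{P}$ of \eqref{maxP:intro}, whose existence for $r=2$ is due to Li~\cite{MLi}, as the Fourier--Mukai kernel on $\mathcal{M}^L(2,w+2-l)\times_B\mathcal{M}^L(2,\chi)$, and let it induce a functor between the two quasi-BPS categories in \eqref{equivT:intro}. Over the locus $B^{\rm{sm}}$ of smooth connected spectral curves this is the classical relative Fourier--Mukai equivalence of dual abelian fibrations of~\cite{DoPa} (see \eqref{equivT:intro3}), and over the stable and regular locus it is already known to be an equivalence by the argument of Theorem~\ref{thm:intro2} (resting on Li's Cohen--Macaulay extension and Arinkin's autoduality~\cite{Ardual}). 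Consequently the functor is an equivalence away from the strictly semistable and non-regular loci, and the entire problem is to control it across the discriminant of the Hitchin map, and especially over the point $0\in B$.

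This last point is where I expect the main obstacle to lie, precisely because it is where the reduced-spectral-curve constructions of~\cite{Ardual, MRVF, MRVF2} no longer apply. Here I would use the genus-zero geometry decisively. Near a polystable point $(L_1,\theta_1)\oplus(L_2,\theta_2)$ of equal slope the Luna slice presents $\mathcal{M}^L(2,\chi)$ \'etale-locally as a symmetric quotient stack $[\Ext^1/\mathbb{C}^\ast]$, and the quasi-BPS category is the corresponding \v{S}penko--Van den Bergh noncommutative crepant resolution—or, for $L=\Omega_{\mathbb{P}^1}$, its matrix-factorization variant—cut out by a magic window of weights~\cite{SVdB}. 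In genus zero these local models are few and glue to an explicit global model, so I would (i) identify both sides of \eqref{equivT:intro} with explicit window subcategories, (ii) check that the symmetry of \eqref{equivT:intro} exchanging Euler characteristic and weight matches the window-shift equivalence between them, and (iii) verify that the Fourier--Mukai kernel $\mathcal{P}$ restricts on these charts to the kernel implementing that window equivalence, by a direct computation of $\mathcal{P}$ together with a weight count matching the two BPS conditions. The delicate part is (iii) over the most degenerate stratum, where the spectral bisection becomes non-reduced; I expect the genus-zero rigidity of bundles on $\mathbb{P}^1$ to reduce this to a finite linear-algebra check.

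Finally I would deduce Conjecture~\ref{conj:intro3}. Fixing $\det F$ carves out $\mathcal{M}^L_{\mathrm{SL}(2)}$ inside $\mathcal{M}^L(2,\chi)$, while quotienting by the relative Jacobian produces $\mathcal{M}^L_{\mathrm{PGL}(2)}$; since the Poincar\'e kernel $\mathcal{P}$ is equivariant for these actions, the $\mathrm{GL}$-equivalence descends to a twisted equivalence $\mathbb{T}^L_{\mathrm{PGL}(2)}(w+2-l)_{-\chi+2-l}\xrightarrow{\sim}\mathbb{T}^L_{\mathrm{SL}(2)}(\chi)_w$, the twist being the natural Brauer class already visible in Theorem~\ref{thm:intro2} and predicted by Hausel--Thaddeus mirror symmetry~\cite{HauTha}. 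In the coprime cases this recovers a genuine derived equivalence of fixed-determinant Higgs moduli spaces, while the even-$\chi$ cases follow from the explicit semistable analysis carried out for Conjecture~\ref{conj:intro}.
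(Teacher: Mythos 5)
Your overall skeleton (take Li's Cohen--Macaulay sheaf $\mathcal{P}$ as a Fourier--Mukai kernel, treat the reduced-spectral-curve locus via \cite{Ardual, MRVF, MRVF2}, then descend to the $\mathrm{SL}/\mathrm{PGL}$ setting by equivariance of the kernel) agrees with the paper, but your central step --- the treatment of the most degenerate locus --- has a genuine gap. You propose to prove the equivalence there by identifying both sides with magic-window subcategories on Luna slices and then checking that $\mathcal{P}$ ``restricts to the kernel implementing that window equivalence.'' No such local window-shift equivalence is available: under the BPS condition one side of \eqref{equivT:intro} is geometric (a coprime, stable moduli space) while the other is a \v{S}penko--Van den Bergh-type noncommutative resolution, and an equivalence between these local models exchanging Euler characteristic and weight is precisely the kind of statement being conjectured (compare the conjectural structure of quasi-BPS categories in \cite{PT1}); invoking it makes the argument circular. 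Moreover, $\mathcal{P}$ is defined only abstractly as the unique maximal Cohen--Macaulay extension, so ``a direct computation of $\mathcal{P}$'' over the non-reduced stratum has no starting point without an explicit model for it there.

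The paper's proof never analyzes the functor over the nilpotent cone at all. The real crux in the non-coprime case is a \emph{membership} statement, not a local equivalence: one must show $\mathcal{P}$ lies in the product of quasi-BPS categories (Assumption~\ref{conj:CMext}), since otherwise $\Phi_{\mathcal{P}}$ does not even map $\mathbb{T}^L(2,w+1-g^{\rm{sp}})_{-\chi+1-g^{\rm{sp}}}$ into $\mathbb{T}^L(2,\chi)_w$. For $C=\mathbb{P}^1$ this is Theorem~\ref{thm:CMext}, proved by realizing $\mathcal{P}$ concretely through twisted Abel--Jacobi maps from $\mathrm{Hilb}^{l-1}(\mathcal{C}/B)$ (the key genus-zero input being Lemmas~\ref{lem:ideal}--\ref{lem:AJM}: stable sheaves on the double curve are of the form $I_{D,Z}^{\vee}$ with $Z$ of length exactly $l-1$, so $d=l-1$ suffices and the Abel--Jacobi fibers are single points) combined with the weight estimate of Lemma~\ref{lem:EinT} --- this is where your ``finite linear-algebra check'' genuinely lives. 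Once membership holds, Theorem~\ref{thm:r=21} concludes by a purely formal Arinkin-type argument: fully-faithfulness is checked only over $B^{\rm{red}}$ (via \cite{MRVF, MRVF2}) and extended across the non-reduced locus by Cohen--Macaulay rigidity of the kernel of $\Phi^L_{\mathcal{P}}\circ\Phi_{\mathcal{P}}$ together with the codimension bound of Lemma~\ref{lem:codim} (Proposition~\ref{prop:CMQ}), and essential surjectivity follows from the indecomposability of $B$-linear semiorthogonal decompositions of BPS categories (Proposition~\ref{prop:equivT}, Lemma~\ref{lem:sodT}). Your proposal is missing both the Abel--Jacobi realization of $\mathcal{P}$ and this rigidity mechanism, and the route you substitute for them would not go through.
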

We note that for $g=0$, Conjecture~\ref{conj:intro2} and Conjecture~\ref{conj:intro3}
for $L=\Omega_C$ are immediate, see~Example~\ref{exam:1}. 
However, Conjectures~\ref{conj:intro} and
Conjecture~\ref{conj:intro3} are more interesting if the degree of $L$ is large enough, 
as the dimensions of the $L$-twisted Higgs moduli stacks are large and there exist complicated 
singular Hitchin fibers. 

For a higher genus $g>0$ and $L=\Omega_C$, we use the result of Theorem~\ref{thm:intro2} and
then apply matrix factorizations for a regular function to deduce some evidence towards
Conjecture~\ref{conj:intro3}.
This approach is inspired by the proof of $\chi$-independence of the BPS cohomologies for Higgs bundles~\cite{MSendscopic, KinjoKoseki}. However, in our
categorical setting, we only obtain an equivalence up to deformation for $\mathbb{Z}/2$-graded categories: 
\begin{thm}\emph{(Theorem~\ref{thm:eq:can})}\label{thm:intro4}
Let $r=2$, $L=\Omega_C$, and assume that $\chi$ and $w$ are odd. 
Then a $\mathbb{Z}/2$-periodic version of 
Conjecture~\ref{conj:intro3} holds up to deformation equivalence, i.e. 
there is a deformation equivalence 
\begin{align*}
    D^{\mathbb{Z}/2}(M_{\mathrm{PGL(2)}}(1), \alpha)
   \simeq D^{\mathbb{Z}/2}(M_{\mathrm{SL}(2)}(1)). 
\end{align*}
\end{thm}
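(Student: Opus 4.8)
The plan is to deduce Theorem~\ref{thm:intro4} from Theorem~\ref{thm:intro2} by a deformation argument that relates the $L=\Omega_C$ (Hitchin) case to the large-degree twisted case $l \gg 0$ where Conjecture~\ref{conj:intro3} is already established. The starting observation is that for $r=2$, $L=\Omega_C$, and $\chi,w$ odd, the BPS condition is satisfied, so both categories in the claimed deformation equivalence are the $\mathbb{Z}/2$-periodic (i.e.\ categories of matrix factorizations / $2$-periodic derived categories) versions of the $\mathrm{SL}(2)$ and $\mathrm{PGL}(2)$ Higgs moduli spaces at Euler characteristic $1$. First I would realize these $\mathbb{Z}/2$-graded categories as categories of matrix factorizations of a global potential: the moduli stack of $\Omega_C$-Higgs bundles sits inside the moduli stack of $L$-twisted Higgs bundles for a larger $L$ as the critical locus of a regular function $W$ coming from the section cutting out $\Omega_C \hookrightarrow L$ (this is the standard dimensional-reduction / Koszul-duality picture used in the $\chi$-independence proofs of~\cite{MSendscopic, KinjoKoseki}).

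The key steps, in order, are as follows. First, I would fix an auxiliary line bundle $L$ with $l>\max\{3g-2,2g\}$ (so that Theorem~\ref{thm:intro2} applies) together with a map realizing $\Omega_C$ as a subsheaf of $L$, and construct the corresponding regular function $W$ on the $L$-twisted moduli whose critical locus is the $\Omega_C$-twisted (Hitchin) moduli. Second, I would invoke Theorem~\ref{thm:intro2}: for this $L$, the maximal Cohen-Macaulay sheaf $\mathcal{P}$ of~\eqref{maxP:intro} (from Li~\cite{MLi}) induces a genuine derived equivalence
\begin{align*}
    D^b(M^L_{\mathrm{PGL}(2)}(1),\alpha) \stackrel{\sim}{\to} D^b(M^L_{\mathrm{SL}(2)}(1)).
\end{align*}
Third, I would upgrade this to a compatibility with the potentials: the kernel $\mathcal{P}$ on the twisted side should intertwine the two potentials $W_{\mathrm{PGL}}$ and $W_{\mathrm{SL}}$ up to a controlled discrepancy, so that Fourier--Mukai along $\mathcal{P}$ descends to an equivalence of the associated categories of matrix factorizations. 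Fourth, I would identify the category of matrix factorizations $\mathrm{MF}(-,W)$ on the $L$-twisted space with the $\mathbb{Z}/2$-periodic category $D^{\mathbb{Z}/2}(M_{\bullet}(1))$ of the $\Omega_C$-Higgs moduli via Knörrer periodicity / dimensional reduction, thereby transporting the equivalence to the Hitchin case but at the cost of only obtaining equality after the one-parameter deformation that rescales the section $\Omega_C \hookrightarrow L$.

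The main obstacle I expect is Step three: showing that the Cohen-Macaulay Fourier--Mukai kernel $\mathcal{P}$ is genuinely compatible with the two potentials, rather than merely inducing an equivalence of the ambient $\mathbb{Z}$-graded categories. Because $\mathcal{P}$ is only maximal Cohen-Macaulay (not a line bundle) over the singular and non-regular locus, and because the two potentials $W_{\mathrm{SL}}$ and $W_{\mathrm{PGL}}$ need not match on the nose under the Fourier--Mukai correspondence, the pushforward/pullback of $\mathcal{P}$ will generically carry the critical locus of one potential to a \emph{deformation} of the critical locus of the other. This is precisely why the conclusion is a deformation equivalence of $\mathbb{Z}/2$-graded categories rather than a strict equivalence: the discrepancy between the potentials is a nonzero but deformation-trivial perturbation, and one must check (using the properness and smoothness of the BPS categories from Theorem~\ref{thm:intro1}, together with invariance of matrix-factorization categories under deformations of the potential with fixed critical structure) that this perturbation can be absorbed into a flat family connecting the two $\mathbb{Z}/2$-periodic categories. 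Making this family precise, and verifying that the deformation stays within the locus where the relevant good-moduli and gerbe structures are preserved, is the technical heart of the argument.
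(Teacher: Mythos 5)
Your overall strategy --- embed the Hitchin moduli into an $L$-twisted moduli for an auxiliary $L \supset \Omega_C$ of large degree, invoke the already-established twisted $\mathrm{SL}/\mathrm{PGL}$ equivalence, and pass to matrix factorizations --- is the paper's strategy. But your Step three contains a genuine gap, and you misidentify the source of the deformation in the conclusion. You treat the compatibility of the Fourier--Mukai kernel with the two potentials as a ``controlled discrepancy'' to be ``absorbed into a flat family,'' but you give no mechanism for doing so, and in fact no such discrepancy needs to be handled at all if the potential is chosen correctly. The paper takes $L=\Omega_C(p_1+\cdots+p_{2k})$ and the potential $f(F,\theta)=\sum_{i=1}^{2k}\mathrm{tr}(\theta|_{p_i}^2)$, which \emph{factors through the Hitchin base}: $f = g\circ h$ with $g$ a linear functional on $H^0(L^{\otimes 2})$. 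Since the twisted equivalence of Corollary~\ref{thm:r=2:PS} is $B^L_{\geq 2}$-linear, tensoring it with $\mathrm{MF}(B^L_{\geq 2}, g)$ immediately yields
\begin{align*}
\mathrm{MF}((M^L_{\mathrm{PGL}(2)}(1), \alpha), f) \stackrel{\sim}{\to} \mathrm{MF}(M^L_{\mathrm{SL}(2)}(1), f),
\end{align*}
the potentials matching automatically because both are pulled back from the base. This base-linearity trick is the key idea missing from your proposal; without it your Step three cannot be completed as written.

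The deformation equivalence in the statement has a different origin from the one you propose: it is the comparison between $D^{\mathbb{Z}/2}$ of the smooth critical locus and $\mathrm{MF}$ of the ambient potential, which is \emph{not} Knörrer periodicity on the nose (your Step four). By Teleman's theorem this comparison holds only up to deformation, and it requires the vanishing $w_1=w_2=0$ of the Stiefel--Whitney classes of the normal bundle equipped with its Hessian quadratic form. This is exactly why the paper twists by an \emph{even} number $2k$ of points with $k\geq 2$: the relevant quadratic bundle is $\mathcal{V}=\bigoplus_{i=1}^{2k}\mathcal{E}nd(\mathcal{F}_{p_i})_0$ with the trace pairing, whose total Stiefel--Whitney class is a $2k$-th power, forcing $w_1=w_2=0$. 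Concretely, by Proposition~\ref{prop:MF2} one has $D^{\mathbb{Z}/2}(M_{\mathrm{SL}(2)}(1))\simeq \mathrm{MF}(\mathcal{V}, g_1)$ (Koszul) and $\mathrm{MF}(M^L_{\mathrm{SL}(2)}(1), f)\simeq \mathrm{MF}(\mathcal{V}, g_2)$, and the deformation is the linear interpolation $tg_1+(1-t)g_2$ on the total space of $\mathcal{V}$ --- not, as you suggest, a rescaling of the section $\Omega_C\hookrightarrow L$ (which would not change $\mathrm{MF}(M^L_{\mathrm{SL}(2)}(1),f)$ at all), nor a perturbation coming from a kernel/potential mismatch. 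Without the base-linearity observation and the Teleman/Stiefel--Whitney input, your argument does not close.
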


\subsection{Motivation for the conjectures}
\subsubsection{Categorical $\chi$-independence}
We are interested in the equivalences (\ref{equivT:intro}), (\ref{equivT:intro2})
as in many cases they reveal how quasi-BPS categories look like. For example, 
Conjecture~\ref{conj:intro2} implies that 
\begin{align}\label{equiv:chi:intro}
\mathbb{T}(r, \chi)^{\rm{red}}_1 \simeq \mathbb{T}(r, 1)_{-\chi}^{\rm{red}} \simeq 
D^b(M(r, 1)),
\end{align}
where $M(r, 1)$ is the usual moduli space of Higgs bundles $(F, \theta)$ with 
$\rank(F)=r$ and $\chi(F)=1$, which is a holomorphic 
symplectic manifold. On the other hand, the category $\mathbb{T}(r, \chi)_1^{\rm{red}}$ is hard 
to investigate for non-coprime $(r, \chi)$, see~\cite{PT1} for 
the conjectural structure of quasi-BPS categories for $\mathbb{C}^3$. 

Since (\ref{equiv:chi:intro}) is independent of $\chi$, we regard the equivalence (\ref{equiv:chi:intro}) as a \textit{categorical $\chi$-independence phenomenon}. The (rational) topological K-theory of a BPS category is isomorphic (as a vector space over $\mathbb{Q}$) to the corresponding BPS cohomology~\cite{PThiggs2, PTtop, PTK3}. Thus, by taking the Euler characteristic of the topological K-theory of the two sides of \eqref{equiv:chi:intro}, we recover the equality of BPS invariants of curves of class $r[C]$ and Euler characteristics $\chi$ and $1$, respectively, in the Calabi-Yau threefold $\mathrm{Tot}_C(\Omega_C)\times \mathbb{A}^1_\mathbb{C}$.
For the most recent developments on cohomological $\chi$-independence,
see~\cite{TodGV, DMJS, KinjoKoseki}. We discuss more about the topological K-theory of quasi-BPS categories of Higgs bundles in \cite{PThiggs2}.

We also motivate the construction of this paper from the point of view of refined Donaldson-Thomas theory in loc. cit., in particular we explain in loc. cit. the name of \textit{quasi-BPS categories} and provide references for BPS invariants/ cohomology.

\subsubsection{Dolbeault Geometric Langlands}
Our conjecture is also inspired by the proposal of Donagi--Pantev of derived equivalence on Higgs moduli stacks for Langlands dual groups~\cite{DoPa}. 
Such an equivalence is regarded as a classical limit of the de Rham geometric Langlands correspondence~\cite[Conjecture~1.1.6]{AG}. 
Recall from loc. cit. that 
for a Langlands dual pair of reductive groups $(G, G^{\vee})$, the de Rham geometric Langlands 
conjecture says that there is an equivalence of categories (with certain properties):
\begin{align}\label{GLC}
\mathrm{Ind}_{\mathcal{N}}D^b(\mathrm{LocSys}_{G^{\vee}}) \simeq \text{D-mod}(\mathrm{Bun}_G). 
\end{align}
Here $\mathrm{LocSys}_{G^{\vee}}$ is the moduli stack of $G^{\vee}$-flat connections, alternatively of local systems, on $C$ 
and $\mathrm{Bun}_G$ is the moduli stack of $G$-bundles on $C$. 
As a classical limit of the equivalence (\ref{GLC}), it is conjectured 
in~\cite[Conjecture~2.5]{DoPa}, ~\cite[Conjecture~1.3]{ZN} 
that there is an equivalence
\begin{align}\label{equiv:DP}
    D_{\rm{qcoh}}(\mathcal{H}iggs_{G^{\vee}}) \simeq D_{\rm{qcoh}}(\mathcal{H}iggs_{G}). 
\end{align}
Here, $\mathcal{H}iggs_G$ is the moduli stack of $G$-Higgs bundles \textit{without stability}, 
thus for $G=GL(r)$ it is not of finite type and much bigger than the union of
$\mathcal{M}(r, \chi)$ for all $\chi \in \mathbb{Z}$. 
As remarked in~\cite[Remark~1.4]{ZN}, the categories in 
(\ref{equiv:DP}) should be suitably modified due to singularities and non-finiteness 
issue as above. 

According to~\cite[Proposition~4.1]{Simp}, 
the stack $\mathrm{LocSys}_{G^{\vee}}$ degenerates to the semistable 
locus $\mathcal{H}iggs_{G^{\vee}}^{\rm{ss}} \subset \mathcal{H}iggs_{G^{\vee}}$ which 
is a stack of finite type. 
It is therefore more natural to consider the derived category of $\mathcal{H}iggs_{G^{\vee}}^{\rm{ss}}$ as a classical limit of the left hand side in (\ref{GLC}). 
Instead of (\ref{equiv:DP}), we expect an equivalence 
\begin{align}\label{equiv:modify}
    D^b(\mathcal{H}iggs_{G^{\vee}}^{\rm{ss}}) \simeq \mathcal{D}(\mathcal{H}iggs_{G}),
\end{align}
where the right hand side is a (yet to be defined) subcategory of $D^b(\mathcal{H}iggs_{G})$. 
Moreover, we expect that
the semiorthogonal decomposition in Theorem~\ref{thm:intro1} for $L=\Omega_C$ 
and $G=\mathrm{GL}(r)$ 
corresponds to a semiorthogonal decomposition in the right hand side induced 
by the Harder-Narasimhan stratification of $\mathcal{H}iggs_{G}$, 
which might also shed light on the Conjectures~\ref{conj:intro2} and \ref{conj:intro3}. 
We will pursue this direction in future research.

\subsubsection{Mirror symmetry for moduli of Higgs bundles}
In the case of $(G, G^{\vee})=(\mathrm{SL}(r), \mathrm{PGL}(r))$, 
if both $(r, \chi)$ and $(r, w)$ are coprime, then 
the pair 
\begin{align*}(M_{\mathrm{PGL}(r)}(w), M_{\mathrm{SL}(r)}(\chi))
\end{align*}
together with some Brauer classes is expected to be a mirror pair~\cite{HauTha}. 
At the numerical or cohomological level, this is proved in~\cite{GrWy, MSendscopic}. 
A categorical version is 
a conjectural derived equivalence
\begin{align}\label{intro:mirror}
    D^b(M_{\mathrm{PGL}(r)}(w), \alpha^{-\chi}) \stackrel{\sim}{\to} D^b(M_{\mathrm{SL}(r)}(\chi)), 
\end{align}
where $\alpha$ is some Brauer class, 
see~\cite[Section~2.4]{HauICM}. 
The statement of Conjecture~\ref{conj:intro3} extends the above 
equivalence to possibly non-coprime $(r, \chi)$, $(r, w)$, but primitive $(r, \chi, w)$, e.g. 
$(r, 0, 1)$ for $r>1$. 
An equivalence of topological K-theories of both sides in 
(\ref{intro:mirror}) is proved by Groechenig--Shen~\cite{GS}.  
In~\cite{PThiggs2}, we will prove a topological K-theory version of 
Conjecture~\ref{conj:intro}, Conjecture~\ref{conj:intro2}
and Conjecture~\ref{conj:intro3}.

\subsection{Acknowledgements}
T.~P. thanks MPIM Bonn and CNRS for their support during part of the preparation of this paper.
This material is partially based upon work supported by the NSF under Grant No. DMS-1928930 and by the Alfred P. Sloan Foundation under grant G-2021-16778, while T.~P. was in residence at SLMath in Berkeley during the Spring 2024 semester. T.~P. thanks Kavli IPMU for their hospitality and excellent conditions during a visit in May 2024.

This work started while Y.~T.~was visiting to Hausdorff Research Institute for Mathematics in Bonn on November 12-18, 2023. 
Y.~T.~thanks the hospitality of HIM
Bonn during his visit. 
Y.~T.~is supported by World Premier International Research Center
	Initiative (WPI initiative), MEXT, Japan, and JSPS KAKENHI Grant Numbers JP19H01779, JP24H00180.

\subsection{Notations and conventions}
In this paper, all the (derived) stacks are defined over $\mathbb{C}$. 
For a derived stack $\X$, we denote by $\X^{\rm{cl}}$ 
its classical truncation. 
For a stack $\X$, we use the notion of \textit{good moduli space}
from~\cite{MR3237451}. It generalizes the notion of GIT quotient 
$R/G \to R\ssslash G$, where $R$ is an affine variety on which 
a reductive group $G$ acts. 

For a locally ringed space $(X, \mathscr{A})$, we denote by 
$\Coh(\mathscr{A})$ the abelian category of coherent right $\mathscr{A}$-modules. 
For a variety or stack $\X$, we denote by $D^b(\X)$ the bounded 
derived category of coherent sheaves on $\X$, which is a pre-triangulated 
dg-category. 
We denote by 
$\mathrm{Perf}(\X)$ the category of perfect complexes, by $D_{\rm{qcoh}}(\X)$ the 
unbounded derived category of quasi-coherent sheaves, and by $D^{-}(\X)\subset D_{\rm{qcoh}}(\X)$ the category of bounded above complexes. 

For stacks $\X_i$ over a base scheme $B$ and 
 pre-triangulated dg-subcategories $\mathcal{C}_i \subset D^b(\X_i)$ for $i=1, 2$, 
we denote by $\mathcal{C}_1\boxtimes_B \mathcal{C}_2$ the smallest 
pre-triangulated dg-subcategory of $D^b(\X_1 \times_B \X_2)$ which contains 
objects $E_1 \boxtimes E_2$ for $E_i \in \mathcal{C}_i$ and closed under direct summands. When $B=\Spec \mathbb{C}$, 
we omit $B$ from the notation. 

We use both of underived and derived functors. For 
a functor $F$, its right/left derived functors 
are denoted by $RF$, $LF$ respectively, following the 
classical notation. If $F$ is an exact functor, we 
omit $R$ or $L$ in the notation. 

For a torus $T$ and for a character $\chi$ and cocharacter $\lambda$ of $T$, 
we denote by $\langle \lambda, \chi \rangle \in \mathbb{Z}$ the natural 
pairing. 
For a $T$-representation $V$, we denote by $V^{\lambda>0} \subset V$ the 
subspace spanned by $T$-weights $\beta$ with $\langle \lambda, \beta\rangle>0$. 
We also write $\langle \lambda, V^{\lambda>0}\rangle:=\langle \lambda, \det(V^{\lambda>0})\rangle$. 

\section{Moduli stacks of Higgs bundles}
In this section, we recall some basic properties of moduli stacks of Higgs bundles, in particular the spectral construction \cite{BeNaRa}, the Hitchin system, and the local description via the étale slice theorem.

\subsection{Twisted Higgs bundles}
Let $C$ be a smooth projective curve of genus $g$. 
Let $L$ be a line bundle on $C$ such that either  
\begin{align*}l:=\deg L>2g-2 \mbox{ or }
L=\Omega_C.
\end{align*}
By definition, a $L$-\textit{Higgs bundle} is a pair 
$(F, \theta)$, where $F$ is a vector bundle on 
$C$ and $\theta$ is a morphism 
\begin{align*}
    \theta \colon F \to F \otimes L. 
\end{align*}
When $L=\Omega_C$, it is just called a Higgs bundle. 
The (semi)stable $L$-twisted Higgs bundle is defined 
using the slope $\mu(F)=\chi(F)/\rank(F)$
in the usual way: a $L$-twisted Higgs bundle $(F, \theta)$ 
is (semi)stable if we have 
\begin{align*}
    \mu(F') <(\leq) \mu(F), 
\end{align*}
for any sub Higgs bundle $(F', \theta') \subset (F, \theta)$
with 
$\rank(F')<\rank(F)$. 

A $L$-twisted Higgs bundle is identified with a 
compactly supported pure one-dimensional 
coherent sheaf on the non-compact surface
\begin{align*}
p \colon 
    S=\mathrm{Tot}_C(L) \to C. 
\end{align*}
The correspondence (called \textit{spectral construction})
is given as follows: 
for a given $L$-twisted Higgs pair $(F, \theta)$, 
the Higgs field $\theta$ determines 
the $p_{\ast}\mathcal{O}_S$-module 
structure on $F$, which in turn 
gives a coherent sheaf on $S$. 
Conversely, a pure one-dimensional 
compactly supported sheaf $E$ on 
$S$ pushes forward to a vector bundle $F$ 
with Higgs field $\theta$ given by the 
$p_{\ast}\mathcal{O}_S$-module structure on it. 

\subsection{Moduli stacks of Higgs bundles}
We denote by 
\begin{align}\label{H:stack}\mathcal{M}^L(r, \chi)
\end{align}
the derived moduli stack of semistable $L$-twisted Higgs bundles 
$(F, \theta)$ with 
\begin{align*}(\rank(F), \chi(F))=(r, \chi).
\end{align*}
It is smooth (in particular classical)
when $\deg L>2g-2$, and 
quasi-smooth when $L=\Omega_C$. 
We omit $L$ in the notation when $L=\Omega_C$. 
We also denote by 
$(\mathcal{F}, \vartheta)$ the universal Higgs bundle 
\begin{align}\label{univ:F}
    \mathcal{F} \in \mathrm{Coh}(C \times \mathcal{M}^L(r, \chi)), \ 
    \vartheta \colon \mathcal{F} \to \mathcal{F} \boxtimes L. 
\end{align}

The stack (\ref{H:stack}) is equipped with the Hitchin map
\begin{align*}
  h \colon \mathcal{M}^L(r, \chi) \to B^L(r,\chi) :=\bigoplus_{i=1}^r
    H^0(C, L^{\otimes i})
\end{align*}
sending $(F, \theta)$ 
to $\mathrm{tr}(\theta^{\otimes i})$
for $1\leq i\leq r$. When $L, r, \chi$ are clear from the context, we only write $B:=B^L(r,\chi)$.
The map $h$ factors through the good moduli space morphism $\pi$: 
\begin{align*}
    h \colon \mathcal{M}^L(r, \chi)^{\rm{cl}} 
    \stackrel{\pi}{\to} M^L(r, \chi) \to B.
\end{align*}
A closed point $y \in M^L(r, \chi)$ corresponds to a polystable 
Higgs bundle 
\begin{align}\label{polystable}
E=\bigoplus_{i=1}^k V_i \otimes E_i
\end{align}
where $E_i$ is a stable $L$-twisted Higgs bundle 
such that $(\rank(E_i), \chi(E_i))=(r_i, \chi_i)$ 
satisfies $\chi_i/r_i=\chi/r$
and 
$V_i$ is a finite dimensional vector space. 
By abuse of notation, we also denote by $y \in \mathcal{M}^L(r, \chi)$ the closed point represented by (\ref{polystable}).
It is the unique closed point in the fiber of 
$\mathcal{M}^L(r, \chi)^{\rm{cl}} \to M^L(r, \chi)$ at $y$. 
We also have the Cartesian square
\begin{align}\label{dia:stable}
\xymatrix{
\mathcal{M}^L(r, \chi)^{\rm{st}} \inclusion \ar[d] & \mathcal{M}^L(r, \chi)^{\rm{cl}} \ar[d] \\
M^L(r, \chi)^{\rm{st}} \inclusion & M^L(r, \chi). 
}
\end{align}
Here $(-)^{\rm{st}}$ means the stable part, the horizontal 
arrows are open immersions and the left vertical arrow is a good 
moduli space morphism which is a $\mathbb{C}^{\ast}$-gerbe.

A point $b \in B$ corresponds to a support
$\mathcal{C}_b \subset S$ of 
the sheaf on $S$, called the \textit{spectral curve}. 
We denote by $g^{\rm{sp}}$ the 
arithmetic genus of the spectral curve, which is given by  
\begin{align}\label{formula:gD}
    g^{\rm{sp}}=1+(g-1)r-\frac{rl}{2}+\frac{r^2 l}{2}. 
\end{align}
Here we note that, for any compactly supported 
effective divisor $D \subset S$, we have 
\begin{align}\label{formula:chiD}
    \chi(\mathcal{O}_D)=-\frac{1}{2}D(K_S+D). 
\end{align}
The formula (\ref{formula:gD}) can be deduced from (\ref{formula:chiD}), alternatively see the computation in \cite[Remark 3.2]{BeNaRa}. 

Let $\mathcal{C} \to B$ be the universal spectral curve, 
which is a closed subscheme of $S \times B$. 
By the spectral construction, the universal Higgs bundle 
corresponds to a universal sheaf 
\begin{align}\label{univ:E}
    \mathcal{E} \in \Coh(\mathcal{C}\times_B \mathcal{M}^L(r, \chi))
\end{align}
which is also regarded as a coherent sheaf on $S \times \mathcal{M}^L(r, \chi)$
by the closed immersion 
$\mathcal{C}\times_B \mathcal{M}^L(r, \chi) \hookrightarrow S \times \mathcal{M}^L(r, \chi)$.

\subsection{Stratification of the Hitchin base}
Consider the open subsets
\begin{align}\label{open:B}
    B^{\rm{ell}} \subset B^{\rm{red}} \subset B
\end{align}
where $B^{\rm{ell}}$ corresponds to irreducible spectral 
curves and $B^{\rm{red}}$ corresponds to reduced 
spectral curves. 
\begin{lemma}\label{lem:codim}
We have 
\begin{align}\label{ineq:reg}
\mathrm{codim}(B \setminus B^{\rm{red}}) \geq 2lr -2l+1-g. 
    \end{align}
    In particular, we have 
    $\mathrm{codim}(B\setminus B^{\rm{red}}) >g^{\rm{sp}}$ if 
    \begin{align}\label{r=23}
     r=2, l>3g-2 \mbox{ or } r=3, l>4g-3. 
    \end{align}
\end{lemma}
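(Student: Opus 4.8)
The plan is to prove Lemma~\ref{lem:codim} by directly estimating the codimension of the locus of non-reduced spectral curves inside the Hitchin base $B = \bigoplus_{i=1}^r H^0(C, L^{\otimes i})$. First I would set up the parametrization: a spectral curve $\mathcal{C}_b \subset S = \mathrm{Tot}_C(L)$ is the zero locus of a section of $p^*L^{\otimes r}$ on $S$ whose coefficients are the components of $b$, so that $\mathcal{C}_b$ is cut out by the characteristic polynomial equation $y^r + a_1 y^{r-1} + \cdots + a_r = 0$ with $a_i \in H^0(C, L^{\otimes i})$. A spectral curve is non-reduced precisely when this degree-$r$ polynomial (over the function field, or fiberwise) has a repeated factor, i.e. when its discriminant vanishes. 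The strategy is to stratify $B \setminus B^{\mathrm{red}}$ according to the partition type of the multiplicities of the spectral curve and to bound the dimension of the largest stratum.

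The key computation is as follows. The generic (and largest) way to be non-reduced is to have a spectral curve of the form $2\Gamma + \Gamma'$, where $\Gamma$ is a smooth spectral curve for a sublinear system corresponding to rank one and $\Gamma'$ accounts for the remaining rank $r-2$; more precisely, I would identify the stratum of maximal dimension, which should come from spectral curves that factor as (double curve of rank one) plus (reduced curve of rank $r-2$), since splitting off a single repeated sheet is the cheapest degeneration. Concretely, a double sheet along a rank-one spectral curve $\Gamma \subset S$ together with a transverse rank-$(r-2)$ piece is parametrized by choosing the rank-one part (living in $H^0(C,L)$, dimension $\dim H^0(C,L)$) and the rank-$(r-2)$ characteristic data; counting dimensions and subtracting from $\dim B$ gives the bound. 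One computes $\dim B = \sum_{i=1}^r h^0(L^{\otimes i})$ using Riemann--Roch, $h^0(L^{\otimes i}) = il - g + 1$ for $i \geq 1$ (valid since $\deg L^{\otimes i} = il > 2g-2$), so $\dim B = \tfrac{r(r+1)}{2} l - r(g-1)$. Carrying out the analogous count for the non-reduced stratum and subtracting should yield the stated inequality $\mathrm{codim}(B \setminus B^{\mathrm{red}}) \geq 2lr - 2l + 1 - g$.

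For the second assertion I would simply compare the right-hand side of \eqref{ineq:reg} with the genus formula \eqref{formula:gD}, namely $g^{\mathrm{sp}} = 1 + (g-1)r - \tfrac{rl}{2} + \tfrac{r^2 l}{2}$. Substituting $r=2$ gives $g^{\mathrm{sp}} = 2g - 1 + l$ and the codimension bound reads $\geq 2l + 1 - g$, and the inequality $2l + 1 - g > 2g - 1 + l$ reduces to $l > 3g - 2$, matching \eqref{r=23}. Similarly, for $r=3$ one gets $g^{\mathrm{sp}} = 3g - 2 + 3l$ and codimension bound $\geq 4l + 1 - g$, and $4l + 1 - g > 3g - 2 + 3l$ reduces to $l > 4g - 3$, again matching. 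So the second statement is an elementary arithmetic consequence of the first together with formula \eqref{formula:gD}.

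The main obstacle is the dimension count in the middle paragraph: identifying precisely which stratum of $B \setminus B^{\mathrm{red}}$ has maximal dimension and giving a clean upper bound for it. One must check that the stratum I single out (a single doubled rank-one sheet) really dominates all other non-reduced partition types, and that the parametrization by lower-rank Hitchin data does not overcount or hide extra moduli. A convenient way to make this rigorous is to realize the non-reduced locus via the discriminant section and use that the discriminant, as a section of an explicit line bundle on $C$, has a vanishing locus whose codimension in $B$ is controlled by the degree of that line bundle; alternatively, one can use the universal spectral curve $\mathcal{C} \to B$ and a relative Bertini-type argument to show that reducedness fails only in the expected codimension. I expect the cleanest route is the explicit stratum-by-stratum estimate, with the doubled-sheet stratum providing the extremal case that forces the bound in \eqref{ineq:reg}.
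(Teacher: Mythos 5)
Your proposal is correct and follows essentially the same route as the paper: the paper also stratifies $B\setminus B^{\rm{red}}$ by multiplicity type via addition maps $B_{r_1}\times\cdots\times B_{r_k}\to B_r$, identifies the extremal stratum as the image of $C_2\times B_{r-2}$ (a doubled rank-one spectral curve plus a reduced rank-$(r-2)$ part, with $\dim C_2\leq \dim B_1=l+1-g$), and subtracts from $\dim B=\tfrac{l}{2}r(r+1)+r(1-g)$ to get \eqref{ineq:reg}, after which the second claim is the same arithmetic comparison with \eqref{formula:gD} that you carried out. The dimension count you deferred does close exactly as you predict, so the only real work left in your plan is the (routine, convexity-type) check that the doubled-sheet stratum dominates all other partition types, which the paper likewise asserts with minimal justification.
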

\begin{proof}
    We write $B_r=B$. 
    Note that 
    \begin{align}\label{dimBr}
        \dim B_r=\sum_{i=1}^r h^0(L^i)
        =\frac{l}{2}r(r+1)+r(1-g). 
    \end{align}
Also for a decomposition $r=r_1+r_2$, there is an 
addition map
\begin{align*}
   \oplus \colon B_{r_1}\times B_{r_2} \to B_{r}
\end{align*}
defined to be adding the spectral curves. 
        The locus $C_r \subset B_r$ 
    corresponding to divisors on $S$ with multiplicity $r$
    is given by the image of a map 
    \begin{align*}
        C_r=\mathrm{Im}(B_1 \to B_r), \quad x \mapsto 
        \overbrace{x\oplus\cdots\oplus x}^r
    \end{align*}
    and $\dim C_r \leq \dim B_1=l+1-g$. 
    The complement $B \setminus B^{\rm{red}}$ is contained in 
    the image of the map 
    \begin{align*}
    \oplus \colon 
        \bigcup_{r_1+\cdots+r_k=2, r_1 \geq 2}
        (C_{r_1} \times B_{r_2} \times \cdots \times B_{r_k}) \to B_r. 
    \end{align*}
    A stratum given by the image from $C_{r_1} \times B_{r_2} \times \cdots \times B_{r_k}$ 
    corresponds to a divisor on $S$ of the form $D_1+\cdots+D_k$
    where $D_i$ is effective and $D_1$ has multiplicity $r_1 \geq 2$. 
    Among such strata, the stratum with the largest dimension
    is given by $k=2$, $r_1=2$ and $r_2=r-2$. 
    It follows that 
    \begin{align*}
        \dim(B\setminus B^{\rm{red}}) &\leq 
        \dim C_{2}+\dim B_{r-2} \\
        &\leq \frac{l}{2}(r^2-3r+2)+r(1-g)+g+l-1. 
    \end{align*}
        By combining with (\ref{dimBr}), we obtain the inequality (\ref{ineq:reg}). 
        From (\ref{formula:gD}) and (\ref{ineq:reg}), we have 
\begin{align*}
    \mathrm{codim}(B\setminus B^{\rm{red}})-g^{\rm{sp}} \geq -\frac{l}{2}(r-1)(r-4)-gr+r-g. 
\end{align*}
The right hand side is positive if (\ref{r=23}) holds. 
    \end{proof}

\subsection{The local description of moduli stacks of Higgs bundles}\label{subsec:loc}
The good moduli space 
\begin{align}\label{good:pi}
\pi \colon \mathcal{M}^L(r, \chi)^{\rm{cl}} \to M^L(r, \chi)
\end{align}
is, locally on $M^L(r, \chi)$, 
described in terms of the representations of the Ext-quiver. 
In the case of $L=\Omega_C$, the description is similar 
to the case of K3 surfaces, see~\cite[Section~4.3]{PTK3}. 
Here we treat the case of $l>2g-2$. 
In what follows, we write 
\begin{align}\label{rchid}
(r, \chi)=d(r_0, \chi_0)
\end{align}
where $d\in \mathbb{Z}_{>0}$ and $(r_0, \chi_0)$ is coprime. 

Let $y \in M^L(r, \chi)$ be a closed point corresponding 
to the polystable object (\ref{polystable}).
The associated Ext-quiver $Q_y$ has 
vertices $\{1, \ldots, k\}$ 
and the number of edges between two vertices is given by 
\begin{align*}
    \sharp(i \to j)=\dim \Ext_S^1(E_i, E_j)=
    r_i r_j l+\delta_{ij}.
\end{align*}
Here, by the spectral construction, we regard $E_i$ as a coherent 
sheaf on $S$. 
The representation space of $Q_y$-representations of dimension vector $\bm{d}=\{d_i\}_{i=1}^k$ 
for 
$d_i=\dim V_i$ is given by 
\begin{align*}
    R_{Q_y}(\bm{d}):=\bigoplus_{(i\to j) \in Q_y}\Hom(V_i, V_j) =\Ext_S^1(E, E). 
\end{align*}
Let $G(\bm{d})$ be the algebraic group
\begin{align*}
    G(\bm{d}):=\prod_{i=1}^k GL(V_i)=\mathrm{Aut}(E)
\end{align*}
which acts on $R_{Q_y}(\bm{d})$ by the conjugation. 
By Luna étale slice theorem, 
étale locally at $y$ the 
map $\pi \colon \mathcal{M}^L(r, \chi) \to M^L(r, \chi)$
is isomorphic to 
\begin{align}\label{etslice}
R_{Q_y}(\bm{d})/G(\bm{d}) \to R_{Q_y}(\bm{d}) \ssslash G(\bm{d}).     
\end{align}
Note that the above quotient stack is the moduli stack 
of $Q_y$-representations of dimension $\bm{d}$. 

The moduli space $M^L(r, \chi)$ is stratified where 
each stratum is indexed by data $(d_i, r_i, \chi_i)_{1\leq i\leq k}$
of the polystable object (\ref{polystable}). 
The deepest stratum corresponds to 
$k=1$ with $(d_1, r_1, \chi_1)=(d, r_0, \chi_0)$, 
which consist of polystable objects
$V\otimes E_0$ where $\dim V=d$ and $E_0$ is stable 
with 
\begin{align*}(\rank(E_0), \chi(E_0))=(r_0, d_0).
\end{align*}
The associated Ext-quiver at the deepest 
stratum has one vertex and $(1+lr_0^2)$-loops. 
We have the following lemma, 
also see~\cite[Lemma~4.3]{PTK3} for an analogous
    statement for K3 surfaces. 

\begin{lemma}\label{lem:equiver}
    For each closed point $y \in M^L(r, \chi)$, 
    and a closed point 
    $x \in M^L(r, \chi)$ which lies in the deepest
    stratum, there exists a closed point 
    $y' \in M^L(r, \chi)$ which is sufficiently close to $x$
    such that $Q_y=Q_{y'}$. 
\end{lemma}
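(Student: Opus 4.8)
The plan is to reduce the statement to a construction and a one-line numerical check inside the local quiver model at the deepest stratum. First I would record that the quiver $Q_y$ depends only on the sequence of ranks $(r_1,\dots,r_k)$ occurring in the polystable decomposition of $y$: by the formula $\sharp(i\to j)=\dim\Ext^1_S(E_i,E_j)=r_ir_jl+\delta_{ij}$, the remaining data $(d_i,\chi_i)$ do not enter. Moreover, since every stable summand $E_i$ has slope $\chi_i/r_i=\chi_0/r_0$ and $(r_0,\chi_0)$ is coprime, each rank is a positive multiple $r_i=e_ir_0$ of $r_0$, and from $\sum_i d_ir_i=r=dr_0$ one gets $\sum_i d_ie_i=d$. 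Hence it suffices to produce a point $y'$ near $x$ whose polystable decomposition has stable summands of ranks $r_1,\dots,r_k$ (with arbitrary multiplicities), for then the displayed formula forces $Q_{y'}=Q_y$.

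Next I would pass to the \'etale slice at $x$, where by \eqref{etslice} the map $\pi$ is modeled by $R_{Q_x}(d)/\GL(d)\to R_{Q_x}(d)\ssslash \GL(d)$, with $Q_x$ the one-vertex quiver carrying $N:=1+lr_0^2$ loops, and $x$ corresponding to the origin (the zero representation $\C^d\otimes S_0$, where the simple $S_0\leftrightarrow E_0$). Under this dictionary a simple $Q_x$-representation of dimension $e$ corresponds to a stable Higgs bundle of rank $er_0$ near $E_0$. For each $i$ I would choose a simple $Q_x$-representation $T_i$ of dimension $e_i$, the $T_i$ pairwise non-isomorphic, and form the semisimple representation $\rho=\bigoplus_i W_i\otimes T_i$ with $\dim W_i=d_i$, whose total dimension is $\sum_i d_ie_i=d$. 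The associated point $y'\in M^L(r,\chi)$ then has stable summands of ranks $e_ir_0=r_i$, as required. To get closeness, I would rescale all loop-matrices of $\rho$ by a scalar $t\neq 0$: a subspace is $tA$-invariant iff it is $A$-invariant, so $t\rho$ stays semisimple with the same (pairwise non-isomorphic simple) summands, while every generating invariant is a trace of a word of positive length and hence scales by a positive power of $t$, so the corresponding point of $R_{Q_x}(d)\ssslash\GL(d)$ tends to the origin as $t\to 0$. Thus $y'$ can be taken within any prescribed neighborhood of $x$.

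As a cross-check on the rank bookkeeping one can compute $Q_{y'}$ directly in the quiver: the path algebra of $Q_x$ is hereditary, so the Euler form gives $\dim\Hom(T_i,T_j)-\dim\Ext^1_{Q_x}(T_i,T_j)=(1-N)e_ie_j$; using $\Hom(T_i,T_j)=0$ for $i\neq j$, $\Hom(T_i,T_i)=\C$, and $N-1=lr_0^2$, one finds $\dim\Ext^1_{Q_x}(T_i,T_j)=lr_0^2e_ie_j+\delta_{ij}=lr_ir_j+\delta_{ij}$, matching $\sharp(i\to j)$ in $Q_y$.

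The main obstacle is the construction step: producing, for each prescribed $e_i\geq 1$, a simple $Q_x$-representation of dimension $e_i$, arranged pairwise non-isomorphic and concentrated near the origin. This is exactly where the inequality $N=1+lr_0^2\geq 2$ (automatic from $l\geq 1$ and $r_0\geq 1$) is used: the one-vertex quiver with at least two loops is wild and admits simple representations in every dimension, with positive-dimensional moduli, so the required choices exist. Everything else—the reduction to ranks, the identification $r_i=e_ir_0$, the Euler-form matching, and the scaling argument for closeness—is routine.
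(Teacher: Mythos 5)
Your proposal is correct and follows essentially the same route as the paper's proof: pass to the \'etale slice at $x$, where $Q_x$ is the one-vertex quiver with $1+lr_0^2\geq 2$ loops, choose simple representations of dimensions $m_i$ (your $e_i$) to form a semisimple representation whose Ext-quiver is $Q_y$, and use the $\mathbb{C}^{\ast}$-scaling on the loops to move the corresponding point arbitrarily close to the origin. Your Euler-form computation $\dim\Ext^1_{Q_x}(T_i,T_j)=lr_0^2e_ie_j+\delta_{ij}$ and the explicit insistence on pairwise non-isomorphic simples are slightly more careful than the paper's write-up, but the argument is the same.
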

\begin{proof}
    Let $y$ corresponds to the polystable object (\ref{polystable})
    with $(r_i, \chi_i)=m_i(r_0, \chi_0)$ for $m_i \in \mathbb{Z}_{>0}$. 
    Let $R_i$ be a simple $Q_x$-representation with dimension $m_i$. 
    Recall that $Q_x$ is a one vertex quiver with 
    loops $1+lr_0^2 \geq 2$, so there exists 
    such a simple representation $R_i$. 
    Then we have 
    \begin{align*}\dim \Ext_{Q_x}^1(R_i, R_j)=m_i m_j r_0^2+\delta_{ij},
    \end{align*}
    hence the Ext-quiver 
    for $R=\bigoplus_{i=1}^k V_i \otimes R_i$ equals $Q_y$. 
    The $Q_x$-representation $R$ corresponds to a point $p \in R_{Q_x}(d)\ssslash G(d)$, 
which can be taken to be sufficiently close to $x$ 
 by applying the $\mathbb{C}^{\ast}$-action on the loops of $Q_x$. 
 By the \'etale slice theorem, the point $p$
 corresponds to $y' \in M^L(r, \chi)$ which is sufficiently 
 close to $x$  
    such that $Q_{y'}=Q_y$. 
       \end{proof}

\subsection{The canonical line bundles on Higgs moduli spaces}
We show that $M^L(r, \chi)$ is Gorenstein with trivial canonical line bundle.
The following lemma is probably well-known, but we include it here as we cannot 
find a reference. 
\begin{lemma}\label{lem:gorenstein}
If $l>0$, then $M=M^L(r, \chi)$ is Gorenstein with trivial  
dualizing sheaf $\omega_M=\mathcal{O}_M$, and the map
(\ref{good:pi}) is generically a 
$\mathbb{C}^{\ast}$-gerbe. 
\end{lemma}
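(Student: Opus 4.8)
The statement bundles together two essentially independent claims: that $M = M^L(r,\chi)$ is Gorenstein with $\omega_M \cong \mathcal{O}_M$, and that the map $\pi$ of (\ref{good:pi}) is generically a $\mathbb{C}^\ast$-gerbe. I would dispose of the gerbe claim first, as it is the soft part. Over the dense open locus $B^{\mathrm{sm}} \subset B$ parametrizing smooth connected spectral curves, every semistable Higgs bundle corresponds to a line bundle on a smooth curve, hence is stable and in particular simple, so its automorphisms are exactly the scalars $\mathbb{C}^\ast$. Thus $h^{-1}(B^{\mathrm{sm}})$ lies in the stable locus, and the Cartesian square (\ref{dia:stable}) exhibits $\pi$ as a $\mathbb{C}^\ast$-gerbe over this dense open subset, which is the generic gerbe statement.

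For the Gorenstein and triviality assertion I would work étale-locally and then globalize. By the Luna slice description (\ref{etslice}), étale-locally at a closed point $y$ the map $\pi$ is modeled on the GIT quotient $R_{Q_y}(\bm{d}) \ssslash G(\bm{d})$ of the representation space of the Ext-quiver by $G(\bm{d}) = \prod_i \GL(V_i)$ (when $L = \Omega_C$ the surface $S = \mathrm{Tot}_C(\Omega_C)$ is Calabi--Yau and the relevant local model is the analogous one for sheaves on a CY surface, cf.\ \cite{PTK3}). Since $R_{Q_y}(\bm{d})$ is a smooth affine space and $G(\bm{d})$ is reductive, the Hochster--Roberts and Boutot theorems show that each such quotient, and hence $M$, is normal and Cohen--Macaulay. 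The crux is the canonical character. Because $\dim\Ext^1_S(E_i,E_j) = r_i r_j l + \delta_{ij}$ is symmetric in $i,j$, the quiver $Q_y$ is symmetric, so $R_{Q_y}(\bm{d})$ is a self-dual $G(\bm{d})$-representation: each pair of opposite arrows $i \rightleftarrows j$ contributes $\det\Hom(V_i,V_j) \otimes \det\Hom(V_j,V_i)$, which is trivial, and each loop contributes $\det(V_i^\ast \otimes V_i)$, again trivial. Hence both $\det R_{Q_y}(\bm{d})$ and $\det \mathfrak{g}(\bm{d})$ are the trivial character, so the canonical bundle $\det(R_{Q_y}(\bm{d}))^{-1} \otimes \det \mathfrak{g}(\bm{d})$ of the quotient stack $[R_{Q_y}(\bm{d})/G(\bm{d})]$ is equivariantly trivial. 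A $G(\bm{d})$-invariant volume form then descends over the free locus and, together with Cohen--Macaulayness, identifies the dualizing sheaf of $R_{Q_y}(\bm{d}) \ssslash G(\bm{d})$ with its structure sheaf; this is the descent underlying the noncommutative crepant resolutions of \cite{SVdB}. Consequently $M$ is Gorenstein and $\omega_M$ is a line bundle that is trivial in every étale chart.

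The remaining step, which I expect to be the main obstacle, is to upgrade this étale-local triviality to a global isomorphism $\omega_M \cong \mathcal{O}_M$, since an étale-locally trivial line bundle may well be globally nontrivial. Here I would exploit the abelian fibration structure of the Hitchin map and the fact that $B = \bigoplus_i H^0(C, L^{\otimes i})$ is an affine space. Over the open locus $B^{\mathrm{red}}$ of reduced spectral curves, $M$ restricts to the relative compactified Jacobian of the family of spectral curves, and by the autoduality of compactified Jacobians of integral curves (in the spirit of Arinkin \cite{Ardual}) its relative dualizing sheaf is pulled back from the base; since $\omega_B$ is trivial, $\omega_M|_{h^{-1}(B^{\mathrm{red}})}$ is itself pulled back from $B^{\mathrm{red}}$. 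By Lemma~\ref{lem:codim} we have $\mathrm{codim}(B \setminus B^{\mathrm{red}}) \geq 2$, so $\operatorname{Pic}(B^{\mathrm{red}}) = \operatorname{Pic}(B) = 0$ and this pullback is already trivial. Finally, the Hitchin fibers being equidimensional gives $\mathrm{codim}(M \setminus h^{-1}(B^{\mathrm{red}})) \geq 2$, so the reflexive rank-one sheaf $\omega_M$, being trivial on a complement of codimension at least two, is globally trivial. The two points genuinely requiring care are the descent of the trivial equivariant canonical to the Gorenstein and trivial-canonical property of the GIT quotient, and the identification of $\omega_M$ over $B^{\mathrm{red}}$ as a pullback from the Hitchin base.
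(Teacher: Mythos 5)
Your gerbe argument and the reduction to the \'etale-local models \eqref{etslice} both match the paper, but the central step of your proof --- deducing that $R_{Q_y}(\bm{d})\ssslash G(\bm{d})$ is Gorenstein with trivial dualizing sheaf from the self-duality of the representation --- has a genuine gap. Equivariant triviality of the canonical bundle of the quotient \emph{stack} $R_{Q_y}(\bm{d})/G(\bm{d})$ does not by itself descend to the GIT quotient: by Knop's theorem on canonical modules of invariant rings (which is what underlies the Gorenstein statements in \cite{SVdB} and \cite{PTquiver}), the invariant volume form trivializes $\omega_{R\ssslash G}$ only when the locus of closed orbits whose stabilizer is strictly larger than the generic $\mathbb{C}^{\ast}$ has codimension at least two in the quotient; otherwise the canonical module acquires a correction along that divisor. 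This codimension hypothesis is exactly why the paper (after reducing to the deepest stratum via Lemma~\ref{lem:equiver}) can invoke \cite[Lemma~5.7]{PTquiver} only when $1+lr_0^2\geq 3$, or $1+lr_0^2=2$ and $d\geq 3$, and it genuinely fails in the remaining case $(1+lr_0^2,d)=(2,2)$, i.e. $l=1$, $r=2$, $\chi$ even --- a case allowed by the hypothesis $l>0$ (with $g=0,1$). There the strictly polystable locus (simultaneously diagonalizable pairs) is a \emph{divisor} in the five-dimensional quotient $\mathfrak{gl}(V)^{\oplus 2}\ssslash GL(V)$, so your descent argument breaks down; the conclusion is still true, but only because of the direct computation in Lemma~\ref{subsec:proof1}, which identifies this quotient with $\mathbb{C}^5$ and then checks triviality of the canonical bundle separately for $g=0$ and $g=1$. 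Since your argument would apply verbatim to this case, it is proving too much, which is the symptom of the missing hypothesis.

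Your globalization step is also not justified as stated, and differs from the paper's. Over $B^{\rm{red}}\setminus B^{\rm{ell}}$ there exist strictly semistable Higgs bundles (polystable sums supported on the components of a reducible reduced spectral curve), so over $B^{\rm{red}}$ the space $M$ is a possibly singular good moduli space rather than a fine relative compactified Jacobian, and the autoduality results of \cite{Ardual, MRVF} do not apply to it; moreover, autoduality is a statement about Fourier--Mukai kernels on dual fibrations, not about triviality of $\omega_{M/B}$, so even over $B^{\rm{ell}}$ the assertion that the relative dualizing sheaf is pulled back from the base requires a separate argument. The paper obtains global triviality more directly: the canonical bundle of the stack $\mathcal{M}^L(r,\chi)$ is trivial by \cite[Proposition~3.8]{KinjoKoseki}, hence $\omega_M$ is trivial on the locus where \eqref{good:pi} is a $\mathbb{C}^{\ast}$-gerbe (in particular on $M^{\rm{sm}}$), and then Gorensteinness plus the fact that the singular locus has codimension at least two give $\omega_M\cong\mathcal{O}_M$. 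If you keep your overall structure, you should replace the autoduality input by this stack-level computation, and you must add both the codimension verification in the local models and a separate treatment of the exceptional case $l=1$, $r=2$, $\chi$ even.
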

\begin{proof}
We first show that $M$ is Gorenstein. 
For $L=\Omega_C$, the condition $l>0$ is equivalent to $g\geq 2$. 
Then the argument is the same 
for the K3 surface case, see~\cite[Remark~7.1]{PTK3}. 
In the case that $l>2g-2$, 
we prove the lemma except the case 
that $l=1$, $r=2$ and $\chi$ even, 
which will be proven in Lemma~\ref{subsec:proof1} separately. 
By the local description (\ref{etslice})
and Lemma~\ref{lem:equiver}, it is enough to prove 
that $M$ is Gorenstein at a point in the deepest stratum. 
Namely we need to check that the GIT quotient 
\begin{align}\label{Gquotient}
\mathfrak{gl}(V)^{\oplus (1+lr_0^2)}\ssslash GL(V)
\end{align}
is Gorenstein, where $\dim V=d$. 
This is proved in~\cite[Lemma~5.7]{PTquiver} (and the argument in it)
when $1+lr_0^2 \geq 3$ or $1+lr_0^2=2$ and $d\geq 3$. 
The only exceptional case is $1+lr_0^2=2$
and $d=2$, or equivalently $l=1$, $r=2$ and $\chi$ is even, 
where the locus in $\mathfrak{gl}(V)^{\oplus 2}$
with positive dimensional stabilizers in $PGL(d)$ is of codimension one
(while it is of codimension bigger than or equal to two in the other cases). 
The above argument also shows that the map (\ref{good:pi}) is 
generically a $\mathbb{C}^{\ast}$-gerbe. 

The canonical line bundle of the stack $\mathcal{M}^L(r, \chi)$ is trivial, 
see~\cite[Proposition~3.8]{KinjoKoseki}. 
Therefore the canonical line bundle on the smooth part $M^{\rm{sm}} \subset M$
is trivial. Since the singular locus has codimension bigger than or equal to two, 
and $M$ is Gorenstein, it follows that
$\omega_M=\mathcal{O}_M$. 
\end{proof}

The following is a treatment of the exceptional case in the 
previous lemma. This is just for the completeness, and the 
readers can skip it at the first reading. 

\begin{lemma}\label{subsec:proof1}
Lemma~\ref{lem:gorenstein} holds for $l=1$, $r=2$ and even $\chi$. 
\end{lemma}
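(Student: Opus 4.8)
The plan is to fill in the single case excluded in the proof of Lemma~\ref{lem:gorenstein}, following the same reduction but computing the relevant GIT quotient by hand and then treating the canonical bundle with extra care. Here $d=2$, $r_0=1$, and $1+lr_0^2=2$, so by the local description \eqref{etslice} the \'etale-local model of $M=M^L(2,\chi)$ at a point of the deepest stratum is the simultaneous-conjugation quotient $\mathfrak{gl}_2^{\oplus 2}\ssslash \mathrm{GL}_2$. First I would show this quotient is isomorphic to $\mathbb{A}^5$, hence smooth. Writing $\mathfrak{gl}_2=\mathfrak{sl}_2\oplus\mathbb{C}\cdot\mathrm{id}$, the action factors through $\mathrm{PGL}_2\cong\mathrm{SO}_3$, acting trivially on the two trace coordinates and as the standard representation on $\mathfrak{sl}_2\cong\mathbb{C}^3$ through the Killing form; thus the quotient is $\mathbb{A}^2\times\big((\mathbb{C}^3)^{\oplus 2}\ssslash\mathrm{SO}_3\big)$. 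By the first fundamental theorem for $\mathrm{SO}_3$---with only two vectors there are no bracket (determinant) invariants---the invariant ring is the polynomial ring on the three inner products $\mathrm{tr}(A_0^2)$, $\mathrm{tr}(A_0B_0)$, $\mathrm{tr}(B_0^2)$, so $(\mathbb{C}^3)^{\oplus 2}\ssslash\mathrm{SO}_3\cong\mathbb{A}^3$ and the local model is $\mathbb{A}^5$.

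Granting this, I would conclude as in Lemma~\ref{lem:gorenstein}: smoothness of the local model at the deepest stratum together with Lemma~\ref{lem:equiver} and the conical structure of the quiver quotients propagates the (smooth, hence Gorenstein) property to every stratum, so $M$ is smooth; and \eqref{good:pi} is generically a $\mathbb{C}^*$-gerbe because the generic stabilizer of the conjugation action is the central $\mathbb{C}^*$. What remains is $\omega_M=\mathcal{O}_M$, and this is where the argument of Lemma~\ref{lem:gorenstein} genuinely breaks: the triviality of the stack canonical descends to a trivialization only on the stable locus $M^{\mathrm{st}}$, and for $l=1$ the strictly semistable locus $D\cong\mathrm{Sym}^2M^L(1,\chi_0)$ has codimension $2l-1=1$ in $M$, so one cannot extend across $M\setminus M^{\mathrm{st}}$ by the codimension-two argument used there.

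I would therefore compute the descended trivialization explicitly across $D$. At a generic point of $D$ the \'etale-local model of the stack is $[\,\mathbb{C}^4\times\mathbb{C}^2/\mathbb{C}^*\,]$ (times the residual gerbe), where $\mathbb{C}^*$ acts with weights $(+1,-1)$ on the two edge coordinates $x,y$ and trivially on the four loop coordinates $w$, with good moduli space $\mathbb{A}^5=\mathrm{Spec}\,\mathbb{C}[w,z]$ and $z=xy$. The weight-zero invariant volume form $\Omega=dw\wedge dx\wedge dy$ trivializes the canonical of the stack, and on the free locus $\{x\neq 0\}$ it factors as $\Omega=\tfrac{dt}{t}\wedge\pi^*(dw\wedge dz)$ with $t=x$ a fiber coordinate; hence the induced trivialization of $\omega_M$ is $dw\wedge dz$, which is regular and nowhere vanishing across $D=\{z=0\}$. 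Thus the trivialization extends over $M^{\mathrm{st}}\cup D$, and since the remaining deepest stratum has codimension $3l\geq 2$ and $M$ is normal, Hartogs gives $\omega_M=\mathcal{O}_M$. I expect this last step---bounding the order of the descended volume form along the codimension-one boundary $D$---to be the main obstacle; the key simplification is that $l=1$ forces a single arrow in each direction, so the torus weights are $\pm 1$ and the invariant $z=xy$ is a coordinate rather than the equation of a singularity.
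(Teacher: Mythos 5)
Your proposal is correct, and it reaches the conclusion by a genuinely different route from the paper's for the part that actually matters. The smoothness half is essentially the paper's argument in different clothing: the paper exhibits $\mathfrak{gl}(V)^{\oplus 2}\ssslash GL(V)\cong\mathbb{C}^5$ by writing down the five invariants $\mathrm{tr}A_1,\mathrm{tr}A_2,\det A_1,\det A_2,\mathrm{tr}(A_1A_2)$, which generate the same ring as your $\mathrm{SO}_3$ inner products, and both arguments then spread smoothness to all of $M$ via Lemma~\ref{lem:equiver}. For the triviality of $\omega_M$, however, the paper splits into cases: for $g=0$ it identifies $M$ globally with $\mathbb{C}^5$, while for $g=1$ it uses the Hitchin map to $B=\mathbb{C}^3$, flatness, and the codimension-two bound on $B\setminus B^{\rm{red}}$ to reduce to reduced spectral curves, where it perturbs the polarization to get a moduli stack $\mathcal{M}_{\varepsilon}$ of stable sheaves mapping to $M\times_B B'$ bijectively on closed points (uniqueness of the extension $0\to L_1\to E\to L_2\to 0$), hence a $\mathbb{C}^{\ast}$-gerbe, so that triviality descends from the stack canonical. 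You instead work uniformly in both genera, directly at the codimension-one divisor $D$ where the general codimension argument of Lemma~\ref{lem:gorenstein} fails: your \'etale-local model $[\mathbb{C}^4\times\mathbb{C}^2/\mathbb{C}^{\ast}]$ at a generic point of $D$ is the right one (two loops at each of the two vertices, one arrow in each direction, weights $\pm 1$), and the computation that $dw\wedge dx\wedge dy$ descends to $dw\wedge dz$ with $z=xy$, hence remains regular and nowhere-vanishing across $D=\{z=0\}$, is correct; Hartogs across the codimension-three deepest stratum then finishes, with only the routine check remaining that the global trivialization of $\omega_{\mathcal{M}}$ differs from the model form by an invertible (weight-zero, hence descending) function on the chart. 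It is worth noting that your chart $\{x\neq 0\}$ is precisely the semistable locus for a perturbed GIT character, so your computation is the local-model shadow of the paper's stability-perturbation trick: the paper's version stays moduli-theoretic and reuses machinery appearing elsewhere in the text, while yours is self-contained, avoids spectral-curve geometry entirely, and makes transparent why the codimension-one boundary is harmless here, namely that $l=1$ forces a single arrow in each direction so that $z=xy$ is a coordinate rather than the equation of a singularity.
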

\begin{proof}
We may assume that $\chi=2$. 
   In the case that $l=1$, $r=2$ and $\chi=2$, 
   the GIT quotient (\ref{Gquotient}) 
   is 
   \begin{align*}
       \mathfrak{gl}(V)^{\oplus 2}\ssslash GL(V)
       \stackrel{\cong}{\to} \mathbb{C}^5
   \end{align*}
   where $\dim V=2$, and the above map is 
   \begin{align*}
       (A_1, A_2) \mapsto (\mathrm{tr} A_1, \mathrm{tr} A_2, \det A_1, \det A_2, \mathrm{tr}(A_1 A_2)). 
   \end{align*}
   Therefore $M^L(2, 0)$ is smooth, in particular 
   it is Gorenstein. 
   We are left to show that $M^L(2, 0)$ has trivial 
   canonical line bundle. 

   As we assumed $l>2g-2$, the case $l=1$ happens
   only when $g=0$ or $g=1$. In the case of $g=0$, 
   the stack $\mathcal{M}^L(2, 2)$ consists of 
   $\mathcal{O}_{\mathbb{P}^1} \otimes V \to 
   \mathcal{O}_{\mathbb{P}^1}(1) \otimes V$
   for $\dim V=2$, 
   thus 
   \begin{align*}
       M^L(2, 2) \cong \mathfrak{gl}(V)^{\oplus 2}\ssslash GL(V). 
   \end{align*}
   As we observed above, the right hand side is
   isomorphic to $\mathbb{C}^5$, in particular has a trivial canonical line bundle. 

   Suppose that $g=1$. In this case, the Hitchin map
   is \[M=M^L(2, 0) \to B=\mathbb{C}^3.\] 
As the reduced locus $B^{\rm{red}} \subset B$ has 
complement at least codimension two, and the 
Hitchin map is flat, it is enough to show that 
the relative canonical bundle $\omega_{M/B}$ is 
trivial \'etale locally at any point in $B^{\rm{red}}$. 
Let $b \in B^{\rm{red}}$ corresponds to a spectral 
curve $\mathcal{C}_b$. It is enough to consider 
an \'etale neighborhood at $b$ where $\mathcal{C}_b$ is 
not irreducible, i.e. $\mathcal{C}_b=C_1 \cup C_2$
and $C_1 \neq C_2$.
Note that each $C_i$ is isomorphic to the smooth 
elliptic curve $C$ and we have that $C_1 \cdot C_2=1$. 
Let $\mathcal{C} \to B$ be the universal spectral curve. 
There is an \'etale neighborhood $g \colon B' \to B^{\rm{red}}$
such that $\mathcal{C}':=\mathcal{C}\times_B B'$
admits a divisor $D$ with $D\cdot C_1=1$, $D\cdot C_2=0$. 
Let $H$ be a divisor on $\mathcal{C}'$ 
given by pull-back of an ample divisor on $C$ via projections
$\mathcal{C}' \to \mathcal{C} \to C$. Set 
$H_{\varepsilon}:=H+\varepsilon D$ for $0<\varepsilon \ll 1$. 
Let $\mathcal{M}_{\varepsilon}$ be the relative moduli stack of 
rank one $H_{\varepsilon}$-stable sheaves on $\mathcal{C}' \to B'$ with Euler characteristic $2$. 
Then it is an open substack of $\mathcal{M}^L(2, 0)\times_B B'$. The composition 
\begin{align}\label{map:compose}
    \mathcal{M}_{\varepsilon} \hookrightarrow 
    \mathcal{M}^L(2, 0)\times_B B' \to M^L(2, 0)\times_B B'
\end{align}
is bijective on closed points. 
In fact, for any strictly polystable 
sheaf $L_1 \oplus L_2$ on on $\mathcal{C}_b$,
where $L_i$ is a line bundle on $C_i$ with degree one, 
there is a unique non-trivial extension 
$0\to L_1 \to E \to L_2 \to 0$, giving 
a unique closed point in the 
fiber of the map (\ref{map:compose}).
Thus (\ref{map:compose}) is a $\mathbb{C}^{\ast}$-gerbe, 
and as $\mathcal{M}^L(2, 0)\times_B B'$ has trivial 
canonical bundle by~\cite[Proposition~3.8]{KinjoKoseki}, it follows that 
$M^L(2, 0)\times_B B'$ also has trivial canonical bundle.

\end{proof}

\section{Quasi-BPS categories for Higgs bundles}
In this section, we introduce quasi-BPS and BPS categories for $L$-twisted 
Higgs bundles, and we begin their study. 
Throughout this section, let $(r_0,\chi_0)\in \mathbb{Z}_{\geq 1}\times \mathbb{Z}$ be coprime, let $d\in \mathbb{Z}_{\geq 1}$, and let $(r,\chi):=d(r_0,\chi_0)$.

The main results discussed in this section are the semiorthogonal decomposition of the derived category of the stack of semistable Higgs bundles in products of quasi-BPS categories (Theorem \ref{thm:sod}) and an analogue of the main theorem in \cite{PTK3} for Higgs bundles, which says that BPS categories are smooth over $\mathbb{C}$, and proper and Calabi-Yau over the Hitchin base (Theorems \ref{thm:proper0} and \ref{thm:proper}). The methods and results are analogous to those in \cite{PTK3}, but the results are slightly stronger (see the proof of Theorem \ref{thm:proper}) than in loc. cit. because of the global description of the stack of semistable Higgs bundles as a critical locus \cite{KinjoMasuda, MSendscopic}.

\subsection{Quasi-BPS categories for $l>2g-2$}
We consider the bounded derived category of coherent sheaves 
$D^b(\mathcal{M}^L(r, \chi))$. 
Each sheaf parametrized by $\mathcal{M}^L(r, \chi)$ has scalar automorphisms $\mathbb{C}^{\ast}$, so $\mathcal{M}^L(r, \chi)$ is naturally a $\mathbb{C}^{\ast}$-gerbe.
Thus there is an orthogonal decomposition 
\begin{align*}
 D^b(\mathcal{M}^L(r, \chi))=\bigoplus_{w\in \mathbb{Z}}
 D^b(\mathcal{M}^L(r, \chi))_w,
\end{align*}
where $D^b(\mathcal{M}^L(r, \chi))_w$ is the subcategory of $D^b(\mathcal{M}^L(r, \chi))$ of complexes of weight $w$ with respect to the scalar automorphisms $\mathbb{C}^{\ast}$.

We next define the \textit{quasi-BPS category} of $D^b(\mathcal{M}^L(r, \chi))_w$. 
The case of $L=\Omega_C$ is defined as in the case of K3 surfaces~\cite[Section~4.4]{PTK3}, 
and will be discussed in Subsection~\ref{subsec:qbps:nc}. 
In this subsection, we focus on the 
case that $l>2g-2$. 

We first construct some line bundle on $\mathcal{M}^L(r, \chi)$. 
We write $(r, \chi)=d(r_0, \chi_0)$ as in (\ref{rchid}), 
and take $(a, b) \in \mathbb{Z}^2$ such that 
\begin{align}\label{cond:uv}
    a\chi_0+br_0+(1-g)ar_0=1
\end{align}
which is possible as $(r_0, \chi_0)$ is coprime. 
Let  
$u\in K(C)$ be such that $(\rank(u), \chi(u))=(a, b)$. 
We define the following line bundle on $\mathcal{M}^L(r, \chi)$
\begin{align}\label{delta}
    \delta:=\det(Rp_{\mathcal{M}\ast}(u\boxtimes \mathcal{F})) \in 
    \mathrm{Pic}(\mathcal{M}^L(r, \chi)). 
\end{align}
Here $\mathcal{F}$ is the universal Higgs bundle (\ref{univ:F}) and $p_{\mathcal{M}}$ is the 
projection onto $\mathcal{M}^L(r, \chi)$. It has diagonal $\mathbb{C}^{\ast}$-weight $d$ 
because of the condition (\ref{cond:uv})
and the Riemann-Roch theorem. 

We next introduce some general notation.
An object $A \in D^b(B\mathbb{C}^{\ast})$ decomposes into a direct sum $\bigoplus_{w\in \mathbb{Z}}A_w$ where $A_{w}$ is of $\mathbb{C}^{\ast}$-weight
$w$. Denote by $\mathrm{wt}(A)$ the set of $w \in \mathbb{Z}$
such that $A_{w} \neq 0$. 
In the case that $A$ is a line bundle on $B\mathbb{C}^{\ast}$, then $\mathrm{wt}(A)$ 
consists of one element $\mathrm{wt}(A) \in \mathbb{Z}$. 
We also write $A^{>0}:=\oplus_{w>0}A_{w}$. 

\begin{defn}\label{def:qbps}
In the case of $l>2g-2$, define the 
quasi-BPS category 
\begin{align}\label{T:qbps}
    \mathbb{T}^L(r, \chi)_w \subset D^b(\mathcal{M}^L(r, \chi))_w. 
\end{align}
to be the subcategory of objects $\mathcal{E}$ such that, for any 
map $\nu \colon B\mathbb{C}^{\ast} \to \mathcal{M}=\mathcal{M}^L(r, \chi)$, we have 
\begin{align}\label{cond:qbps}
\mathrm{wt}(\nu^{\ast}\mathcal{E}) \subset 
\left[-\frac{1}{2}\mathrm{wt} \det ((\nu^{\ast}\mathbb{L}_{\mathcal{M}})^{>0}), 
\frac{1}{2}\mathrm{wt} \det ((\nu^{\ast}\mathbb{L}_{\mathcal{M}})^{>0})
\right]+\frac{w}{d}\mathrm{wt}(\nu^{\ast}\delta). 
\end{align}
\end{defn}
\begin{remark}\label{rmk:nu}
Under the assumption $l>2g-2$, the stack $\mathcal{M}$ is smooth, 
so $\mathrm{wt}(\nu^{\ast}\mathcal{E})$ consists of a finite set of integers. 
\end{remark}
\begin{remark}\label{rmk:qbps}
The subcategory (\ref{T:qbps}) is independent of a choice 
of $(a, b)$ satisfying the condition (\ref{cond:uv})
and $u\in K(C)$ with $(\rank(u), \chi(u))=(a, b)$. 
See the argument of~\cite[Lemma~4.6]{PTK3}. 
\end{remark}

\begin{remark}\label{rmk:coprime}
If $(r, \chi)$ is coprime, then 
$\mathcal{M}^L(r, \chi)$ consists of stable 
$L$-twisted Higgs bundles, and any map $\nu \colon B\mathbb{C}^{\ast} \to \mathcal{M}^L(r, \chi)$ corresponds to 
a scalar automorphism of stable $L$-twisted Higgs bundles. 
In this case, the good moduli space 
morphism $\mathcal{M}^L(r, \chi) \to M^L(r, \chi)$
is a trivial $\mathbb{C}^{\ast}$-gerbe
because of the existence of the line bundle (\ref{delta}) 
of $\mathbb{C}^{\ast}$-weight one. 
Therefore, in this case we have 
\begin{align*}
    \mathbb{T}^L(r, \chi)_w=D^b(\mathcal{M}^L(r, \chi))_w \simeq D^b(M^L(r, \chi)). 
\end{align*}
    However if there exists a strictly semistable 
    $L$-twisted Higgs bundles, the condition (\ref{cond:qbps}) 
    is non-trivial 
    and the quasi-BPS category is a strict subcategory of 
    $D^b(\mathcal{M}^L(r, \chi))_w$. 
\end{remark}

The following lemma is immediate from Definition~\ref{def:qbps}. 
\begin{lemma}\label{lem:qbps}
Suppose that $l>2g-2$. An object $\mathcal{E} \in D^b(\mathcal{M}^L(r, \chi))$ lies 
in $\mathbb{T}^L(r, \chi)_w$ if and only if for any closed point 
$y \in \mathcal{M}^L(r, \chi)$ corresponding to the polystable object (\ref{polystable}), 
one parameter subgroup $\lambda \colon \mathbb{C}^{\ast} \to T(\bm{d}) \subset G(\bm{d})$
where $T(\bm{d})$ is the maximal torus, and any $T(\bm{d})$-weight $\chi$ of $\mathcal{E}|_{y}$, 
we have 
\begin{align}\label{ineq:n}
    -\frac{n_{\lambda}}{2} \leq \left\langle \lambda, \chi-\frac{w}{d}\delta_y \right\rangle \leq 
    \frac{n_{\lambda}}{2}. 
\end{align}
Here, $n_{\lambda}$ and $\delta_y$ are defined by 
\begin{align}\notag
    n_{\lambda}:=\big\langle \lambda, \det(R_{Q_y}(\bm{d})^{\lambda>0})-\det(\mathfrak{g}(\bm{d})^{\lambda>0}) \big\rangle, \ 
    \delta_y:=\bigotimes_{i=1}^k \det(V_i)^{\otimes m_i},
\end{align}
where $m_i$ is determined by $(r_i, \chi_i)=m_i(r_0, \chi_0)$
and $\mathfrak{gl}(\bm{d})$ is the Lie algebra of $G(\bm{d})$. 
\end{lemma}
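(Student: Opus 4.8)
The plan is to unwind Definition~\ref{def:qbps} through the étale-local description~\eqref{etslice} and a comparison of the two weight constraints \eqref{cond:qbps} and \eqref{ineq:n}. The key point is that the abstract condition on all maps $\nu\colon B\mathbb{C}^{\ast}\to\mathcal{M}^L(r,\chi)$ can be tested on the closed points $y\in\mathcal{M}^L(r,\chi)$ in the image of a good moduli space morphism, together with one-parameter subgroups $\lambda$ of the stabilizer $G(\bm{d})$. First I would recall that, since $\mathcal{M}^L(r,\chi)$ is smooth under $l>2g-2$ (Remark~\ref{rmk:nu}), any map $\nu\colon B\mathbb{C}^{\ast}\to\mathcal{M}$ factors through some closed point $y$ with polystable representative~\eqref{polystable}; by Luna's étale slice theorem the map $\pi$ is étale-locally modelled by~\eqref{etslice}, so $\nu$ corresponds to a cocharacter $\lambda\colon\mathbb{C}^{\ast}\to G(\bm{d})$, and after conjugation we may assume $\lambda$ lands in the maximal torus $T(\bm{d})$.

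The second step is to translate each ingredient of \eqref{cond:qbps} into the quiver-theoretic quantities appearing in \eqref{ineq:n}. For the cotangent complex: under the slice isomorphism, $\mathbb{L}_{\mathcal{M}}$ at $y$ is computed from the representation space $R_{Q_y}(\bm{d})=\Ext_S^1(E,E)$ and the adjoint action of $G(\bm{d})$, so $\nu^{\ast}\mathbb{L}_{\mathcal{M}}$ has $\lambda$-weights coming from $R_{Q_y}(\bm{d})^{\dual}$ in cohomological degree $0$ and $\mathfrak{gl}(\bm{d})$ in degree $+1$ (the stacky direction). Taking $(-)^{>0}$ and the determinant-weight then yields exactly $n_{\lambda}=\langle\lambda,\det(R_{Q_y}(\bm{d})^{\lambda>0})-\det(\mathfrak{gl}(\bm{d})^{\lambda>0})\rangle$, where the sign flip on the $\mathfrak{gl}$-part reflects the shift by one degree. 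For the twist by $\delta$: I would compute $\mathrm{wt}(\nu^{\ast}\delta)$ using the definition~\eqref{delta} and Riemann--Roch, checking that pulling back $\det(Rp_{\mathcal{M}\ast}(u\boxtimes\mathcal{F}))$ along $\nu$ gives precisely $\langle\lambda,\delta_y\rangle$ with $\delta_y=\bigotimes_i\det(V_i)^{\otimes m_i}$; the normalization~\eqref{cond:uv} is what makes the diagonal weight equal to $d$, so the factor $\tfrac{w}{d}\mathrm{wt}(\nu^{\ast}\delta)$ matches the term $\tfrac{w}{d}\delta_y$ in \eqref{ineq:n}. Finally, matching the weight of $\nu^{\ast}\mathcal{E}$ with the $T(\bm{d})$-weights $\chi$ of the fibre $\mathcal{E}|_y$ identifies the two membership conditions.

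The main obstacle I expect is the bookkeeping of degrees and signs in the determinant-weight of $(\nu^{\ast}\mathbb{L}_{\mathcal{M}})^{>0}$, specifically confirming that the contribution of $\mathfrak{gl}(\bm{d})$ enters with the opposite sign to $R_{Q_y}(\bm{d})$. This is where the smoothness of $\mathcal{M}$ is used essentially: it guarantees $\mathbb{L}_{\mathcal{M}}$ is concentrated in the expected degrees $[0,1]$ so that $(-)^{>0}$ extracts the correct weights without spurious higher-degree terms, and it ensures $\mathrm{wt}(\nu^{\ast}\mathcal{E})$ is finite (Remark~\ref{rmk:nu}), so that the interval condition is well-posed. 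Since the whole argument is an étale-local dictionary and all the individual computations are standard, I would emphasize that the lemma is "immediate from Definition~\ref{def:qbps}" once the slice-theorem identification is in place, and I would refer to the parallel K3 computation in~\cite[Section~4.4, Lemma~4.6]{PTK3} to avoid repeating the routine verifications.
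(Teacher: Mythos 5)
The paper records no proof of this lemma at all (it is declared ``immediate from Definition~\ref{def:qbps}''), so the only question is whether your reconstruction is internally correct, and it has one genuine gap. Your step (2) --- the dictionary identifying $\mathrm{wt}\det((\nu^{\ast}\mathbb{L}_{\mathcal{M}})^{>0})$ with $n_{\lambda}$ via the slice model \eqref{etslice}, and $\mathrm{wt}(\nu^{\ast}\delta)$ with $\langle\lambda,\delta_y\rangle$ (cf.\ Remark~\ref{deltax}) --- is the routine part and is fine. The problem is your step (1): it is \emph{not} true that every map $\nu\colon B\mathbb{C}^{\ast}\to\mathcal{M}^L(r,\chi)$ factors through a closed point. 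Such a $\nu$ factors only through the residual gerbe of its image point $x$, which can be non-closed while still carrying non-scalar cocharacters: for example $x=E_1\oplus E'$ with $E_1$ stable and $E'$ a non-split extension of two non-isomorphic stables of the same slope has $\Aut(x)\supset(\mathbb{C}^{\ast})^2$ but is not polystable. Smoothness of $\mathcal{M}$ (Remark~\ref{rmk:nu}) only guarantees finiteness of weight sets and has no bearing on this. As written, your argument establishes the ``only if'' direction and the ``if'' direction only for $\nu$ based at closed points; since Definition~\ref{def:qbps} quantifies over \emph{all} $\nu$, the inequalities \eqref{ineq:n} at closed points do not yet give $\mathcal{E}\in\mathbb{T}^L(r,\chi)_w$ --- and that is precisely the direction the lemma gets used for later (e.g.\ in Proposition~\ref{prop:Px}).

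The missing idea is a specialization argument from an arbitrary base point $x$ (with cocharacter $\lambda$ of $\Aut(x)$) to the unique closed point $y$ in the closure of $x$ inside the fiber of the good moduli space. Two facts are needed. First, the bound is insensitive to replacing $x$ by $y$: in the slice model at $y$, the point $x$ lifts to a $\lambda$-fixed point $p\in R_{Q_y}(\bm{d})$ with $\lambda$ conjugated into $T(\bm{d})$, and since $T_pR_{Q_y}(\bm{d})=R_{Q_y}(\bm{d})$ for an affine space, $\nu^{\ast}\mathbb{L}_{\mathcal{M}}$ is the \emph{same} two-term complex $[R_{Q_y}(\bm{d})^{\vee}\to\mathfrak{gl}(\bm{d})^{\vee}]$ of $\lambda$-representations at $p$ as at the origin; likewise $\langle\lambda,\delta|_p\rangle=\langle\lambda,\delta_y\rangle$ because the fixed locus $R_{Q_y}(\bm{d})^{\lambda}$ is a linear (hence connected) subspace containing both $p$ and $0$. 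Second --- the real content --- the $\lambda$-weights of $\mathcal{E}|_x$ must be shown to lie among the $\lambda$-weights of $\mathcal{E}|_y$. One way: decompose $x=\oplus_i x_i$ into $\lambda$-weight summands, degenerate each $x_i$ to its polystable associated graded by a Rees family; this is $\lambda$-equivariant (as $\lambda$ acts by scalars on each $x_i$), giving a map $[\mathbb{A}^1/\mathbb{G}_m]\times B\mathbb{C}^{\ast}\to\mathcal{M}$ joining $(x,\lambda)$ to $(y,\lambda)$, and each weight piece of the pullback of $\mathcal{E}$ has closed $\mathbb{G}_m$-invariant support in $\mathbb{A}^1$, so non-vanishing at $1$ forces non-vanishing at $0$. (Equivalently, in the slice model the supports of the weight pieces of $\mathcal{E}|_{R^{\lambda}}$ are closed and $C_{G(\bm{d})}(\lambda)$-invariant, and $0\in\overline{C_{G(\bm{d})}(\lambda)\cdot p}$ because each weight summand $p_i$ of $p$ is itself nilpotent.) With this inserted your proof closes; without it, the criterion has only been verified on a proper subclass of the test maps $\nu$.
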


\begin{remark}\label{deltax}
The $G(\bm{d})$-character $\delta_y$ is 
the restriction of the line bundle $\delta$ in (\ref{delta}) at $y$. 
\end{remark}

\begin{remark}\label{rmk:domi}
For an irreducible $G(\bm{d})$-representation $V$, 
any of its $T(\bm{d})$-weights $\chi$ satisfies the condition (\ref{ineq:n})
if and only if the highest weight $\gamma$ of $V$ satisfies 
\begin{align}\label{weight:W}
\gamma+\rho \in \frac{1}{2}\mathrm{sum}[0, \beta]+\frac{w}{d}\delta_y \subset M(\bm{d})_{\mathbb{R}},
\end{align}
where 
$M(\bm{d})$ is the weight lattice of $T(\bm{d})$, 
$\mathrm{sum}[0, \beta]$ is the Minkowski sum of 
$T(\bm{d})$-weights in $R_{Q_y}(\bm{d})$, and $\rho$ is half the sum of 
positive roots, see~\cite[Lemma~2.9]{hls}. 
\end{remark}

We have the following immediate periodicity of 
quasi-BPS categories: 
\begin{lemma}\label{lem:period}
For $(a, b) \in \mathbb{Z}^2$, there is an equivalence 
\begin{align}\label{equiv:ab}
\mathbb{T}^L(r, \chi)_w \stackrel{\sim}{\to} \mathbb{T}^L(r, \chi+ar)_{w+bd}.
\end{align}
    \end{lemma}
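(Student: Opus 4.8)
The plan is to realize (\ref{equiv:ab}) as the composition of two elementary operations: a stack isomorphism induced by tensoring Higgs bundles with a line bundle on $C$, which moves $\chi$, and tensoring by a power of the determinant line bundle $\delta$ of (\ref{delta}), which moves $w$.

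First I would fix a line bundle $\mathcal{L}$ on $C$ of degree $a$ (these exist for every $a \in \mathbb{Z}$) and consider the isomorphism of derived stacks
\begin{align*}
\Phi \colon \mathcal{M}^L(r, \chi) \stackrel{\sim}{\to} \mathcal{M}^L(r, \chi + ar), \quad (F, \theta) \mapsto (F \otimes \mathcal{L}, \theta \otimes \mathrm{id}_{\mathcal{L}}),
\end{align*}
where $\theta \otimes \mathrm{id}_{\mathcal{L}}$ is viewed as a Higgs field via $(F \otimes \mathcal{L}) \otimes L \cong (F \otimes L) \otimes \mathcal{L}$, the shift $\chi \mapsto \chi + ar$ is Riemann--Roch, and the inverse is tensoring by $\mathcal{L}^{-1}$. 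Note that $(r, \chi + ar) = d(r_0, \chi_0 + ar_0)$, so $\Phi$ changes the coprime part $(r_0, \chi_0)$ to $(r_0, \chi_0 + ar_0)$ and fixes $d$. I would then propose the functor
\begin{align*}
\mathcal{E} \mapsto \Phi_{\ast}(\mathcal{E}) \otimes (\delta')^{\otimes b},
\end{align*}
where $\delta'$ is the line bundle (\ref{delta}) on $\mathcal{M}^L(r, \chi + ar)$, and check that it is the desired equivalence.

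The main verification is that $\Phi_{\ast}$ identifies the quasi-BPS conditions of Definition~\ref{def:qbps}. The scalar $\mathbb{C}^{\ast}$-action is preserved under $\Phi$, since $t \cdot \mathrm{id}_F$ corresponds to $t \cdot \mathrm{id}_{F \otimes \mathcal{L}}$, so $\Phi_{\ast}$ respects the weight decomposition; and $\Phi^{\ast} \mathbb{L}_{\mathcal{M}^L(r, \chi + ar)} \cong \mathbb{L}_{\mathcal{M}^L(r, \chi)}$ since $\Phi$ is an isomorphism, so the cotangent-complex term in (\ref{cond:qbps}) matches. The one point requiring care---and the main obstacle---is the comparison of the two line bundles $\delta$. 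Writing $\mathcal{F}, \mathcal{F}'$ for the universal Higgs bundles on source and target, one has $(\mathrm{id}_C \times \Phi)^{\ast} \mathcal{F}' \cong \mathcal{F} \otimes p_C^{\ast} \mathcal{L}$, hence for $\delta' = \det(Rp_{\mathcal{M}\ast}(u' \boxtimes \mathcal{F}'))$ with $u'$ satisfying (\ref{cond:uv}) for $(r_0, \chi_0 + ar_0)$ we obtain
\begin{align*}
\Phi^{\ast} \delta' = \det(Rp_{\mathcal{M}\ast}((u' \cdot [\mathcal{L}]) \boxtimes \mathcal{F})).
\end{align*}
A short Riemann--Roch computation shows that $u' \cdot [\mathcal{L}] \in K(C)$ satisfies (\ref{cond:uv}) for $(r_0, \chi_0)$, so $\Phi^{\ast} \delta'$ is exactly the line bundle (\ref{delta}) built from the admissible class $u' \cdot [\mathcal{L}]$; by the independence statement of Remark~\ref{rmk:qbps} this is enough. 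Pulling the condition (\ref{cond:qbps}) for $\Phi_{\ast}\mathcal{E}$ back along $\Phi$ therefore recovers the condition for $\mathcal{E}$ verbatim, and $\Phi_{\ast}$ restricts to an equivalence $\mathbb{T}^L(r, \chi)_w \stackrel{\sim}{\to} \mathbb{T}^L(r, \chi + ar)_w$.

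Finally, I would check that tensoring by $(\delta')^{\otimes b}$ gives an equivalence $\mathbb{T}^L(r, \chi + ar)_w \stackrel{\sim}{\to} \mathbb{T}^L(r, \chi + ar)_{w + bd}$. Since $\delta'$ has diagonal $\mathbb{C}^{\ast}$-weight $d$, tensoring shifts the weight by $bd$; and for the defining condition one has $\mathrm{wt}(\nu^{\ast}(\mathcal{E} \otimes (\delta')^{\otimes b})) = \mathrm{wt}(\nu^{\ast}\mathcal{E}) + b\,\mathrm{wt}(\nu^{\ast}\delta')$, while replacing $w$ by $w + bd$ translates the centered interval on the right-hand side of (\ref{cond:qbps}) by the same amount $b\,\mathrm{wt}(\nu^{\ast}\delta')$; the two shifts cancel, so the condition is preserved. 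Composing the two equivalences yields (\ref{equiv:ab}). The case $L = \Omega_C$ is handled identically using the categorical (matrix-factorization) description of quasi-BPS categories from Subsection~\ref{subsec:qbps:nc}, since both $\Phi$ and the twist by $\delta$ make sense there.
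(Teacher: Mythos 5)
Your proof is correct and follows essentially the same route as the paper: the paper also realizes the equivalence by tensoring with a line bundle on $C$ (there $\mathcal{O}_C(p)$, reducing to the generators $(a,b)=(1,0)$ and $(0,1)$) to shift $\chi$ by multiples of $r$, and tensoring with the determinant line bundle $\delta$ to shift $w$ by multiples of $d$. The only difference is presentational: you treat general $(a,b)$ in one step and spell out the verification that the two $\delta$-line bundles agree via the admissible class $u'\cdot[\mathcal{L}]$ and Remark~\ref{rmk:qbps}, details the paper leaves implicit.
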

    \begin{proof}
    It is enough to construct equivalences for $(a, b)=(1, 0)$ and $(a, b)=(0, 1)$. 
    For a closed point $p\in C$, there is an isomorphism of stacks 
    \begin{align*}
        \mathcal{M}^L(r, \chi)\stackrel{\cong}{\to} \mathcal{M}^L(r, \chi+r)
    \end{align*}
    given by $(F, \theta) \mapsto (F(p), \theta(p))$.
    The push-forward by the above isomorphism gives an equivalence (\ref{equiv:ab})
    for $(a, b)=(1, 0)$. 
    The equivalence for $(a, b)=(0, 1)$ is given by 
    the tensor product with the determinant line bundle $\delta$ defined by (\ref{delta}). 
    \end{proof}

\subsection{Quasi-BPS categories for $L=\Omega_C$}\label{subsec:qbps:nc}
We next define and study quasi-BPS categories for the moduli spaces of 
usual Higgs bundles, i.e. $L=\Omega_C$. 
One difference between this and the $l>2g-2$ case is that 
$\mathcal{M}(r, \chi)$ is a singular derived stack. 
Since $S=\mathrm{Tot}_C(\Omega_C)$ is a (non-compact) Calabi-Yau surface, 
the quasi-BPS categories in this case can be defined 
similarly to the case of K3 surfaces~\cite{PTK3}, i.e. 
defined to be intrinsic window subcategories~\cite[Subsection~6.2.2]{T}. We note that this definition is inspired by the work of Halpern-Leistner~\cite{HalpK32}. 
In the case of Higgs bundles, the relevant moduli stacks are 
a \textit{global} derived zero locus \cite[Corollary 1.2]{KinjoMasuda}, \cite[Theorem 4.5]{MSendscopic}, as we recall below.

Fix $p\in C$. There is a closed embedding 
\begin{align}\label{emb:M}
j \colon  \mathcal{M}(r, \chi) \hookrightarrow 
    \mathcal{M}^{\Omega_C(p)}(r, \chi)
\end{align}
sending 
$(F, \theta)$ 
for $\theta \colon F \to F \otimes \Omega_C$ to 
$(F, \theta')$, where $\theta'$ is the composition 
\begin{align*}
\theta' \colon 
    F \stackrel{\theta}{\to} F \otimes \Omega_C \hookrightarrow 
    F \otimes \Omega_C(p). 
\end{align*}
A Higgs bundle $(F, \theta')$ in $\mathcal{M}^{\Omega_C(p)}(r, \chi)$ is in the image of (\ref{emb:M})
if and only if $\theta'|_{p} \colon F|_p \to F|_{p}\otimes \Omega_C(p)|_{p}$ is zero. 
Globally, let $(\mathcal{F}, \vartheta)$ be the universal 
Higgs bundle (\ref{univ:F}) for $L=\Omega_C(p)$, 
and set 
\begin{align*}
\mathcal{F}_p :=\mathcal{F}|_{p\times \mathcal{M}^{\Omega_C(p)}(r, \chi)} \in \mathrm{Coh}(\mathcal{M}^{\Omega_C(p)}(r, \chi)).
\end{align*}
By fixing an isomorphism $\Omega_C(p)|_{p} \cong \mathbb{C}$, 
the correspondence 
$(F, \theta') \mapsto (F|_{p}, \theta'|_{p})$ gives a section 
$s$ of the vector bundle 
\begin{align}\label{sec:s}
\xymatrix{
    \mathcal{V}:=\mathcal{E}nd(\mathcal{F}_p) \ar[r] & \ar@/_18pt/[l]^s \mathcal{M}^{\Omega_C(p)}(r, \chi). 
    }
\end{align}
Then there is an equivalence of derived stacks 
\begin{align}\label{equiv:stack}
    \mathcal{M}(r, \chi) \stackrel{\sim}{\to} s^{-1}(0),
\end{align}
where the right hand side is the derived zero locus of the section $s$. 
\begin{defn}\label{def:qbps2}
Suppose that $L=\Omega_C$. The \textit{quasi-BPS category} 
\begin{align}\label{def:qbps:T}
    \mathbb{T}(r, \chi)_w \subset D^b(\mathcal{M}(r, \chi))_w
\end{align}
is the subcategory with
objects $\mathcal{E} \in D^b(\mathcal{M}(r, \chi))_w$ such 
that, 
for all $\nu \colon B\mathbb{C}^{\ast} \to \mathcal{M}(r, \chi)$, 
we have 
\begin{align}\label{cond:nu}
\mathrm{wt}(\nu^{\ast}j^{\ast}j_{\ast}\mathcal{E}) \subset 
\left[-\frac{1}{2}\mathrm{wt} \det (\nu^{\ast}\mathbb{L}_{\mathcal{V}})^{\nu>0}, 
\frac{1}{2}\mathrm{wt} \det (\nu^{\ast}\mathbb{L}_{\mathcal{V}})^{\nu>0}
\right]+\frac{w}{d}\mathrm{wt}(\nu^{\ast}\delta). 
\end{align}
Here $j$ is the closed immersion (\ref{emb:M}). 
\end{defn}

\begin{remark}
    The subcategory (\ref{def:qbps:T}) is intrinsic to the 
    derived stack $\mathcal{M}(r, \chi)$, and in particular independent 
    of a choice of $p\in C$. See~\cite[Subsection~6.2.2]{T}. 
\end{remark}

\begin{remark}\label{rmk:coprime2}
If $(r, \chi)$ are coprime, then, similarly to 
Remark~\ref{rmk:coprime}, we have 
\begin{align*}
    \mathbb{T}(r, \chi)_w \simeq 
    D^b(M(r, \chi) \times \Spec \mathbb{C}[\epsilon]),
\end{align*}
where $\deg \epsilon=-1$. 
The moduli space $M(r, \chi)$ is the usual moduli 
space of Higgs bundles, which is a holomorphic symplectic 
manifold. Contrary to the case of $l>2g-2$ in Remark~\ref{rmk:coprime}, 
there is a derived structure $\Spec \mathbb{C}[\epsilon]$
because $\Ext^2_S(F, F)=\mathbb{C}$ for a stable 
Higgs bundle $F$. 
\end{remark}

\subsection{The semiorthogonal decomposition for Higgs bundles}\label{subsec:sod}
For a decomposition 
$d=d_1+\cdots+d_k$, 
let $\mathcal{F}il^L(d_1, \ldots, d_k)$ be the 
(derived) moduli stack of filtrations
of $L$-twisted Higgs bundles
\begin{align}\label{higgs:filt}
    0=E_0 \subset E_1 \subset \cdots \subset E_k
\end{align}
where $E_i/E_{i-1}$ has $(\rank(E_i/E_{i-1}), \chi(E_{i}/E_{i-1}))=d_i(r_0, \chi_0)$. 
There are natural morphisms 
\begin{align*}
    \times_{i=1}^k \mathcal{M}^L(d_i(r_0, \chi_0)) \stackrel{q}{\leftarrow} 
    \mathcal{F}il^L(d_1, \ldots, d_k) \stackrel{p}{\to} \mathcal{M}^L(r, \chi), 
\end{align*}
where $p$ sends a filtration (\ref{higgs:filt}) to $E_k$
and $q$ sends (\ref{higgs:filt}) to $(E_i/E_{i-1})_{i=1}^k$. 
Note that $p$ is proper and $q$ is quasi-smooth. 
Therefore the 
categorical Hall product~\cite{PoSa} is defined by  
\begin{align}\label{cathall}
\ast:=Rp_{\ast}Lq^{\ast} \colon 
\boxtimes_{i=1}^k
D^b(\mathcal{M}^L(d_i(r_0, \chi_0)))   
\to D^b(\mathcal{M}^L(r, \chi)). 
\end{align}

\begin{thm}\label{thm:sod}
Let $(r,\chi)\in \mathbb{Z}_{\geq 1}\times\mathbb{Z}$ and let $w\in \mathbb{Z}$.
There is a semiorthogonal decomposition 
\begin{align}\label{sod:main}
    D^b(\mathcal{M}^L(r, \chi))_w
    =\left\langle \boxtimes_{i=1}^k \mathbb{T}^L(r_i, \chi_i)_{w_i} \,\bigg|\, 
    \frac{v_1}{r_1}<\cdots<\frac{v_k}{r_k}\right\rangle. 
\end{align}
The right hand side is after all partitions 
$(r, \chi, w)=(r_1, \chi_1, w_1)+\cdots+(r_k, \chi_k, w_k)$ with 
$\chi/r=\chi_i/r_i$, and 
$v_i \in \frac{1}{2}\mathbb{Z}$ are defined by 
\begin{align}\label{def:wi}
    v_i:=w_i-\frac{l}{2}r_i \left(\sum_{i>j}r_j-\sum_{i<j}r_j   \right). 
\end{align}
The fully-faithful functor 
\begin{align*}
\boxtimes_{i=1}^k \mathbb{T}^L(r_i, \chi_i)_{w_i}
\to D^b(\mathcal{M}^L(r, \chi))_w
\end{align*}
is given by the restriction of the categorical Hall product (\ref{cathall}). The order of the summands is as in \cite{PTK3, PT0}.
\end{thm}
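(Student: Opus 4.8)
The plan is to establish the semiorthogonal decomposition \eqref{sod:main} by reducing, via the étale slice theorem, to a statement about window subcategories for quotient stacks $R_{Q_y}(\bm{d})/G(\bm{d})$ attached to the Ext-quiver at each polystable point, and then to globalize. First I would recall from Subsection~\ref{subsec:loc} that, étale locally at a closed point $y \in M^L(r,\chi)$, the good moduli space map is modeled by \eqref{etslice}, so that $D^b(\mathcal{M}^L(r,\chi))_w$ is locally the weight-$w$ part of the derived category of the quotient stack $R_{Q_y}(\bm{d})/G(\bm{d})$. For such symmetric quotient stacks, the magic windows of \v{S}penko--Van den Bergh~\cite{SVdB} and Halpern-Leistner--Sam~\cite{hls} (as invoked in Remark~\ref{rmk:domi}) provide a semiorthogonal decomposition indexed by the Harder--Narasimhan/Kempf--Ness type strata, whose pieces are the Hall products of the window subcategories on the Levi factors. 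The quasi-BPS category $\mathbb{T}^L(r,\chi)_w$ is, by Definition~\ref{def:qbps} and Lemma~\ref{lem:qbps}, precisely the window subcategory cut out by the weight condition \eqref{ineq:n}, so the local model of \eqref{sod:main} is the instance of the \cite{SVdB, hls} decomposition for the quiver $Q_y$.

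The core of the argument is therefore to match the combinatorics. I would verify that the strata appearing in the local window decomposition correspond exactly to partitions $(r,\chi,w) = \sum_i (r_i,\chi_i,w_i)$ with $\chi_i/r_i = \chi/r$, that the Hall product \eqref{cathall} restricts to the fully faithful functor on each summand, and—most importantly—that the ordering condition $v_1/r_1 < \cdots < v_k/r_k$ with $v_i$ defined by \eqref{def:wi} is the correct translation of the slope ordering that governs the Kempf--Ness stratification. The shift by $-\tfrac{l}{2}r_i(\sum_{i>j}r_j - \sum_{i<j}r_j)$ in \eqref{def:wi} should emerge from the contribution of the off-diagonal $\Ext^1$ spaces $\Ext^1_S(E_i,E_j)$ to the normal weights $n_\lambda$: since $\sharp(i\to j) = r_ir_j l + \delta_{ij}$, the antisymmetric part of these Ext pairings between distinct HN pieces produces exactly this twist, and I would compute $\langle \lambda, (\cdots)^{\lambda>0}\rangle$ for a one-parameter subgroup $\lambda$ separating the filtration steps to confirm the sign and magnitude.

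Having the local statement, the plan is to globalize by the standard descent argument for semiorthogonal decompositions built from window subcategories, exactly as in the K3 case~\cite{PTK3} and the analogous result~\cite{PT0}: the subcategories $\mathbb{T}^L(r_i,\chi_i)_{w_i}$ are defined by an intrinsic weight condition \eqref{cond:qbps} on all maps $\nu\colon B\mathbb{C}^\ast \to \mathcal{M}$, so they glue over the base $M^L(r,\chi)$, and the semiorthogonality and generation can be checked étale-locally where they reduce to the quiver statement above. The properness of $p$ and quasi-smoothness of $q$ guarantee the Hall product \eqref{cathall} preserves $D^b$, and fully faithfulness on each summand follows from the adjunction estimates as in~\cite{PTK3}.

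The main obstacle I anticipate is the careful bookkeeping of the weight twist \eqref{def:wi} and the verification that the resulting order $v_1/r_1 < \cdots < v_k/r_k$ is genuinely the one induced by the Kempf--Ness stratification—in particular checking that it is independent of the choices of $(a,b)$ and $u$ (cf. Remark~\ref{rmk:qbps}) and compatible with the periodicity of Lemma~\ref{lem:period}. A secondary technical point is ensuring that the étale-local models patch coherently across the strata of varying Ext-quiver type, for which Lemma~\ref{lem:equiver} (which guarantees that the quiver at a generic nearby point agrees with that at the deepest stratum) is the key input; I would lean on the fact that the whole construction is local on the good moduli space and that the proof in~\cite{PTK3} transfers verbatim, the only genuinely new computation being the $L$-dependent shift in \eqref{def:wi}.
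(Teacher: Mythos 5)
Your overall architecture coincides with the paper's: both arguments reduce \'etale-locally over the good moduli space to the quiver model given by the Ext-quiver, establish a semiorthogonal decomposition there, and globalize as in \cite{PTK3}, with Lemma~\ref{lem:equiver} playing exactly the role you assign to it (every Ext-quiver occurs near the deepest stratum, so the verification concentrates at the one-vertex quiver with $lr_0^2+1$ loops).

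The genuine gap is in your local input. You assert that the magic-window results of \v{S}penko--Van den Bergh~\cite{SVdB} and Halpern-Leistner--Sam~\cite{hls} ``provide a semiorthogonal decomposition indexed by Harder--Narasimhan/Kempf--Ness type strata whose pieces are the Hall products of the window subcategories,'' and that the local model of (\ref{sod:main}) is ``the instance of the \cite{SVdB, hls} decomposition for the quiver $Q_y$.'' No such statement exists in those references: \cite{SVdB} constructs non-commutative (crepant) resolutions and \cite{hls} proves window equivalences for variation of GIT; neither decomposes $D^b(R_{Q_y}(\bm{d})/G(\bm{d}))_w$ into categorical Hall products of quasi-BPS categories. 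That decomposition --- with precisely the ordering $v_1/d_1<\cdots<v_k/d_k$ and the shift you propose to re-derive --- is the substantive theorem the paper quotes as \cite[Theorem~4.2]{PTquiver}, displayed as (\ref{sod:local}); its proof is an iterated window/Hall-product argument occupying a separate paper, not bookkeeping layered on top of \cite{SVdB, hls}. So the step ``invoke \cite{SVdB, hls} and match combinatorics'' would fail as written: you must either quote \cite[Theorem~4.2]{PTquiver} (after which the rest of your argument goes through and agrees with the paper's proof) or reprove it, a far larger task than you anticipate. A smaller inaccuracy: in the local model every representation of the one-vertex quiver is semistable, so the ordering is not literally induced by a Kempf--Ness stratification of an unstable locus; it emerges from the weight analysis in the Hall-product semiorthogonality estimates.
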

\begin{proof}
In the case of $L=\Omega_C$, the proof is the 
same as in the case of K3 surface, see~\cite[Theorem~5.1]{PTK3}. 

The proof for $l>2g-2$ case is also the same, so we only give a sketch of the proof. Using Lemma~\ref{lem:equiver},
we can reduce it to the semiorthogonal decomposition of the local 
case at a point in the deepest stratum. 
Let $y\in M^L(r, \chi)$ lies in the deepest stratum, 
and $Q_y$ the associated Ext-quiver. 
Then the quotient stack 
\begin{align*}\mathcal{X}_{Q_y}(d)=\mathfrak{gl}(V)^{\oplus (l r_0^2+1)}/GL(V)
\end{align*}
for $\dim V=d$ 
together with its good moduli space $\mathcal{X}_{Q_y}(d) \to 
X_{Q_y}(d)$
gives a local model of $\mathcal{M}^L(r, \chi) \to M^L(r, \chi)$ at $y$, see Subsection~\ref{subsec:loc}. 
There is a semiorthogonal decomposition~\cite[Theorem~4.2]{PTquiver}:
\begin{align}\label{sod:local}
    D^b(\mathcal{X}_{Q_y}(d))_w=\left\langle \boxtimes_{i=1}^k \mathbb{T}_{Q_y}(d_i)_{w_i} \,\bigg|\, 
    \frac{v_1}{d_1}<\cdots <\frac{v_k}{d_k} \right\rangle,
\end{align}
where the right hand side is after all partitions $(d,w)=(d_1,w_1)+\cdots+(d_k,w_k)$ such that $v_i \in \frac{1}{2}\mathbb{Z}$
defined in (\ref{def:wi}) satisfy the above inequality. 
The subcategory 
\begin{align}\label{qbps:loc}
    \mathbb{T}_{Q_y}(d)_w \subset D^b(\mathcal{X}_{Q_y}(d))_w
\end{align}
is generated by $\Gamma \otimes \mathcal{O}_{\mathcal{X}_{Q_y}}(d)$
for $GL(V)$-representations $\Gamma$ whose $T(d)$-weights satisfy the 
condition (\ref{ineq:n}). 
As in the proof of~\cite[Theorem~5.1]{PTK3}, we can reduce 
the semiorthogonal decomposition (\ref{sod:main}) to the local one (\ref{sod:local}). 
\end{proof}

\begin{remark}
If $lr_0^2 \in 2\mathbb{Z}$, 
the condition $w_i \in \mathbb{Z}$ is equivalent to $v_i \in \mathbb{Z}$.
So we have $v_i \in \mathbb{Z}$ in (\ref{sod:main})
if either $l$ or $r_0$ is even. 
\end{remark}

\begin{remark}\label{rmk:open}
The construction of the category (\ref{T:qbps}) is Zariski (or even étale) local over $M=M^L(r, \chi)$. It follows that, for any open subset $U \subset M^L(r, \chi)$,
there is an associated subcategory 
\begin{align*}
    \mathbb{T}^L(r, \chi)_{w}|_{U} \subset D^b(\mathcal{M}^L(r, \chi)\times_{M} U), 
\end{align*}
defined as in Definition~\ref{def:qbps}. 
Similarly, the construction is also local on $B$, so for 
any open subset $U' \subset B$, we have the associated subcategory 
\begin{align*}
    \mathbb{T}^L(r, \chi)_{w}|_{U'} \subset D^b(\mathcal{M}^L(r, \chi)\times_{B} U'). 
\end{align*}
Also see Lemma~\ref{lem:sdo} for the base change property of semiorthogonal decompositions.  
\end{remark}

\begin{remark}\label{remark316}
We will also use the version for quasi-coherent sheaves
\begin{align*}
    \mathbb{T}_{\rm{qcoh}}^L(r, \chi)_w \subset D_{\rm{qcoh}}(\mathcal{M}^L(r, \chi)). 
\end{align*}
It is defined to be the full subcategory split generated by $\mathbb{T}^L(r, \chi)_w$, also see Subsection~\ref{subsec:basechange}.     
\end{remark}
\begin{remark}\label{rmk:open2}
For $l>2g-2$, it follows directly from the definition of (\ref{T:qbps}) that: 
\begin{align*}
    \mathbb{T}^L(r, \chi)_w|_{M^L(r, \chi)^{\rm{st}}} \simeq 
    D^b(\mathcal{M}^L(r, \chi)^{\rm{st}})_w \simeq D^b(M^L(r, \chi)^{\rm{st}}, \alpha^w)
\end{align*}
where $\alpha$ is the Brauer class classifying the $\mathbb{C}^{\ast}$-gerbe 
in the left vertical arrow in (\ref{dia:stable}), and the right hand side is the 
derived category of $\alpha^w$-twisted sheaves, see~\cite{MR2700538, MR2309155}. 
Similarly, for $L=\Omega_C$, we have that
\begin{align*}
    \mathbb{T}(r, \chi)_w|_{M(r, \chi)^{\rm{st}}} \simeq 
    D^b(\mathcal{M}(r, \chi)^{\rm{st}})_w \simeq D^b(M(r, \chi)^{\rm{st}}\times \mathrm{Spec}\,\mathbb{C}[\epsilon], \alpha^w),
\end{align*} where $\deg \epsilon=-1$ and $\alpha$ is a Brauer class as above. 
\end{remark}

\subsection{The semiorthogonal decomposition for vector bundles on a curve}\label{subsec:bun}
The result of this subsection is not used later in the paper, but it may be of independent interest.
We discuss a version of Theorem \ref{thm:sod} for the moduli stack of slope semistable vector bundles $\mathcal{B}un^{\mathrm{ss}}(r,\chi)$ of rank $r$ and degree $\chi$ on a smooth projective curve $C$ of genus $g\geq 1$. Its proof is completely analogous to the proof of Theorem \ref{thm:sod}, based on the local description of the good moduli space 
\[\mu\colon \mathcal{B}un^{\mathrm{ss}}(r,\chi)\to \mathrm{Bun}^{\mathrm{ss}}(r,\chi)\] in terms of Ext quivers, that we now recall.
Write $(r,\chi)=d(r_0,\chi_0)$ with $(r_0,\chi_0)$ coprime. Let $y\in \mathrm{Bun}^{\mathrm{ss}}(r,\chi)$ be a closed point corresponding to the polystable vector bundle
\[E=\bigoplus_{i=1}^k V_i\otimes E_i.\] In the above, $E_i$ is a stable vector bundle of rank $r_i$ and degree $\chi_0$, $E_i$ and $E_j$ are not isomorphic for $1\leq i\neq j\leq k$, and $V_i$ is a finite dimensional vector space of dimension $d_i$. 
The Ext quiver of $y$ is the quiver $Q_y$ with vertices $\{1,\ldots, k\}$ and with the following number of edges between two vertices:
\begin{align*}
    \sharp(i \to j)=\dim \Ext_C^1(E_i, E_j)=
    r_i r_j (g-1)+\delta_{ij}.
\end{align*}
As in Subsection \ref{subsec:loc}, the map $\mu$ can be étale locally described using the good moduli space map for the moduli stack of dimension $(d_i)_{i=1}^k$ representations of $Q_y$. Note that $Q_y$ is a symmetric quiver. Further, Lemma \ref{lem:equiver} holds in this case. 

There are categories
\[\mathbb{B}(r,\chi)_w\subset D^b(\mathcal{B}un^{\mathrm{ss}}(r,\chi))_w\]
as in Definition \ref{def:qbps}, see also \cite{SVdB, P3}. These categories are twisted non-commutative resolutions of singularities of $\mathrm{Bun}^{\mathrm{ss}}(r,\chi)$.
If $(r,\chi)$ are coprime, then $\mu$ is a $\mathbb{C}^{\ast}$-gerbe and there exists a universal vector bundle, so in this case \[\mathbb{B}(r,\chi)_w\simeq D^b(\mathcal{B}un^{\mathrm{ss}}(r,\chi))_w\simeq D^b(\mathrm{Bun}^{\mathrm{ss}}(r,\chi))\] for all $w\in \mathbb{Z}$.

Consider a partition $(r,\chi)=(r_1,\chi_1)+\cdots+(r_k,\chi_k)$ such that $\chi_i/r_i=\chi/r$ for all $1\leq i\leq k$.
There is also a categorical Hall product as in \eqref{cathall} defined via the stack of flags of semistable vector bundles:
\[*\colon \boxtimes_{i=1}^k D^b(\mathcal{B}un^{\mathrm{ss}}(r_i,\chi_i))\to D^b(\mathcal{B}un^{\mathrm{ss}}(r,\chi)).\] 
An argument completely analogous to that of Theorem \ref{thm:sod} shows the following.

\begin{thm}\label{thm:sodBun}
Let $C$ be a smooth projective curve of genus $g\geq 1$,
let $(r_0,\chi_0)\in \mathbb{Z}_{\geq 1}\times\mathbb{Z}$ be coprime, let $d\in \mathbb{Z}_{\geq 1}$, let $(r,\chi):=d(r_0,\chi_0)$, and let $w\in \mathbb{Z}$.
There is a semiorthogonal decomposition 
\[ D^b(\mathcal{B}un^{\mathrm{ss}}(r, \chi))_w
    =\left\langle \boxtimes_{i=1}^k \mathbb{B}(r_i, \chi_i)_{w_i} \,\bigg|\, 
    \frac{v_1}{d_1}<\cdots<\frac{v_k}{d_k}\right\rangle. 
\]
The right hand side is after all partitions 
$(r, \chi, w)=(r_1, \chi_1, w_1)+\cdots+(r_k, \chi_k, w_k)$ with 
$\chi/r=\chi_i/r_i$, and where
$v_i \in \frac{1}{2}\mathbb{Z}$ are defined by 
\[ v_i:=w_i-\frac{g-1}{2}r_i \left(\sum_{i>j}r_j-\sum_{i<j}r_j   \right). \]
The fully-faithful functor 
\begin{align*}
\boxtimes_{i=1}^k \mathbb{B}(d_i r_0, d_i \chi_0)_{w_i}
\to D^b(\mathcal{B}un^{\mathrm{ss}}(r, \chi))_w
\end{align*}
is given by the restriction of the categorical Hall product. The order of the summands is as in \cite{PTK3, PT0}.
\end{thm}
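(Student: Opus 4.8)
The plan is to transcribe the proof of Theorem~\ref{thm:sod}, reducing the global decomposition to a semiorthogonal decomposition for the local quiver model and then globalizing. First I would record that, by the étale-local description of $\mu$ recalled before the statement, an étale neighborhood of a closed point $y \in \mathrm{Bun}^{\mathrm{ss}}(r,\chi)$ corresponding to the polystable bundle $E = \bigoplus_{i=1}^k V_i \otimes E_i$ is modeled on the good moduli space map of the stack $R_{Q_y}(\bm{d})/G(\bm{d})$ of representations of the Ext quiver $Q_y$, where $\bm{d} = (d_i)_{i=1}^k$ and $G(\bm{d}) = \prod_{i=1}^k GL(V_i)$. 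The key input is that $Q_y$ is symmetric, since $\sharp(i\to j) = r_ir_j(g-1)+\delta_{ij}$ is symmetric in $i$ and $j$; this is precisely what allows the quiver results of \cite{PTquiver} to apply.

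Next, exactly as in the proof of Theorem~\ref{thm:sod}, I would invoke Lemma~\ref{lem:equiver} (valid here) to reduce to a point in the deepest stratum, where $Q_y$ is the one-vertex quiver with $(g-1)r_0^2+1$ loops and the local model is the symmetric quiver stack $\mathcal{X}_{Q_y}(d) = \mathfrak{gl}(V)^{\oplus((g-1)r_0^2+1)}/GL(V)$ with $\dim V = d$. The local semiorthogonal decomposition of \cite[Theorem~4.2]{PTquiver},
\[
D^b(\mathcal{X}_{Q_y}(d))_w = \left\langle \boxtimes_{i=1}^k \mathbb{T}_{Q_y}(d_i)_{w_i} \,\Big|\, \frac{v_1}{d_1} < \cdots < \frac{v_k}{d_k}\right\rangle,
\]
provides the building blocks, with the slopes $v_i$ computed from the quiver data matching those in the statement once the role of $l$ in Theorem~\ref{thm:sod} is played by $g-1$. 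The categories $\mathbb{B}(r_i,\chi_i)_{w_i}$ are defined by the same weight window as in Definition~\ref{def:qbps}, with the determinant line bundle $\delta$ replaced by its analogue on $\mathcal{B}un^{\mathrm{ss}}$ of diagonal $\mathbb{C}^{\ast}$-weight $d$, and under the étale slices they restrict to the quiver quasi-BPS categories $\mathbb{T}_{Q_y}(d_i)_{w_i}$.

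To pass from the local to the global statement, I would run the gluing argument of \cite[Theorem~5.1]{PTK3}: the categorical Hall product $\ast$ on $\mathcal{B}un^{\mathrm{ss}}$ restricts to the local Hall products on the étale slices, and both semiorthogonality and generation are étale-local conditions over $\mathrm{Bun}^{\mathrm{ss}}(r,\chi)$, hence follow from the local decomposition together with the matching of the intrinsic window categories $\mathbb{B}(r_i,\chi_i)_{w_i}$ with the local quiver quasi-BPS categories. The ordering by the slopes $v_i/d_i$ is identified with the relevant Harder--Narasimhan-type ordering as in \cite{PT0}.

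Since the theorem is asserted to be completely analogous to Theorem~\ref{thm:sod}, the real work is bookkeeping rather than a new idea, and the main point to verify is the numerical normalization: one must check that the factor $g-1$ in the number of edges of $Q_y$ produces exactly $v_i = w_i - \frac{g-1}{2} r_i\left(\sum_{i>j} r_j - \sum_{i<j} r_j\right)$ and that the chosen determinant line bundle has diagonal weight $d$, so that the weight windows of Definition~\ref{def:qbps} are the correct ones. A secondary subtlety I would flag is the boundary case $g=1$: here the deepest-stratum quiver degenerates to the single-loop Jordan quiver (as $(g-1)r_0^2+1 = 1$), in contrast to the Higgs setting where $lr_0^2+1 \geq 2$ always, so I would confirm that \cite[Theorem~4.2]{PTquiver} indeed covers the one-loop case (where the good moduli space is a symmetric product and the decomposition reduces to a known one).
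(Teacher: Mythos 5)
Your proposal follows exactly the paper's route: the paper proves Theorem~\ref{thm:sodBun} by declaring the argument ``completely analogous'' to Theorem~\ref{thm:sod}, i.e.\ using the \'etale-local Ext-quiver description of $\mu$, the symmetry of $Q_y$, Lemma~\ref{lem:equiver} to reduce to the deepest stratum, the local semiorthogonal decomposition of \cite[Theorem~4.2]{PTquiver}, and the globalization of \cite[Theorem~5.1]{PTK3}, with $l$ replaced by $g-1$ throughout. Your extra check on the $g=1$ boundary case (one-loop quiver) is sensible but not a gap: for $g=1$ all stable summands automatically have $m_i=1$, so Lemma~\ref{lem:equiver} and the quiver-level input go through, consistent with the paper's genus-one example where $\mathbb{B}(r,\chi)_w\simeq D^b(\mathrm{Sym}^d(E))$.
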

\begin{example}
If $C=\mathbb{P}^1$, then $\mathcal{B}un^{\mathrm{ss}}(r,\chi)\cong BGL(r)$. A version of the above semiorthogonal decomposition holds such that the categories $\mathbb{B}(r,\chi)_w$ are trivial unless $r=1$ when $\mathbb{B}(1,\chi)_w\cong D^b(\Spec \mathbb{C})$. 
\end{example}
\begin{example}
If $C$ has genus $1$, then $\mathbb{B}(r,\chi)_w$ is empty unless $d|w$, and in this case
\[\mathbb{B}(r,\chi)_w\simeq \mathbb{B}(r,\chi)_0\simeq D^b(\mathrm{Sym}^d(E)),\] and the argument is as in \cite[Section 3]{Toquot2}.
\end{example}
\begin{remark}
The Betti numbers of the moduli spaces $\mathrm{Bun}^{\mathrm{ss}}(r,\chi)$ and $\mathrm{Bun}^{\mathrm{ss}}(r,\chi')$ for $\chi, \chi'$ coprime with $r$ are different unless $r|\chi\pm \chi'$, see \cite[Theorem 3.3.2]{HaNa}.
Thus a straightforward version of $\chi$-independence is false, and we thus do not expect the immediate analogue of Conjecture \ref{conj:intro} for the categories $\mathbb{B}(r,\chi)_w$ to hold.
Further, whereas BPS categories for Higgs bundles are indecomposable (by Lemma \ref{lem:sodT}), the categories $\mathbb{B}(r,\chi)_w$ are expected to have further semiorthogonal decompositions, see the introduction of \cite{Tevelev} for further details for the case $(r,\chi)=(2,1)$.
\end{remark}

\subsection{Sheaves of non-commutative algebras}\label{subsec:sheaves}
We define the following BPS condition for the 
numerical data $(r, \chi, w)$. 
Recall the formula (\ref{formula:gD}) for the genus $g^{\rm{sp}}$ of the spectral curve. 
\begin{defn}\label{def:bps}
We say that $(r, \chi, w)$ satisfies the \textit{BPS condition}
if the following vector is primitive (i.e. its entries are coprime):
\begin{align*}
(r, \chi, w+1-g^{\rm{sp}}) \in \mathbb{Z}^3. 
\end{align*}
 \end{defn}

Recall that we write $(r, \chi)=d(r_0, \chi_0)$ as in (\ref{rchid}) for $(r_0,\chi_0)\in \mathbb{Z}_{\geq 1}\times\mathbb{Z}$ coprime and $d\in\mathbb{Z}_{\geq 1}$. 
The following lemma is elementary and the proof is left to the reader. 
\begin{lemma}\label{lem:bps}
    The tuple $(r, \chi, w)$ satisfies the BPS condition 
    if and only if it satisfies either of the following conditions
     \begin{enumerate}
     \item 
 $lr_0^2$ is even and $(d, w)$ is coprime;
 \item $lr_0^2$ is odd and $(d, w)$ is coprime with $d\not\equiv 2 \pmod 4$;
 \item $lr_0^2$ is odd, $\gcd(d, w)=2$, and $d \equiv 2 \pmod 4$. 
 \end{enumerate}
\end{lemma}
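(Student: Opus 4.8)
The plan is to reduce the primitivity of the vector $(r,\chi,w+1-g^{\rm{sp}})$ to a one‑variable gcd computation and then to a parity analysis. First I would use that $(r_0,\chi_0)$ is coprime, so that $\gcd(r,\chi)=\gcd(dr_0,d\chi_0)=d$, and therefore
\[\gcd(r,\chi,\,w+1-g^{\rm{sp}})=\gcd\!\left(d,\ w+1-g^{\rm{sp}}\right).\]
Thus the BPS condition is equivalent to $\gcd(d,\,w+1-g^{\rm{sp}})=1$, and it remains to understand $w+1-g^{\rm{sp}}$ modulo $d$.

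Next I would rewrite, using formula (\ref{formula:gD}) and $r=dr_0$,
\[w+1-g^{\rm{sp}}=w-(g-1)r-\frac{rl(r-1)}{2}.\]
The term $(g-1)r$ is divisible by $d$, so modulo $d$ everything is controlled by the integer $A:=\tfrac12\,dr_0\,l\,(dr_0-1)$. The key observation is that $2A=dr_0\,l\,(dr_0-1)$ is divisible by $d$, so $A\equiv 0$ or $\tfrac{d}{2}\pmod d$, the latter possible only when $d$ is even. To decide which, I would note that for $d$ even the integer $dr_0-1$ is odd, so the parity of $r_0\,l\,(dr_0-1)$ equals that of $r_0 l$, which is odd exactly when $lr_0^2$ is odd; while for $d$ odd one has $d\mid A$ directly from $d\mid 2A$ and $\gcd(d,2)=1$. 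This yields the clean statement: $A\equiv 0\pmod d$ unless $d$ is even and $lr_0^2$ is odd, in which case $A\equiv\tfrac{d}{2}\pmod d$.

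Finally I would run the case analysis. If $A\equiv0\pmod d$ — that is, if $lr_0^2$ is even, or $lr_0^2$ is odd with $d$ odd — then $\gcd(d,\,w+1-g^{\rm{sp}})=\gcd(d,w)$, and the BPS condition reads $\gcd(d,w)=1$; since $d$ odd forces $d\not\equiv 2\pmod 4$, this produces case (1) together with the $d$‑odd part of case (2). In the remaining situation $A\equiv\tfrac{d}{2}\pmod d$, with $d$ even and $lr_0^2$ odd, I would compute $\gcd(d,\,w-\tfrac{d}{2})$ prime by prime: for each odd prime $p\mid d$ one has $\tfrac{d}{2}\equiv 0\pmod p$, so divisibility by $p$ is unaffected, and only the prime $2$ requires care. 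Splitting according to $d\equiv 0$ or $d\equiv 2\pmod 4$: when $d\equiv 0\pmod 4$ the shift $\tfrac{d}{2}$ is even, so $\gcd(d,\,w-\tfrac{d}{2})=1$ is equivalent to $\gcd(d,w)=1$, finishing case (2); when $d\equiv 2\pmod 4$ the shift $\tfrac{d}{2}$ is odd, and $\gcd(d,\,w-\tfrac{d}{2})=1$ forces $w$ even with the odd part of $d$ coprime to $w$, i.e. exactly $\gcd(d,w)=2$, giving case (3).

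The main obstacle is this last $2$‑adic bookkeeping. Everything else is formal manipulation, but tracking how subtracting the \emph{odd} shift $d/2$ alters the $2$‑part of the gcd — and in particular seeing why it forces $\gcd(d,w)=2$ rather than $\gcd(d,w)=1$ precisely in the regime $d\equiv 2\pmod 4$ — is the one step where the argument must be carried out with attention to parities rather than quoted formally.
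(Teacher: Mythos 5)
Your proof is correct. Note that the paper itself gives no argument for this lemma (it states ``the proof is left to the reader''), so there is nothing to compare against; your route --- reducing the primitivity of $(r,\chi,w+1-g^{\mathrm{sp}})$ to $\gcd\!\left(d,\,w+1-g^{\mathrm{sp}}\right)=1$ via $\gcd(r,\chi)=d$, then analysing the shift $\tfrac{1}{2}rl(r-1)$ modulo $d$, with the only delicate point being the prime $2$ when $d$ is even and $lr_0^2$ is odd --- is exactly the elementary computation the authors intended, and your $2$-adic case split ($d\equiv 0$ versus $d\equiv 2 \pmod 4$) correctly produces cases (1)--(3) of the statement.
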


\begin{remark}\label{rmk:bps}
If $l$ is even, then $(r, \chi, w)$ satisfies the BPS condition 
if and only if it is primitive in $\mathbb{Z}^3$. 

If $l$ is odd, then $(r, \chi, w)$ satisfies the BPS condition 
if and only if either one of the following conditions
holds:
\begin{enumerate}
    \item $r_0$ is even and $(r, \chi, w)$ is primitive;
    \item $r_0$ is odd, $(r, \chi, w)$ is primitive and $d \not\equiv 2 \pmod 4$;
    \item $r_0$ is odd, $\gcd(r, \chi, w)=2$, and $d \equiv 2 \pmod 4$. 
\end{enumerate}
\end{remark}

\begin{defn}\label{def:bpscat}
Suppose that $(r, \chi, w)$ satisfies the BPS condition. Then the category $\mathbb{T}^L(r, \chi)_w$ is called a \textit{BPS category}. 
\end{defn}

\begin{remark} 
We explain the terms \textit{BPS condition} and \textit{BPS category} introduced above. For simplicity, assume that $l>2g-2$, but the argument for $L=\Omega_C$ is analogous.

    For the quiver $Q$ with one vertex and $(lr_0^2+1)$-loops (i.e. the quiver for the deepest stratum of $M^L(r,\chi)$), 
    consider its quasi-BPS category $\mathbb{T}_{Q_y}(d)_w\subset D^b(\X_{Q_y}(d))_w$
    defined as in (\ref{qbps:loc}).
    By Lemma~\ref{lem:bps}, the total topological K-theory of $\mathbb{T}_{Q_y}(d)_w$ is isomorphic to the total BPS cohomology of the good moduli space $X_{Q_y}(d)$ of $\X_{Q_y}(d)$, which is a local version of the BPS cohomology of $M^L(r,\chi)$, see~\cite{PTtop}.
    Note that the topological K-theory of a quasi-BPS category contains the BPS cohomology as a direct summand, but it may be larger.
    The BPS cohomologies of both $X_{Q_y}(d)$ and $M^L(r,\chi)$ are the intersection cohomologies of these two spaces~\cite{Mein, Mere}. In \cite{PThiggs2}, we further expand on this remark and show that the total topological K-theory of a BPS category is isomorphic to the BPS cohomology of $M^L(r,\chi)$ (for both cases $l>2g-2$ and $L=\Omega_C$).
    
\end{remark}

We will show in this section (Proposition \ref{prop:TA}, Theorem \ref{thm:proper}, Corollary \ref{cor:proper}) that BPS categories behave like derived categories of 
stable Hitchin moduli spaces with coprime rank and Euler characteristics, so 
may be regarded as ``non-commutative Hitchin moduli spaces''.

\begin{prop}\label{prop:TA}
Suppose that $l>2g-2$ 
and $(r, \chi, w)$ satisfies the BPS condition. 
Then there is a vector bundle 
$\mathbb{V} \to \mathcal{M}^L(r, \chi)$
such that $\mathbb{V} \in \mathbb{T}^L(r, \chi)_w$, 
and  
the sheaf of non-commutative algebras
\begin{align}\label{def:A}
    \mathscr{A}:=\pi_{\ast}\mathcal{E}nd(\mathbb{V})
\end{align}
on $M^L(r, \chi)$ is a maximal Cohen-Macaulay $\mathcal{O}_{M^L(r, \chi)}$-module 
    and is of finite global dimension. Moreover, there is an equivalence 
    \begin{align*}
    \pi_{\ast}\mathcal{H}om(\mathbb{V}, -) \colon 
        \mathbb{T}^L(r, \chi)_w \stackrel{\sim}{\to} D^b(\mathscr{A}):=D^b(\Coh(\mathscr{A})). 
    \end{align*}
\end{prop}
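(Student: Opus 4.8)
The plan is to exhibit $\mathbb{V}$ as a global tilting bundle realizing $\mathbb{T}^L(r,\chi)_w$ as the derived category of modules over a non-commutative crepant resolution in the sense of \v{S}penko--Van den Bergh, and then to deduce the algebraic properties of $\mathscr{A}$ and the equivalence by reducing to the local quiver model of Subsection~\ref{subsec:loc}. The key inputs are the étale-slice description (\ref{etslice}), the reduction to the deepest stratum provided by Lemma~\ref{lem:equiver}, the Gorenstein property of Lemma~\ref{lem:gorenstein}, and the local non-commutative resolution results of \cite{SVdB}.

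First I would construct $\mathbb{V}$. Using the universal Higgs bundle $\mathcal{F}$ of (\ref{univ:F}) and the determinant line bundle $\delta$ of (\ref{delta}), I would assemble a finite direct sum $\mathbb{V}=\bigoplus_{\Gamma}\mathcal{V}_{\Gamma}$ of tautological bundles whose restriction at any closed point $y$, under the slice isomorphism (\ref{etslice}), realizes the $G(\bm{d})$-representations $\Gamma$ whose $T(\bm{d})$-weights satisfy (\ref{ineq:n}). By Lemma~\ref{lem:qbps} this guarantees $\mathbb{V}\in\mathbb{T}^L(r,\chi)_w$. The role of the BPS condition is to ensure (via Lemma~\ref{lem:bps} and the window description (\ref{weight:W})) that the set of dominant weights in the window is precisely a tilting generating set, so that $\mathbb{V}$ is a classical generator of $\mathbb{T}^L(r,\chi)_w$; this generation is exactly the local statement underlying the proof of Theorem~\ref{thm:sod}, cf.\ the subcategory (\ref{qbps:loc}).

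Next I would establish the maximal Cohen--Macaulay property and the finiteness of the global dimension of $\mathscr{A}=\pi_{\ast}\mathcal{E}nd(\mathbb{V})$. Both properties are étale-local on $M^L(r,\chi)$, so by Lemma~\ref{lem:equiver} it suffices to verify them at a point in the deepest stratum, where (\ref{etslice}) identifies $\pi$ with the good moduli space map of $\mathfrak{gl}(V)^{\oplus(lr_0^2+1)}/GL(V)$. There $\mathscr{A}$ becomes the endomorphism algebra of the \v{S}penko--Van den Bergh non-commutative resolution. Since $M^L(r,\chi)$ is Gorenstein by Lemma~\ref{lem:gorenstein}, the results of \cite{SVdB} give that this algebra is maximal Cohen--Macaulay over the Gorenstein GIT quotient and of finite global dimension; these are precisely the properties making it a non-commutative crepant resolution under the BPS condition.

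Finally I would prove the equivalence. The functor $\pi_{\ast}\mathcal{H}om(\mathbb{V},-)$ is the tilting functor attached to $\mathbb{V}$, landing in $D^b(\mathscr{A})$ because $\mathbb{V}\in\mathbb{T}^L(r,\chi)_w$ and $\mathscr{A}=\pi_{\ast}\mathcal{E}nd(\mathbb{V})$ by definition. Full faithfulness and essential surjectivity reduce, using that the quasi-BPS construction is étale-local over $M^L(r,\chi)$ (Remark~\ref{rmk:open}), to the \v{S}penko--Van den Bergh derived equivalence for the local quiver model. I expect the main obstacle to be the globalization: one must produce a single bundle $\mathbb{V}$ whose restrictions to all slices simultaneously yield the local tilting generators, and then check that the local derived equivalences are compatible with restriction so that they glue to a global one. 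The compatibility of the magic windows across the different strata, guaranteed by Lemma~\ref{lem:equiver}, is the technical point that makes both the construction of $\mathbb{V}$ and the gluing possible.
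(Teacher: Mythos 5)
Your proposal follows essentially the same route as the paper's proof: reduce to the deepest stratum via Lemma~\ref{lem:equiver}, apply the \v{S}penko--Van den Bergh results in the local one-vertex quiver model (where the BPS condition forces the weights into the strict interior of the polytope, yielding maximal Cohen--Macaulayness and homological homogeneity, i.e. finite global dimension), and globalize via Schur powers of a tautological bundle. The paper settles the globalization point you flag by taking $\mathcal{U}=p_{\mathcal{M}\ast}(\mathcal{O}_C(mp)\boxtimes\mathcal{F})$ for $m\gg 0$ and $\mathbb{V}=\bigoplus_{\gamma\in\Lambda}\mathbb{S}^{\gamma}(\mathcal{U})$, and rather than gluing local equivalences it checks that the globally defined, $\mathrm{Perf}(M^L(r,\chi))$-linear adjunction morphisms for $\pi_{\ast}\mathcal{H}om(\mathbb{V},-)$ and $(-)\otimes_{\mathscr{A}}\mathbb{V}$ are isomorphisms \'etale-locally, which is the precise form of the compatibility you anticipated.
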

\begin{proof}
 We first consider the local case. 
 Let $y \in M^L(r, \chi)$ lies in the deepest stratum, 
 so it corresponds to the direct sum $V \otimes E_0$ with $\dim V=d$ and a stable $L$-twisted Higgs bundle $E_0$. 
 Let $Q_y$ be the Ext-quiver at $y$, which is a $(lr_0^2+1)$-loop quiver. 
 Let $\mathbb{T}_{Q_y}(d)_w$ be the subcategory of $D^b(\mathcal{X}_{Q_y}(d))$ defined as in (\ref{qbps:loc}). 
 It is generated by $\mathbb{S}^{\gamma}(V) \otimes \mathcal{O}_{\mathcal{X}_{Q_x}(d)}$
 for Schur powers $\mathbb{S}^{\gamma}(V)$
 of $V$
 whose highest weight $\gamma$ satisfy the condition (\ref{weight:W})
 for the Ext-quiver $Q_y$. 
 Let $\Lambda$ be the set of $\gamma \in M(d)$ which appears as a generator of 
 $\mathbb{T}_{Q_y}(d)_w$ as above. 
 Then the direct sum
 \begin{align*}
     \mathbb{V}_y :=\bigoplus_{\gamma \in \Lambda}\mathbb{S}^{\gamma}(V)\otimes \mathcal{O}_{\mathcal{X}_{Q_y}(d)} \in 
     \mathbb{T}_{Q_y}(d)_w
 \end{align*}
 is a tilting generator of $\mathbb{T}_{Q_y}(d)_w$, i.e. 
 $\Hom^i(\mathbb{V}_y, \mathbb{V}_y)=0$ for $i\neq 0$ and 
 $\Hom^{\ast}(\mathbb{V}_y, P)=0$ for $P \in \mathbb{T}_{Q_y}(d)_w$ implies 
 $P=0$. 
 
     The BPS condition of $(r, \chi, w)$ 
     implies that the highest weight $\gamma$ in Remark~\ref{rmk:domi} lies in 
     the strict interior of the right hand side in (\ref{weight:W}), 
     see~\cite[Lemma~5.11]{PTquiver} in the case that $lr_0^2$ is even. 
     In the case that $lr_0^2$ is odd, 
     this claim follows from~\cite[Lemma~8.7]{PTtop} together with the argument in~\cite[Lemma~5.11]{PTquiver}. 
     Then by~\cite[Proposition~4.4]{SVdB}, 
     under the BPS condition, the 
     non-commutative algebra $\mathscr{A}_y:=\mathrm{End}(\mathbb{V}_y)$ is a maximal Cohen-Macaulay
     module over $R_{Q_y}(d)\ssslash G(d)$. 
     Moreover $\mathscr{A}_y$ is homologically homogeneous
by~\cite[Theorem~1.5.1]{SVdB}.      

 Therefore
there is an equivalence, see the argument of~\cite[Lemma~3.4]{TU} and also Lemma~\ref{lem:AT}
 \begin{align}\label{equiv:loc}
     \Hom(\mathbb{V}_y, -) \colon 
     \mathbb{T}_{Q_y}(d)_w \stackrel{\sim}{\to} D^b(\mathscr{A}_y). 
 \end{align} 
 The inverse functor of (\ref{equiv:loc}) is given by 
 \begin{align*}
    D^b(\mathscr{A}_y) \to \mathbb{T}_{Q_y}(d)_w, \ 
    (-) \mapsto (-) \otimes^L_{\mathscr{A}_y} \mathbb{V}_y. 
 \end{align*}
 Note that the above functor makes sense as $\mathscr{A}_y$ is homologically homogeneous.

     We globalize the above local equivalence (\ref{equiv:loc}). 
     We take $m\gg 0$, a closed point $p\in C$, and consider the following vector bundle on $\mathcal{M}^L(r, \chi)$
     \begin{align}\label{vec:U}
         \mathcal{U}:=p_{\mathcal{M}\ast}(\mathcal{O}_C(mp)\boxtimes \mathcal{F}) \to \mathcal{M}^L(r, \chi), 
     \end{align} where recall that $\mathcal{F}$ is the universal Higgs bundle.
     Note that for a closed point $y \in M^L(r, \chi)$
     as above 
     and its lift to a closed point $y \in \mathcal{M}^L(r, \chi)$, 
     we have that $\mathcal{U}|_{y}$ is a direct sum of $V$ as a $GL(V)$-representation. 
     We then set $\mathbb{V}$ to be the direct 
     sum of Schur powers of $\mathcal{U}$
     \begin{align*}
         \mathbb{V} :=\bigoplus_{\gamma \in \Lambda}\mathbb{S}^{\gamma}(\mathcal{U}) \in \Coh(\mathcal{M}^L(r, \chi)). 
     \end{align*}
     Then \'etale locally at $y$, the set of direct summands of $\mathbb{V}$ is the same as that of $\mathbb{V}_y$. Since this holds for any 
     closed point $y$ as above, we have $\mathbb{V} \in \mathbb{T}^L(r, \chi)_w$. 
     
    Let $\mathscr{A}=\pi_{\ast}\mathcal{E}nd(\mathbb{V})$ be the sheaf of algebras as in (\ref{def:A}). 
    Then \'etale locally at $y$, 
    the algebra $\mathscr{A}$ is Morita equivalent to $\mathscr{A}_y$.  
    Let $\Phi$ be the functor
     \begin{align}
         \Phi(-)=\pi_{\ast}\mathcal{H}om(\mathbb{V}, -) \colon \mathbb{T}^L(r, \chi)_{w} \to D^b(\mathscr{A}).
     \end{align}
     It admits a left adjoint  
     \begin{align*}
         \Phi^L(-)=(-)\otimes_{\mathscr{A}}\mathbb{V} \colon D^b(\mathscr{A}) \to \mathbb{T}^L(r, \chi)_w.  
     \end{align*}
     We have the adjunction morphisms 
     \begin{align*}\Phi^L \circ \Phi \to \id, \ \id \to \Phi \circ \Phi^L.
     \end{align*}
     Note that both of $\Phi$ and $\Phi^L$ are linear 
     over $\mathrm{Perf}(M^L(r, \chi))$. 
     By the above local argument together with Lemma~\ref{lem:equiver}, 
     we see that the above adjunction morphisms are isomorphisms 
     \'etale locally at any point in $M^L(r, \chi)$, hence they are isomorphisms. 
     Therefore $\Phi$ is an equivalence. 
\end{proof}

 Proposition~\ref{prop:TA} implies that 
the BPS category is a (twisted) non-commutative crepant 
resolution of the singular moduli space $M^L(r, \chi)$. 
Note that for all $d\geq 2$ and $1+lr_0^2\geq 2$ except the case $(d, l, r_0)=(2,1,1)$, the moduli space $M^L(r, \chi)$ does not have a geometric crepant resolution \cite{KaLeSo}. 

We now briefly recall some terminology related to dg-categories 
(cf.~\cite[Section~3.3]{Orsmooth}). 

A dg-category $\mathscr{D}$
over $\mathbb{C}$ is called \textit{smooth} if the diagonal dg-module 
on $\mathscr{D}\otimes_{\mathbb{C}}\mathscr{D}^{\rm{op}}$ is perfect. 

A dg-category $\mathscr{D}$ over a
$\mathbb{C}$-scheme $B$ (i.e. a dg-category
over $\mathbb{C}$ with a module over $\mathrm{Perf}(B)$) is \textit{proper over $B$} if 
the inner homomorphism $\mathcal{H}om(\mathcal{E}_1, \mathcal{E}_2) \in D_{\rm{qc}}(B)$ is an object in 
$D^b(B)$. 

The \textit{relative Serre functor} $S_{\mathscr{D}/B}$ is 
a functor $S_{\mathscr{D}/B} \colon \mathscr{D} \to \mathscr{D}$
such that there exist functorial isomorphisms
\begin{align*}
    \mathcal{H}om(\mathcal{E}_1, S_{\mathscr{D}/B}(\mathcal{E}_2)) \cong \mathcal{H}om_B(\mathcal{H}om(\mathcal{E}_2, \mathcal{E}_1), \mathcal{O}_B). 
\end{align*}
If the relative Serre functor is isomorphic to a shift functor, we say that $\mathscr{D}$ is \textit{Calabi-Yau} over $B$.

\begin{thm}\label{thm:proper0}
Suppose that $l>2g-2$ and assume that $(r, \chi, w)$ satisfies 
the BPS condition.  
Then the BPS category $\mathbb{T}^L(r, \chi)_w$
is smooth, proper over $M=M^L(r, \chi)$, and there is a relative 
Serre functor $S_{\mathbb{T}/M}$ over $M$ satisfying $S_{\mathbb{T}/M} \cong \id$, i.e. 
for $E_i \in \mathbb{T}^L(r, \chi)_w$ with $i=1, 2$, we have
\begin{align}\label{isom:STM}
    \pi_{\ast}R\mathcal{H}om(E_1, E_2) \cong 
    \mathcal{R}\mathcal{H}om_{M}(\pi_{\ast}R\mathcal{H}om(E_2, E_1), \mathcal{O}_M). 
\end{align}
\end{thm}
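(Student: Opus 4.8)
The plan is to transport all three assertions across the equivalence $\Phi = \pi_{\ast}\mathcal{H}om(\mathbb{V}, -)\colon \mathbb{T}^L(r,\chi)_w \xrightarrow{\sim} D^b(\mathscr{A})$ furnished by Proposition~\ref{prop:TA}, and to read each property off the structure of the sheaf of algebras $\mathscr{A} = \pi_{\ast}\mathcal{E}nd(\mathbb{V})$ on $M = M^L(r,\chi)$. The key inputs from that proposition are that $\mathscr{A}$ is a maximal Cohen--Macaulay $\mathcal{O}_M$-module and is of finite global dimension; these are exactly what make the three properties accessible.

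For smoothness I would avoid $\mathscr{A}$ and instead invoke Theorem~\ref{thm:sod}: taking the trivial partition realizes $\mathbb{T}^L(r,\chi)_w \subset D^b(\mathcal{M}^L(r,\chi))_w$ as an admissible summand of $D^b(\mathcal{M}^L(r,\chi))$. Since $l > 2g-2$ forces $\mathcal{M}^L(r,\chi)$ to be a smooth quotient stack of finite type over $\mathbb{C}$, its bounded derived category is smooth over $\mathbb{C}$, and an admissible subcategory of a smooth dg-category is again smooth; this gives assertion (1). Properness over $M$ is then immediate from the algebra picture: because $\mathscr{A}$ is module-finite over $\mathcal{O}_M$, any $N_1, N_2 \in D^b(\mathscr{A})$ have $R\mathcal{H}om_{\mathscr{A}}(N_1, N_2) \in D^b(\Coh(M))$ — bounded by finite global dimension and with coherent cohomology since $\mathscr{A}$ is finite — and under $\Phi$ this object is precisely the inner hom $\pi_{\ast}R\mathcal{H}om(E_1, E_2)$.

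The Calabi--Yau statement is the substance. Transported to $D^b(\mathscr{A})$, the relative Serre functor over $M$ is the standard one for a module-finite algebra over a Gorenstein base, namely $(-)\otimes^{\mathbf L}_{\mathscr{A}} \omega_{\mathscr{A}/M}$ with $\omega_{\mathscr{A}/M} = R\mathcal{H}om_{\mathcal{O}_M}(\mathscr{A}, \omega_M)$. Two inputs now combine: by Lemma~\ref{lem:gorenstein} we have $\omega_M = \mathcal{O}_M$, and since $\mathscr{A}$ is maximal Cohen--Macaulay the dual $\mathscr{A}^{\vee} := \mathcal{H}om_{\mathcal{O}_M}(\mathscr{A}, \mathcal{O}_M)$ is concentrated in degree zero, so $\omega_{\mathscr{A}/M} \cong \mathscr{A}^{\vee}$. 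Thus proving $S_{\mathbb{T}/M} \cong \id$, and hence the isomorphism \eqref{isom:STM}, is equivalent to producing an isomorphism of $\mathscr{A}$-bimodules $\mathscr{A} \cong \mathscr{A}^{\vee}$, i.e. to showing that $\mathscr{A}$ is a symmetric $\mathcal{O}_M$-algebra. The natural candidate is the map induced by the trace pairing $(a,b) \mapsto \pi_{\ast}\mathrm{tr}(ab)$ on $\mathscr{A} = \pi_{\ast}\mathcal{E}nd(\mathbb{V})$, which is visibly $\mathscr{A}$-bilinear and symmetric; what must be checked is that it is a perfect pairing.

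I expect this perfectness to be the main obstacle, and I would settle it étale-locally over $M$. At a point $y$ in the deepest stratum, Proposition~\ref{prop:TA} identifies $\mathscr{A}$, up to Morita equivalence, with the \v{S}penko--Van den Bergh endomorphism algebra $\mathscr{A}_y = \mathrm{End}(\mathbb{V}_y)$ attached to the $GL(V)$-representation $R_{Q_y}(d) = \mathfrak{gl}(V)^{\oplus(lr_0^2+1)}$. Since this representation is self-dual (via the trace form on $\mathfrak{gl}(V)$) and the group acts with trivial determinant character on it, the associated GIT quotient is Gorenstein Calabi--Yau and the resulting NCCR $\mathscr{A}_y$ is a symmetric order; concretely, the trace pairing is perfect and the Nakayama automorphism is trivial, giving $\mathscr{A}_y \cong \mathscr{A}_y^{\vee}$. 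Lemma~\ref{lem:equiver} propagates this identification to a neighborhood of every point of $M$, so the local isomorphisms glue to the global bimodule isomorphism $\mathscr{A} \cong \mathscr{A}^{\vee}$. The one subtlety I anticipate is the bookkeeping of the weight-$w$ and determinant normalizations, so that no residual shift or character twist survives in $\omega_{\mathscr{A}/M}$; this is exactly where the BPS condition on $(r,\chi,w)$ is used, as it is what guarantees (through Proposition~\ref{prop:TA}) that $\mathscr{A}$ is maximal Cohen--Macaulay to begin with.
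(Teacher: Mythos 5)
Your properness argument and your reduction of the Calabi--Yau property to the bimodule self-duality $\mathscr{A}\cong\mathscr{A}^{\vee}$ via the trace pairing track the paper closely: properness is read off from $\mathscr{A}$ being module-finite of finite global dimension (Proposition~\ref{prop:TA}), and the paper's Lemma~\ref{lem:duality} (fed into Lemma~\ref{assum:perf}) proves exactly that the trace map $\alpha\mapsto(\beta\mapsto\pi_{\ast}\mathrm{tr}(\beta\alpha))$ is an isomorphism $\mathscr{A}\to\mathcal{H}om_{M}(\mathscr{A},\mathcal{O}_M)$, by checking it over the locus where $\pi$ is a $\mathbb{C}^{\ast}$-gerbe and using that both sides are maximal Cohen--Macaulay while the complement has codimension at least two (this is where Lemma~\ref{lem:gorenstein} enters). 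Your alternative perfectness check --- \'etale-locally at deepest-stratum points, citing that \v{S}penko--Van den Bergh NCCRs of self-dual representations are symmetric orders --- is plausible, but in the twisted setting relevant here ($\mathbb{V}_y$ does not descend to a module over the invariant ring) the only way I know to prove that local claim is the same reflexivity-plus-codimension-two argument, so this is a localization of the paper's proof rather than an independent route.

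The genuine gap is your smoothness argument. You assert that $D^b(\mathcal{M}^L(r,\chi))$ is smooth because the stack is a smooth finite-type quotient stack, and then invoke inheritance of smoothness by semiorthogonal summands. The inheritance statement is standard, but the premise is false: smoothness of an Artin stack does not imply smoothness of its bounded derived category when there are positive-dimensional stabilizers. Already $D^b(B\mathbb{G}_m)$ and $D^b(B\mathrm{PGL}_2)$ are not smooth (the pushforward of $\mathcal{O}$ along the diagonal is $\mathcal{O}(G)$, which is not coherent; equivalently, these categories split into infinitely many orthogonal pieces, so the diagonal bimodule cannot be perfect). At a strictly polystable point of $\mathcal{M}^L(r,\chi)$ --- and such points exist precisely when $(r,\chi)$ is non-coprime --- the stabilizer modulo the central $\mathbb{C}^{\ast}$ is a positive-dimensional reductive group, so the same failure afflicts $D^b(\mathcal{M}^L(r,\chi))_w$; indeed the paper states explicitly, right after Theorem~\ref{thm:intro1}, that $D^b(\mathcal{M}^L(r,\chi))_w$ is not smooth for non-coprime $(r,\chi)$. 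A second warning sign is that your argument never uses the BPS condition, so it would prove that every summand of \eqref{SODthm11} is smooth, which is not expected: guaranteeing smoothness is exactly the role of the BPS condition. The correct argument is the one you chose to avoid: $\mathbb{T}^L(r,\chi)_w\simeq D^b(\mathscr{A})$ with $\mathscr{A}$ of finite global dimension and finite over the finite-type scheme $M$, and it is inside Proposition~\ref{prop:TA} (through the strict-interior weight condition of \cite[Lemma~5.11]{PTquiver}) that the BPS hypothesis does its work.
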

\begin{proof}
The smooth and properness follows from 
Proposition~\ref{prop:TA} as $\mathscr{A}$ has finite global dimension and is
finitely generated as an $\mathcal{O}_{M}$-module. To show $S_{\mathbb{T}/M} \cong \id$, see Lemma~\ref{assum:perf} for a more 
general statement. 
\end{proof}

We also have the following corollary, showing that the BPS category is a 
smooth Calabi-Yau compactification of 
the abelian fibration $M^L(r, \chi)^{\rm{sm}} \to B^{\rm{sm}}$ of 
relative dimension $g^{\rm{sp}}$ over the locus of 
smooth spectral curves. 
\begin{cor}\label{cor:proper0}
    In the situation of Theorem~\ref{thm:proper0}, 
    the relative Serre functor $S_{\mathbb{T}/B}$ of $\mathbb{T}^L(r, \chi)_w$ 
    over $B$ is 
    isomorphic to the shift functor $[g^{\rm{sp}}]$, i.e. 
    for $E_i \in \mathbb{T}^L(r, \chi)_w$ with $i=1, 2$, we have
\begin{align*}
    Rh_{\ast}R\mathcal{H}om(E_1, E_2) \cong 
    \mathcal{R}\mathcal{H}om_{B}(Rh_{\ast}R\mathcal{H}om(E_2, E_1), \mathcal{O}_B[g^{\rm{sp}}]). 
\end{align*}
\end{cor}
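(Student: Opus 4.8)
The plan is to deduce the statement from Theorem~\ref{thm:proper0} by composing the (already trivial) relative Serre functor over $M=M^L(r,\chi)$ with Grothendieck--Serre duality for the map from $M$ down to the Hitchin base $B$. Concretely, I would factor the Hitchin map as
\[
h\colon \mathcal{M}^L(r,\chi)^{\rm{cl}} \xrightarrow{\ \pi\ } M \xrightarrow{\ f\ } B,
\]
where $\pi$ is the good moduli space morphism and $f$ is the induced map to $B$. Since $Rh_\ast=Rf_\ast\circ \pi_\ast$, the inner $\mathcal{H}om$ over $B$ is the $Rf_\ast$--pushforward of the inner $\mathcal{H}om$ over $M$. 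Setting $A:=\pi_\ast R\mathcal{H}om(E_2,E_1)\in D^b(M)$, Theorem~\ref{thm:proper0} identifies $\pi_\ast R\mathcal{H}om(E_1,E_2)$ with the $\mathcal{O}_M$-linear dual $\mathcal{R}\mathcal{H}om_M(A,\mathcal{O}_M)$, so the left-hand side of the desired formula equals $Rf_\ast\,\mathcal{R}\mathcal{H}om_M(A,\mathcal{O}_M)$.

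The key geometric input is the computation of the relative dualizing complex $f^!\mathcal{O}_B$. First I would record that $f$ is proper (properness of the Hitchin fibration) and flat of relative dimension equal to $g^{\rm{sp}}$, the arithmetic genus of the spectral curve from~\eqref{formula:gD}, which is the dimension of the Hitchin fibers. Next, the base $B=\bigoplus_{i=1}^r H^0(C,L^{\otimes i})$ is an affine space, hence regular with $\omega_B\cong\mathcal{O}_B$, while Lemma~\ref{lem:gorenstein} shows that $M$ is Gorenstein with $\omega_M\cong\mathcal{O}_M$. Therefore the relative dualizing sheaf is $\omega_{M/B}=\omega_M\otimes f^\ast\omega_B^{-1}\cong\mathcal{O}_M$, and consequently
\[
f^!\mathcal{O}_B\cong \omega_{M/B}[g^{\rm{sp}}]\cong \mathcal{O}_M[g^{\rm{sp}}].
\]

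With this in hand I would apply Grothendieck--Serre duality for the proper morphism $f$ to the object $A$, namely
\[
Rf_\ast\,\mathcal{R}\mathcal{H}om_M(A,f^!\mathcal{O}_B)\cong \mathcal{R}\mathcal{H}om_B(Rf_\ast A,\mathcal{O}_B),
\]
then substitute $f^!\mathcal{O}_B\cong\mathcal{O}_M[g^{\rm{sp}}]$ and $Rf_\ast A=Rf_\ast\pi_\ast R\mathcal{H}om(E_2,E_1)=Rh_\ast R\mathcal{H}om(E_2,E_1)$, and use the identification of the first paragraph for the left-hand side. This produces the claimed isomorphism, with the shift governed by the relative dimension $g^{\rm{sp}}$. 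Equivalently, the computation is the transitivity of relative Serre functors $S_{\mathbb{T}/B}\cong\bigl(-\otimes\omega_{M/B}[g^{\rm{sp}}]\bigr)\circ S_{\mathbb{T}/M}$, specialized to $S_{\mathbb{T}/M}\cong\id$ and $\omega_{M/B}\cong\mathcal{O}_M$, giving $S_{\mathbb{T}/B}\cong[g^{\rm{sp}}]$.

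Most of this is formal once Theorem~\ref{thm:proper0} is granted; the one substantive ingredient is the computation of $f^!\mathcal{O}_B$ at the singular points of $M$, where smoothness is unavailable and one must instead invoke the Gorenstein property with trivial canonical sheaf from Lemma~\ref{lem:gorenstein}. The main technical obstacle I anticipate is bookkeeping rather than conceptual: one must check that the duality isomorphism of Theorem~\ref{thm:proper0} over $M$ and the Grothendieck duality isomorphism for $f$ are natural and $\mathrm{Perf}(B)$-linear enough to assemble into a genuine relative Serre functor over $B$ (not merely a pointwise identification), and that $\mathbb{T}^L(r,\chi)_w$ is proper over $B$, which follows by combining properness of $f$ with properness over $M$ from Theorem~\ref{thm:proper0}.
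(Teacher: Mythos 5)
Your proposal is correct and takes essentially the same route as the paper: the paper's proof likewise deduces the corollary from Theorem~\ref{thm:proper0} by applying Grothendieck duality to $h_M \colon M^L(r,\chi) \to B$ and noting that the relative dualizing sheaf of $h_M$ is trivial by Lemma~\ref{lem:gorenstein}. One incidental caveat: the flatness of $h_M$ that you assert is neither established in general in the paper (only for $r=2$, citing Li) nor actually needed, since the identification $h_M^!\mathcal{O}_B \cong \mathcal{O}_M[g^{\rm{sp}}]$ already follows from $M$ being Gorenstein with $\omega_M\cong\mathcal{O}_M$, $B$ smooth with trivial canonical bundle, and the dimension count.
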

\begin{proof}
The corollary follows by applying the Grothendieck duality for \[h_M \colon M^L(r, \chi) \to B\]
and noticing that the relative dualizing sheaf of $h_M$ is trivial by Lemma~\ref{lem:gorenstein}. 
\end{proof}

\subsection{BPS categories via non-commutative matrix factorizations}
By Proposition \ref{prop:TA}, the BPS category for $l>2g-2$ is 
equivalent to the category of coherent sheaves over 
a twisted non-commutative crepant resolution of $M^L(r, \chi)$. In the case of $L=\Omega_C$, 
we relate the BPS category with a category of matrix factorizations 
over a sheaf of non-commutative algebras. 
This result will not be used later, but we believe it may be of 
independent interest.

Recall the notation from Subsection \ref{subsec:qbps:nc}.
Let $f$ be the function 
\begin{align*}
f \colon \mathcal{V}^{\vee} \to \mathbb{C}, (y, v) \mapsto \langle s(y), v \rangle
\end{align*}
where $y \in \mathcal{M}^{\Omega_C(p)}(r, \chi)$ and 
$v \in \mathcal{V}^{\vee}|_{y}$. 
Note that $\mathcal{V}^{\vee} \cong \mathcal{V}$ and 
$f$ is identified with the function 
\begin{align*}
    f \colon \mathcal{V} \to \mathbb{C}, \ f(y, u)=\mathrm{tr}(u \circ s(y))
\end{align*}
where $y \in \mathcal{M}^{\Omega_C(p)}(r, \chi)$ and 
$u \in \mathcal{V}|_{y}$. 
There is an isomorphism 
\begin{align*}
    \Omega_{\mathcal{M}(r, \chi)}[-1]^{{\rm{cl}}} \stackrel{\cong}{\to}
    \mathrm{Crit}(f). 
\end{align*}
Here, the left hand side is the $(-1)$-shifted cotangent
\begin{align*}
    \Omega_{\mathcal{M}(r, \chi)}[-1]:=
    \Spec \mathrm{Sym}(\mathbb{T}_{\mathcal{M}(r, \chi)}[1])
\end{align*}
which is identified with the moduli stack 
of semistable sheaves on the non-compact 
CY 3-fold
\begin{align*}X=\mathrm{Tot}_C(\Omega_{C} \oplus \mathcal{O}_C),
\end{align*}
see~\cite[Lemma~3.4.1]{T}.

There is a Koszul equivalence~\cite{I, Hirano, T}: 
\begin{align}\label{equiv:koszul}
    D^b(\mathcal{M}(r, \chi)) \stackrel{\sim}{\to} \mathrm{MF}^{\rm{gr}}(\mathcal{V}, f)
\end{align}
where $\mathbb{C}^{\ast}$ acts on fibers of $\mathcal{V} \to \mathcal{M}^{\Omega_C(p)}(r, \chi)$ 
by weight one, and the right hand side is the category of graded 
matrix factorizations of the function $f$. 
Its objects consist of tuples
\begin{align}\label{tuplet:graded}
(E, F, \alpha \colon 
E\to F(1), \beta \colon F\to E),
\end{align}
where $E$ and $F$ are $\mathbb{C}^{\star}$-equivariant coherent sheaves on $\mathcal{V}$, 
$(1)$ is the twist by the weight one $\mathbb{C}^{\ast}$-character, 
and $\alpha$ and $\beta$ are $\mathbb{C}^{\ast}$-equivariant morphisms
such that $\alpha\circ\beta$ and $\beta\circ\alpha$ are multiplication by $f$.
We refer to~\cite[Section~2.6]{PT0} for a review of 
graded matrix factorizations. 

There is a commutative diagram 
\begin{align*}
\xymatrix{
\mathcal{V} \ar[r]_-{\pi_{N}}  \ar@/^18pt/[rr]^f  \ar[d]_-{g} & N \ar[r]_-{f_N} \ar[d] & \mathbb{C}. \\
\mathcal{M}^{\Omega_C(p)}(r, \chi) \ar[r]^-{\pi} & M^{\Omega_C(p)}(r, \chi) & 
}
\end{align*}
Here $\pi_N \colon \mathcal{V} \to N$ is the good moduli space of $\mathcal{V}$, that is 
\begin{align*}
    N:=\Spec_{M^{\Omega_C(p)}(r, \chi)} \pi_{\ast}\mathrm{Sym}(g_{\ast}\mathcal{V}^{\vee}).  
\end{align*}
Let $\mathbb{C}^{\ast}$ acts on fibers of $\mathcal{V}$ by weight one, 
and consider the induced action on $N$. 
For a $\mathbb{C}^{\ast}$-equivariant coherent sheaf of $\mathcal{O}_N$-algebras $\mathscr{A}$
on $N$, 
we consider the category 
\begin{align*}
    \mathrm{MF}^{\rm{gr}}(\mathscr{A}, f_N)
\end{align*}
which consists of tuples (\ref{tuplet:graded}) such that $E, F$ are 
$\mathbb{C}^{\ast}$-equivariant right coherent $\mathcal{A}$-modules, 
and $\alpha \circ \beta$, $\beta \circ \alpha$ are multiplications by $f_N$. 
See~\cite[Definition~4.6]{ncHirano} for the precise definition of non-commutative factorization categories. 

\begin{prop}\label{prop:MFA}
Let $L=\Omega_C$ and $(r, \chi, w)$ satisfies the BPS condition (i.e. it is primitive). 
Then there is a sheaf of non-commutative algebras $\mathscr{A}_N$ on $N$, which is 
a Cohen-Macaulay $\mathcal{O}_N$-module of finite global dimension, such that 
there is an equivalence 
\begin{align}\label{equiv:TMFA}
    \mathbb{T}(r, \chi)_w \stackrel{\sim}{\to} \mathrm{MF}^{\rm{gr}}(\mathscr{A}_N, f_N). 
\end{align}
\end{prop}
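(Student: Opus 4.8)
The plan is to transport the non-commutative crepant resolution produced by Proposition~\ref{prop:TA} across the Koszul equivalence (\ref{equiv:koszul}). The key geometric input is that $\mathcal{V}=\mathcal{E}nd(\mathcal{F}_p)$ is a \emph{smooth} stack: it is the total space of a vector bundle $g\colon \mathcal{V}\to \mathcal{M}^{\Omega_C(p)}(r,\chi)$ over a smooth base, since $\deg\Omega_C(p)=2g-1>2g-2$. Its good moduli space is the affine map $\pi_N\colon \mathcal{V}\to N$ appearing in the diagram before the statement, and the potential factors as $f=\pi_N^{\ast}f_N$. I would then proceed in three steps: (i) produce a tilting generator of the window subcategory $\mathbb{W}\subset D^b(\mathcal{V})_w$ determined by the weights of $\mathbb{L}_{\mathcal{V}}$; (ii) upgrade the resulting tilting equivalence to the level of matrix factorizations for $f$; and (iii) match the outcome with $\mathbb{T}(r,\chi)_w$ using Koszul duality.

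For step (i), I would run the argument of Proposition~\ref{prop:TA} with $\mathcal{V}$ in place of $\mathcal{M}^L(r,\chi)$ and $N$ in place of $M^L(r,\chi)$. Étale locally over a point of the deepest stratum, $\mathcal{V}$ is the representation stack of a one-vertex symmetric quiver modulo $GL(V)$, so the same combinatorics applies: the window $\mathbb{W}$ cut out by the bounds in (\ref{cond:nu}) (which involve $\mathbb{L}_{\mathcal{V}}$, hence are strictly wider than the pullback of the window on the base) has a tilting generator $\mathbb{V}'$ built from a direct sum of Schur powers of the tautological bundle, and the BPS condition forces the relevant highest weights into the strict interior of the corresponding polytope, so that \cite[Proposition~4.4]{SVdB} applies (the parity bookkeeping being as in \cite[Lemma~5.11]{PTquiver} and \cite[Lemma~8.7]{PTtop}). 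Setting
\[
\mathscr{A}_N:=\pi_{N\ast}\mathcal{E}nd(\mathbb{V}'),
\]
this yields a sheaf of $\mathcal{O}_N$-algebras that is a maximal Cohen-Macaulay $\mathcal{O}_N$-module, homologically homogeneous (hence of finite global dimension), and that satisfies $\mathbb{W}\simeq D^b(\mathscr{A}_N)$ exactly as in (\ref{equiv:loc}); all of these properties are verified étale locally as in Proposition~\ref{prop:TA}.

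For steps (ii) and (iii), note that the equivalence $\mathbb{W}\simeq D^b(\mathscr{A}_N)$ of step (i) is $\mathrm{Perf}(N)$-linear and that $f=\pi_N^{\ast}f_N$, so tensoring with the tilting generator intertwines multiplication by $f$ with multiplication by $f_N$. This should induce an equivalence
\[
\mathrm{MF}^{\rm{gr}}(\mathbb{W},f)\stackrel{\sim}{\to}\mathrm{MF}^{\rm{gr}}(\mathscr{A}_N,f_N),
\]
where the left-hand side denotes graded matrix factorizations of $f$ with entries in $\mathbb{W}$ and the right-hand side is the non-commutative factorization category of \cite[Definition~4.6]{ncHirano}. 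Finally, by the intrinsic window description of \cite[Subsection~6.2.2]{T}, the defining condition (\ref{cond:nu}) is arranged precisely so that the Koszul equivalence (\ref{equiv:koszul}) restricts to $\mathbb{T}(r,\chi)_w\stackrel{\sim}{\to}\mathrm{MF}^{\rm{gr}}(\mathbb{W},f)$. Composing the three equivalences gives (\ref{equiv:TMFA}).

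I expect the main obstacle to be step (ii): promoting the $D^b$-level tilting equivalence to one of matrix factorization categories carrying the potential. One must check that $\mathbb{V}'$ remains a generator after passing to factorizations, that the $\mathcal{O}_N$-linear structure really does intertwine $f$ and $f_N$ at the level of the $\mathbb{Z}/2$-folded categories, and that $\mathscr{A}_N$ together with $f_N$ is the correct non-commutative Landau-Ginzburg model; this is best handled through the non-commutative Koszul/Knörrer framework of \cite{ncHirano, T} rather than by an explicit computation. By contrast, the identity $f=\pi_N^{\ast}f_N$ and the Cohen-Macaulay and finite global dimension properties of $\mathscr{A}_N$ are routine consequences of the étale-local picture already used in Proposition~\ref{prop:TA}.
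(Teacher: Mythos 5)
Your proposal is correct and follows essentially the same route as the paper: the paper likewise restricts the Koszul equivalence to get $\mathbb{T}(r,\chi)_w\simeq \mathrm{MF}^{\rm{gr}}(\mathbb{T}',f_N)$ for the window category $\mathbb{T}'\subset D^b_{\mathbb{C}^{\ast}}(\mathcal{V})$ (your $\mathbb{W}$), builds the tilting bundle $\mathbb{V}_N=\bigoplus_{\gamma\in\Lambda}\mathbb{S}^{\gamma}(\mathcal{U})$ on $\mathcal{V}$ exactly as in Proposition~\ref{prop:TA} to obtain $\mathbb{T}'\simeq D^b_{\mathbb{C}^{\ast}}(\mathscr{A}_N)$ with $\mathscr{A}_N=\pi_{N\ast}\mathcal{E}nd(\mathbb{V}_N)$ maximal Cohen--Macaulay of finite global dimension, and then passes to matrix factorizations. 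The step you flag as the main obstacle (transporting the tilting equivalence to the factorization categories) is treated as routine in the paper as well, so there is no substantive divergence.
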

\begin{proof}
Let $\mathbb{T}' \subset D^b_{\mathbb{C}^{\ast}}(\mathcal{V})$ be the 
subcategory of objects $\mathcal{P}$ such that for all morphisms $\nu \colon B\mathbb{C}^{\ast} \to \mathcal{V}$
the set of weights $\mathrm{wt}(\nu^{\ast}\mathcal{P})$ lies in the right hand side of (\ref{cond:nu}).
Then the Koszul equivalence (\ref{equiv:koszul}) restricts to the equivalence, see~\cite[Lemma~2.6, Corollary~3.15]{PTquiver}
\begin{align}\label{equiv:koszul2}
    \mathbb{T}(r, \chi)_w \stackrel{\sim}{\to} \mathrm{MF}^{\rm{gr}}(\mathbb{T}', f_N).
\end{align}
Here the right hand side is the category of matrix factorizations 
whose factors are objects in $\mathbb{T}'$. 

Let $y \in M^{\Omega_C(p)}(r, \chi)$ lies in the 
deepest stratum, corresponding to $V\otimes E_0$ where 
$\dim V=d$ and $E_0$ is a stable $\Omega_C(p)$-Higgs bundle 
with $(\rank(E_0), \chi(E_0))=(r_0, \chi_0)$. 
Note that we have $\mathcal{V}|_{y}=\mathrm{End}(V)^{\oplus r_0^2}$. 
Therefore étale locally at $y$, the diagram 
\begin{align*}
    \mathcal{V} \to \mathcal{M}^{\Omega_{C}(p)}(r, \chi) \to M^{\Omega_{C}(p)}(r, \chi)
\end{align*}
is isomorphic to 
\begin{align*}
    \mathfrak{gl}(V)^{\oplus (e+r_0^2)}/GL(V) \to 
    \mathfrak{gl}(V)^{\oplus e}/GL(V) \to 
     \mathfrak{gl}(V)^{\oplus e} \ssslash GL(V). 
\end{align*}
Here we have set $e=1+r_0^2(2g-1)$, so $e+r_0^2=1+2r_0^2 g$. 
Let $\Lambda$ be the set of dominant 
weights $\gamma \in M(d)$
satisfying the condition (\ref{weight:W}) for 
weights $\beta$ of $\mathfrak{gl}(V)^{\oplus (1+2r_0^2g)}$. 
Let $\mathcal{U}\to \mathcal{M}^{\Omega_C(p)}(r, \chi)$ be the vector bundle as in (\ref{vec:U}). 
We use the same symbol $\mathcal{U}$ for its pull-back to 
$\mathcal{V} \to \mathcal{M}^{\Omega_C(p)}(r, \chi)$. 
We set
\begin{align*}
    \mathbb{V}_N :=\bigoplus_{\gamma \in \Lambda}
    \mathbb{S}^{\gamma}(\mathcal{U})
\end{align*}
which is a $\mathbb{C}^{\ast}$-equivariant vector bundle on $\mathcal{V}$. 
Let $\mathscr{A}_N=\pi_{N\ast}\mathcal{E}nd(\mathbb{V}_N)$, which is a 
$\mathbb{C}^{\ast}$-equivariant 
sheaf of non-commutative algebras on $N$. 
Similarly to the proof of Proposition~\ref{prop:TA},
we have the equivalence
\begin{align*}
\pi_{N\ast}\mathcal{H}om(\mathbb{V}_N, -) \colon 
    \mathbb{T}' \stackrel{\sim}{\to} D^b_{\mathbb{C}^{\ast}}(\mathscr{A}_N),
\end{align*}
and $\mathscr{A}_N$ is a maximal Cohen–Macaulay $\mathcal{O}_N$-module of finite global dimension. 
By applying the matrix factorizatin and using (\ref{equiv:koszul2}), we obtain 
the equivalence (\ref{equiv:TMFA}). 
\end{proof}

\subsection{The reduced quasi-BPS category}
As we observed in Remark~\ref{rmk:coprime2}, 
the derived stack $\mathcal{M}(r, \chi)$ contains 
some redundant derived structure, and it is 
useful to eliminate it. 
We first note the following lemma. 

\begin{lemma}
    The section $s$ in (\ref{sec:s}) factors through 
    the section $s_0$ of the subbundle 
    \begin{align*}
        \mathcal{V}_0 := \mathcal{E}nd(\mathcal{F}_p)_0 \subset \mathcal{V}
    \end{align*}
    where the subscript $0$ means the traceless part, i.e. the kernel of 
    \begin{align}\label{tracemap}
        \mathrm{tr} \colon \mathcal{E}nd(\mathcal{F}_p) \to \mathcal{O}_{\mathcal{M}^{\Omega_C(p)}(r, \chi)}. 
    \end{align}
\end{lemma}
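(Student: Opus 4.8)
The plan is to prove the equivalent statement that $\mathrm{tr}\circ s=0$, where $\mathrm{tr}$ is the trace map (\ref{tracemap}); this says precisely that $s$ lands in $\ker(\mathrm{tr})=\mathcal{V}_0$, and the induced section is the desired $s_0$. First I would unwind the definition of $s$ from (\ref{sec:s}): over a point of $\mathcal{M}^{\Omega_C(p)}(r,\chi)$ corresponding to a Higgs bundle $(F,\theta')$, the section $s$ is the endomorphism $\theta'|_p\in\mathrm{End}(F|_p)$ obtained from the chosen trivialization $\Omega_C(p)|_p\cong\mathbb{C}$. Since the trace commutes with restriction to $p$, the function $\mathrm{tr}(s)$ sends $(F,\theta')$ to $(\mathrm{tr}\,\theta')|_p\in\Omega_C(p)|_p\cong\mathbb{C}$, where $\mathrm{tr}(\theta')\in H^0(C,\Omega_C(p))$ is a meromorphic $1$-form on $C$ that is regular away from $p$ and has at most a simple pole at $p$.

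The key input is the residue theorem. The natural trivialization $\Omega_C(p)|_p\cong\mathbb{C}$ is the Poincaré residue isomorphism, so that for any $\omega\in H^0(C,\Omega_C(p))$ the fiber value $\omega|_p$ is identified with $\mathrm{Res}_p(\omega)$. As the only possible pole of $\omega$ is at $p$, the residue theorem $\sum_{x\in C}\mathrm{Res}_x(\omega)=0$ forces $\mathrm{Res}_p(\omega)=0$. Equivalently, one checks by Riemann-Roch that $H^0(C,\Omega_C(p))=H^0(C,\Omega_C)$ and that the inclusion $\Omega_C\hookrightarrow\Omega_C(p)$ vanishes in the fiber over $p$; either way the evaluation map $H^0(C,\Omega_C(p))\to\Omega_C(p)|_p$ is identically zero. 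Applying this to $\omega=\mathrm{tr}(\theta')$ gives $(\mathrm{tr}\,\theta')|_p=0$, hence $\mathrm{tr}(s)=0$ at every point.

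To promote this pointwise vanishing to a statement over the (derived) stack, I would run the argument universally. The trace of the universal Higgs field $\vartheta$ of (\ref{univ:F}) (for $L=\Omega_C(p)$) is a section of $\mathrm{pr}_C^{\ast}\Omega_C(p)$ over $C\times\mathcal{M}^{\Omega_C(p)}(r,\chi)$, i.e. an $\mathcal{M}^{\Omega_C(p)}(r,\chi)$-family of global sections of $\Omega_C(p)$, and $\mathrm{tr}(s)$ is its restriction along $\{p\}\times\mathcal{M}^{\Omega_C(p)}(r,\chi)$ read off through the trivialization. Equivalently, $\mathrm{tr}(s)$ is the composite of the first Hitchin coordinate $\mathcal{M}^{\Omega_C(p)}(r,\chi)\to H^0(C,\Omega_C(p))$ with the evaluation $H^0(C,\Omega_C(p))\to\Omega_C(p)|_p$, which we showed is the zero map and is compatible with base change. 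Hence $\mathrm{tr}(s)$ vanishes identically and $s$ factors through $\mathcal{V}_0$. I do not expect a genuine obstacle: the only point requiring care is matching the fixed trivialization $\Omega_C(p)|_p\cong\mathbb{C}$ with the residue isomorphism, so that ``value at $p$'' becomes literally ``residue at $p$'', after which the residue theorem closes the argument in families.
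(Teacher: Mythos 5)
Your proposal is correct, and its reduction is identical to the paper's: both boil the lemma down to showing that the evaluation map $H^0(C,\Omega_C(p))\to\Omega_C(p)|_p$ is zero, applied to $\omega=\mathrm{tr}(\theta')$. Where you differ is in how that vanishing is proved. The paper argues cohomologically: from the exact sequence $0\to H^0(\Omega_C)\to H^0(\Omega_C(p))\to \Omega_C(p)|_p$ it computes $h^0(\Omega_C)=h^0(\Omega_C(p))=g$ (Riemann--Roch plus Serre duality $h^1(\Omega_C(p))=h^0(\mathcal{O}_C(-p))=0$), so the first map is an isomorphism and the evaluation map vanishes. Your primary argument instead identifies $\Omega_C(p)|_p\cong\mathbb{C}$ via the residue and invokes the residue theorem: a global section of $\Omega_C(p)$ has poles only at $p$, so $\mathrm{Res}_p(\omega)=-\sum_{x\neq p}\mathrm{Res}_x(\omega)=0$. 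These are two faces of the same duality statement, and you in fact also record the paper's Riemann--Roch argument as an alternative, so your write-up subsumes the paper's proof. Two minor remarks: first, your worry about matching the chosen trivialization $\Omega_C(p)|_p\cong\mathbb{C}$ with the residue isomorphism is a non-issue, since vanishing of the fiber value is independent of any trivialization (the residue map on the fiber is an isomorphism, so $\mathrm{Res}_p(\omega)=0$ iff $\omega|_p=0$); second, your explicit promotion to families --- factoring $\mathrm{tr}(s)$ as the first Hitchin coordinate followed by the (zero) evaluation map --- is slightly more careful than the paper, which checks the vanishing pointwise (this suffices there because $\mathcal{M}^{\Omega_C(p)}(r,\chi)$ is smooth, hence reduced), and your universal formulation is a clean way to avoid even that remark.
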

\begin{proof}
It is enough to show that
for any $\Omega_C(p)$-Higgs bundle $(F, \theta)$, 
we have 
$\mathrm{tr}(\theta|_{p})=\mathrm{tr}(\theta)|_{p}=0$. 
Note that $\mathrm{tr}(\theta)$ gives a global 
section $\mathrm{tr}(\theta) \in H^0(\Omega_C(p))$. 
We have the exact sequence 
\begin{align}\label{seq:Omega}
    0 \to H^0(\Omega_C) \to H^0(\Omega_C(p)) \to \Omega_C(p)|_{p}. 
\end{align}
We have $h^0(\Omega_C)=h^0(\Omega_C(p))=g$, 
where the latter follows from 
the Riemann-Roch theorem $\chi(\Omega_C(p))=g$
and the Serre duality $h^1(\Omega_C(p))=h^0(\mathcal{O}_C(-p))=0$. 
Therefore the map $H^0(\Omega_C) \to H^0(\Omega_C(p))$ is 
an isomorphism, hence $\mathrm{tr}(\theta)|_{p}=0$. 

\end{proof}

We define the reduced stack $\mathcal{M}(r, \chi)^{\rm{red}}$ to be the derived 
zero locus of $s_0$
\begin{align*}
    \mathcal{M}(r, \chi)^{\rm{red}} \stackrel{\sim}{\to} s_0^{-1}(0). 
\end{align*}
Similarly to~\cite[Lemma~4.4]{PTK3}, 
the reduced stack is classical when $g\geq 2$. 
Let 
\begin{align*}
    \mathbb{T}(r, \chi)_w^{\rm{red}} \subset D^b(\mathcal{M}(r, \chi)^{\rm{red}})_w
\end{align*}
be the reduced quasi-BPS category, which is defined 
similarly to Definition~\ref{def:qbps}
using the following 
closed embedding instead of (\ref{emb:M})
\begin{align*}
    j\colon \mathcal{M}(r, \chi)^{\rm{red}} \hookrightarrow 
    \mathcal{M}^{\Omega_C(p)}(r, \chi). 
\end{align*}

\begin{remark}
    If $(r,\chi,w)$ is a primitive vector (i.e. satisfies the BPS condition), the category $\mathbb{T}(r, \chi)_w^{\rm{red}}$ is called \textit{a reduced BPS category}.
\end{remark}

\begin{remark}
    There is a version of Theorem \ref{thm:sod} for the category $D^b(\mathcal{M}(r, \chi)^{\rm{red}})_w$ completely analogous to \cite[Theorem 5.2]{PTK3}. 
\end{remark}
\begin{remark}
    The construction of $\mathbb{T}(r, \chi)_w^{\rm{red}}$ is Zariski (and étale) local over $M(r,\chi)$ and $B$. As in Remark \ref{remark316} and for $\alpha$ the Brauer class defined there, there is an equivalence \begin{align*}
    \mathbb{T}(r, \chi)_w^{\rm{red}}|_{M(r, \chi)^{\rm{st}}} \simeq 
 D^b(M(r, \chi)^{\rm{st}}, \alpha^w).
\end{align*} 
\end{remark}

Since the trace map (\ref{tracemap}) canonically splits, 
there are decompositions
\begin{align*}
    \mathcal{V}=\mathcal{V}_0 \oplus \mathcal{O}, \ 
    N=N_0 \times \mathbb{A}^1
\end{align*}
where $\mathcal{V}_0 \to N_0$ is the good 
moduli space, 
and the function $f_N$ factors through 
the projection 
\begin{align*}
    f_N \colon N \to N_0 \stackrel{f_{N_0}}{\to} \mathbb{C}
\end{align*}
where the first morphism is the projection. 
\begin{prop}\label{prop:MFA2}
Let $L=\Omega_C$ and $(r, \chi, w) \in \mathbb{Z}^3$ is primitive. 
Then there is a sheaf of non-commutative algebras $\mathscr{A}_{N_0}$ on $N_0$, which is 
a Cohen-Macaulay $\mathcal{O}_{N_0}$-module and of finite global dimension, such that 
there is an equivalence 
\begin{align*}
    \mathbb{T}(r, \chi)_w^{\rm{red}} \stackrel{\sim}{\to} \mathrm{MF}^{\rm{gr}}(\mathscr{A}_{N_0}, f_{N_0}). 
\end{align*}
\end{prop}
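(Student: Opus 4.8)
The plan is to follow the proof of Proposition~\ref{prop:MFA} essentially verbatim, replacing the bundle $\mathcal{V}$, its good moduli space $N$, and the function $f_N$ by their traceless analogues $\mathcal{V}_0$, $N_0$, and $f_{N_0}$. First I would record a \emph{reduced Koszul equivalence}. Since $\mathcal{M}(r, \chi)^{\rm{red}} \stackrel{\sim}{\to} s_0^{-1}(0)$ is the derived zero locus of the section $s_0$ of $\mathcal{V}_0$, the Koszul duality of (\ref{equiv:koszul}) gives an equivalence
\begin{align*}
    D^b(\mathcal{M}(r, \chi)^{\rm{red}}) \stackrel{\sim}{\to} \mathrm{MF}^{\rm{gr}}(\mathcal{V}_0, f_{N_0}).
\end{align*}
Here one uses that the fibrewise trace form $\mathcal{E}nd(\mathcal{F}_p)^{\otimes 2}\to \mathcal{O}$, $(u,v)\mapsto \mathrm{tr}(uv)$, is non-degenerate and that $\mathcal{V}_0=\ker(\mathrm{tr})$ is the orthogonal complement of the scalar line $\mathcal{O}=\mathbb{C}\cdot\mathrm{id}$ under it (for $u\in\mathcal{V}_0$ we have $\mathrm{tr}(u\cdot\mathrm{id})=\mathrm{tr}(u)=0$); hence $\mathcal{V}_0\cong\mathcal{V}_0^{\vee}$ and the potential attached to $s_0$ is precisely $f_{N_0}$. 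Restricting to the window, exactly as in (\ref{equiv:koszul2}) and using \cite[Lemma~2.6, Corollary~3.15]{PTquiver}, this descends to $\mathbb{T}(r, \chi)_w^{\rm{red}} \stackrel{\sim}{\to} \mathrm{MF}^{\rm{gr}}(\mathbb{T}_0', f_{N_0})$, where $\mathbb{T}_0' \subset D^b_{\mathbb{C}^{\ast}}(\mathcal{V}_0)$ is the subcategory of objects whose $\nu$-weights lie in the window of (the reduced analogue of) (\ref{cond:nu}).

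Next I would produce the tilting object and the algebra $\mathscr{A}_{N_0}$. \'Etale locally at a point $y$ in the deepest stratum, the tower $\mathcal{V}_0 \to \mathcal{M}^{\Omega_C(p)}(r,\chi) \to M^{\Omega_C(p)}(r,\chi)$ is the quotient by $GL(V)$ of $\mathfrak{gl}(V)^{\oplus e}\oplus(\mathcal{V}_0|_y)$, where $e=1+(2g-1)r_0^2$ as in Proposition~\ref{prop:MFA}. Writing $W=(E_0)_p$, the fibre decomposes as a $GL(V)$-representation
\begin{align*}
    \mathcal{V}_0|_y = \mathfrak{sl}(V)^{\oplus r_0^2}\oplus \mathbb{C}^{r_0^2-1},
\end{align*}
obtained from $\mathcal{V}|_y=\mathfrak{gl}(V)\otimes\mathrm{End}(W)$ by deleting the one-dimensional scalar direction $\mathbb{C}\,\mathrm{id}_V\otimes\mathbb{C}\,\mathrm{id}_W$. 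With $\mathcal{U}$ the vector bundle (\ref{vec:U}), pulled back to $\mathcal{V}_0$, and $\Lambda_0$ the set of dominant weights $\gamma$ satisfying the window condition (\ref{weight:W}) for the weights $\beta$ of this local representation, I would set $\mathbb{V}_{N_0} := \bigoplus_{\gamma \in \Lambda_0} \mathbb{S}^{\gamma}(\mathcal{U})$ and $\mathscr{A}_{N_0} := \pi_{N_0 \ast} \mathcal{E}nd(\mathbb{V}_{N_0})$.

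The heart of the argument is then identical to Propositions~\ref{prop:TA} and~\ref{prop:MFA}: the local representation $\mathfrak{gl}(V)^{\oplus e}\oplus\mathcal{V}_0|_y$ is quasi-symmetric (deleting a trivial summand changes no nonzero $T(V)$-weight, and $\mathfrak{sl}(V)$ carries the same nonzero weights as $\mathfrak{gl}(V)$), so the results of \v{S}penko--Van den Bergh~\cite{SVdB} apply. The primitivity of $(r, \chi, w)$ forces the highest weights in Remark~\ref{rmk:domi} into the strict interior of (\ref{weight:W}), whence by \cite[Proposition~4.4]{SVdB} (with the parity bookkeeping of \cite[Lemma~5.11]{PTquiver} and \cite[Lemma~8.7]{PTtop}) the algebra $\mathscr{A}_{N_0}$ is a Cohen--Macaulay $\mathcal{O}_{N_0}$-module of finite global dimension, and $\pi_{N_0\ast}\mathcal{H}om(\mathbb{V}_{N_0}, -)$ is an equivalence $\mathbb{T}_0' \stackrel{\sim}{\to} D^b_{\mathbb{C}^{\ast}}(\mathscr{A}_{N_0})$. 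Passing to graded matrix factorizations and combining with the reduced Koszul equivalence yields $\mathbb{T}(r, \chi)_w^{\rm{red}} \stackrel{\sim}{\to} \mathrm{MF}^{\rm{gr}}(\mathscr{A}_{N_0}, f_{N_0})$.

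The step I expect to be the main obstacle is the local computation of $\mathcal{V}_0|_y$ together with the verification that the representation stays quasi-symmetric after removing the trace direction, so that the strict-interior consequence of primitivity — the input guaranteeing both the Cohen--Macaulay property and finite global dimension — continues to hold; the trivial summand $\mathbb{C}^{r_0^2-1}$ contributes only a smooth affine factor to $N_0$ and is harmless.
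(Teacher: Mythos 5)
Your proposal is correct and coincides with the paper's proof: the paper's argument for Proposition~\ref{prop:MFA2} is literally ``the same argument as Proposition~\ref{prop:MFA}'', i.e.\ rerun the Koszul-duality-plus-tilting argument with $\mathcal{V}$, $N$, $f_N$ replaced by $\mathcal{V}_0$, $N_0$, $f_{N_0}$, which is exactly what you do. Your supporting details — the trace-form identification $\mathcal{V}_0\cong\mathcal{V}_0^{\vee}$, the local computation $\mathcal{V}_0|_y=\mathfrak{sl}(V)^{\oplus r_0^2}\oplus\mathbb{C}^{r_0^2-1}$, and the observation that dropping the trace direction changes no nonzero $T(V)$-weights so the \v{S}penko--Van den Bergh input and the strict-interior consequence of primitivity persist — are accurate and fill in precisely the steps the paper leaves implicit.
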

\begin{proof}
The same argument used to prove Proposition~\ref{prop:MFA2} also applies here. 
\end{proof}

The following is a version of Corollary~\ref{thm:proper0} for the reduced BPS category, 
and it is the Higgs bundle analogue of~\cite[Theorem~1.2]{PTK3}. 

\begin{thm}\label{thm:proper}
Let $L=\Omega_C$, $g \geq 2$, and let $(r, \chi, w)$ be a primitive vector. 
The reduced BPS category $\mathbb{T}(r, \chi)^{\rm{red}}_w$
is smooth, proper over $M(r, \chi)$, and there is a relative 
Serre functor $S_{\mathbb{T}/M}$ satisfying $S_{\mathbb{T}/M} \cong \id$. 
\end{thm}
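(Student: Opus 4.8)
The plan is to run the argument of Theorem~\ref{thm:proper0}, with the non-commutative crepant resolution of Proposition~\ref{prop:TA} replaced by the non-commutative matrix factorization model of Proposition~\ref{prop:MFA2}. By that proposition there is an equivalence
\[
\mathbb{T}(r, \chi)^{\rm{red}}_w \stackrel{\sim}{\to} \mathrm{MF}^{\rm{gr}}(\mathscr{A}_{N_0}, f_{N_0}),
\]
with $\mathscr{A}_{N_0}$ a sheaf of $\mathcal{O}_{N_0}$-algebras which is a maximal Cohen--Macaulay $\mathcal{O}_{N_0}$-module of finite global dimension, and $f_{N_0}\colon N_0 \to \mathbb{C}$. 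It therefore suffices to prove, for this category, smoothness over $\mathbb{C}$, properness over $M = M(r, \chi)$, and that the relative Serre functor over $M$ is the identity.

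Smoothness I would deduce from the finiteness of the global dimension of $\mathscr{A}_{N_0}$: the Van den Bergh resolution plays the role of a regular ambient ring, so that $D^b_{\mathbb{C}^{\ast}}(\mathscr{A}_{N_0})$ is a smooth dg-category and smoothness is inherited by the graded matrix factorization category, exactly as for matrix factorizations of a potential on a regular scheme. For properness over $M$ the morphism complexes in $\mathbb{T}(r,\chi)^{\rm{red}}_w$ are bounded---a consequence of finite global dimension, i.e. of smoothness---while the good moduli space morphism $\pi\colon \mathcal{M}(r,\chi)^{\rm{red},\rm{cl}}\to M$ sends coherent sheaves to coherent sheaves; hence $\pi_{\ast}R\mathcal{H}om(E_1,E_2)$ is coherent over $M$ for $E_1, E_2 \in \mathbb{T}(r,\chi)^{\rm{red}}_w$. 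The feature making this argument global---and the resulting statement slightly stronger than its K3 counterpart in \cite{PTK3}---is the \emph{global} presentation of $\mathcal{M}(r,\chi)$ as a critical locus \cite{KinjoMasuda, MSendscopic}, which underlies the global matrix factorization model of Proposition~\ref{prop:MFA2}; on a K3 surface only a local shifted-symplectic Darboux chart is available.

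Finally I would establish $S_{\mathbb{T}/M}\cong\id$, as in Theorem~\ref{thm:proper0}. The two structural inputs are that $M$ is Gorenstein with $\omega_M=\mathcal{O}_M$ by Lemma~\ref{lem:gorenstein}, and that $\mathscr{A}_{N_0}$ is a \emph{symmetric} maximal Cohen--Macaulay module: the window weights $\Lambda$ defining it are centred and stable under $\gamma\mapsto-\gamma$, reflecting the $(-1)$-shifted symplectic, equivalently Calabi--Yau threefold, self-duality of $\mathcal{M}(r,\chi)$. Feeding these into the matrix factorization analogue of Lemma~\ref{assum:perf} should give the functorial duality \eqref{isom:STM} for $\mathbb{T}(r,\chi)^{\rm{red}}_w$; the Calabi--Yau shift over $B$ then follows from Grothendieck duality along $M\to B$ as in Corollary~\ref{cor:proper0}.

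The step I expect to be the main obstacle is the identity $S_{\mathbb{T}/M}\cong\id$. Smoothness and properness are comparatively formal consequences of the matrix factorization model, but the Calabi--Yau property encodes the full self-duality of the BPS category and, in the $\mathbb{Z}$-graded matrix factorization setting, requires tracking the grading twist $(1)$ in $\mathrm{MF}^{\rm{gr}}$ to confirm that it contributes a trivial net shift, so that the relative Serre functor is genuinely the identity and not a nonzero shift. This is precisely the point at which the argument goes beyond the $l>2g-2$ case of Theorem~\ref{thm:proper0}, where no potential and no grading twist are present.
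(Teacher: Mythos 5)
Your overall architecture does track the paper's proof: the paper likewise works through the non-commutative matrix factorization model $\mathbb{T}(r,\chi)^{\rm{red}}_w \simeq \mathrm{MF}^{\rm{gr}}(\mathscr{A}_{N_0}, f_{N_0})$ of Proposition~\ref{prop:MFA2}, deduces smoothness from the homological homogeneity (finite global dimension) of $\mathscr{A}_{N_0}$, and obtains $S_{\mathbb{T}/M}\cong\id$ by running the argument of \cite[Theorem~7.4]{PTK3}, where — as you correctly identify — the global critical-locus presentation \eqref{equiv:stack} is exactly what upgrades the étale-local triviality of the Serre functor available in the K3 case to a global statement. (One small correction of emphasis: in the paper this globality is what the Serre-functor step needs, not the smoothness/properness steps; and the paper handles the Serre functor by applying the K3 argument to the global derived zero locus chart rather than by tracking the twist $(1)$ inside $\mathrm{MF}^{\rm{gr}}$, so your worry about the grading twist, while legitimate for your route, is sidestepped in the paper's.)

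The genuine gap is in your properness step. You claim the morphism complexes in $\mathbb{T}(r,\chi)^{\rm{red}}_w$ are bounded ``as a consequence of finite global dimension, i.e.\ of smoothness,'' and that properness over $M$ then follows formally because $\pi_{\ast}$ preserves coherence. Neither implication holds. Smoothness of a dg-category does not bound Hom complexes (e.g.\ $\mathrm{Perf}(\mathbb{C}[u])$ with $\deg u = 2$ is smooth with unbounded endomorphism algebra), and the internal $R\mathcal{H}om$ on the singular stack $\mathcal{M}(r,\chi)^{\rm{red}}$ is genuinely unbounded: only its pushforward to $M$ is bounded, and only because of the window condition. The paper itself contains a counterexample to your reasoning: the \emph{non-reduced} category $\mathbb{T}(r,\chi)_w$ admits exactly the same kind of model $\mathrm{MF}^{\rm{gr}}(\mathscr{A}_{N}, f_{N})$ with $\mathscr{A}_N$ maximal Cohen--Macaulay of finite global dimension (Proposition~\ref{prop:MFA}), yet by Remark~\ref{rmk:coprime2}, for coprime $(r,\chi)$ one has $\mathbb{T}(r,\chi)_w \simeq D^b(M(r,\chi)\times \Spec\mathbb{C}[\epsilon])$ with $\deg\epsilon=-1$, whose Hom complexes — already after pushforward to $M(r,\chi)$ — contain $\Ext_{\mathbb{C}[\epsilon]}(\mathbb{C},\mathbb{C})\cong\mathbb{C}[u]$ and are unbounded, so it is \emph{not} proper over $M$. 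The difference between $N = N_0\times\mathbb{A}^1$ (potential pulled back, with a zero direction) and $N_0$ shows that properness of the reduced category cannot be a formal consequence of the finite-global-dimension MF model; it is a substantive fact, which the paper proves via the weight estimates of \cite[Proposition~5.9]{PTquiver} through the argument of \cite[Theorem~6.7]{PTK3}. This missing input is the heart of the properness claim, and your assessment that properness is ``comparatively formal'' while the Serre functor is the main obstacle inverts where the actual work lies.
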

\begin{proof}
The smoothness follows from that the algebra $\mathscr{A}_{N_0}$ in Proposition~\ref{prop:MFA} is 
homologically homogeneous. The properness follows by the 
same argument of~\cite[Theorem~6.7]{PTK3}, which 
itself follows from the local result in~\cite[Proposition~5.9]{PTquiver}. 

The triviality of the relative Serre functor also follows from the same 
argument of~\cite[Theorem~7.4]{PTK3}. 
Note that in loc. cit. the relative Serre functor is shown to be 
trivial only étale locally on the good moduli space. 
This is because in the K3 surface case, we only have étale local 
description of the derived moduli stack as derived zero locus. 
In our situation, we can write $\mathcal{M}^L(r, \chi)$ as a global 
derived zero locus as in (\ref{equiv:stack}). We can then use the above 
global chart and apply the argument of~\cite[Theorem~7.4]{PTK3} to deduce that the relative Serre functor is indeed trivial. 
\end{proof}

\begin{cor}\label{cor:proper}
    In the situation of Theorem~\ref{thm:proper}, 
    the relative Serre functor $S_{\mathbb{T}/B}$ of $\mathbb{T}(r, \chi)_w^{\rm{red}}$ 
    over $B$ is 
    isomorphic to $(-) \mapsto (-)[g^{\rm{sp}}]$. 
\end{cor}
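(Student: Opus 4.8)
The plan is to deduce Corollary~\ref{cor:proper} from Theorem~\ref{thm:proper} by passing from the relative Serre functor over $M=M(r,\chi)$ to the relative Serre functor over $B$ via Grothendieck duality for the Hitchin map. Theorem~\ref{thm:proper} provides $S_{\mathbb{T}/M}\cong\id$, which is the statement that the inner homomorphism over $M$ is self-dual, and the structure of the argument should exactly mirror the passage from Theorem~\ref{thm:proper0} to Corollary~\ref{cor:proper0} in the $l>2g-2$ case. The only new ingredient needed over that earlier corollary is that the reduced BPS category is proper over $B$ and that the relative dualizing complex of the composite map $\mathbb{T}(r,\chi)^{\mathrm{red}}_w\to M\to B$ contributes precisely the shift $[g^{\mathrm{sp}}]$.

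First I would recall that composition of relative Serre functors along a tower $\mathbb{T}\to M\to B$ twists by the relative dualizing complex of the map $h_M\colon M^L(r,\chi)\to B$. Concretely, for $E_1,E_2\in\mathbb{T}(r,\chi)^{\mathrm{red}}_w$, I would start from the isomorphism (\ref{isom:STM}) of Theorem~\ref{thm:proper},
\begin{align*}
    \pi_{\ast}R\mathcal{H}om(E_1,E_2)\cong \mathcal{R}\mathcal{H}om_M\!\left(\pi_{\ast}R\mathcal{H}om(E_2,E_1),\mathcal{O}_M\right),
\end{align*}
then apply $Rh_{M\ast}$ and invoke Grothendieck duality for $h_M$. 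Grothendieck duality converts $Rh_{M\ast}\mathcal{R}\mathcal{H}om_M(-,\mathcal{O}_M)$ into $\mathcal{R}\mathcal{H}om_B(Rh_{M\ast}(-),\omega_{M/B})$, where $\omega_{M/B}$ is the relative dualizing complex of $h_M$. Since $Rh_\ast = Rh_{M\ast}\circ\pi_\ast$, this yields
\begin{align*}
    Rh_{\ast}R\mathcal{H}om(E_1,E_2)\cong \mathcal{R}\mathcal{H}om_B\!\left(Rh_{\ast}R\mathcal{H}om(E_2,E_1),\omega_{M/B}\right),
\end{align*}
so the proof reduces to identifying $\omega_{M/B}\cong\mathcal{O}_B[g^{\mathrm{sp}}]$.

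The key computation is that $\omega_{M/B}$ is the trivial line bundle shifted by the relative dimension $g^{\mathrm{sp}}$ of $h_M$. For this I would use Lemma~\ref{lem:gorenstein}, which gives that $M^L(r,\chi)$ is Gorenstein with trivial dualizing sheaf $\omega_M=\mathcal{O}_M$; since $B$ is smooth (indeed an affine space) with trivial canonical bundle, the relative dualizing sheaf $\omega_{M/B}=\omega_M\otimes h_M^{\ast}\omega_B^{-1}$ is $\mathcal{O}_M$. The shift is exactly the relative dimension of the Hitchin fibration, which equals $g^{\mathrm{sp}}$ by the formula (\ref{formula:gD}) for the arithmetic genus of the spectral curve (the generic Hitchin fiber being a torsor over the relative Jacobian of the smooth spectral curve, of dimension $g^{\mathrm{sp}}$). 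Pushing $\omega_{M/B}\cong\mathcal{O}_M[g^{\mathrm{sp}}]$ through the duality isomorphism above produces exactly $\mathcal{O}_B[g^{\mathrm{sp}}]$, which is the claim, and shows $S_{\mathbb{T}/B}\cong(-)[g^{\mathrm{sp}}]$.

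The main obstacle, and the place where care is required, is verifying that Grothendieck duality applies in this relative singular setting and that properness over $B$ holds for the reduced BPS category. Properness over $M$ is given by Theorem~\ref{thm:proper}, and since $h_M\colon M\to B$ is proper (the Hitchin map is proper for $L=\Omega_C$), composing gives properness over $B$; but one must confirm the inner-hom objects land in $D^b(B)$ after pushforward, which uses the coherence of $\mathscr{A}_{N_0}$ and the Cohen–Macaulay property from Proposition~\ref{prop:MFA2}. The subtle point is that $M$ is singular, so Grothendieck duality must be invoked in its full generality for the (possibly non-flat, but flat by the Gorenstein plus equidimensional fiber structure) proper morphism $h_M$; once one knows $h_M$ is flat with Gorenstein fibers—which follows from $M$ being Gorenstein together with $B$ smooth and $h_M$ equidimensional—the relative dualizing complex is a shifted line bundle, and the identification is routine. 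I expect the proof to be short, citing Corollary~\ref{cor:proper0} as the direct template.
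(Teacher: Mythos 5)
Your proposal is correct and follows essentially the same route as the paper: the paper's proof of Corollary~\ref{cor:proper} simply cites Grothendieck duality for $h_M \colon M(r,\chi) \to B$ together with Theorem~\ref{thm:proper}, exactly as in the template Corollary~\ref{cor:proper0}, where the triviality of the relative dualizing sheaf comes from Lemma~\ref{lem:gorenstein} (applicable here since $g \geq 2$ gives $l > 0$). Your additional verifications (the shift $[g^{\rm{sp}}]$ being the relative dimension of the Hitchin map, and properness of $h_M$) are correct fillings-in of details the paper leaves implicit.
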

\begin{proof}
The proof is similar to Corollary~\ref{cor:proper0}, using the Grothendieck 
duality for $h_M \colon M(r, \chi) \to B$ and Theorem~\ref{thm:proper}. 
\end{proof}

\begin{remark}\label{rmk:coprime3}
If $(r, \chi)$ is coprime, then by Remark~\ref{rmk:coprime2},
taking the reduced stack corresponds to removing the 
factor of $\Spec \mathbb{C}[\epsilon]$ on $M(r,\chi)\times \Spec \mathbb{C}[\epsilon]$. Therefore 
there is an equivalence
\begin{align*}
    \mathbb{T}(r, \chi)_w^{\rm{red}} \simeq D^b(M(r, \chi)),
\end{align*}
i.e. the reduced BPS category is the derived category of usual Higgs moduli space, which is 
a smooth and Calabi-Yau fibration over $B$. 
However, if $(r, \chi)$ is not coprime but $(r, \chi, w)$ is primitive, 
e.g. $(r, 0, 1)$ for $r>1$, then the category 
$\mathbb{T}(r, \chi)_w^{\rm{red}}$ provides a new smooth dg-category 
which is proper and Calabi-Yau over $B$. 
\end{remark}

\section{The conjectural symmetry of quasi-BPS categories: the $\mathrm{GL}$ case}
In this section, we state our main conjectures on 
symmetries (which interchange the degree and the weight) of quasi-BPS categories for $GL(r)$-bundles. 
As mentioned in the introduction, these conjectures can be regarded as versions of the D/K equivalence~\cite{B-O2, MR1949787} or SYZ mirror symmetry~\cite{SYZ} for moduli of Higgs bundles, or as a version of the Dolbeault Langlands equivalence~\cite{DoPa} for categories of bounded complexes of coherent sheaves.

\subsection{Fourier-Mukai transforms for Picard stacks}
We first recall the Fourier-Mukai equivalence for 
moduli stacks of line bundles on smooth projective curves \cite{Mu1, DPA}. 
Let $D$ be a smooth projective curve of genus $g(D)$, 
and let
\begin{align}\label{pic:gerb}
    \mathcal{P}ic^e(D) \to \mathrm{Pic}^e(D)
\end{align}
be the classical moduli stack of 
line bundles on $D$ of degree $e$
and its good moduli space. 
Note that $\mathrm{Pic}^e(D)$ is an abelian variety of dimension $g(D)$, 
and the morphism (\ref{pic:gerb}) is a trivial $\mathbb{C}^{\ast}$-gerbe. 
Let 
\begin{align*}
\mathcal{L} \to D \times \mathcal{P}ic^e(D) 
\end{align*}
be the universal line bundle. 
Following~\cite{Ardual}, we define the following line bundle 
on $\mathcal{P}ic^{e_1}(D) \times \mathcal{P}ic^{e_2}(D)$:
\begin{align}\label{line:P}
    \mathcal{P}=\det Rp_{13\ast}(p_{12}^{\ast}\mathcal{L}_1 \otimes
    p_{23}^{\ast}\mathcal{L}_2) &\otimes 
 \det Rp_{13\ast}(p_{12}^{\ast}\mathcal{L}_1)^{-1} \\ \notag 
 &\otimes 
  \det Rp_{13\ast}(p_{23}^{\ast}\mathcal{L}_2)^{-1} 
  \otimes \det Rp_{13\ast}\mathcal{O}. 
\end{align}
Here $\mathcal{L}_i$ is the universal line bundle on 
$D \times \mathcal{P}ic^{e_i}(D)$ 
and $p_{ij}$ is the projection from 
$\mathcal{P}ic^{e_1}(D) \times D \times \mathcal{P}ic^{e_2}(D)$
onto the corresponding factors. 
Informally over a pair $(L_1, L_2)$
where $L_i$ is a line bundle on $D$ of degree $e_i$, we have 
\begin{align*}
    \mathcal{P}|_{(L_1, L_2)}=\det R\Gamma(L_1 \otimes L_2)
    \otimes \det R\Gamma(L_1)^{-1} \otimes \det R\Gamma(L_2)^{-1}
    \otimes \det R\Gamma(O_D). 
\end{align*}
It has $(\mathbb{C}^{\ast} \times \mathbb{C}^{\ast})$-weight $(e_2, e_1)$, and
induces a Fourier-Mukai equivalence~\cite{Mu1}:
\begin{align}\label{FM:smooth}
    D^b(\mathcal{P}ic^{e_1}(D))_{-e_2} \stackrel{\sim}{\to} 
    D^b(\mathcal{P}ic^{e_2}(D))_{e_1}. 
\end{align}
\begin{remark}
    By writing $L_2=\mathcal{O}_C(\sum_{i=1}^k a_i x_i)$
    for distinct $x_1, \ldots, x_k \in C$, we can also write, 
    see~\cite[Section~1.1]{Ardual}:
    \begin{align*}
        \mathcal{P}|_{(L_1, L_2)}=\otimes_{i=1}^k L_1^{\otimes a_k}|_{x_k}. 
    \end{align*}
\end{remark}

\subsection{Fourier-Mukai equivalences for families of smooth spectral curves}
The equivalence (\ref{FM:smooth}) naturally extends to the smooth 
family of spectral curves \cite{DPA}. 
For simplicity, we assume that $l>2g-2$, but the same construction also applies to the case $L=\Omega_C$ with $\mathcal{M}^L(r,\chi)$ replaced by $\mathcal{M}(r,\chi)^{\mathrm{red}}$.
Let \[\mathcal{C} \to B\] be
the universal spectral curve
and let
\begin{align*}
    \mathcal{E} \in \Coh(\mathcal{M}^L(r, \chi)\times_B \mathcal{C})
\end{align*}
the universal sheaf. 
We denote by $B^{\rm{sm}} \subset B$
the open subset corresponding to smooth and irreducible spectral 
curves. Let 
$\mathcal{E}^{\rm{sm}}$ be the restriction of $\mathcal{E}$ to $B^{\rm{sm}}$. 
We denote by 
\begin{align*}
    \mathcal{M}^L(r, \chi)^{\rm{sm}}:=
    \mathcal{M}^L(r, \chi)\times_B B^{\rm{sm}}, \ 
    M^L(r, \chi)^{\rm{sm}}:=M^L(r, \chi)\times_B B^{\rm{sm}}. 
\end{align*}
Then there is an isomorphism
\begin{align*}
    \mathcal{M}^L(r, \chi)^{\rm{sm}} \cong \mathcal{P}ic^{e}(\mathcal{C}^{\rm{sm}}/B^{\rm{sm}})
\end{align*}
where $e=\chi+g^{\rm{sp}}-1$ and the right hand side is the relative Picard stack, 
which is a $\mathbb{C}^{\ast}$-gerbe
\begin{align}\label{pic:gerb0}
    \mathcal{P}ic^e(\mathcal{C}^{\rm{sm}}/B^{\rm{sm}}) \to 
    \mathrm{Pic}^e(\mathcal{C}^{\rm{sm}}/B^{\rm{sm}}). 
\end{align}
\begin{remark}
From Remarks~\ref{rmk:open} and~\ref{rmk:open2}, there are equivalences
\begin{align}\label{rest:sm}
    \mathbb{T}^L(r, \chi)_w|_{B^{\rm{sm}}}\simeq 
    D^b(\mathcal{M}^L(r, \chi)^{\rm{sm}})_w \simeq D^b(\mathrm{Pic}^e(\mathcal{C}^{\rm{sm}}/B^{\rm{sm}}), \alpha^w),
\end{align}
where $\alpha$ is the Brauer class classifying the $\mathbb{C}^{\ast}$-gerbe (\ref{pic:gerb0}). 
\end{remark}

Note that the formula (\ref{line:P}) using the universal 
sheaf $\mathcal{E}^{\rm{sm}}$ determines the 
line bundle 
\begin{align}\label{line:P2}
    \mathcal{P}^{\rm{sm}} \to \mathcal{M}^L(r, w+1-g^{\rm{sp}})^{\rm{sm}}\times_{B^{\rm{sm}}} \mathcal{M}^L(r, \chi)^{\rm{sm}}
\end{align}
of bi-weight $(\chi+g^{\rm{sp}}-1, w)$. More precisely, 
the line bundle $\mathcal{P}^{\rm{sm}}$ is given by 
\begin{align}\label{line:P22}
    \mathcal{P}^{\rm{sm}}=\det Rp_{13\ast}(p_{12}^{\ast}\mathcal{E}' \otimes
    p_{23}^{\ast}\mathcal{E}) &\otimes 
 \det Rp_{13\ast}(p_{12}^{\ast}\mathcal{E}')^{-1} \\ \notag 
 &\otimes 
  \det Rp_{13\ast}(p_{23}^{\ast}\mathcal{E})^{-1} 
  \otimes \det Rp_{13\ast}\mathcal{O}
\end{align}
where $p_{ij}$ are the projections 
\begin{align*}
\xymatrix{
& \mathcal{M}'^{\rm{sm}}\times_{B^{\rm{sm}}} \times \mathcal{C}^{\rm{sm}} \times_{B^{\rm{sm}}}
\times \mathcal{M}^{\rm{sm}} \ar[ld]^-{p_{12}} \ar[d]_-{p_{13}} \ar[rd]_-{p_{23}} & \\
\mathcal{M}'^{\rm{sm}}\times_{B^{\rm{sm}}} \times \mathcal{C}^{\rm{sm}} & 
\mathcal{M}'^{\rm{sm}}\times_{B^{\rm{sm}}}
\times \mathcal{M}^{\rm{sm}} & 
\mathcal{C}^{\rm{sm}} \times_{B^{\rm{sm}}}
\times \mathcal{M}^{\rm{sm}}.
}
\end{align*}
Here for simplicity, we have written that
\begin{align}\label{write:simple}
\mathcal{M}=\mathcal{M}^L(r, \chi), \ \mathcal{M}'=\mathcal{M}^L(r, w+1-g^{\rm{sp}}),
\end{align}
and 
$\mathcal{E}'$ is the universal sheaf on 
$\mathcal{C}\times_B \mathcal{M}'$. 
Similarly to (\ref{FM:smooth}), it determines the Fourier-Mukai equivalence 
\begin{align}\label{equiv:family}
 \Phi_{\mathcal{P}^{\rm{sm}}} \colon   D^b(\mathcal{M}^L(r, w+1-g^{\rm{sp}})^{\rm{sm}})_{-\chi+1-g^{\rm{sp}}}\stackrel{\sim}{\to} D^b(\mathcal{M}^L(r, \chi)^{\rm{sm}})_w 
    \end{align}
    by the formula 
    \begin{align*}
        \Phi_{\mathcal{P}^{\rm{sm}}}(-)=Rp_{\mathcal{M}\ast}(p_{\mathcal{M}'}^{\ast}(-) \otimes \mathcal{P}^{\rm{sm}}).
    \end{align*}
    Here $p_{\mathcal{M}}$, $p_{\mathcal{M}'}$ are the projections 
    from $\mathcal{M}'^{\rm{sm}}\times_{B^{\rm{sm}}} \mathcal{M}^{\rm{sm}}$ onto the corresponding factors. 

    \subsection{The main conjectures}
    We now state our main conjectures. 
By (\ref{rest:sm}), the equivalence (\ref{equiv:family}) can be rewritten as 
\begin{align}\label{equiv:Bsm}
    \mathbb{T}^L(r, w+1-g^{\rm{sp}})_{-\chi+1-g^{\rm{sp}}}|_{B^{\rm{sm}}}
\stackrel{\sim}{\to} 
    \mathbb{T}^L(r, \chi)_w|_{B^{\rm{sm}}}. 
\end{align}
By Theorem~\ref{thm:proper0} and Corollary~\ref{cor:proper0},
under the BPS condition both sides (before restriction to $B^{\rm{sm}}$) 
are smooth dg-categories, which are proper 
and Calabi-Yau over $B$. Therefore as in the discussion of Subsection~\ref{subsec:intro:main}, 
we conjecture that the equivalence (\ref{equiv:Bsm}) extends to an equivalence of BPS categories. 
\begin{conj}\label{conj:T0}
Let $l>2g-2$ and suppose that $(r, \chi, w)$ satisfies the BPS condition (see Definition \ref{def:bps}). 
Then there is a $B$-linear equivalence 
\begin{align}\label{equiv:conjT}
\mathbb{T}^L(r, w+1-g^{\rm{sp}})_{-\chi+1-g^{\rm{sp}}}
\stackrel{\sim}{\to} 
    \mathbb{T}^L(r, \chi)_w 
\end{align}
which extends the equivalence (\ref{equiv:Bsm}), i.e. it commutes with 
the restrictions to $B^{\rm{sm}}$. 
\end{conj}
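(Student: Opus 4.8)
The plan is to realize the desired $B$-linear equivalence \eqref{equiv:conjT} as a relative Fourier-Mukai transform whose kernel extends the Poincaré line bundle $\mathcal{P}^{\rm{sm}}$ of \eqref{line:P2} from $B^{\rm{sm}}$ across the entire Hitchin base. Since convolution along the fibre product over $B$ is automatically $B$-linear and already induces the equivalence \eqref{equiv:Bsm} over $B^{\rm{sm}}$, the problem splits into three tasks: (i) extend $\mathcal{P}^{\rm{sm}}$ to an integral kernel $\mathcal{P}$ on $\mathcal{M}^L(r, w+1-g^{\rm{sp}})\times_B \mathcal{M}^L(r, \chi)$; (ii) verify that the induced transform $\Phi_{\mathcal{P}}$ carries $\mathbb{T}^L(r, w+1-g^{\rm{sp}})_{-\chi+1-g^{\rm{sp}}}$ into $\mathbb{T}^L(r, \chi)_w$; and (iii) prove that $\Phi_{\mathcal{P}}$ is an equivalence.

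For task (i), I would first extend $\mathcal{P}^{\rm{sm}}$ over the regular locus, where at least one factor parametrizes line bundles on possibly singular spectral curves; this yields the line bundles $\mathcal{P}^{\rm{reg}}$ and $\mathcal{P}^{\sharp}$ discussed before Theorem~\ref{thm:intro1.5}. The remaining step is to extend $\mathcal{P}^{\sharp}$ across the non-regular locus, ideally as a canonical sheaf that is flat over $B$: following \cite{MLi}, for $r=2$ this extension exists and is the maximal Cohen-Macaulay sheaf $\mathcal{P}$, while for general $r$ one would seek such an extension via a reflexive hull or a pushforward from a resolution of the product. For task (ii), I would reduce to the local quiver model of $\pi$ from Subsection~\ref{subsec:loc}: by Lemma~\ref{lem:qbps}, membership in a quasi-BPS category is the window condition \eqref{ineq:n} on one-parameter subgroups, and one checks that convolution with the local model of $\mathcal{P}$ preserves these windows, the bookkeeping being controlled by the fact that $\mathcal{P}$ has bi-weight $(\chi+g^{\rm{sp}}-1, w)$ as recorded after \eqref{line:P22}.

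For task (iii), the leverage comes from Theorem~\ref{thm:proper0} and Corollary~\ref{cor:proper0}: under the BPS condition both sides are smooth and proper over $B$ with relative Serre functor $[g^{\rm{sp}}]$, so any fully faithful $B$-linear functor between them has admissible, hence (by the Calabi-Yau property) orthogonally complemented, essential image. I would first establish full faithfulness by upgrading Theorem~\ref{thm:intro1.5}, whose kernel already gives a fully faithful functor on the stable regular locus, to the whole stack; since $\Phi_{\mathcal{P}}$ is an equivalence over the dense open $B^{\rm{sm}}$ it is in particular nonzero, so indecomposability of BPS categories (Lemma~\ref{lem:sodT}) forces the orthogonal complement of its image to vanish, yielding the equivalence. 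The main obstacle is task (i) together with the global full faithfulness in (iii): producing the correct integral kernel over the singular Hitchin fibres, and above all over the global nilpotent cone $h^{-1}(0)$, where neither side is an abelian fibration and the Poincaré bundle degenerates. For $r=2$ the Cohen-Macaulay extension of \cite{MLi} is exactly what makes \eqref{intro:ff}, and hence Theorem~\ref{thm:intro2}, available, and it is the absence of such a kernel (and of the accompanying full-faithfulness argument) for general $r$ that keeps \eqref{equiv:conjT} conjectural.
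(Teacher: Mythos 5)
The statement you were asked to prove is Conjecture~\ref{conj:T0}, which is open: the paper itself contains no proof of it in this generality, only partial results for $r=2$ (Theorem~\ref{thm:r=21} and Corollary~\ref{cor:r=2}, conditional on Assumption~\ref{conj:CMext}) and for $C=\mathbb{P}^1$ (Theorem~\ref{thm:CMext}, Corollary~\ref{cor:CMext}). So the honest comparison is between your strategy and the paper's program, and there the match is close: your task (i) is exactly Li's maximal Cohen--Macaulay extension (Theorem~\ref{thm:mao}); your task (ii) is Proposition~\ref{prop:Px} on the regular locus and Assumption~\ref{conj:CMext} for the full kernel; your task (iii) is the paper's two-step argument of full faithfulness via reduction to $B^{\rm{red}}$ (Lemma~\ref{lem:codim}, Proposition~\ref{prop:CMQ}, and the duality of \cite{MRVF, MRVF2} over reduced spectral curves) followed by Proposition~\ref{prop:equivT} and the indecomposability Lemma~\ref{lem:sodT}. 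Your closing diagnosis — that the missing kernel for general $r$ and the global full faithfulness are precisely what keeps the statement conjectural — is also the paper's own assessment.

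Two places where your outline understates the difficulty. First, your task (ii) is not a routine window-bookkeeping reduction to the local quiver model: the paper cannot verify that Li's Cohen--Macaulay extension lies in the product of quasi-BPS categories and must leave this as Assumption~\ref{conj:CMext}, proving it only when it is vacuous (both $\chi$ and $w+l$ odd, so the quasi-BPS categories are the full derived categories) or for $C=\mathbb{P}^1$, where the argument is genuinely different from weight bookkeeping — it uses twisted Abel--Jacobi maps, the weight bound of Lemma~\ref{lem:EinT}, and Haiman's isospectral Hilbert scheme to identify the kernel with a descent of $\mathcal{Q}^{[l-1]}$. Second, your fallback for general $r$ (a reflexive hull or pushforward from a resolution) is not known to produce a kernel with the properties the argument needs: flatness over both factors and the maximal Cohen--Macaulay property are the entire content of Li's $r=2$ theorem, and moreover the codimension estimate $\mathrm{codim}(B\setminus B^{\rm{red}})>g^{\rm{sp}}$ that drives Proposition~\ref{prop:CMQ} holds only under the numerical restrictions $r=2$, $l>3g-2$ or $r=3$, $l>4g-3$ of Lemma~\ref{lem:codim}; for larger $r$ it fails, so even granting a kernel, the full-faithfulness step of your task (iii) would not close by this route.
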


  Using the periodicity in Lemma~\ref{lem:period},
  we can remove the $g^{\rm{sp}}$ terms in the degree and weight as follows: 

\begin{lemma}\label{lem:simple}
An equivalence (\ref{equiv:conjT}) is simplified 
to either 
\begin{align}\label{equiv:simple}
&\mathbb{T}^L(r, w)_{-\chi} \stackrel{\sim}{\to} \mathbb{T}^L(r, \chi)_w  
\mbox{ if }l \mbox{ is even or }
    lr \mbox{ is odd, or} \\
&\notag    \mathbb{T}^L(r, w+r/2)_{-\chi+r/2} \stackrel{\sim}{\to} \mathbb{T}^L(r, \chi)_w  
\mbox{ if } l \mbox{ is odd and }
    r \mbox{ is even}.
\end{align}
    \end{lemma}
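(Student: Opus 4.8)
The plan is to obtain both lines of (\ref{equiv:simple}) from the equivalence (\ref{equiv:conjT}) by precomposing with a periodicity equivalence from Lemma~\ref{lem:period}, so that the whole argument reduces to parity bookkeeping. First I would set $m:=1-g^{\rm{sp}}$ and use the genus formula (\ref{formula:gD}) to rewrite it as
\begin{align*}
m=(1-g)r-\frac{rl(r-1)}{2}=r\left[(1-g)-\frac{l(r-1)}{2}\right],
\end{align*}
so that the left hand side of (\ref{equiv:conjT}) is $\mathbb{T}^L(r, w+m)_{-\chi+m}$, with Euler characteristic $w+m$ and weight $-\chi+m$. I would also record $k:=2(1-g)-l(r-1)$, so that $m=\frac{rk}{2}$.

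The key point is a dichotomy for the parity of $m/r$. The number $\frac{l(r-1)}{2}$ is an integer precisely when $l(r-1)$ is even, that is, when $l$ is even or $r$ is odd; in this case $r\mid m$. When $l$ is odd and $r$ is even, $l(r-1)$ is odd, so $m/r\in\frac12+\mathbb{Z}$ and instead $m-\frac r2=r\cdot\frac{k-1}{2}$ is a multiple of $r$, since $k$ is then odd.

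In the first case ($l$ even, or $lr$ odd) I would apply Lemma~\ref{lem:period} to identify $\mathbb{T}^L(r, w)_{-\chi}$ with $\mathbb{T}^L(r, w+m)_{-\chi+m}$ by shifting both the Euler characteristic and the weight by $m$: one takes $a=m/r$, which is an integer as $r\mid m$, and $b=m/d$, where $d=\gcd(r,w)=\gcd(r,w+m)$ is the gcd governing these categories. Precomposing (\ref{equiv:conjT}) with this equivalence yields the first line of (\ref{equiv:simple}). In the second case ($l$ odd, $r$ even) I would instead shift the Euler characteristic and the weight each by $m-\frac r2$, taking $a=\frac mr-\frac12\in\mathbb{Z}$ and $b=\left(m-\frac r2\right)/d$; this identifies $\mathbb{T}^L(r, w+r/2)_{-\chi+r/2}$ with $\mathbb{T}^L(r, w+m)_{-\chi+m}$, and precomposition gives the second line.

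The three parity cases ($l$ even; $l,r$ both odd; $l$ odd and $r$ even) exhaust all possibilities, so this completes the simplification. There is no genuine obstacle; the only subtlety to verify is that the weight shift in Lemma~\ref{lem:period} proceeds in multiples of $d=\gcd(r,w+m)$ (the gcd of the source category, not of $(r,\chi)$), so that $b$ is an integer. This is automatic, because both weight shifts I use, $m$ and $m-\frac r2$, are multiples of $r$ and hence of $d$.
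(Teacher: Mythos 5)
Your proposal is correct and follows exactly the paper's approach: the paper's proof is a one-line appeal to the periodicity equivalences of Lemma~\ref{lem:period}, and your argument is precisely that application, with the parity bookkeeping (when $r \mid 1-g^{\rm{sp}}$ versus when $1-g^{\rm{sp}}-r/2$ is a multiple of $r$) and the integrality of the weight-shift parameter $b$ carefully verified.
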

\begin{proof}
The simplifications follow from Lemma~\ref{lem:period}. 
\end{proof}
\begin{remark}\label{rmk:replace}
    If $(r, \chi, w)$ satisfies the BPS condition, then 
    $(r, w+1-g^{\rm{sp}}, -\chi+1-g^{\rm{sp}})$ also satisfies the 
    BPS condition. Therefore an equivalence (\ref{equiv:conjT}) is an 
    equivalence between BPS categories. 
\end{remark}

In the case of $L=\Omega_C$, similarly to (\ref{rest:sm}) there is an 
equivalence 
\begin{align*}
    \mathbb{T}(r, \chi)_w^{\rm{red}}|_{B^{\rm{sm}}} \simeq D^b(\mathrm{Pic}^e(\mathcal{C}^{\rm{sm}}/B^{\rm{sm}}), \alpha^w). 
\end{align*}
Here we note that considering the reduced category is necessary, as otherwise 
we need to impose a derived structure on $\mathrm{Pic}^e(\mathcal{C}^{\rm{sm}}/B^{\rm{sm}})$. 
As in (\ref{equiv:Bsm}) (also together with Lemma~\ref{lem:period}),
there is an equivalence 
\begin{align}\label{equiv:Bsm2}
\mathbb{T}(r, w)_{-\chi}^{\rm{red}}|_{B^{\rm{sm}}} \stackrel{\sim}{\to}
        \mathbb{T}(r, \chi)_{w}^{\rm{red}}|_{B^{\rm{sm}}}. 
\end{align}
From Theorem~\ref{thm:proper} and Corollary~\ref{cor:proper},
similarly to Conjecture~\ref{conj:T0}, we also propose the following conjecture: 
\begin{conj}\label{conj:reduced}
    Suppose that $L=\Omega_C$ and $(r, \chi, w)$ is primitive. 
    Then there is a $B$-linear equivalence
    \begin{align*}
        \mathbb{T}(r, w)_{-\chi}^{\rm{red}} \stackrel{\sim}{\to}
        \mathbb{T}(r, \chi)_{w}^{\rm{red}} 
    \end{align*}
    which extends the equivalence (\ref{equiv:Bsm2}). 
\end{conj}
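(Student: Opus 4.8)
The plan is to realize the desired equivalence as a relative Fourier–Mukai transform whose kernel extends the Poincaré line bundle $\mathcal{P}^{\rm{sm}}$ of~(\ref{line:P22}) from the smooth locus $B^{\rm{sm}}$ to the whole Hitchin base $B$. Over $B^{\rm{sm}}$ the equivalence~(\ref{equiv:Bsm2}) is already in hand, and it is manifestly $B^{\rm{sm}}$-linear and built from a Poincaré kernel; so the problem is entirely one of extending this kernel across the discriminant $B \setminus B^{\rm{sm}}$, and in particular across the global nilpotent cone over $0 \in B$, in such a way that the resulting $B$-linear functor restricts to~(\ref{equiv:Bsm2}). Working with the reduced stacks $\mathcal{M}(r,\chi)^{\rm{red}}$ is essential here, since it is precisely the reduced categories that restrict over $B^{\rm{sm}}$ to derived categories of twisted sheaves on honest relative Picard schemes, with no spurious derived structure to obstruct the Fourier–Mukai formalism.

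First I would extend the kernel over the open locus $B^{\rm{red}} \subset B$ of reduced spectral curves. Following Arinkin's autoduality of compactified Jacobians~\cite{Ardual}, refined by Melo–Rapagnetta–Viviani~\cite{MRVF, MRVF2}, the Poincaré line bundle extends canonically to a maximal Cohen–Macaulay sheaf $\mathcal{P}^{\rm{red}}$ on the relative product of compactified Picard schemes over $B^{\rm{red}}$, giving a Fourier–Mukai kernel there. The next step is to check that convolution with $\mathcal{P}^{\rm{red}}$ carries the reduced quasi-BPS window $\mathbb{T}(r,w)^{\rm{red}}_{-\chi}|_{B^{\rm{red}}}$ into $\mathbb{T}(r,\chi)^{\rm{red}}_w|_{B^{\rm{red}}}$ — a weight computation of exactly the type encoded in condition~(\ref{cond:nu}) defining these subcategories — and that the resulting functor is an equivalence. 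For the latter I would verify that the convolution of $\mathcal{P}^{\rm{red}}$ with its relative adjoint is the structure sheaf of the relative diagonal, which is where the smoothness, properness, and self-dual Calabi-Yau structure over $B$ supplied by Theorem~\ref{thm:proper} and Corollary~\ref{cor:proper} enter, via a relative Bondal–Orlov-type criterion.

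The hard part is the final extension from $B^{\rm{red}}$ across the non-reduced locus. There autoduality is unavailable and one cannot expect a literal maximal Cohen–Macaulay extension of the kernel in general, which is exactly why the unconditional results of this paper are confined either to rank two — where Li's theorem~\cite{MLi} produces the Cohen–Macaulay sheaf~(\ref{maxP:intro}) even over the nilpotent cone, yielding Theorem~\ref{thm:intro2} — or to genus zero (Theorem~\ref{thm:intro3}). The intended mechanism for the general case is the abstract one of Subsection~\ref{subsec:intro:main}: both sides are smooth and proper Calabi-Yau compactifications over $B$ of the dual abelian fibrations glued by~(\ref{equiv:Bsm2}), and when $\mathrm{codim}(B \setminus B^{\rm{red}})$ exceeds the relative dimension $g^{\rm{sp}}$ — as happens in the favorable range of Lemma~\ref{lem:codim} — one expects the $B^{\rm{red}}$-equivalence to extend uniquely to a $B$-linear equivalence by a D/K-type argument. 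Making this extension rigorous over non-reduced spectral curves is the principal obstacle; in the $\mathbb{Z}/2$-periodic setting it is precisely the point at which one is currently forced to weaken the statement to a deformation equivalence, as in Theorem~\ref{thm:intro4}.
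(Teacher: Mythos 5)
The statement you are trying to prove is Conjecture~\ref{conj:reduced}: it is left \emph{open} in the paper, which offers only the heuristic motivation of Subsection~\ref{subsec:intro:main} (categorical Calabi--Yau compactification of the Fourier--Mukai equivalence (\ref{equiv:Bsm2}), by analogy with D/K and SYZ) and verification in trivial cases (genus $0$ in Example~\ref{exam:1}, conjecturally genus $1$ in Example~\ref{exam:2}). Your proposal reproduces that heuristic and then stalls exactly where the paper says the difficulty lies --- the extension across the non-reduced locus --- so it is a strategy outline, not a proof; by your own admission the ``principal obstacle'' is not overcome. That alone means there is a genuine gap.

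Beyond that, several of the ingredients you invoke do not apply in the setting of this conjecture, which is specifically $L=\Omega_C$, i.e.\ $l=2g-2$. The favorable range of Lemma~\ref{lem:codim} is $r=2,\ l>3g-2$ or $r=3,\ l>4g-3$; with $l=2g-2$ the first condition reads $2g-2>3g-2$, which is impossible, so the codimension bound $\mathrm{codim}(B\setminus B^{\rm{red}})>g^{\rm{sp}}$ that your D/K-type extension argument rests on is never available here. Similarly, Li's Cohen--Macaulay extension (Theorem~\ref{thm:mao}) requires $r=2$ and $l>2g$, and Theorems~\ref{thm:intro2} and~\ref{thm:intro3} are results about Conjecture~\ref{conj:intro} (and \ref{conj:intro3}) in the twisted case $l>2g-2$; none of them constitutes evidence for, or a partial proof of, Conjecture~\ref{conj:reduced}. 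The whole Fourier--Mukai machinery of Sections~5--6 (Proposition~\ref{prop:CMQ}, Proposition~\ref{prop:equivT}, etc.) is likewise developed only for $l>2g-2$, where $\mathcal{M}^L(r,\chi)$ is a smooth classical stack; for $L=\Omega_C$ one works with the (quasi-smooth, then reduced) stack $\mathcal{M}(r,\chi)^{\rm{red}}$ and none of those dimension-estimate arguments is established in the paper. Finally, the deformation statement you allude to (Theorem~\ref{thm:intro4}) concerns the $\mathbb{Z}/2$-periodic SL/PGL Conjecture~\ref{conj:intro3}, not this one. So the correct assessment is: the conjecture is open, and your proposal neither closes it nor correctly matches the partial results that do exist.
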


\begin{example}\label{exam:1}
Suppose that $g=0$ and $(r, \chi, w)$ is primitive. In this case, the 
same computation as in~\cite[Proposition~4.17]{PTK3} shows that
\begin{align*}
    \mathbb{T}(r, \chi)_w^{\rm{red}}=\begin{cases}
        D^b(\Spec \mathbb{C}), & r=1, \\
        0, & r>1. 
    \end{cases}
\end{align*}
In particular, Conjecture~\ref{conj:reduced} is true.  
It is also easy to check Conjecture~\ref{conj:T0} for $g=0$ and 
$\deg L=-1, 0$. 
On the other hand, Conjecture~\ref{conj:T0} for $g=0$ and $\deg L>0$ is not obvious 
and more interesting, as
there exist more objects which contribute to $\mathbb{T}^L(r, \chi)_w$
and the singular fibers of the Hitchin fibration are more complicated. 
\end{example}

\begin{example}\label{exam:2}
Suppose that $g=1$ so that $C$ is an elliptic curve, and $(r, \chi, w)$ is primitive. 
In this case, similarly to~\cite[Conjecture~4.18]{PTK3}, 
we expect an equivalence 
\begin{align}\label{equiv:g=1}
    D^b(S) \stackrel{\sim}{\to} \mathbb{T}(r, \chi)_w^{\rm{red}}
\end{align}
where $S=C \times \mathbb{A}^1$. 
The above equivalence should hold from a conjectural computation of the
quasi-BPS categories for the doubled quiver of the Jordan quiver 
studied in~\cite{PT0, PT1}, see~\cite[Conjecture~4.19]{PTK3} and~\cite[Proposition~4.20]{PTK3}. 
Then Conjecture~\ref{conj:reduced} follows from (\ref{equiv:g=1}).    
\end{example}

\section{Equivalences of quasi-BPS categories}
In this section, we prove Theorems~\ref{thm:intro1.5}, \ref{thm:intro2}, 
and \ref{thm:intro3} in the $GL(r)$-case. Note that these results are in the case $l>2g-2$. 
Throughout this section,
we use the notation for the base-change of pre-triangulated subcategories
from Subsection~\ref{subsec:basechange}. We also postpone the proof of several technical results to Section~\ref{s6}.
\subsection{Poincaré line bundles over the regular locus}

In this subsection, we prove Proposition \ref{prop:Px}, which is a compatibility result between the Poincaré line bundle and quasi-BPS categories, and then we use it to prove Theorem \ref{thm:ff}.

A $L$-twisted Higgs bundle is called \textit{regular} if 
it corresponds (under the spectral construction) to a line bundle on a spectral curve in 
$S=\mathrm{Tot}_C(L)$. 
Throughout this section, we assume that $l>2g-2$. 
We denote by 
\begin{align}\label{def:reg}
    \mathcal{M}^L(r, \chi)^{\rm{reg}} \subset \mathcal{M}^L(r, \chi)
\end{align}
the open substack consisting of regular Higgs bundles. The formula (\ref{line:P}) 
gives a natural line 
bundle 
\begin{align}\label{def:Preg}
\mathcal{P}^{\rm{reg}} \to \mathcal{M}^L(r, w+1-g^{\rm{sp}})^{\rm{reg}} \times_B 
\mathcal{M}^L(r, \chi). 
\end{align}
Namely, let 
\begin{align*}(F, E) \in \mathcal{M}^L(r, w+1-g^{\rm{sp}})^{\rm{reg}} \times_B 
\mathcal{M}^L(r, \chi)
\end{align*}
be a pair of Higgs bundles 
over $b \in B$. 
Let $\mathcal{C}_b \subset S$ be the spectral curve corresponding to $b$. 
Then $F$ is a line bundle on $\mathcal{C}_b$ and $E$ is a coherent sheaf on $\mathcal{C}_b$. 
We have 
\begin{align}\label{line:2}
    \mathcal{P}^{\rm{reg}}|_{(F, E)}=
    \det R\Gamma(F \otimes_{\mathcal{O}_{\mathcal{C}_b}} E)
    &\otimes \det R\Gamma(F)^{-1} \\
    &\notag \otimes \det R\Gamma(E)^{-1}
    \otimes \det R\Gamma(\mathcal{O}_{\mathcal{C}_b}). 
\end{align}
Note that the above formula makes sense since $F$ is a line bundle 
on $\mathcal{C}_b$. 
The line bundle $\mathcal{P}^{\rm{reg}}$ 
has bi-weight $(\chi+g^{\rm{sp}}-1, w)$. 
Let $x \in \mathcal{M}^L(r, w+1-g^{\rm{sp}})^{\rm{reg}}$ be a point which corresponds to 
a line bundle $F$ on $\mathcal{C}_b$. 
Then the formula (\ref{line:2}) 
determines an object 
\begin{align*}
    \mathcal{P}^{\rm{reg}}_x \in \Coh(\mathcal{M}^L(r, \chi))
\end{align*}
which is a line bundle on 
the fiber of the Hitchin map 
$\mathcal{M}^L(r, \chi) \to B$
at $b \in B$. 
We show that the above object lies in the quasi-BPS category: 
\begin{prop}\label{prop:Px}
For any $x \in \mathcal{M}^L(r, w+1-g^{\rm{sp}})^{\rm{reg}}$
corresponding to a line bundle bundle $F$ on a spectral 
curve $\mathcal{C}_b$, 
we have 
\begin{align*}\mathcal{P}^{\rm{reg}}_x \in \mathbb{T}^L(r, \chi)_{w}.
\end{align*}
\end{prop}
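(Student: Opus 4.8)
The plan is to verify the membership $\mathcal{P}^{\rm{reg}}_x \in \mathbb{T}^L(r,\chi)_w$ by checking the numerical window of Lemma~\ref{lem:qbps} at every polystable point. So I fix a closed point $y \in \mathcal{M}^L(r,\chi)$ lying over $b\in B$ and represented by a polystable Higgs sheaf $E = \bigoplus_{i=1}^k V_i\otimes E_i$ on the spectral curve $\mathcal{C}_b$, together with a one-parameter subgroup $\lambda\colon\mathbb{C}^{\ast}\to T(\bm{d})$. I may assume that $b$ is the point determined by $x$, since otherwise $\mathcal{P}^{\rm{reg}}_x|_y=0$ and there is nothing to check; thus $F$ is a line bundle on the same curve $\mathcal{C}_b$. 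Writing $t_i := \langle\lambda, \det V_i\rangle$ for the total $\lambda$-weight on $V_i$, and letting $\gamma$ be the single $T(\bm{d})$-weight of the line $\mathcal{P}^{\rm{reg}}_x|_y$, the task is to prove $-\tfrac{n_\lambda}{2}\le \langle\lambda,\gamma-\tfrac{w}{d}\delta_y\rangle\le\tfrac{n_\lambda}{2}$.

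First I would compute $\gamma$. Since $\mathcal{P}^{\rm{reg}}_x$ is built from determinants of cohomology and the factors $\det R\Gamma(F)^{-1}$ and $\det R\Gamma(\mathcal{O}_{\mathcal{C}_b})$ do not involve $E$, only $\det R\Gamma(F\otimes E)\otimes\det R\Gamma(E)^{-1}$ contributes to the $T(\bm{d})$-weight. Using $E = \bigoplus V_i\otimes E_i$ and additivity of $\det R\Gamma$ one gets $\langle\lambda,\gamma\rangle = \sum_i(\chi(F\otimes E_i)-\chi(E_i))\,t_i$. Each $E_i$ is generically a line bundle on its spectral support $\mathcal{C}^{(i)}\subset\mathcal{C}_b$, a curve of degree $r_i$ over $C$, so Riemann--Roch on $\mathcal{C}_b$ yields $\chi(F\otimes E_i)-\chi(E_i) = \deg(F|_{\mathcal{C}^{(i)}})$. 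On the other hand $\delta_y = \bigotimes_i(\det V_i)^{\otimes m_i}$ with $m_i = r_i/r_0$, whence $\langle\lambda,\tfrac{w}{d}\delta_y\rangle = \sum_i\tfrac{w r_i}{r}\,t_i$. Thus the quantity to be bounded is $\sum_i\bigl(\deg(F|_{\mathcal{C}^{(i)}}) - \tfrac{wr_i}{r}\bigr)t_i$, a weighted sum of the deviations of the partial degrees of $F$ from the balanced value $\tfrac{wr_i}{r}$.

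Next I would compute $n_\lambda$. The Ext-quiver formula $\dim\Ext^1_S(E_i,E_j) - \dim\Hom(E_i,E_j) = r_ir_j l = [\mathcal{C}^{(i)}]\cdot[\mathcal{C}^{(j)}]$ identifies $n_\lambda$ with a positive combination of the intersection numbers of the spectral curves $\mathcal{C}^{(i)}$, weighted by the gaps between $\lambda$-weights. By a summation-by-parts I would reduce to block-scalar $\lambda$, for which the $\lambda$-weight filtration of $E$ is a filtration of $\mathcal{C}_b$ by subcurves; the inequality then becomes the assertion that, for each subcurve $\mathcal{C}'\subset\mathcal{C}_b$ of degree $r'$ over $C$ with complement $\mathcal{C}''$ of degree $r''=r-r'$, the partial degree $\deg(F|_{\mathcal{C}'})$ lies within $\tfrac12[\mathcal{C}']\cdot[\mathcal{C}''] = \tfrac12 l r'r''$ of $\tfrac{wr'}{r}$. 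This is exactly the semistability of the regular Higgs bundle $F$, viewed as a rank-one sheaf on $\mathcal{C}_b$ of slope $\chi(F)/r$: applying stability to the maximal subsheaf $F|_{\mathcal{C}'}(-\mathcal{C}'\cap\mathcal{C}'')$ supported on $\mathcal{C}'$ and to the quotient $F|_{\mathcal{C}'}$ yields the two-sided bound $\tfrac{(w+1-g^{\rm{sp}})r'}{r}+g(\mathcal{C}')-1 \le \deg(F|_{\mathcal{C}'})\le \tfrac{(w+1-g^{\rm{sp}})r'}{r}+g(\mathcal{C}')-1 + lr'r''$. The midpoint of this interval I would check to equal exactly $\tfrac{wr'}{r}$ after substituting the arithmetic-genus formula (\ref{formula:gD}) for $g^{\rm{sp}}$ and for $g(\mathcal{C}')$ (the latter via (\ref{formula:chiD}) applied to the degree-$r'$ multisection $\mathcal{C}'$), so that the semistability window and the quasi-BPS window coincide.

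The main obstacle is carrying out this comparison for an arbitrary one-parameter subgroup and an arbitrary, possibly reducible or non-reduced, spectral curve: I must organize the deviations $\deg(F|_{\mathcal{C}^{(i)}}) - \tfrac{wr_i}{r}$ so that they are controlled simultaneously by the full family of semistability inequalities for $F$, and confirm that the intersection-number bookkeeping defining $n_\lambda$ matches the multiplicities that appear when restricting $F$ to the subcurves of the $\lambda$-filtration. The decisive point — the reason the window is sharp rather than merely satisfied — is the cancellation of all genus-dependent terms forced by (\ref{formula:gD}), which aligns the center of the semistability window with the center $\tfrac{w}{d}\delta_y$ of the quasi-BPS window; this is the analogue in our setting of the sharpness underlying the main theorem of \cite{ArFe}. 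The purely non-reduced case is handled as in the deepest-stratum computation, where a line bundle on the thickening $d\mathcal{C}^{(i)}$ automatically has balanced partial degree $\tfrac{w}{d}$ on the reduced curve, so the corresponding deviation vanishes identically and the inequality is trivial.
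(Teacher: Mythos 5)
Your proposal follows essentially the same route as the paper's proof: check the window condition of Lemma~\ref{lem:qbps} at each polystable point over $b$, reduce to two-block (Weyl-invariant) cocharacters, identify the relevant weight with the deviation of the partial degree of $F$ on a subcurve from its balanced value $\tfrac{wr'}{r}$, and bound this deviation by the semistability of $F$ applied to the restriction exact sequences, with the genus formula (\ref{formula:gD}) centering the semistability interval on the quasi-BPS window. The one imprecision is the justification that each $E_i$ is ``generically a line bundle on its spectral support'' (this fails, e.g., for a stable higher-rank bundle on a reduced curve with scalar Higgs field, whose spectral support is non-reduced); what is actually needed, and what the paper's bookkeeping with the divisors $D_i$ and $\chi(\mathcal{O}_{D_1+\cdots+D_m})$ amounts to, is the weaker fact that $\chi(F\otimes E_i)-\chi(E_i)$ equals the degree of the line bundle $F$ against the support cycle of $E_i$, which holds for any pure one-dimensional sheaf.
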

\begin{proof}
    Let $y \in \mathcal{M}^L(r, \chi)$ be 
    a closed point corresponding to the polystable 
    Higgs bundle with spectral curve $\mathcal{C}_b$:
    \begin{align*}
        E=\bigoplus_{i=1}^k V_i \otimes E_i,
    \end{align*}
    where, for all $1\leq i\leq k$, $V_i$ is a finite dimensional 
    vector space of dimension $d_i$ and $E_i$ is a 
    stable Higgs bundle such that 
    $(r_i, \chi_i)=(\rank(E_i), \chi(E_i))$ satisfies 
    \begin{align}\label{ratio:i}
        \frac{\chi_i}{r_i}=\frac{\chi}{r}. 
    \end{align}
    Let $G:=\mathrm{Aut}(E)=\prod_{i=1}^k GL(V_i)$. 
    Then $\mathcal{P}^{\rm{reg}}_x|_{y}$ is the following 
    character of $G$
    \begin{align}\label{char:xy}
        \mathcal{P}^{\rm{reg}}_x|_{y}=\bigotimes_{i=1}^k 
        (\det V_i)^{\chi(F \otimes E_i)-\chi(E_i)}.
    \end{align}
Note that we have 
\begin{align*}
\dim \Ext_S^1(E_i, E_j)=r_i r_j l +\delta_{ij},
    \end{align*}
    where $S$ is the local surface $\mathrm{Tot}_C(L)$. 
    Then the representation space of the Ext-quiver 
    associated with $\{E_1, \ldots, E_k\}$
of dimension vector $\bm{d}=(d_i)_{1\leq i \leq k}$
is given by the affine space    
\begin{align*}
R(\bm{d})=\bigoplus_{1\leq i, j \leq k} \Hom(V_i, V_j)^{\oplus (r_i r_j l+\delta_{ij})}.    \end{align*}
Let $T \subset G$ be a maximal torus. 
As in Lemma~\ref{lem:qbps}, we set 
\begin{align*}
    n_{\lambda}=\big\langle \lambda, \det(R(\bm{d})^{\lambda>0})
    -\det(\mathfrak{gl}(\bm{d})^{\lambda>0})\big\rangle, \ 
    \delta_y=\bigotimes_{i=1}^k (\det V_i)^{\otimes m_i},
\end{align*}
where $\lambda$ is a cocharacter 
$\lambda \colon \mathbb{C}^{\ast} \to T$, 
$(r, \chi)=d(r_0, \chi_0)$ with  $(r_0, \chi_0)$ coprime,
and $m_i$ is determined by $(r_i, \chi_i)=m_i(r_0, \chi_0)$. 
Let $M=\Hom(T, \mathbb{C}^{\ast})$ be the weight lattice 
of $T$, and set 
\begin{align*}
    \nabla_{\delta}:=\left\{\chi \in M_{\mathbb{R}} \,\Big|\,
    -\frac{n_{\lambda}}{2} \leq \left\langle \lambda, \chi-\frac{w}{d}\delta_y \right\rangle 
    \leq \frac{n_{\lambda}}{2} \right\},
\end{align*}
where the right hand side is after all the cocharacter 
$\lambda$ of $T$. 
By Lemma~\ref{lem:qbps}, it is enough to show that
$\mathcal{P}^{\rm{reg}}_x|_{y} \in \nabla_{\delta}$. 

Since $\mathcal{P}^{\rm{reg}}_x|_{y}$ is a Weyl-invariant
character of $T$, 
it is enough to show that 
\begin{align}\label{ets:lambda}
  -\frac{n_{\lambda}}{2} \leq \left\langle \lambda, \mathcal{P}^{\rm{reg}}_x|_{y}-\frac{w}{d}\delta_y \right\rangle 
    \leq \frac{n_{\lambda}}{2}
    \end{align}
    for any Weyl-invariant cocharacter $\lambda$. 
    So we may assume that $\lambda$ acts on $V_i$ by 
weight $\lambda_i$ for all $1\leq i\leq k$. 
There is a decomposition \[\{1, \ldots, k\}=I_1 \sqcup 
\cdots \sqcup I_l,\]
where $\lambda_i$ is constant on each $I_j$. 
It is enough to check the case of $l=2$, as
a codimension one face 
of the polytope $\nabla_{\delta} \cap M_{\mathbb{R}}^W$
is parallel to 
$\lambda^{\perp}$ for such a cocharacter $\lambda$. 
Therefore we can assume that there is $1\leq m \leq k$
such that 
$\lambda$ acts on $V_i$ for $1\leq i\leq m$ by weight 
one, and on $V_i$ for $m<i\leq k$ by weight zero. 

For such a cocharacter $\lambda$, by (\ref{char:xy}) one can compute that 
\begin{align}\label{P:delta}
\left\langle \lambda, \mathcal{P}^{\rm{reg}}_x|_{y} -\frac{w}{d}\delta_y \right\rangle 
    =\sum_{i=1}^m d_i \left(\chi(F \otimes E_i)-\frac{r_i}{r}(w+\chi) \right)
\end{align}
and 
\begin{align*}
    \frac{n_{\lambda}}{2}=\frac{l}{2}\sum_{\begin{subarray}{c}1\leq i\leq m \\
    m<j\leq k\end{subarray}}d_i d_j r_i r_j=\frac{l}{2} r' r'',
\end{align*}
where $r'$ and $r''$ are given by 
\begin{align*}
    r'=\sum_{i=1}^m d_i r_i, \ r''=\sum_{i=m+1}^k d_i r_i. 
\end{align*}
Note that $r=r'+r''$. 
Let $D_i \subset S$ be the pure one-dimensional 
support of $E_i^{\oplus d_i}$, so that 
we have $\mathcal{C}_b=D_1+\cdots+D_k$
as a divisor on $S$. 
Using (\ref{formula:chiD}), we have 
\begin{align*}
\chi(\mathcal{O}_{D_1+\cdots+D_m})
&=\left(\sum_{i=1}^m d_i r_i\right)\left(1-g+\frac{l}{2}
-\frac{l}{2}\sum_{i=1}^m d_i r_i  \right)\\
&=\frac{r'}{r}(1-g^{\rm{sp}})+\frac{l}{2}r' r'',
\end{align*}
where $g^{\rm{sp}}$ is given by (\ref{formula:gD}). 
On the other hand, using (\ref{ratio:i})
we have that:
\begin{align*}
    \frac{\sum_{i=1}^m d_i\chi(E_i)}{r'}=\frac{\chi}{r}. 
\end{align*}
It follows that, as $F$ is a line bundle on $\mathcal{C}_b$, we have 
\begin{align*}
    \sum_{i=1}^m d_i \chi(F \otimes E_i) -\chi(F|_{D_1+\cdots+D_m})
    &=\sum_{i=1}^m d_i \chi(E_i) -\chi(\mathcal{O}_{D_1+\cdots+D_m}) \\
    &=\frac{r'}{r} \chi-\frac{r'}{r}(1-g^{\rm{sp}})-\frac{l}{2}r' r''. 
\end{align*}
Therefore 
the right hand side of (\ref{P:delta}) 
is equal to 
\begin{align}\label{equals:chi}
   \chi(F|_{D_1+\cdots+D_m})
-\frac{r'}{r}(w+1-g^{\rm{sp}})-\frac{l}{2} r' r''. 
\end{align}
There are exact sequences
\begin{align*}
    &0 \to F|_{D_{m+1}+\cdots+D_k}\left(-\sum_{\begin{subarray}{c}1\leq i\leq k \\
    m<j\leq k\end{subarray}}D_i D_j  \right)
    \to F \to F|_{D_1+\cdots+D_m} \to 0, \\
    &0 \to F|_{D_{1}+\cdots+D_m}\left(-\sum_{\begin{subarray}{c}1\leq i\leq k \\
    m<j\leq k\end{subarray}}D_i D_j  \right)
    \to F \to F|_{D_{m+1}+\cdots+D_k} \to 0.
\end{align*}
By the stability of $F$, there are inequalities
\begin{align*}
    \frac{w+1-g^{\rm{sp}}}{r} \leq 
    \frac{\chi(F|_{D_1+\cdots+D_m})}{r'}
    \leq \frac{w+1-g^{\rm{sp}}}{r}+lr' r''. 
\end{align*}
By the formula (\ref{equals:chi}), 
the inequalities in (\ref{ets:lambda}) hold. 
\end{proof}

Denote by 
\begin{align*}
    \mathcal{M}^L(r, \chi)^{\rm{sreg}} \subset \mathcal{M}^L(r, \chi)^{\rm{reg}}
\end{align*}
the open substack 
consisting of line bundles on spectral curves which are also stable. 
Consider its good moduli space 
\begin{align*}
    \mathcal{M}^L(r, \chi)^{\rm{sreg}} \to M^L(r, \chi)^{\rm{sreg}} \subset 
    M^L(r, \chi),
\end{align*}
where the first map is a $\mathbb{C}^{\ast}$-gerbe and the second one is an 
open immersion. 
Recall the notation \eqref{def:minuscategory} and the category $D^{-}(-)$ of bounded above complexes of quasi-coherent sheaves.
Since the semiorthogonal decomposition (\ref{sod:main})
is linear over the Hitchin base $B$, 
there is an induced semiorthogonal decomposition, see Lemma~\ref{lem:sdo}:
\begin{align}\label{sod:prod}
    D^-&(\mathcal{M}^L(r, w+1-g^{\rm{sp}})^{\rm{sreg}} \times_B 
    \mathcal{M}^L(r, \chi)) \\ \notag
    &=\left\langle \left(\boxtimes_{i=1}^k \mathbb{T}^L(d_i r_0, d_i \chi_0)_{w_i}\right)^{-}_{\mathcal{M}^L(r, w+1-g^{\rm{sp}})^{\rm{sreg}}} \right\rangle, 
\end{align}
where the right hand side is after all partitions $d=d_1+\cdots+d_k$ and $w_i \in \mathbb{Z}$
as in (\ref{sod:main}). 
We set
\begin{align*}    \mathcal{P}^{\rm{sreg}}:=\mathcal{P}^{\rm{reg}}|_{\mathcal{M}^L(r, w+1-g^{\rm{sp}})^{\rm{sreg}}\times_B \mathcal{M}^L(r, \chi)}. 
\end{align*}
By Proposition~\ref{prop:Px} and Lemma~\ref{lem:Cix}, we have 
    \begin{align}\label{prop:sreg}
        \mathcal{P}^{\rm{sreg}} \in \left(\mathbb{T}^L(r, \chi)_w  \right)_{\mathcal{M}^L(r, w+1-g^{\rm{sp}})^{\rm{sreg}}}.
    \end{align}
Here we refer to Subsection~\ref{subsec:basechange} for the notation $(\mathbb{T}^L(r, \chi)_w)_{(-)}$. 

The main result of this subsection is the following theorem. An important ingredient in its proof is Proposition \ref{prop:CMQ} which helps reducing it to a statement over the locus $B^{\mathrm{red}}$ of reduced spectral curves.
    \begin{thm}\label{thm:ff}
    Suppose that 
    \begin{align*}
        r=2, l>3g-2, \mbox{ or } r=3, l>4g-3. 
    \end{align*}
    Then the induced functor 
    \begin{align}\label{Mqcoh:phi}
  \Phi_{\mathcal{P}^{\rm{sreg}}} \colon  D_{\rm{qcoh}}(\mathcal{M}^L(r, w+1-g^{\rm{sp}})^{\rm{sreg}})_{-\chi+1-g^{\rm{sp}}}
        \to \mathbb{T}_{\rm{qcoh}}^L(r, \chi)_{w}
    \end{align}
    is fully-faithful. 
            \end{thm}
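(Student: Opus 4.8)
The plan is to prove full faithfulness by the spanning-class criterion on point objects, and then to reduce that verification, via a Cohen--Macaulay and codimension argument, to the locus $B^{\mathrm{red}}$ of reduced spectral curves where autoduality of compactified Jacobians is available. Write $M':=M^L(r, w+1-g^{\mathrm{sp}})^{\mathrm{sreg}}$; this is a smooth quasi-projective variety (an open subset of the smooth stable locus), so the source is the derived category of $\alpha$-twisted quasi-coherent sheaves on $M'$, where $\alpha$ is the Brauer class of the $\mathbb{C}^{\ast}$-gerbe. First I would observe that $\Phi_{\mathcal{P}^{\mathrm{sreg}}}$ preserves colimits and hence admits a right adjoint $\Psi$, so that full faithfulness is equivalent to the unit $\mathrm{id}\to \Psi\Phi_{\mathcal{P}^{\mathrm{sreg}}}$ being an isomorphism. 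Since $M'$ is smooth, the (twisted) skyscraper sheaves $\mathcal{O}_x$ at closed points $x\in M'$ form a spanning class, and by Proposition~\ref{prop:Px} we have $\Phi_{\mathcal{P}^{\mathrm{sreg}}}(\mathcal{O}_x)=\mathcal{P}^{\mathrm{reg}}_x\in \mathbb{T}^L(r,\chi)_w$, an object with proper support in the Hitchin fibration. It therefore suffices to show that for all closed points $x_1,x_2\in M'$ the natural comparison map $R\mathcal{H}om_{M'}(\mathcal{O}_{x_1},\mathcal{O}_{x_2})\to R\mathcal{H}om_{\mathbb{T}}(\mathcal{P}^{\mathrm{reg}}_{x_1},\mathcal{P}^{\mathrm{reg}}_{x_2})$ is a quasi-isomorphism, the quasi-coherent statement then following by passing to ind-completions.

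Next I would exploit the $B$-linearity of $\Phi_{\mathcal{P}^{\mathrm{sreg}}}$. If $x_1,x_2$ lie over distinct points $b_1\neq b_2$ of the Hitchin base, then $\mathcal{P}^{\mathrm{reg}}_{x_1}$ and $\mathcal{P}^{\mathrm{reg}}_{x_2}$ are supported over disjoint Hitchin fibers and both sides vanish, matching $R\mathcal{H}om_{M'}(\mathcal{O}_{x_1},\mathcal{O}_{x_2})=0$. Hence the statement is $B$-local and packages into a comparison of relative $R\mathcal{H}om$-sheaves on the fibered product $M'\times_B M'$, whose fiber over a common $b\in B$ computes the Ext-groups between the two line bundles $\mathcal{P}^{\mathrm{reg}}_{x_i}$ on the Hitchin fiber at $b$. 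This is the object to which the structural results of the paper apply.

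The heart of the reduction is to pass to $B^{\mathrm{red}}$. Over $B^{\mathrm{sm}}$ the functor is exactly the relative Fourier--Mukai equivalence of dual Picard schemes \eqref{equiv:family}, so the comparison is an isomorphism there; over $B^{\mathrm{red}}$ it is governed by the autoduality of compactified Jacobians, proved by Arinkin~\cite{Ardual} for integral spectral curves and by Melo--Rapagnetta--Viviani~\cite{MRVF, MRVF2} for reduced (possibly reducible) ones, which in the present twisted/quasi-BPS formulation gives that the comparison map is an isomorphism after restriction over $B^{\mathrm{red}}$. To propagate this from $B^{\mathrm{red}}$ to all of $B$ I would invoke Proposition~\ref{prop:CMQ}, which identifies the relevant convolution object with a maximal Cohen--Macaulay sheaf and hence shows it is determined by its restriction to any open subset whose complement has codimension at least two. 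Combined with the estimate $\mathrm{codim}(B\setminus B^{\mathrm{red}})>g^{\mathrm{sp}}$ of Lemma~\ref{lem:codim} --- valid precisely under the hypotheses $r=2,\ l>3g-2$ or $r=3,\ l>4g-3$ --- this guarantees that the preimage of $B\setminus B^{\mathrm{red}}$ in the relevant fibered space has codimension at least two, so that the isomorphism over $B^{\mathrm{red}}$ extends over the whole space.

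The hard part will be the reduced-but-singular locus $B^{\mathrm{red}}\setminus B^{\mathrm{sm}}$ and the passage across it. The autoduality statements of \cite{Ardual, MRVF, MRVF2} are formulated for the derived categories of the compactified Jacobians themselves, whereas here the comparison must be carried out inside the quasi-BPS category $\mathbb{T}^L(r,\chi)_w$, which over reducible reduced spectral curves genuinely incorporates strictly semistable Higgs bundles and the semiorthogonal/Hall-product structure \eqref{sod:prod}. Reconciling the point-object Ext-computation with this structure, and checking that the Cohen--Macaulay extension of Proposition~\ref{prop:CMQ} is compatible with the gerbe twist $\alpha$, is the technical core; once these are in place, the remaining steps are formal consequences of $B$-linearity and the spanning-class criterion.
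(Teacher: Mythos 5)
Your overall architecture --- reduce to the locus $B^{\mathrm{red}}$ using Proposition~\ref{prop:CMQ} together with the codimension estimate of Lemma~\ref{lem:codim}, and feed in autoduality of compactified Jacobians over $B^{\mathrm{red}}$ --- is the same as the paper's. However, there are two genuine gaps. First, your full-faithfulness criterion is flawed: skyscraper sheaves at closed points do \emph{not} form a spanning class of $D_{\rm{qcoh}}$ of the non-proper variety $M'=M^L(r,w+1-g^{\rm{sp}})^{\rm{sreg}}$, nor do they compactly generate it. Indeed, if $K(M')$ denotes the quasi-coherent pushforward of the function field, then every $f\in\mathfrak{m}_x$ acts invertibly on $K(M')$, so the Koszul resolution gives $\mathcal{O}_x\otimes^L K(M')=0$ and hence $R\Hom(\mathcal{O}_x,K(M')[i])=0=R\Hom(K(M'),\mathcal{O}_x[i])$ for all $x$ and $i$; thus $K(M')$ is a nonzero object orthogonal to all skyscrapers on both sides, and neither Bridgeland's spanning-class theorem nor a ``pass to ind-completions'' step applies. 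The paper avoids this issue entirely by arguing at the level of kernels: the left adjoint $\Phi^L_{\mathcal{P}^{\rm{sreg}}}$ is itself a Fourier--Mukai functor with explicit kernel $\mathcal{P}^{\vee}[g^{\rm{sp}}]$ (this uses the relative Calabi--Yau property over $B$, Lemma~\ref{lem:PhiL}), so the counit $\Phi^L\circ\Phi\to\id$ is represented by a map of kernels $\eta\colon\mathcal{Q}\to\mathcal{O}_{\Delta_T}$ with $\mathcal{Q}\in D^b(T\times_B T,-\beta\boxtimes\beta)$, and proving $\eta$ is an isomorphism yields full faithfulness on all of $D_{\rm{qcoh}}$ with no generation statement needed. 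Proposition~\ref{prop:CMQ} is a statement about exactly this kernel $\mathcal{Q}$, not about pointwise $\mathrm{Ext}$-comparisons.

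Second, and more seriously, the step you defer as ``the technical core'' is precisely where the paper does its real work, so your proposal does not contain a proof of the key input: that $\eta$ is an isomorphism over $B^{\mathrm{red}}$. The autoduality theorems of \cite{Ardual, MRVF, MRVF2} concern derived categories of \emph{fine} compactified Jacobians, whereas $\widehat{\mathbb{T}}_b\subset D^b(\widehat{\mathcal{M}}_b)_w$ involves the stack of all semistable sheaves, including strictly semistable ones. The paper bridges this by completing at $b\in B^{\mathrm{red}}$, perturbing the polarization by a generic $\mathbb{Q}$-twist $\varepsilon_{\bullet}$ to produce a fine relative compactified Jacobian $\widehat{M}_b'$, and proving a window theorem (Lemma~\ref{lem:eq:window}) showing that the composition $\widehat{\mathbb{T}}_b\hookrightarrow D^b(\widehat{\mathcal{M}}_b)_w\twoheadrightarrow D^b(\widehat{\mathcal{M}}_b')_w\simeq D^b(\widehat{M}_b')$ is an equivalence; only then can \cite[Theorem~A]{MRVF} (in its version for families of reduced planar curves) be applied to the bottom row of the comparison diagram (\ref{Dqcoh:T}) and transported to the quasi-BPS category via the commutative square. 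Without this window-theorem identification --- which is not a compatibility check but a substantive argument relying on the Ext-quivers at points of $B^{\mathrm{red}}$ having dimension vector $(1,\ldots,1)$ --- the restriction of your comparison to $B^{\mathrm{red}}$ remains unproven, and the Cohen--Macaulay extension step has nothing to extend.
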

\begin{proof}
For simplicity, let $\mathcal{M}=\mathcal{M}^L(r, \chi)$
and $\mathcal{T}=\mathcal{M}^L(r, w+1-g^{\rm{sp}})^{\rm{sreg}}$. 
Consider the good moduli space 
\begin{align}\label{t:gerb}
    \mathcal{T} \to T \subset M^L(r, w+1-g^{\rm{sp}}),
\end{align}
where the first morphism is a $\mathbb{C}^{\ast}$-gerbe
and the second one is an open immersion. 
Let $\alpha \in \mathrm{Br}(T)$ be the Brauer class
corresponding to the $\mathbb{C}^{\ast}$-gerbe (\ref{t:gerb}), 
and set $\beta:=\alpha^{-\chi+1-g^{\rm{sp}}}$. 
There is an equivalence 
\begin{align}\label{equiv:T}
    D_{\rm{qcoh}}(\mathcal{T})_{-\chi+1-g^{\rm{sp}}}\simeq 
    D_{\rm{qcoh}}(T, \beta). 
\end{align}
Below we identify the left hand side of (\ref{Mqcoh:phi}) 
with $D_{\rm{qcoh}}(T, \beta)$ by the above equivalence. 

Let $\Phi_{\mathcal{P}^{\rm{sreg}}}^L$ be the left adjoint of $\Phi_{\mathcal{P}^{\rm{sreg}}}$
and denote by 
\begin{align*}
\mathcal{Q} \in D^b(T\times_B T, -\beta \boxtimes \beta)
    \end{align*}
    the kernel object of $\Phi_{\mathcal{P}^{\rm{sreg}}}^L \circ \Phi_{\mathcal{P}^{\rm{sreg}}}$, 
    see Subsection~\ref{subsec:fmBPS}. 
    Note that $\mathcal{Q}$ is an object in 
    $D^b(T\times_B T, -\beta\boxtimes \beta)$ by the formula (\ref{isom:Q}) together with the 
    fact that $\mathcal{P}^{\rm{sreg}}$ is a line bundle.
    The adjunction morphism 
    \[
    \Phi_{\mathcal{P}^{\rm{sreg}}}^L \circ \Phi_{\mathcal{P}^{\rm{sreg}}} \to \id
   \]
    corresponds to a morphism 
    \begin{align*}
        \eta \colon \mathcal{Q} \to \mathcal{O}_{\Delta_T}. 
    \end{align*}
    It is enough to show that $\eta$ is an isomorphism, and we 
    apply Proposition~\ref{prop:CMQ} to prove it. 
    Let $B^{\rm{red}} \subset B$ be the open subset defined 
    in (\ref{open:B}), and set $T^{\circ}:=T \times_B B^{\rm{red}}$. 
    By Lemma~\ref{lem:codim} and Proposition~\ref{prop:CMQ}, it is enough to show that 
    \begin{align}\label{eta:isom}
\eta|_{B^{\rm{red}}} \colon \mathcal{Q}|_{T^{\circ}\times_{B^{\rm{red}}}\times T^{\circ}} \to 
\mathcal{O}_{\Delta_{T^{\circ}}} 
    \end{align}
    is an isomorphism. We next explain that this is indeed the case using the duality of compactified Jacobians of reduced curves of Melo--Rapagnetta--Viviani \cite{MRVF} and window categories.

Take $b \in B^{\rm{red}}$ which corresponds to a reduced 
spectral curve
\begin{align*}
    \mathcal{C}_b=D_1+\cdots+D_k
\end{align*}
where each $D_i$ is an irreducible curve in $S=\mathrm{Tot}_C(L)$ 
and $D_i \neq D_j$ for $1\leq i\neq j\leq k$. 
Let \[\mathcal{C} \to B\] be the universal spectral curve. 
We set 
$\widehat{B}_b:=\Spec \widehat{\mathcal{O}}_{B, b}$, 
and take the fiber products
\begin{align*}
    \widehat{\mathcal{M}}_b :=\mathcal{M}\times_B \widehat{B}_b, \ 
     \widehat{\mathcal{T}}_b :=\mathcal{T}\times_B \widehat{B}_b, \ 
      \widehat{\mathcal{C}}_b :=\mathcal{C}\times_B \widehat{B}_b. 
\end{align*}
Let $\mathcal{L}$ be an ample line bundle on $C$, and 
denote by $\widehat{\mathcal{L}}_b$ its pull-back 
by the projection $\widehat{\mathcal{C}}_b \to C$. 
Note that 
$\widehat{\mathcal{C}}_b \to \widehat{B}_b$ is a flat family of 
curves in $S$ over $\widehat{B}_b$ 
with central fiber $\mathcal{C}_b$, and $\widehat{\mathcal{M}}_b$ is 
identified with the relative moduli stack of 
$\widehat{\mathcal{L}}_b$-semistable sheaves on 
$\widehat{\mathcal{C}}_b \to \widehat{B}_b$ 
with rank one and holomorphic Euler characteristic $\chi$. 

On the other hand, there exist line bundles $\mathcal{L}_j$ 
for $1\leq j\leq k$ on $\widehat{\mathcal{C}}_b$
such that $\det(\mathcal{L}_j|_{D_i})=\delta_{ij}$. 
We perturb $\widehat{\mathcal{L}}_b$ to a $\mathbb{Q}$-line 
bundle on $\widehat{\mathcal{C}}_b$:
\begin{align*}\widehat{\mathcal{L}}_b'=\widehat{\mathcal{L}}_b \otimes 
\bigotimes_{i=1}^k \mathcal{L}_i^{\varepsilon_i}
\end{align*}
for $\varepsilon_i \in \mathbb{Q}$ with $\lvert \varepsilon_i \rvert \ll 1$
and $\sum_{i=1}^k \varepsilon_i=0$. 
We denote by 
$\widehat{\mathcal{M}}_b' \to \widehat{B}_b$
the relative moduli stack of $\widehat{\mathcal{L}}_b'$-semistable 
sheaves on $\widehat{\mathcal{C}}_b \to \widehat{B}_b$
of rank one and holomorphic Euler characteristic $\chi$. 
Then for a generic choice of $\varepsilon_i$, 
the moduli stack $\widehat{\mathcal{M}}_b'$ is fine, i.e. 
it only consists of stable sheaves and the good moduli space 
$\widehat{\mathcal{M}}_b' \to \widehat{M}_b'$
is a trivial $\mathbb{C}^{\ast}$-gerbe, or equivalently  
there is a universal sheaf on $\widehat{M}_b' \times_{\widehat{B}_b} \widehat{\mathcal{C}}_b$. 
The existence of the universal sheaf follows from the existence of $\mathcal{L}_i$ 
and applying~\cite[Theorem~6.5]{Hu}. 
In particular, there is an equivalence 
\begin{align}\label{equiv:hatw}
    D^b(\widehat{\mathcal{M}}_b')_w \simeq D^b(\widehat{M}_b'). 
\end{align}
Note that we have the open immersion 
$\widehat{\mathcal{M}}_b' \subset \widehat{\mathcal{M}}_b$ and 
$\widehat{M}_b' \to \widehat{B}_b$ is a fine relative compactified 
Jacobian of $\widehat{\mathcal{C}}_b \to \widehat{B}_b$. 

Similarly $\widehat{\mathcal{T}}_b \to \widehat{T}_b$ is a trivial 
$\mathbb{C}^{\ast}$-gerbe, and 
hence $\beta|_{\widehat{T}_b}$ is trivial. 
The equivalence 
(\ref{equiv:T}) induces equivalences 
\begin{align}\label{equiv:T2}
    D_{\rm{qcoh}}(\widehat{\mathcal{T}}_b)_{-\chi+1-g^{\rm{sp}}} \simeq 
    D_{\rm{qcoh}}(\widehat{T}_b, \beta|_{\widehat{T}_b}) \simeq 
    D_{\rm{qcoh}}(\widehat{T}_b). 
\end{align}

The quasi-BPS category 
$\mathbb{T}=\mathbb{T}^L(r, \chi)_w$ in 
$D^b(\mathcal{M})_w$ restricts to the quasi-BPS categories, 
see Remark~\ref{rmk:open}:
\begin{align*}\widehat{\mathbb{T}}_b \subset D^b(\widehat{\mathcal{M}}_b)_w, 
\ \widehat{\mathbb{T}}_{b, \rm{qcoh}} \subset D_{\rm{qcoh}}(\widehat{\mathcal{M}}_b)_w. 
\end{align*}
For a generic choice of $\varepsilon_i$, 
the window theorem~\cite{BFK, halp, T}
implies that the composition 
\begin{align}\label{eq:window}
    \widehat{\mathbb{T}}_b \hookrightarrow 
    D^b(\widehat{\mathcal{M}}_b)_w \twoheadrightarrow 
    D^b(\widehat{\mathcal{M}}_b')_w \simeq D^b(\widehat{M}_b')
\end{align}
is an equivalence, see Lemma~\ref{lem:eq:window}. 
Here the second arrow is the pull-back 
of the open immersion 
$\widehat{\mathcal{M}}_b' \subset \widehat{\mathcal{M}}_b$
and the third arrow is given by (\ref{equiv:hatw}). 
We set
\begin{align*}
    \widehat{\mathcal{P}}_b :=\mathcal{P}^{\rm{reg}}|_{\widehat{T}_b \times_{\widehat{B}_b}\widehat{\mathcal{M}}_b}, \ 
     \widehat{\mathcal{P}}_b' :=\mathcal{P}^{\rm{reg}}|_{\widehat{T}_b \times_{\widehat{B}_b}\widehat{\mathcal{M}}'_b}. 
\end{align*}
We have the commutative diagram 
\begin{align}\label{Dqcoh:T}
    \xymatrix{
    D_{\rm{qcoh}}(\widehat{T}_b) \ar[r]^-{\sim} \ar@{=}[d] &
D_{\rm{qcoh}}(\widehat{\mathcal{T}}_b)_{-\chi+1-g^{\rm{sp}}} \ar[r]^-{\Phi_{\widehat{\mathcal{P}}_b}} \ar@{=}[d] & 
\widehat{\mathbb{T}}_{b, \rm{qcoh}} \ar[d]_-{\sim} & \\ 
 D_{\rm{qcoh}}(\widehat{T}_b) \ar[r]^-{\sim} &
D_{\rm{qcoh}}(\widehat{\mathcal{T}}_b)_{-\chi+1-g^{\rm{sp}}} \ar[r]^-{\Phi_{\widehat{\mathcal{P}}_b'}}  & 
D_{\rm{qcoh}}(\widehat{\mathcal{M}}_b')_w \ar[r]^-{\sim} &D_{\rm{qcoh}}(\widehat{M}_b')    
    }
\end{align}
where the left horizontal arrows are equivalences (\ref{equiv:T2})
and the right vertical arrow is induced by (\ref{eq:window}). 
The functor $\Phi_{\widehat{\mathcal{P}}_b}$ has image in 
$\widehat{\mathbb{T}}_{b, \rm{qcoh}}$, see (\ref{prop:sreg}), which itself is a corollary of Proposition \ref{prop:Px}. 
The composition of the bottom horizontal arrow is fully-faithful by~\cite[Theorem~A]{MRVF}
(more precisely, its version for families of reduced planar curves). 
Therefore the top arrows are fully-faithful, which implies that 
the morphism (\ref{eta:isom}) is an isomorphism 
on $\widehat{T}_b \times_{\widehat{B}_b} \widehat{T}_b$. 
Since this holds for any $b \in B^{\rm{red}}$, the morphism 
(\ref{eta:isom}) is an isomorphism. 
\end{proof}

\begin{lemma}\label{lem:eq:window}
The composition functor (\ref{eq:window}) is an equivalence.     \end{lemma}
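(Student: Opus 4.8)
The plan is to recognize the composition (\ref{eq:window}) as an instance of the window theorem \cite{BFK, halp, T}, by identifying the quasi-BPS subcategory $\widehat{\mathbb{T}}_b$ with a window subcategory attached to the generic stability $\widehat{\mathcal{L}}'_b$. First I would reduce to the local model. Working formally at $b$, the good moduli space map $\widehat{\mathcal{M}}_b \to \widehat{M}_b$ is \'etale (equivalently formally) locally on $\widehat{M}_b$ of the form $R_{Q_y}(\bm{d})/G(\bm{d}) \to R_{Q_y}(\bm{d}) \ssslash G(\bm{d})$, by Subsection~\ref{subsec:loc} and Lemma~\ref{lem:equiver}. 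Under this identification the perturbed polarization $\widehat{\mathcal{L}}'_b$ corresponds to a GIT linearization (a rational character of $G(\bm{d})$); for generic $\varepsilon_i$ the associated semistable locus equals the stable locus, so that $\widehat{\mathcal{M}}'_b$ is a $\mathbb{C}^{\ast}$-gerbe over the fine moduli space $\widehat{M}'_b$, which is the content of (\ref{equiv:hatw}).

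Next I would invoke the window theorem for the open immersion $\widehat{\mathcal{M}}'_b \hookrightarrow \widehat{\mathcal{M}}_b$. The destabilizing locus for $\widehat{\mathcal{L}}'_b$ carries a $\Theta$-stratification indexed by one-parameter subgroups $\lambda$, and for the window width $n_\lambda = \langle \lambda, \det(R_{Q_y}(\bm{d})^{\lambda>0}) - \det(\mathfrak{gl}(\bm{d})^{\lambda>0}) \rangle$ of Lemma~\ref{lem:qbps} the theorem produces a subcategory $\mathcal{W} \subset D^b(\widehat{\mathcal{M}}_b)_w$ for which the restriction $\mathcal{W} \to D^b(\widehat{\mathcal{M}}'_b)_w \simeq D^b(\widehat{M}'_b)$ is an equivalence. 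Concretely, $\mathcal{W}$ consists of the objects $\mathcal{E}$ whose $T(\bm{d})$-weights along each relevant $\lambda$ lie in a half-open interval of length $n_\lambda$ with a center determined by the linearization $\widehat{\mathcal{L}}'_b$.

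The crucial step, where the BPS condition enters, is to identify $\widehat{\mathbb{T}}_b$ with $\mathcal{W}$. By Lemma~\ref{lem:qbps}, $\widehat{\mathbb{T}}_b$ is cut out by the symmetric conditions (\ref{ineq:n}) centered at $\frac{w}{d}\delta_y$. Under the BPS condition, the center $\frac{w}{d}\langle \lambda, \delta_y \rangle$ is placed so that no weight of an irreducible summand lies on the boundary of the symmetric interval: this strict-interior phenomenon is the one used in Proposition~\ref{prop:TA}, and it follows from \cite[Lemma~5.11]{PTquiver} when $lr_0^2$ is even and from \cite[Lemma~8.7]{PTtop} together with the same argument when $lr_0^2$ is odd. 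Hence for a generic choice of $\varepsilon_i$ the symmetric interval $[-\tfrac{1}{2}n_\lambda, \tfrac{1}{2}n_\lambda] + \frac{w}{d}\langle \lambda, \delta_y \rangle$ and the half-open GIT window contain the same lattice points, giving $\widehat{\mathbb{T}}_b = \mathcal{W}$ and hence that (\ref{eq:window}) is an equivalence. I expect the main obstacle to be exactly this matching: one must arrange the generic perturbation $\widehat{\mathcal{L}}'_b$ so that the half-open window it induces agrees with the intrinsic symmetric window simultaneously along all relevant $\lambda$, which is precisely where the primitivity in the BPS condition is needed to exclude boundary weights.
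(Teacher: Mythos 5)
Your overall skeleton (pass to the local quiver models at closed points, realize the perturbed polarization as a GIT linearization with semistable locus $\widehat{\mathcal{M}}_b'$, and invoke a window theorem to match $\widehat{\mathbb{T}}_b$ with a window subcategory) is the same as the paper's. The paper implements this by applying the window theorem for smooth Artin stacks \cite[Theorem~6.3.13]{T} with its maximal symmetric structure (the Ext-quivers are symmetric), after constructing the global $\mathbb{Q}$-line bundle $\Theta:=\det Rp_{\mathcal{M}\ast}(\widehat{\mathcal{E}}_b \boxtimes \widehat{\mathcal{L}}_b') \otimes \det Rp_{\mathcal{M}\ast}(\widehat{\mathcal{E}}_b \boxtimes \widehat{\mathcal{L}}_b)^{-1}$, showing $\widehat{\mathcal{M}}_b'=(\widehat{\mathcal{M}}_b)^{\Theta\text{-ss}}$, and using $\sum_i \varepsilon_i=0$ to descend $\Theta$ to the $\mathbb{C}^{\ast}$-rigidification.

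The genuine gap is in your crucial matching step: you ground the identification of the symmetric quasi-BPS window with the GIT window in the BPS condition on $(r,\chi,w)$, via the strict-interior property used in Proposition~\ref{prop:TA}. That condition is not a hypothesis of Lemma~\ref{lem:eq:window}, and it is not available where the lemma is applied: the lemma sits inside the proof of Theorem~\ref{thm:ff}, which makes no BPS assumption on $(r,\chi,w)$, so a proof conditional on BPS does not establish what is needed. Moreover, the strict-interior statements you cite (\cite[Lemma~5.11]{PTquiver}, \cite[Lemma~8.7]{PTtop}) concern the one-vertex Ext-quiver of the deepest stratum with dimension vector $d$; such points never occur over $B^{\rm{red}}$, which is the only locus where the lemma is used. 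Since $b\in B^{\rm{red}}$ corresponds to a reduced spectral curve, every polystable point $y$ of $\widehat{\mathcal{M}}_b$ has Ext-quiver with dimension vector the primitive vector $(1,\ldots,1)$ and torus stabilizer, and it is this primitivity — not the primitivity of $(r,\chi,w+1-g^{\rm{sp}})$ — that the paper exploits: it guarantees that for a generic choice of the $\varepsilon_i$ the $\mathrm{Aut}(y)$-character $\Theta|_{y}$ lies off all walls in the character lattice of $\mathrm{Aut}(y)$, which is the genericity hypothesis of \cite[Theorem~6.3.3]{T} needed to run \cite[Theorem~6.3.13]{T}. So while your concern about boundary weights points at a real subtlety of window arguments, your proof both assumes a hypothesis the paper cannot afford to assume and justifies the key step with lemmas that do not apply to the quivers actually arising over the reduced locus.
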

\begin{proof}
    The lemma is a consequence of a window theorem for smooth Artin stacks which 
    are locally GIT quotient stacks by torus action. 
    As an analogous result is obtained 
    in~\cite[Section~6.4.1]{T}, 
    we only give an outline of the proof. 
    
    We 
    apply~\cite[Theorem~6.3.13]{T} as follows.
    First, as Ext-quivers at closed points of $\mathcal{M}$
    are symmetric, we take a maximal symmetric structure 
    in the statement of~\cite[Theorem~6.3.13]{T}. 
    Next, let 
    $\widehat{\mathcal{E}}_b$ be the universal sheaf on 
    $\widehat{\mathcal{C}}_b \times_{\widehat{B}_b} \times \widehat{\mathcal{M}}_b$
    and define 
    the following $\mathbb{Q}$-line bundle on $\widehat{\mathcal{M}}_b$:
    \begin{align*}
        \Theta:=\det Rp_{\mathcal{M}\ast}(\widehat{\mathcal{E}}_b \boxtimes \widehat{\mathcal{L}}_b') \otimes \det Rp_{\mathcal{M}\ast}(\widehat{\mathcal{E}}_b \boxtimes \widehat{\mathcal{L}}_b)^{-1}. 
    \end{align*}
    Here, we denote by $p_{\mathcal{M}}$ the projection onto the factor $\widehat{\mathcal{M}}_b$. 
    Then an argument similar to~\cite[Lemma~6.4.5]{T} shows that 
   $\widehat{\mathcal{M}}_b'=(\widehat{\mathcal{M}}_b)^{\Theta \text{-ss}}$.
   Also, the condition $\sum_{i=1}^k \varepsilon_i=0$ implies that
   the $\mathbb{Q}$-line bundle $\Theta$ descends to 
   the $\mathbb{C}^{\ast}$-rigidification 
   $\widehat{\mathcal{M}}_b^{\rm{rig}}$ of $\widehat{\mathcal{M}}_b$. We have that $\widehat{M}_b'=(\widehat{\mathcal{M}}_b^{\rm{rig}})^{\Theta\text{-ss}}$. 
   
   For a generic choice of $\varepsilon_i$, the $\mathbb{Q}$-line bundle 
   $\Theta$ on $\widehat{\mathcal{M}}_b^{\rm{rig}}$ satisfies the genericity condition in~\cite[Theorem~6.3.3]{T} as in the proof of~\cite[Lemma~6.4.6]{T}. 
   Indeed, for any closed point 
   $y \in \widehat{\mathcal{M}}_b^{\rm{rig}}$, 
the associated dimension vector of its Ext-quiver is the primitive vector $(1, 1, \ldots, 1)$
as $b \in B^{\rm{red}}$ corresponds to a reduced spectral curve. Thus, for a generic choice of $\varepsilon_i$, 
the $\Aut(y)$-character $\Theta|_{y}$
does not lie on walls in the character lattice of $\Aut(y)$. 

   We also note that $\Omega_{\widehat{\mathcal{M}}_b^{\rm{rig}}}[-1]=\widehat{\mathcal{M}}_b^{\rm{rig}}$ as 
   $\mathcal{M}$ is smooth. Therefore the lemma follows from~\cite[Theorem~6.3.13]{T}. 
\end{proof}

\subsection{The Cohen-Macaulay extension of the Poincaré line bundle}\label{subsec:rank2}

In this subsection, we recall the extension $\mathcal{P}$ of the Poincaré line bundle from the regular locus to the full semistable locus in the rank $2$ case due to Li (and based on previous work of Arinkin). We do not know whether $\mathcal{P}$ is in the corresponding product of BPS categories in general. This property is automatically satisfied in some cases, which allow us to prove equivalence of BPS categories, see Corollary~\ref{cor:r=2}.

For a given tuple $(r, \chi, w)$, we use the short hand notation from \eqref{write:simple}:
\begin{align*}\mathcal{M}=\mathcal{M}^L(r, \chi), \ 
\mathcal{M}'=\mathcal{M}^L(r, w+1-g^{\rm{sp}}).
\end{align*}
We denote by $\mathcal{M}^{\rm{reg}} \subset \mathcal{M}$ the 
regular part as in (\ref{def:reg}). 
We set
\begin{align*}
(\mathcal{M}'\times_B \mathcal{M})^{\sharp}:=
(\mathcal{M}'^{\rm{reg}}\times_B \mathcal{M}) \cup 
(\mathcal{M}'\times_B \mathcal{M}^{\rm{reg}}). 
\end{align*}
The line bundle (\ref{def:Preg}) naturally extends to a
line bundle 
\begin{align}\label{line:Preg}
\mathcal{P}^{\sharp} \to (\mathcal{M}'\times_B \mathcal{M})^{\sharp}
\end{align}
since the formula (\ref{line:2}) makes sense if either $F$ or $E$ is a line bundle. 
In the rank two case, the following result is proved in~\cite{MLi}. 
\begin{thm}\emph{(\cite{MLi})}\label{thm:mao}
If $r=2$ and $l>2g$, the line bundle $\mathcal{P}^{\sharp}$ uniquely extends 
to a maximal Cohen–Macaulay sheaf $\mathcal{P}$ on $\mathcal{M}'\times_B \mathcal{M}$
flat over $\mathcal{M}'$.
\end{thm}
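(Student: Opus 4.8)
The plan is to define the extension as $\mathcal{P}:=j_{*}\mathcal{P}^{\sharp}$, where $j\colon (\mathcal{M}'\times_B \mathcal{M})^{\sharp}\hookrightarrow \mathcal{M}'\times_B \mathcal{M}$ is the open immersion, and then to verify that this pushforward is coherent, maximal Cohen-Macaulay, and flat over $\mathcal{M}'$. Uniqueness will be formal, while proving the maximal Cohen-Macaulay property along the locus where neither factor is regular --- in particular over non-reduced spectral curves --- will be the main obstacle. This strategy follows Arinkin's construction of the Poincar\'e sheaf for integral planar curves and its extension to reduced curves by Melo--Rapagnetta--Viviani, the new input being the non-reduced spectral curves that occur for $r=2$.

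First I would record the geometric inputs. Since $l>2g-2$, the stacks $\mathcal{M}$ and $\mathcal{M}'$ are smooth, the Hitchin maps to $B$ are flat, and their fibres are compactified Jacobians of the spectral curves $\mathcal{C}_b\subset S=\mathrm{Tot}_C(L)$, which are Gorenstein by Lemma~\ref{lem:gorenstein}; consequently $\mathcal{M}'\times_B\mathcal{M}$ is Cohen-Macaulay. Next I would bound the codimension of $Z:=(\mathcal{M}'\times_B \mathcal{M})\setminus(\mathcal{M}'\times_B \mathcal{M})^{\sharp}$, the locus of pairs $(F,E)$ for which neither $F$ nor $E$ is a line bundle on the common spectral curve $\mathcal{C}_b$. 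A non-regular rank one sheaf forces $\mathcal{C}_b$ to be singular, so $Z$ lies over the discriminant $B\setminus B^{\mathrm{sm}}$; for $r=2$ the non-regular locus in each Hitchin fibre has codimension at least one, and these conditions are imposed on the two independent factors over a common $b$. Combining the codimension of the discriminant in $B$ with these fibrewise estimates --- here the hypothesis $l>2g$ controls the strata, cf. Lemma~\ref{lem:codim} --- gives $\mathrm{codim}_{\mathcal{M}'\times_B\mathcal{M}}Z\geq 2$ (in fact $\geq 3$ over the generic point of the discriminant).

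Granting this codimension bound, the formal parts follow quickly. Since $\mathcal{M}'\times_B\mathcal{M}$ is Cohen-Macaulay (hence $S_2$) and $Z$ has codimension at least two, the pushforward $\mathcal{P}=j_{*}\mathcal{P}^{\sharp}$ of the line bundle $\mathcal{P}^{\sharp}$ is coherent, because a line bundle is $S_2$ and $S_2$ sheaves extend coherently across codimension-two loci on an $S_2$ scheme. Any maximal Cohen-Macaulay sheaf satisfies Serre's condition $S_2$, so it is recovered as the pushforward of its restriction to any open subset whose complement has codimension at least two; hence an MCM extension of $\mathcal{P}^{\sharp}$ is necessarily equal to $j_{*}\mathcal{P}^{\sharp}$, which yields uniqueness. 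Finally, granting that $\mathcal{P}$ is MCM, flatness over $\mathcal{M}'$ follows by miracle flatness: $\mathcal{P}$ is MCM on the Cohen-Macaulay total space, the base $\mathcal{M}'$ is smooth, and the fibres of the projection to $\mathcal{M}'$ are equidimensional, so the local criterion for flatness applies.

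The crux is therefore to show that $j_{*}\mathcal{P}^{\sharp}$ is maximal Cohen-Macaulay, which is a local question along $Z$. Over $b\in B\setminus B^{\mathrm{sm}}$ the spectral curve $\mathcal{C}_b$ is a double cover of $C$, hence is integral and singular, reduced with two distinct components $D_1+D_2$, or non-reduced of the form $2D$; in all cases it is a planar (indeed Gorenstein) curve as a Cartier divisor in the smooth surface $S$. In the first two cases the MCM property of the Poincar\'e sheaf is provided by Arinkin's autoduality for integral planar curves and by the Melo--Rapagnetta--Viviani extension to reduced planar curves. The genuinely new and hardest case is the non-reduced ribbon $2D$, which lies outside both results. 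Here I would pass to the \'etale-local model of rank one torsion-free sheaves on $2D$, realize the determinantal formula \eqref{line:2} as a twist of the structure sheaf of the correspondence of pairs $(F,E)$ admitting a nonzero map, and reduce the MCM property to the statement that this correspondence is Cohen-Macaulay of the expected dimension; the depth condition at the deepest points could then be verified either by constructing a finite MCM resolution or by degenerating $2D$ flatly to a reduced curve and invoking semicontinuity of depth. I expect this analysis of compactified Jacobians of ribbons to be the main difficulty, since it goes beyond the reduced planar setting treated in the literature.
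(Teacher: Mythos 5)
This theorem is not proved in the paper at all: it is quoted from Li's work \cite{MLi}, and Li's construction (which the paper itself recalls in the proof of Theorem~\ref{thm:CMext}) is quite different from what you propose. Your formal steps are essentially correct: granting that a maximal Cohen--Macaulay extension exists, it must agree with $j_{*}\mathcal{P}^{\sharp}$ by the $S_2$ property of MCM sheaves (uniqueness), and flatness over $\mathcal{M}'$ follows from a module version of miracle flatness once the MCM property is known (note, though, that Cohen--Macaulayness of $\mathcal{M}'\times_B\mathcal{M}$ already leans on Li's flatness of the rank-two Hitchin map, \cite[Proposition~2.2.6]{MLi}). The problem is that the entire content of the theorem is the existence statement, namely that $j_{*}\mathcal{P}^{\sharp}$ is coherent and maximal Cohen--Macaulay along the locus where both factors are non-regular, and there your proposal has a genuine gap that you yourself flag: the non-reduced case $\mathcal{C}_b=2D$ is left open, with two speculative strategies. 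The first (construct a finite MCM resolution) is not carried out. The second (degenerate $2D$ flatly to a reduced curve and invoke semicontinuity of depth) fails as stated, because the implication runs the wrong way: Cohen--Macaulayness of the fibers of a flat family is an \emph{open} condition, so CM-ness of the Poincar\'e sheaf over nearby reduced spectral curves gives no control over the special point corresponding to the ribbon. Moreover, to even speak of a degeneration you would need a flat family of extensions whose special fiber is identified with $j_{*}\mathcal{P}^{\sharp}$ and whose limit is CM; a flat limit of CM sheaves need not be CM, so producing such a family with the right special fiber is essentially equivalent to what must be proved.

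This is exactly why \cite{MLi}, following Arinkin \cite{Ardual} and ultimately Haiman \cite{Ha}, proceeds globally rather than by local analysis near the bad locus: one defines the sheaf $\mathcal{Q}^{[d]}$ on $\mathrm{Hilb}^d(\mathcal{C}/B)\times_B\mathcal{M}$ for $d\gg 0$ as the sign-isotypic part of a pullback--pushforward through the isospectral Hilbert scheme, proves it is Cohen--Macaulay using Haiman's theorem together with flatness-plus-CM-fibers arguments of the type of Lemma~\ref{lem:MM}, and then descends it along the (smooth, surjective) Abel--Jacobi maps to obtain $\mathcal{P}$ on all of $\mathcal{M}'\times_B\mathcal{M}$. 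The MCM property is thus built into the construction uniformly over $B$, with the non-reduced spectral curves handled by the same global mechanism rather than by a separate study of compactified Jacobians of ribbons. In summary: your reduction of uniqueness and flatness to existence is sound, but the existence/MCM step --- which is the actual theorem --- is missing, and the one concrete route you offer toward it (semicontinuity under degeneration) is invalid.
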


We will use the following lemma: 
\begin{lemma}\label{lem:CMP}
Let $\mathcal{P}$ be the maximal Cohen–Macaulay sheaf in 
    Theorem~\ref{thm:mao}. Then $\mathcal{P}$ is also flat over 
    $\mathcal{M}$. Moreover its derived dual 
    \begin{align*}
        \mathcal{P}^{\vee}:=R\mathcal{H}om(\mathcal{P}, 
        \mathcal{O}_{\mathcal{M}'\times_B \mathcal{M}})
    \end{align*}
    is also maximal Cohen–Macaulay sheaf which is flat over 
    $\mathcal{M}'$ and $\mathcal{M}$. 
\end{lemma}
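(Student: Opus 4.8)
The plan is to deduce everything from two inputs: the manifest symmetry of the Poincaré sheaf under swapping the two factors, together with the fact that $X:=\mathcal{M}'\times_B\mathcal{M}$ is Gorenstein. To see the latter, note that $\mathcal{M}$, $\mathcal{M}'$, and $B$ are all smooth (as $l>2g-2$), so $\mathcal{M}'\times\mathcal{M}$ is smooth and the diagonal $\Delta_B\hookrightarrow B\times B$ is a regular embedding of codimension $\dim B$. Since $X=(\mathcal{M}'\times\mathcal{M})\times_{B\times B}B$ is the preimage of $\Delta_B$ under $(h',h)$, it is cut out in the smooth scheme $\mathcal{M}'\times\mathcal{M}$ by $\dim B$ equations. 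As the Hitchin maps $h,h'$ are flat, the projection $q\colon X\to\mathcal{M}'$ (and $q'\colon X\to\mathcal{M}$) is flat and $\dim X=\dim\mathcal{M}'+\dim\mathcal{M}-\dim B$, so this codimension is exactly $\dim B$; hence $X$ is a local complete intersection in a regular scheme, and in particular Gorenstein with $\omega_X$ a line bundle. Flatness of $q$ together with $X$ and $\mathcal{M}'$ Gorenstein shows that the fibres of $q$ are Gorenstein and that $\omega_{X/\mathcal{M}'}=\omega_X\otimes q^\ast\omega_{\mathcal{M}'}^{-1}$ is a line bundle.

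For the flatness of $\mathcal{P}$ over $\mathcal{M}$, I would use that the defining formula (\ref{line:2}) is symmetric in $F$ and $E$. Thus, if $\sigma\colon\mathcal{M}'\times_B\mathcal{M}\xrightarrow{\sim}\mathcal{M}\times_B\mathcal{M}'$ denotes the swap, then $\sigma^\ast$ of the Poincaré line bundle on the $\sharp$-locus of $\mathcal{M}\times_B\mathcal{M}'$ equals $\mathcal{P}^\sharp$. Applying Theorem~\ref{thm:mao} with the two factors interchanged produces a maximal Cohen--Macaulay extension $\mathcal{P}_{\mathrm{sw}}$ on $\mathcal{M}\times_B\mathcal{M}'$ flat over $\mathcal{M}$; its pullback $\sigma^\ast\mathcal{P}_{\mathrm{sw}}$ is then a maximal Cohen--Macaulay sheaf on $X$ restricting to $\mathcal{P}^\sharp$ over $(\mathcal{M}'\times_B\mathcal{M})^\sharp$. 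Since $X$ is Cohen--Macaulay of dimension $\geq 2$, any maximal Cohen--Macaulay sheaf on it satisfies Serre's condition $S_2$, and the complement of $(\mathcal{M}'\times_B\mathcal{M})^\sharp$ — the locus where \emph{both} factors are non-regular — has codimension $\geq 2$ (each non-regular locus is fibrewise of codimension $\geq 1$ for $r=2$, and codimensions add in the fibre product over $B$). Two $S_2$-sheaves agreeing on the complement of a codimension $\geq 2$ closed subset are canonically isomorphic (both equal $j_\ast j^\ast$ of the common restriction), so $\mathcal{P}\cong\sigma^\ast\mathcal{P}_{\mathrm{sw}}$; as the latter is flat over $\mathcal{M}$, so is $\mathcal{P}$.

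For the dual, I would first restrict $\mathcal{P}$ to the fibres of $q$: because $\mathcal{M}'$ is smooth and $\mathcal{P}$ is $q$-flat, the local coordinates on $\mathcal{M}'$ at a point $y$ form a regular sequence on $\mathcal{P}$, so $\operatorname{depth}\mathcal{P}|_{X_y}=\operatorname{depth}\mathcal{P}-\dim\mathcal{M}'=\dim X-\dim\mathcal{M}'=\dim X_y$, i.e.\ $\mathcal{P}|_{X_y}$ is maximal Cohen--Macaulay on the Gorenstein fibre $X_y$. Relative duality for the flat, Gorenstein family $q$ then gives that $R\mathcal{H}om(\mathcal{P},\omega_{X/\mathcal{M}'})$ is concentrated in degree zero, is flat over $\mathcal{M}'$, and commutes with base change to the fibres. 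Since $\omega_X$ is a line bundle and $\mathcal{P}$ is maximal Cohen--Macaulay, one has $R\mathcal{H}om(\mathcal{P},\omega_X)=\mathcal{H}om(\mathcal{P},\omega_X)$, and twisting by the line bundles $q^\ast\omega_{\mathcal{M}'}$ and $\omega_X^{-1}$ identifies the above with $\mathcal{P}^\vee=R\mathcal{H}om(\mathcal{P},\mathcal{O}_X)$; hence $\mathcal{P}^\vee$ is a maximal Cohen--Macaulay sheaf flat over $\mathcal{M}'$. Repeating the argument with $q'\colon X\to\mathcal{M}$ in place of $q$ — legitimate now that flatness of $\mathcal{P}$ over $\mathcal{M}$ has been established — shows that $\mathcal{P}^\vee$ is flat over $\mathcal{M}$ as well.

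The main obstacle I anticipate is the base-change compatibility underlying relative duality: one must check that forming $R\mathcal{H}om(\mathcal{P},\omega_{X/\mathcal{M}'})$ commutes with passing to fibres, which requires the fibrewise vanishing $\mathcal{E}xt^i(\mathcal{P}|_{X_y},\omega_{X_y})=0$ for $i>0$ (guaranteed by the maximal Cohen--Macaulay property just established) together with the local criterion of flatness and semicontinuity. A secondary point to pin down carefully is the codimension estimate for the complement of the $\sharp$-locus, which licenses the $S_2$-uniqueness of the maximal Cohen--Macaulay extension; this is where the rank-two hypothesis and the assumption $l>2g$ enter.
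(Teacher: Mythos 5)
Your proposal is correct. The first half — flatness of $\mathcal{P}$ over $\mathcal{M}$ via the swap involution, applying Theorem~\ref{thm:mao} with the factors interchanged and identifying the two sheaves by uniqueness of the maximal Cohen--Macaulay extension — is exactly the paper's argument; your $S_2$/codimension discussion is the mechanism behind that uniqueness, but you can simply invoke the uniqueness clause of Theorem~\ref{thm:mao} rather than re-derive it (your heuristic that the complement of the $\sharp$-locus has codimension $\geq 2$ is not proved, but it is not needed once uniqueness is cited).

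Where you genuinely diverge is the treatment of $\mathcal{P}^{\vee}$. The paper never invokes relative duality over the projections: it introduces a third moduli stack $\mathcal{M}''=\mathcal{M}^L(2,-w+1-g^{\mathrm{sp}})$ and the isomorphism $\eta\colon \mathcal{M}'\times_B\mathcal{M}\cong\mathcal{M}''\times_B\mathcal{M}$ sending $(E,F)\mapsto(E^{\vee},F)$, checks from the defining formula that $\eta^{\sharp\ast}(\mathcal{P}')^{\sharp}\cong(\mathcal{P}^{\sharp})^{\vee}$ (cf.~\cite{Ardual}), and then, since $\mathcal{P}^{\vee}$ is maximal Cohen--Macaulay (Gorenstein duality on the fiber product, as in your argument), identifies $\mathcal{P}^{\vee}\cong\eta^{\ast}\mathcal{P}'$ by uniqueness of Cohen--Macaulay extensions; flatness of $\mathcal{P}^{\vee}$ over both factors is then inherited from Theorem~\ref{thm:mao} together with the already-proved first part, applied to the pair $(\mathcal{M}'',\mathcal{M})$. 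Your route instead proves flatness of the dual directly: fiberwise maximal Cohen--Macaulayness of $\mathcal{P}$ via the depth formula over the smooth base, then Ext-vanishing, base change, and the local criterion of flatness for the flat Gorenstein projections $q$ and $q'$. Both arguments are sound. The paper's is shorter given Li's theorem and requires no duality-with-base-change machinery, but it is special to Poincar\'e-type kernels, where the dual of the sheaf on the $\sharp$-locus is again a Poincar\'e-type line bundle; yours is more robust — it shows that on a fiber product of flat Gorenstein families, the dual of any maximal Cohen--Macaulay sheaf flat over one factor is again maximal Cohen--Macaulay and flat over that factor — at the cost of the base-change compatibility you correctly flag as the crux, which is standard (Altman--Kleiman-type Ext and base change, or an induction along a regular sequence of the base using the change-of-rings isomorphism $\mathcal{E}xt^{i+1}_{R}(M,R)\cong\mathcal{E}xt^{i}_{R/t}(M,R/t)$ for $t$ a nonzerodivisor annihilating $M$).
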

\begin{proof}
    Let $\iota, \iota^{\sharp}$ be the involutions 
    \begin{align*}
        \iota \colon \mathcal{M}\times_B \mathcal{M}'\stackrel{\cong}{\to}
        \mathcal{M}'\times_B \mathcal{M}, \quad \iota^{\sharp} \colon (\mathcal{M}\times_B \mathcal{M}')^{\sharp}\stackrel{\cong}{\to}
        (\mathcal{M}'\times_B \mathcal{M})^{\sharp}
    \end{align*}
    given by $(x, y) \mapsto (y, x)$. 
    Then $\iota^{\ast}\mathcal{P}$ is the unique Cohen–Macaulay extension 
    of the line bundle $\iota^{\sharp\ast}\mathcal{P}^{\sharp}$, 
    where the latter is given by the formula (\ref{line:2}) 
    switching $(E, F)$ with $(F, E)$.
    Therefore the result of~\cite{MLi} applies to $\iota^{\sharp\ast}\mathcal{P}^{\sharp}$
    and we obtain its maximal Cohen–Macaulay extension 
    $\mathcal{P}^{\iota}$ on $\mathcal{M}\times_B \mathcal{M}'$
    which is flat over $\mathcal{M}$. 
    By the uniqueness of Cohen–Macaulay extension, we have $\mathcal{P}^{\iota} \cong \iota^{\ast}\mathcal{P}$. 
    Therefore $\mathcal{P}$ is also flat over $\mathcal{M}$. 

As $\mathcal{P}$ is maximal Cohen–Macaulay, 
its derived dual $\mathcal{P}^{\vee}$ is also maximal Cohen–Macaulay 
sheaf. We show that it is also flat over $\mathcal{M}$ and $\mathcal{M}'$. 
Let $\mathcal{M}''=\mathcal{M}^L(2, -w+1-g^{\rm{sp}})$. 
We have the isomorphisms 
\begin{align*}
    \eta \colon \mathcal{M}'\times_B \mathcal{M} \stackrel{\cong}{\to} \mathcal{M}''\times_B \mathcal{M}, 
    \quad 
    \eta^{\sharp} \colon (\mathcal{M}'\times_B \mathcal{M})^{\sharp} \stackrel{\cong}{\to} (\mathcal{M}''\times_B \mathcal{M})^{\sharp}
\end{align*}
which sends $(E, F)$ for a pair of sheaves on 
a spectral curve $D$ to 
$(E^{\vee}, F)$. 
The same construction as in $\mathcal{P}^{\sharp}$ 
yields a line bundle 
$(\mathcal{P}')^{\sharp}$ on $(\mathcal{M}''\times_B \mathcal{M})^{\sharp}$.  
Then from the formula (\ref{line:2}), one can check that 
$\eta^{\sharp\ast}(\mathcal{P}')^{\sharp} \cong (\mathcal{P}^{\sharp})^{\vee}$
(cf.~\cite[Lemma~6.2]{Ardual}). 
On the other hand, the result of~\cite{MLi} applies 
to 
the line bundle $(\mathcal{P}')^{\sharp}$ and it uniquely 
extends to a maximal Cohen–Macaulay sheaf $\mathcal{P}'$ on $\mathcal{M}''\times_B \mathcal{M}$
flat over $\mathcal{M}''$ and $\mathcal{M}$. 
By the uniqueness of Cohen–Macaulay extension, we have 
$\eta^{\ast}\mathcal{P}' \cong 
    \mathcal{P}^{\vee}$, 
hence $\mathcal{P}^{\vee}$ is flat over $\mathcal{M}$ and $\mathcal{M}'$. 
\end{proof}

\begin{remark}\label{nonflat}
Note that in~\cite[Remark~4]{MLi} it is mentioned that the Cohen–Macaulay extension of 
$\mathcal{P}^{\sharp}$ is not flat over the second factor. 
This is because in~\cite[Remark~4]{MLi} the Cohen–Macaulay extension is also considered 
including unstable locus in the second factor, and the non-flatness happens at 
unstable points. 
\end{remark}

The following lemma will be used later. 
\begin{lemma}\label{replace}
For a line bundle $\mathcal{L} \in \mathrm{Pic}^0(\mathcal{C})$, replace $\mathcal{E}'$ with $\mathcal{L}\boxtimes \mathcal{E}'$
in the formula (\ref{line:P22}) to obtain another 
line bundle 
\begin{align*}
    \mathcal{P}_{\mathcal{L}}^{\sharp} \to (\mathcal{M}'\times_B \mathcal{M})^{\sharp}. 
\end{align*}
Then 
there is an étale map $g \colon B' \to B$ such that 
$g^{\ast}\mathcal{P}^{\sharp}$ and $g^{\ast}\mathcal{P}_{\mathcal{L}}^{\sharp}$ 
are isomorphic up to tensoring with an element in $\mathrm{Pic}^0(\mathcal{M}\times_B B')$. 
\end{lemma}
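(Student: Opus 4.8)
The plan is to show that replacing $\mathcal{E}'$ by $\mathcal{L}\boxtimes\mathcal{E}'$ in the determinant formula (\ref{line:P22}) alters $\mathcal{P}^{\sharp}$ by the relative Deligne pairing of $\mathcal{L}$ with the universal sheaf $\mathcal{E}$ on the $\mathcal{M}$-factor, and that this pairing becomes, after an étale base change on $B$, a line bundle pulled back from $\mathcal{M}$ lying in $\mathrm{Pic}^0$. First I would use that $\det R\Gamma$ is additive along distinguished triangles, hence factors through the relative $K$-theory of $\mathcal{C}$ over the appropriate base. Writing $\mathcal{P}^{\sharp}$ as $\det Rp_{13\ast}$ of the $K$-theory product $(p_{12}^{\ast}\mathcal{E}'-\mathcal{O})\cdot(p_{23}^{\ast}\mathcal{E}-\mathcal{O})$ and substituting $q^{\ast}\mathcal{L}\otimes\mathcal{E}'$ for $\mathcal{E}'$ (where $q$ is the projection of the triple product onto $\mathcal{C}$), a direct manipulation using $q^{\ast}\mathcal{L}\cdot p_{12}^{\ast}\mathcal{E}'-\mathcal{O}=(q^{\ast}\mathcal{L}-\mathcal{O})\,p_{12}^{\ast}\mathcal{E}'+(p_{12}^{\ast}\mathcal{E}'-\mathcal{O})$ yields
\[
\mathcal{P}_{\mathcal{L}}^{\sharp}\otimes(\mathcal{P}^{\sharp})^{-1}\cong \det Rp_{13\ast}\big((q^{\ast}\mathcal{L}-\mathcal{O})\cdot p_{12}^{\ast}\mathcal{E}'\cdot(p_{23}^{\ast}\mathcal{E}-\mathcal{O})\big).
\]

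Next I would represent $\mathcal{L}$ by a divisor. Since $\mathcal{L}$ has relative degree zero over $B$, after an étale base change $g\colon B'\to B$ (through which sections of $\mathcal{C}'=\mathcal{C}\times_B B'\to B'$ exist and a relatively ample effective divisor splits into sections) I can write $g^{\ast}\mathcal{L}\cong \mathcal{O}_{\mathcal{C}'}\big(\sum_i a_i\Sigma_i\big)$ with each $\Sigma_i\subset\mathcal{C}'$ a degree one section and $\sum_i a_i=0$. Writing $q^{\ast}\mathcal{L}-\mathcal{O}$ as an alternating sum of classes $i_{\Sigma_i\ast}(-)$ supported on the sections and using that $Rp_{13\ast}$ of a sheaf supported on a section collapses to restriction, the factor $p_{12}^{\ast}\mathcal{E}'$ contributes a line bundle $M_i$ on the $\mathcal{M}'$-factor, and $\det\!\big(M_i\otimes(\mathcal{E}|_{\Sigma_i}-\mathcal{O})\big)\cong \mathcal{E}|_{\Sigma_i}$ is independent of $M_i$. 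This is exactly the mechanism behind the pairing formula $\mathcal{P}|_{(L_1,L_2)}=\bigotimes_k L_1^{\otimes a_k}|_{x_k}$ of \cite[Section~1.1]{Ardual}. The $\mathcal{E}'$-dependence therefore cancels and I obtain
\[
g^{\ast}\big(\mathcal{P}_{\mathcal{L}}^{\sharp}\otimes(\mathcal{P}^{\sharp})^{-1}\big)\cong \bigotimes_i(\mathcal{E}|_{\Sigma_i})^{\otimes a_i},
\]
a line bundle pulled back from $\mathcal{M}\times_B B'$.

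Finally I would check that this line bundle lies in $\mathrm{Pic}^0(\mathcal{M}\times_B B')$: because $\sum_i a_i=0$, its restriction to a Hitchin fibre $\mathcal{M}_b$ is the line bundle associated, via the Abel--Jacobi map and the autoduality of the (compactified) Jacobian of $\mathcal{C}_b$, to the relative degree zero divisor $\sum_i a_i\Sigma_i(b)$, which is algebraically equivalent to zero; this gives the claim fibrewise and hence over $B'$. I expect the main obstacle to be the second step rather than the first or third: one must justify the étale-local divisor representation of $\mathcal{L}$ and, more delicately, arrange the sections $\Sigma_i$ to meet each spectral curve in the locus where the restriction $\mathcal{E}|_{\Sigma_i}$ is a genuine line bundle on $\mathcal{M}$, so that the determinant computation is literal rather than merely $K$-theoretic, and so that the identifications above hold on the whole $\sharp$-locus where $\mathcal{P}^{\sharp}$ and $\mathcal{P}_{\mathcal{L}}^{\sharp}$ are defined.
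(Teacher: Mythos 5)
Your proposal follows the same route as the paper's proof: after an \'etale base change $g\colon B'\to B$ you represent $\mathcal{L}$ by a combination $\sum_i a_i\Sigma_i$ of sections of $\mathcal{C}'\to B'$ with $\sum_i a_i=0$, you compute that the two line bundles differ by $\bigotimes_i \det(\mathcal{E}|_{\Sigma_i})^{\otimes a_i}$, and you use $\sum_i a_i=0$ to place this in $\mathrm{Pic}^0(\mathcal{M}\times_B B')$. Your K-theoretic bookkeeping (the rank-zero class $p_{23}^{\ast}\mathcal{E}-\mathcal{O}$ killing all dependence on $\mathcal{E}'$ in the determinant) is a correct expansion of what the paper dismisses as ``an easy computation,'' and it reproduces exactly the paper's formula $\bigotimes_{j}\bigl(\det(\mathcal{E}|_{\mathcal{D}_{1j}})\otimes\det(\mathcal{E}|_{\mathcal{D}_{2j}})^{-1}\bigr)$.

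The one place you go astray is your flagged ``main obstacle.'' You cannot, and need not, arrange the sections $\Sigma_i$ to meet each spectral curve only where the sheaves are locally free: as the sheaf varies over $\mathcal{M}$, its non-locally-free points sweep out the whole spectral curve, so no fixed choice of sections works for all points of $\mathcal{M}$ simultaneously. The paper's resolution is that no such arrangement is required: $\mathcal{D}_{ij}\times_B\mathcal{M}\hookrightarrow \mathcal{C}\times_B\mathcal{M}$ is a regular closed immersion, hence the derived restriction $\mathcal{E}|_{\mathcal{D}_{ij}\times_B\mathcal{M}}$ is a bounded, hence perfect, complex on $\mathcal{M}$, so $\det(\mathcal{E}|_{\mathcal{D}_{ij}})$ is an honest line bundle; multiplicativity of $\det$ on exact triangles of perfect complexes then turns your ``merely K-theoretic'' manipulation into a literal isomorphism of line bundles on the whole $\sharp$-locus. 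With that observation in place of your proposed (impossible) general-position argument, your proof is complete; also note that the membership in $\mathrm{Pic}^0$ needs nothing as heavy as autoduality of compactified Jacobians, since the classes $\det(\mathcal{E}|_{\Sigma_i})$ are algebraically equivalent to one another as the section varies in $\mathcal{C}'$.
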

\begin{proof}
    For each $b \in B$, we can write $\mathcal{L}=\mathcal{O}_{\mathcal{C}}(\mathcal{D}_2-\mathcal{D}_1)$
    for smooth divisor $\mathcal{D}_i \subset \mathcal{C}$ such that \[\mathcal{D}=\mathcal{D}_1 \cup \mathcal{D}_2 \to B\] is étale at $b$. 
       There is an étale map $(B', b') \to (B, b)$ such that 
    $\mathcal{D}\times_B B'$ is a disjoint union of sections of $\mathcal{C}'\times_B B' \to B'$. 
    Hence we may assume that $\mathcal{D}_1$ and $\mathcal{D}_2$ 
    are disjoint union of sections \[\mathcal{D}_i=\bigsqcup_{j=1}^a \mathcal{D}_{ij}.\] 
  Then an easy computation shows that 
  the line bundles $\mathcal{P}$ and $\mathcal{P}_{\mathcal{L}}$ differ
    by the tensor product of 
    \begin{align*}
    \bigotimes_{j=1}^a (\det(\mathcal{E}|_{\mathcal{D}_{1j}}) \otimes 
    \det(\mathcal{E}|_{\mathcal{D}_{2j}})^{-1}),
    \end{align*}
    which is an element of $\mathrm{Pic}^0(\mathcal{M})$. 
        To make sense of the above formula, we note that $\mathcal{D}_{ij} \times_B \mathcal{M} \hookrightarrow \mathcal{C}\times_B \mathcal{M}$ is a regular closed immersion (thus quasi-smooth), 
    so the derived pull-back $\mathcal{E}|_{\mathcal{D}_{ij}\times_B \mathcal{M}}$ is bounded, hence perfect on 
    $\mathcal{M}$ so its determinant is indeed a well-defined line bundle on $\mathcal{M}$. 
    \end{proof}

We expect that the Cohen-Macaulay sheaf in Theorem~\ref{thm:mao}
gives a correspondence between quasi-BPS categories. 
We state the following assumption: 
\begin{assum}\label{conj:CMext}
For a tuple 
$(2, \chi, w)$ satisfying the BPS condition, 
the maximal Cohen–Macaulay sheaf $\mathcal{P}$ in Theorem~\ref{thm:mao} satisfies 
\begin{align*}
    \mathcal{P} \in \mathbb{T}^L(2, w+1-g^{\rm{sp}})_{\chi+g^{\rm{sp}}-1}\boxtimes_B \mathbb{T}^L(2, \chi)_w. 
\end{align*}
    \end{assum}
    \begin{remark}
        We do not know whether Assumption~\ref{conj:CMext} holds in general. 
        In Theorem~\ref{thm:CMext}, we show that Assumption~\ref{conj:CMext} holds 
        for $C=\mathbb{P}^1$. Note that the assumption also holds automatically if $w+1-g^{\mathrm{sp}}$ and $\chi$ are odd.
    \end{remark}
Under Assumption~\ref{conj:CMext}, there is an
induced functor 
\begin{align}\label{equiv:expect}
    \Phi_{\mathcal{P}} \colon 
   \mathbb{T}^L(2, w+1-g^{\rm{sp}})_{-\chi+1-g^{\rm{sp}}} 
   \to \mathbb{T}^L(2, \chi)_w
\end{align}
which we expect to be the equivalence claimed in Conjecture \ref{conj:T0}.

\begin{thm}\label{thm:r=21}
Under Assumption~\ref{conj:CMext}, Conjecture~\ref{conj:T0} holds for $r=2$ and $l>\mathrm{max}\{3g-2, 2g\}$. 
\end{thm}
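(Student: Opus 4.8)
The plan is to show that the functor $\Phi_{\mathcal{P}}$ of \eqref{equiv:expect} is a $B$-linear equivalence, following closely the strategy of the proof of Theorem~\ref{thm:ff}. Write $\mathbb{T}'=\mathbb{T}^L(2, w+1-g^{\rm{sp}})_{-\chi+1-g^{\rm{sp}}}$ and $\mathbb{T}=\mathbb{T}^L(2, \chi)_w$. By Assumption~\ref{conj:CMext} the kernel $\mathcal{P}$ lies in $\mathbb{T}^L(2, w+1-g^{\rm{sp}})_{\chi+g^{\rm{sp}}-1}\boxtimes_B \mathbb{T}^L(2, \chi)_w$, so $\Phi_{\mathcal{P}}\colon \mathbb{T}'\to\mathbb{T}$ is well defined and $B$-linear. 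Since $\mathcal{P}$ restricts over $B^{\rm{sm}}$ to the line bundle $\mathcal{P}^{\rm{sm}}$ of \eqref{line:P2}, the restriction of $\Phi_{\mathcal{P}}$ to $B^{\rm{sm}}$ agrees with the Fourier--Mukai equivalence \eqref{equiv:Bsm}; thus it remains only to prove that $\Phi_{\mathcal{P}}$ is an equivalence, which will then automatically extend \eqref{equiv:Bsm} as required by Conjecture~\ref{conj:T0}. The hypothesis $l>2g$ is used here only to guarantee, via Theorem~\ref{thm:mao}, that $\mathcal{P}$ exists.

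First I would construct the adjoint. By Lemma~\ref{lem:CMP}, the derived dual $\mathcal{P}^{\vee}$ is again a maximal Cohen--Macaulay sheaf, flat over both factors, and serves as the Fourier--Mukai kernel of the adjoint of $\Phi_{\mathcal{P}}$ up to a shift. Since both $\mathbb{T}$ and $\mathbb{T}'$ are smooth and proper over $M$ with relative Serre functor trivial over $M$ and equal to $[g^{\rm{sp}}]$ over $B$ (Theorem~\ref{thm:proper0}, Corollary~\ref{cor:proper0}), the left and right adjoints coincide, and I denote the resulting functor $\Phi_{\mathcal{P}^{\vee}}$. The compositions $\Phi_{\mathcal{P}^{\vee}}\circ\Phi_{\mathcal{P}}$ and $\Phi_{\mathcal{P}}\circ\Phi_{\mathcal{P}^{\vee}}$ are Fourier--Mukai functors whose kernels $\mathcal{Q}$ and $\mathcal{Q}'$ are obtained by convolution over $B$, and the unit and counit of the adjunction correspond to morphisms $\eta\colon\mathcal{Q}\to\mathcal{O}_{\Delta}$ and $\eta'\colon\mathcal{O}_{\Delta}\to\mathcal{Q}'$. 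It suffices to show that $\eta$ and $\eta'$ are both isomorphisms.

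To prove this I would reduce to the locus $B^{\rm{red}}$ of reduced spectral curves exactly as in the proof of Theorem~\ref{thm:ff}. Because $r=2$ and $l>3g-2$, Lemma~\ref{lem:codim} gives $\mathrm{codim}(B\setminus B^{\rm{red}})>g^{\rm{sp}}$, and since $\mathcal{P}$ and $\mathcal{P}^{\vee}$ are maximal Cohen--Macaulay, Proposition~\ref{prop:CMQ} reduces the statement that $\eta,\eta'$ are isomorphisms to the statement that their restrictions over $B^{\rm{red}}$ are isomorphisms. Over $B^{\rm{red}}$ I would argue formally locally at each $b\in B^{\rm{red}}$: after perturbing the polarization generically the relative compactified Jacobian becomes a fine moduli space, the window theorem (Lemma~\ref{lem:eq:window}) identifies the restricted BPS categories $\mathbb{T}'|_{B^{\rm{red}}}$ and $\mathbb{T}|_{B^{\rm{red}}}$ with the derived categories of these fine compactified Jacobians, and the restriction of $\mathcal{P}$ is precisely the Poincar\'e sheaf of Melo--Rapagnetta--Viviani. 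By Theorem~A of \cite{MRVF}, in its version for families of reduced planar curves, this Poincar\'e sheaf induces a derived \emph{equivalence} of the compactified Jacobians, so both $\eta$ and $\eta'$ are isomorphisms over $B^{\rm{red}}$. Hence they are isomorphisms globally and $\Phi_{\mathcal{P}}$ is an equivalence.

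The main obstacle will be the reduction of the third paragraph: one must check that the convolution kernels $\mathcal{Q},\mathcal{Q}'$ inherit enough depth from the Cohen--Macaulay property of $\mathcal{P}$ and $\mathcal{P}^{\vee}$ for Proposition~\ref{prop:CMQ} to apply with the bound $\mathrm{codim}(B\setminus B^{\rm{red}})>g^{\rm{sp}}$, and that over $B^{\rm{red}}$ the window equivalence of Lemma~\ref{lem:eq:window} genuinely intertwines $\Phi_{\mathcal{P}}$ with the Melo--Rapagnetta--Viviani transform on the \emph{full} compactified Jacobian rather than only on its regular (line-bundle) locus. Both points are the natural analogues, for the Cohen--Macaulay extension $\mathcal{P}$ and for the full categories, of the corresponding steps already carried out for $\mathcal{P}^{\rm{sreg}}$ in the proof of Theorem~\ref{thm:ff}.
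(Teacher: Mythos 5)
Your overall strategy (Cohen--Macaulay kernel, reduction to $B^{\rm{red}}$ by codimension estimates, Melo--Rapagnetta--Viviani duality plus the window theorem over the reduced locus) is the same as the paper's, but there are two genuine gaps in how you set it up. First, you never perform the reduction that makes the dimension-estimate machinery applicable. Proposition~\ref{prop:CMQ} (and the whole framework of Section~\ref{s6}: the functors \eqref{funct:PhiP}, \eqref{PhiPL}, the kernel $\mathcal{Q}$ in \eqref{isom:Q}, Lemmas~\ref{lem:MCM} and \ref{lem:Qvanish}) is formulated for a quasi-projective \emph{variety} $T$ with a flat, lci map $g\colon T\to B$, with $\mathcal{Q}$ a complex on the scheme $T\times_B T$; the key input, Lemma~\ref{lem:CMcond}, is Arinkin's criterion for schemes of pure dimension. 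You instead treat $\Phi_{\mathcal{P}}\colon \mathbb{T}'\to\mathbb{T}$ with both sides abstract quasi-BPS categories of stacks, so your ``convolution kernels'' do not live on a scheme of the required form and Proposition~\ref{prop:CMQ} cannot be invoked. The paper first uses the BPS condition (one of $(2,\chi)$, $(2,w+1-g^{\rm{sp}})$ is coprime) together with the periodicity of Lemma~\ref{lem:period} to normalize so that the source is $D^b(T)$ for the variety $T=M^L(2,1)$ (whose Hitchin map is flat and lci by~\cite[Proposition~2.2.6]{MLi}), and only then runs the kernel argument on $T\times_B T$.

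Second, your plan to prove that \emph{both} the unit $\eta$ and the counit $\eta'$ are isomorphisms by the same depth argument fails for $\eta'$: the kernel $\mathcal{Q}'$ of $\Phi_{\mathcal{P}}\circ\Phi_{\mathcal{P}^{\vee}}$ lives over the non-coprime side, which is a quasi-BPS category of a stack (equivalently, by Proposition~\ref{prop:TA}, modules over a sheaf of non-commutative algebras on $M$), and the paper has no analogue of Proposition~\ref{prop:CMQ} in that setting. This is not a technicality you can wave away as ``inheriting enough depth'': it is precisely why the paper does \emph{not} argue this way. Instead, after establishing fully-faithfulness from $\eta$, the paper deduces that $\Phi_{\mathcal{P}}$ is an equivalence from Proposition~\ref{prop:equivT}, whose proof rests on Lemma~\ref{lem:sodT} (a BPS category admits no non-trivial $B$-linear semiorthogonal decomposition, via the Calabi-Yau property of Corollary~\ref{cor:proper0} and the maximal Cohen--Macaulay property of $\mathscr{A}$). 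You would need to incorporate this indecomposability step, or find a genuinely new dimension estimate on the non-commutative side. Finally, a minor point: over $B^{\rm{red}}$ the statement you need is an \emph{equivalence} of the transform attached to the extended Poincaré sheaf on the full compactified Jacobians, which is~\cite[Theorem~A]{MRVF2}; \cite[Theorem~A]{MRVF} only gives fully-faithfulness and is what the paper uses in Theorem~\ref{thm:ff}, not here.
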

\begin{proof}
We discuss the case when $l$ is even, as the case of $l$ odd is similar. 
    The tuple $(2, \chi, w)$ satisfies the BPS condition, 
so either $(2, \chi)$ or $(2, w+1-g^{\rm{sp}})$ are coprime. 
It is enough to consider the case when $(2, w+1-g^{\rm{sp}})$ are coprime. 
We have either that $(2, \chi)$ are coprime or $\gcd(2, \chi)=2$, i.e. 
$\chi$ is even. We only discuss the case when $\chi$ is even, as the other 
case is similar (and even easier). 

So we treat the case when $l$ is even, $\chi$ is even, and $w$ is odd. 
    Under these assumptions, we have 
    \begin{align*}
        \mathbb{T}^L(2, w+1-g^{\rm{sp}})_{-\chi-g^{\mathrm{sp}}+1} \simeq 
        D^b(M^L(2, 1)), \ 
        \mathbb{T}^L(2, \chi)_w \simeq D^b(\mathscr{A})
    \end{align*}
    where $\mathscr{A}$ is the sheaf of non-commutative algebras on $M^L(2, 0)$
    from Proposition~\ref{prop:TA}. 
    Note that $g^{\rm{sp}}=l+2g-1$ is odd, 
    so by Lemma~\ref{lem:period} we may assume that $w=g^{\rm{sp}}$, $\chi=1-g^{\rm{sp}}$. 
    We set $\mathcal{T}=\mathcal{M}^L(2, 1)$ and $T=M^L(2, 1)$. 
    The maximal Cohen–Macaulay sheaf in Theorem~\ref{thm:mao} has bi-weight $(0, g^{\rm{sp}})$,
    so by Assumption~\ref{conj:CMext} it descends to 
    give a maximal Cohen–Macaulay sheaf 
    \begin{align*}
        \mathcal{P} \in (\mathbb{T}^L(2, 1-g^{\rm{sp}})_{g^{\rm{sp}}})_{T}
        \subset D^b(T\times_B \mathcal{M}), 
    \end{align*}
    see \eqref{def:minuscategory} for the notation. 
    The above sheaf induces a Fourier-Mukai functor 
    \begin{align}\label{induce:Phi}
        \Phi_{\mathcal{P}} \colon D^b(T) \to \mathbb{T}^L(2, 1-g^{\rm{sp}})_{g^{\rm{sp}}}. 
    \end{align}

 Let $\Phi_{\mathcal{P}}^L$ be the left adjoint of
   $\Phi_{\mathcal{P}}$ and let $\mathcal{Q} \in D^b(T\times_B T)$
   be the kernel object of $\Phi_{\mathcal{P}}^L \circ 
   \Phi_{\mathcal{P}}$. 
   Note that $\mathcal{Q}$ is an object in $D^b(T\times_B T)$
   by the formula (\ref{isom:Q}) together with Lemma~\ref{lem:CMP} and Lemma~\ref{lem:MCM}. 
   There is a natural map
   \begin{equation}\label{map:Qdelta}
   \mathcal{Q} \to \mathcal{O}_{\Delta_T}.
   \end{equation}
   We next show that this map is an isomorphism. 
   The proof that \eqref{map:Qdelta} is an isomorphism uses the same argument from the proof of Theorem~\ref{thm:ff}, using~\cite[Theorem~A]{MRVF2}
   instead of~\cite[Theorem~A]{MRVF}. First, we have that $\mathcal{Q} \cong 
   \mathcal{O}_{\Delta_T}$ over $B^{\rm{red}} \subset B$. 
Indeed we replace $\mathcal{P}^{\rm{sreg}}$ with $\mathcal{P}$ in the notation of 
the proof of Theorem~\ref{thm:ff}. 
Then for $b \in B^{\rm{red}}$, the diagram (\ref{Dqcoh:T})
is replaced by 
\begin{align}\notag
    \xymatrix{
    D^b(\widehat{T}_b) \ar[r]^-{\sim} \ar@{=}[d] &
D^b(\widehat{\mathcal{T}}_b)_{0} \ar[r]^-{\Phi_{\widehat{\mathcal{P}}_b}} \ar@{=}[d] & 
\widehat{\mathbb{T}}_{b} \ar[d]_-{\sim} & \\ 
 D^b(\widehat{T}_b) \ar[r]^-{\sim} &
D^b(\widehat{\mathcal{T}}_b)_{0} \ar[r]^-{\Phi_{\widehat{\mathcal{P}}_b'}}  & 
D^b(\widehat{\mathcal{M}}_b')_w \ar[r]^-{\sim} &D^b(\widehat{M}_b').   
    }
\end{align}
The composition of the bottom horizontal arrows is an equivalence by~\cite[Theorem~A]{MRVF2}, 
therefore the top arrows are equivalences, which implies that 
the map 
\eqref{map:Qdelta} is an isomorphism over 
$B^{\rm{red}} \subset B$. 

   Note that the Hitchin map 
   $M^L(2, 1) \to B$ is flat and lci, see~\cite[Proposition~2.2.6]{MLi}. 
   Therefore using Proposition~\ref{prop:CMQ} and Lemma~\ref{lem:codim}, 
   we conclude that indeed \eqref{map:Qdelta} is an isomorphism, 
   hence the natural transform $\Phi_{\mathcal{P}}^L \circ \Phi_{\mathcal{P}} \to \id$
   is an isomorphism. 
   It follows that $\Phi_{\mathcal{P}}$ is fully-faithful. 
   Then as the Hitchin map $M^L(2, 1) \to B$ is proper, we can also apply Proposition~\ref{prop:equivT}
   to conclude that $\Phi_{\mathcal{P}}$ is an equivalence.     
\end{proof}

\begin{cor}\label{cor:r=2}
Conjecture~\ref{conj:T0} is true if $r=2$, 
$l>\mathrm{max}\{3g-2, 2g\}$, and $\chi$ and $w+l$ are both odd.
\end{cor}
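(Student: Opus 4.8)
The plan is to obtain Corollary~\ref{cor:r=2} as an immediate consequence of Theorem~\ref{thm:r=21}: the only thing to check is that, under the parity hypotheses ``$\chi$ and $w+l$ both odd'', the tuple $(2,\chi,w)$ satisfies the BPS condition and Assumption~\ref{conj:CMext} holds, after which the conclusion of Theorem~\ref{thm:r=21} applies verbatim.

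First I would verify the BPS condition. Since $\chi$ is odd, the pair $(2,\chi)$ is coprime, so the vector $(2,\chi,w+1-g^{\rm{sp}})$ is primitive and Definition~\ref{def:bps} is satisfied. In particular Conjecture~\ref{conj:T0} is meaningful for $(2,\chi,w)$, and together with the standing hypothesis $l>\mathrm{max}\{3g-2,2g\}$ the assumptions of Theorem~\ref{thm:r=21} are in force.

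Next I would convert the condition on $w+l$ into a coprimality statement for the second pair. Specializing the genus formula~\eqref{formula:gD} to $r=2$ gives $g^{\rm{sp}}=l+2g-1$, hence
\[
w+1-g^{\rm{sp}}=w+2-2g-l\equiv w+l \pmod 2 .
\]
Thus ``$w+l$ odd'' is equivalent to ``$w+1-g^{\rm{sp}}$ odd'', so the pair $(2,w+1-g^{\rm{sp}})$ is coprime as well.

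The key step is then to observe that Assumption~\ref{conj:CMext} is automatic once both $(2,\chi)$ and $(2,w+1-g^{\rm{sp}})$ are coprime. Indeed, by Remark~\ref{rmk:coprime} each factor of the relevant box product equals the full weight-graded derived category, $\mathbb{T}^L(2,\chi)_w=D^b(\mathcal{M})_w$ and $\mathbb{T}^L(2,w+1-g^{\rm{sp}})_{\chi+g^{\rm{sp}}-1}=D^b(\mathcal{M}')_{\chi+g^{\rm{sp}}-1}$, so their box product over $B$ exhausts the bi-weight $(\chi+g^{\rm{sp}}-1,w)$ component of $D^b(\mathcal{M}'\times_B\mathcal{M})$. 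Since $\mathcal{P}$ is, by Theorem~\ref{thm:mao} (and Lemma~\ref{lem:CMP}), a coherent maximal Cohen--Macaulay sheaf of exactly this bi-weight, it lies in the box product, verifying Assumption~\ref{conj:CMext}. Feeding this into Theorem~\ref{thm:r=21} produces the $B$-linear equivalence of Conjecture~\ref{conj:T0}, finishing the proof. The only point requiring any care is the exhaustion claim for the box product, where one uses the $\mathbb{C}^{\ast}$-gerbe/coprimality structure of the two moduli stacks; beyond this the corollary is pure bookkeeping, so I do not expect a genuine obstacle.
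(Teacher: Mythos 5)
Your proof is correct and is essentially the paper's own argument: the paper likewise observes that oddness of $\chi$ and of $w+l$ (equivalently of $w+1-g^{\rm{sp}}$, since $g^{\rm{sp}}=l+2g-1$ when $r=2$) forces $\mathbb{T}^L(2,\chi)_w=D^b(\mathcal{M})_w$ and $\mathbb{T}^L(2,w+1-g^{\rm{sp}})_{\chi+g^{\rm{sp}}-1}=D^b(\mathcal{M}')_{\chi+g^{\rm{sp}}-1}$, so that Assumption~\ref{conj:CMext} is automatically satisfied and Theorem~\ref{thm:r=21} applies. The paper's proof is exactly this one-line observation; your additional checks (the BPS condition via coprimality of $(2,\chi)$, and the parity computation identifying $w+l$ with $w+1-g^{\rm{sp}}$ modulo $2$) are the same bookkeeping, spelled out.
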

\begin{proof}
As both $\chi$ and $w+l$ are both odd, 
    we have 
    \begin{align*}\mathbb{T}^L(2, w+1-g^{\rm{sp}})_{\chi+g^{\rm{sp}}-1}=
    D^b(\mathcal{M}')_{\chi+g^{\rm{sp}}-1}, \ 
    \mathbb{T}^L(2, \chi)_w=D^b(\mathcal{M})_w,
    \end{align*}
    thus Assumption~\ref{conj:CMext} is automatically satisfied.
    \end{proof}
    
  \begin{remark}\label{rmk:M21}
In the setting of Corollary~\ref{cor:r=2}, both sides 
of (\ref{equiv:expect}) are naturally equivalent to $D^b(M^L(2, 1))$, 
so $\Phi_{\mathcal{P}}$ gives an autoequivalence 
\begin{align*}
    \Phi_{\mathcal{P}} \colon D^b(M^L(2, 1)) \stackrel{\sim}{\to} D^b(M^L(2, 1)). 
\end{align*}
The above equivalence extends a Fourier-Mukai equivalence on the smooth generic fibers
of $M^L(2, 1) \to B$
given by Poincaré line bundle. 
  \end{remark}

  \subsection{The Cohen–Macaulay extension for $C=\mathbb{P}^1$}
We show that Assumption~\ref{conj:CMext}
is satisfied 
in the case of $C=\mathbb{P}^1$, thus 
proving Conjecture~\ref{conj:T0} for 
$C=\mathbb{P}^1$ and $r=2$, see Theorem~\ref{thm:CMext}.
Below we assume that $C=\mathbb{P}^1$, $r=2$, and $l>0$. 
Note that $g^{\rm{sp}}=l-1$ in this case, see the formula
(\ref{formula:gD}). 

Before we state and prove Theorem~\ref{thm:CMext}, we discuss some preliminary lemmas. The first goal is to show that twisted Abel-Jacobi maps provide étale neighborhoods of $M^L(2,1)$, see Lemma~\ref{lem:AJM}.
\begin{lemma}\label{lem:ideal}
Let $D=2C \subset S$ be the double curve supported 
on the zero section of $S \to C$, i.e. 
\begin{align}\label{double}
    D=\Spec \left(\mathcal{O}_S/\mathcal{O}_S(-2C)\right). 
\end{align}
Let $E$ be a stable sheaf on $D$ 
such that $(\rank(p_{\ast}E), \chi(E))=(2, 1)$ 
where $p \colon D \to C$ is the projection. 
Then \[E\cong I_{D, Z}^{\vee},\]
where $Z \subset D$ is a zero-dimensional subscheme 
of length $l-1$, $I_{D, Z} \subset \mathcal{O}_D$
is the ideal sheaf of $Z$, and 
$I_{D, Z}^{\vee}=\mathcal{H}om(I_{D, Z}, \mathcal{O}_D)$. 
\end{lemma}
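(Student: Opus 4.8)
The plan is to show that $E$ is a rank one, maximal Cohen--Macaulay (hence reflexive) sheaf on the Gorenstein curve $D$, to present it as an upper modification of $\mathcal{O}_D$, and then to dualize. First I would record that $E$ is pure of dimension one and that, writing $F:=p_{\ast}E$ with induced Higgs field $\theta\colon F\to F\otimes L$, one has $\theta\neq 0$: otherwise $(F,0)$ would be a stable Higgs bundle with $F$ a stable rank two bundle on $\mathbb{P}^1$, which is impossible. Since $\theta^2=0$ and $\theta\neq 0$, at the generic point $\eta$ of $D$ the stalk $E_\eta$ is free of rank one over the Artinian local ring $\mathcal{O}_{D,\eta}\cong k(\eta)[\epsilon]/(\epsilon^2)$. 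Thus $E$ is generically free of rank one over $\mathcal{O}_D$; being pure of dimension one on the one-dimensional Gorenstein scheme $D$, it is maximal Cohen--Macaulay, hence reflexive, and $\lExt^1_{\mathcal{O}_D}(E,\mathcal{O}_D)=0$.

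The key step is to produce a section $s\colon \mathcal{O}_D\to E$ which is an isomorphism at $\eta$. By Nakayama this amounts to finding a global section whose image in $E_\eta/\epsilon E_\eta\cong k(\eta)$ is nonzero. Let $\overline{F}$ denote the maximal torsion-free quotient of the $\mathcal{O}_C$-module $E|_C=E/\epsilon E$; it is a line bundle $\mathcal{O}_{\mathbb{P}^1}(d)$, and the composite $E\onto E|_C\onto \overline{F}$ is a Higgs quotient of $E$ on which $\epsilon$ acts by zero. Since $\epsilon E\neq 0$ this is a proper quotient, so stability of $E$ (of slope $\chi/\rank=1/2$) forces $\mu(\overline{F})=d+1>1/2$, i.e. $d\geq 0$. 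It then remains to lift a generically nonzero section of $\mathcal{O}_{\mathbb{P}^1}(d)$ to $E$. I expect this lifting to be the main obstacle, and I would establish it by ruling out that all of $H^0(E)$ lands in $K:=\ker(E\to\overline{F})$, using that $K$ is a Higgs subsheaf of rank one with $\mu(K)<1/2$, the numerical bound $h^0(E)\geq\chi(E)=1$, and the explicit classification of $(F,\theta)$ by the degree of $\ker\theta$ (which lies between $-(l+1)/2$ and $-1$).

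Given such a section $s$, I obtain a short exact sequence $0\to\mathcal{O}_D\xrightarrow{s}E\to Q\to 0$ with $Q$ of finite length. Using \eqref{formula:chiD} (equivalently, since $g^{\mathrm{sp}}=l-1$) we have $\chi(\mathcal{O}_D)=1-g^{\mathrm{sp}}=2-l$, so $\ell(Q)=\chi(E)-\chi(\mathcal{O}_D)=l-1$. Applying $\lHom(-,\mathcal{O}_D)$, and using $\lExt^1(E,\mathcal{O}_D)=0$ together with the Gorenstein duality identification of $\lExt^1(Q,\mathcal{O}_D)$ with the structure sheaf $\mathcal{O}_Z$ of a length $\ell(Q)=l-1$ subscheme $Z\subset D$, the dual sequence reads
\begin{align*}
0\to E^{\vee}\to\mathcal{O}_D\to\mathcal{O}_Z\to 0,
\end{align*}
so that $E^{\vee}\cong I_{D,Z}$. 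Finally, dualizing once more and invoking reflexivity of $E$ gives $E\cong E^{\vee\vee}\cong I_{D,Z}^{\vee}$ with $Z$ of length $l-1$, as claimed.

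In summary, the formal skeleton (purity and reflexivity, the $\chi$-computation, and the two dualizations) is routine once one knows that $E$ is generically free of rank one; the genuine content is the construction of the generically trivializing section $s$, for which stability enters twice — once to force $d\geq 0$ and once to guarantee that a global section survives to the degree-$\geq 0$ quotient $\overline{F}$.
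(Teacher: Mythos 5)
Your proposal is correct and follows the same overall skeleton as the paper's proof — produce a section $s\colon\mathcal{O}_D\to E$ that is generically an isomorphism, deduce an exact sequence $0\to\mathcal{O}_D\to E\to Q\to 0$ with $Q$ of length $\chi(E)-\chi(\mathcal{O}_D)=l-1$, dualize using that $E$ is maximal Cohen--Macaulay, and conclude $E\cong I_{D,Z}^{\vee}$ — but you reach the key section by a different route. The paper's shortcut is to take an \emph{arbitrary} nonzero section (which exists since $\chi(E)=1$) and apply stability to its image: if $s$ were not generically surjective, purity of $E$ would force $\mathrm{Im}(s)\cong\mathcal{O}_C$, and $\chi(\mathcal{O}_C)=1>1/2=\mu(E)$ contradicts stability. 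You instead pass to the restriction $E|_C$, its torsion-free quotient $\overline{F}=\mathcal{O}_{\mathbb{P}^1}(d)$, and the kernel $K$, and you flag the lifting of a section from $\overline{F}$ to $E$ as the main unresolved obstacle. In fact your own ingredients close this gap immediately: $\epsilon K$ is generically zero (since $K_\eta=\epsilon E_\eta$), hence a zero-dimensional subsheaf of the pure sheaf $E$, hence zero, so $K$ is a $\theta$-invariant rank one pure sheaf on $C=\mathbb{P}^1$, i.e.\ a line bundle; stability applied to the subsheaf $K$ gives $\chi(K)<1/2$, hence $\deg K=\chi(K)-1\leq -1$ and $h^0(K)=0$, so since $h^0(E)\geq\chi(E)=1$ some section of $E$ has nonzero image in $\overline{F}$, and Nakayama at the generic Artinian stalk shows it generates $E_\eta$. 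Thus both arguments are the same stability mechanism applied to complementary subsheaves (the paper to the image of $s$, you to the kernel $K$); the paper's version is shorter and never needs the restriction to $C$, while yours makes the extension structure of $E$ explicit and is more careful about reflexivity and the vanishing $\lExt^1_{\mathcal{O}_D}(E,\mathcal{O}_D)=0$, which the paper uses implicitly in the final identification $E\cong E^{\vee\vee}\cong I_{D,Z}^{\vee}$.
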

\begin{proof}
    Since $\chi(E)=1$, we have $h^0(E)\geq 1$, 
    hence there is a non-zero morphism 
    \[s \colon \mathcal{O}_D \to E.\] 
    Suppose that $s$ is not generically surjective. 
    Then $\mathrm{Im}(s)$ is the structure sheaf of 
    a one-dimensional subscheme in $D$ with curve class $C$, 
    which is reduced as $E$ is pure one-dimensional. 
    Therefore $\mathrm{Im}(s)=\mathcal{O}_C$. 
    However, note that \[\chi(\mathcal{O}_C)=1>\chi(E)/2=1/2,\] 
    (here we use in an essential way that $C=\mathbb{P}^1$)
    which contradicts that $E$ is stable. 
    Therefore $s$ is generically surjective. 
As $\rank(p_{\ast}\mathcal{O}_D)=\rank(p_{\ast}E)=2$, the map 
$s$ is also generically an isomorphism. Hence $s$ is injective 
as $\mathcal{O}_D$ is pure one-dimensional. 

It follows that there is an exact sequence 
\begin{align*}
    0 \to \mathcal{O}_D \to E \to Q \to 0
\end{align*}
for a zero-dimensional sheaf $Q$. 
We apply $R\mathcal{H}om(-, \mathcal{O}_D)$ to the above 
exact sequence. As $E$ is a maximal Cohen–Macaulay sheaf on $D$, 
we obtain the exact sequence
\begin{align*}
    0 \to E^{\vee} \to \mathcal{O}_D \to \mathcal{E}xt^1_D(Q, \mathcal{O}_D) \to 0. 
\end{align*}
    Therefore $E^{\vee}$ is of the form $I_{D, Z}$ for a zero-dimensional 
    subscheme $Z \subset D$, 
    hence $E=I_{D, Z}^{\vee}$. 
    As $\chi(\mathcal{O}_D)=2-l$
    and $\chi(E)=1$, the length of $Z$ is $l-1$. 
\end{proof}

  \begin{lemma}\label{lem:h1}
Let $D \subset S$ be as in Lemma~\ref{lem:ideal}.
For any stable sheaf $E$ on $D$ with $(\rank(p_{\ast}E), \chi(E))=(2, 1)$
and a generic $\mathcal{L} \in \mathrm{Pic}^0(D)$, we have 
$H^1(E\otimes \mathcal{L})=0$. 
  \end{lemma}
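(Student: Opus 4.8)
The plan is to reduce the statement to the vanishing of a single cohomology group and then dispatch the genericity by semicontinuity. First I would record that for any $\mathcal{L} \in \mathrm{Pic}^0(D)$ we have $\chi(E \otimes \mathcal{L}) = \chi(E) = 1$, since $D$ is Gorenstein and tensoring by a degree-zero line bundle preserves the Euler characteristic (Riemann--Roch for rank-one torsion-free sheaves on $D$). Hence $H^1(E \otimes \mathcal{L}) = 0$ is equivalent to $h^0(E \otimes \mathcal{L}) = 1$, and by Serre duality on $D$ it is also equivalent to $\Hom_D(E, \omega_D \otimes \mathcal{L}^{-1}) = 0$. Pushing forward along $p \colon D \to \mathbb{P}^1$, the sheaf $p_{\ast}(E \otimes \mathcal{L})$ is a rank-two bundle of degree $-1$ on $\mathbb{P}^1$, so the desired vanishing is equivalent to $p_{\ast}(E \otimes \mathcal{L})$ having the balanced splitting type $\mathcal{O} \oplus \mathcal{O}(-1)$; every other splitting type has a summand of degree $\leq -2$ and hence nonzero $H^1$.

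Next I would observe that the vanishing of $H^1(E \otimes \mathcal{L})$ is a Zariski-open condition on $\mathcal{L} \in \mathrm{Pic}^0(D)$: choosing a Poincaré line bundle on $D \times \mathrm{Pic}^0(D)$ and applying cohomology and base change to the relative twist $E \otimes (-)$, the locus where $h^1$ jumps is closed. Therefore it suffices to exhibit a single $\mathcal{L}_0 \in \mathrm{Pic}^0(D)$ with $h^0(E \otimes \mathcal{L}_0) = 1$; equivalently, to show that the minimal value of $h^0(E \otimes \mathcal{L})$ over $\mathrm{Pic}^0(D)$ equals $1$ (it is always $\geq 1$ since $\chi = 1$).

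For the construction of $\mathcal{L}_0$ I would use the exact sequence $0 \to \mathcal{O}_D \to E \to Q \to 0$ produced in the proof of Lemma~\ref{lem:ideal}, where $Q$ is $0$-dimensional of length $l-1$. Twisting by $\mathcal{L}$ and using $Q \otimes \mathcal{L} \cong Q$ gives $H^1(E \otimes \mathcal{L}) = \coker\bigl(\delta_{\mathcal{L}} \colon H^0(Q) \to H^1(\mathcal{L})\bigr)$, where $\delta_{\mathcal{L}}$ is cup product with the extension class twisted by $\mathcal{L}$. For a generic nontrivial $\mathcal{L}$ one computes $h^0(\mathcal{L}) = 0$ and $h^1(\mathcal{L}) = l-2$, while $\dim H^0(Q) = l-1$, so the dimensions are compatible with surjectivity of $\delta_{\mathcal{L}}$. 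I would then establish this surjectivity for one well-chosen $\mathcal{L}_0$ using the explicit description of $\mathrm{Pic}^0(D)$: restriction to $C_0 = \mathbb{P}^1$ has unipotent kernel $H^1(C_0, L^{-1}) \cong \mathbb{C}^{l-1}$, so $\mathrm{Pic}^0(D) \cong \mathbb{A}^{l-1}$ and its points are extensions of $\mathcal{O}$ by $\mathcal{O}(-l)$; surjectivity of $\delta_{\mathcal{L}}$ then reduces to nondegeneracy of an explicit pairing, checkable at a single extension class.

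The main obstacle is precisely this last step: producing one $\mathcal{L}_0$ for which $\delta_{\mathcal{L}_0}$ is surjective, equivalently for which $p_{\ast}(E \otimes \mathcal{L}_0)$ is balanced. A more geometric route to the same conclusion is a dimension count: a general section of $E \otimes \mathcal{L}$ gives an injection $\mathcal{L}^{-1} \hookrightarrow E$ with locally free cokernel of length $l-1$, which yields a rational map from the Quot scheme $\Quot(E, l-1)$ (of dimension $l-1 = g_a(D)$) to $\mathrm{Pic}^0(D)$ (also of dimension $l-1$) sending a quotient to the class of its kernel; dominance of this Abel--Jacobi-type map forces the generic fiber to be finite and hence $h^0(E \otimes \mathcal{L}) = 1$ generically. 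Proving this dominance is the crux, and I would take care that it does not circularly rely on the twisted Abel--Jacobi statement used later; the safest implementation is to verify the balanced splitting type at one explicit $\mathcal{L}_0$ on $\mathbb{P}^1$ as above and then invoke semicontinuity.
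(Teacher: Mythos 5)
Your framework is set up correctly: $\chi(E\otimes\mathcal{L})=1$ for all $\mathcal{L}\in\mathrm{Pic}^0(D)$, vanishing of $H^1$ is open in $\mathcal{L}$ by semicontinuity, the sequence $0\to\mathcal{O}_D\to E\to Q\to 0$ from Lemma~\ref{lem:ideal} identifies $H^1(E\otimes\mathcal{L})$ with $\coker\bigl(\delta_{\mathcal{L}}\colon H^0(Q\otimes\mathcal{L})\to H^1(\mathcal{L})\bigr)$, and the dimension counts $h^1(\mathcal{L})=l-2$, $h^0(Q)=l-1$ are right, as is the reformulation via the splitting type of $p_{\ast}(E\otimes\mathcal{L})$. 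But the proposal stops exactly where the lemma lives: you never produce a single $\mathcal{L}_0$ with $\delta_{\mathcal{L}_0}$ surjective for an \emph{arbitrary} stable $E$. Your route (b) (dominance of the Abel--Jacobi-type map) is, as you yourself note, circular in the context of this paper — Lemma~\ref{lem:AJM} is deduced from the present lemma — and you give no independent proof of dominance. Your route (a) (``verify the balanced splitting type at one explicit $\mathcal{L}_0$'') cannot be executed as stated, because $E$ is not explicit: a stable sheaf on the non-reduced curve $D$ is of the form $I_{D,Z}^{\vee}$ and need not be locally free on $D$, so $\delta_{\mathcal{L}}$ depends on an arbitrary extension class in $\Ext^1_D(Q,\mathcal{O}_D)$, and there is no concrete model in which to do the check. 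Acknowledging the crux is not the same as resolving it; this is a genuine gap.

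What the paper does, and what is missing from your proposal, is a structural reduction that makes the check possible. Writing $E$ as a nilpotent Higgs pair $(F,\theta)$ with $\theta$ generically of rank one, let $Z$ be the scheme-theoretic vanishing locus of $\theta$ and $L'=\Ker(L\twoheadrightarrow L|_Z)$; then $E=\phi_{\ast}E'$ for a \emph{line bundle} $E'$ on the smaller double curve $D'\subset \mathrm{Tot}_C(L')$, and the pull-back $\phi^{\ast}\colon \mathrm{Pic}^0(D)=H^1(L^{-1})\to \mathrm{Pic}^0(D')=H^1(L'^{-1})$ is surjective, so genericity on $D$ implies genericity on $D'$ and one may assume $E$ is a line bundle on $D$. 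Only then does one get the explicit filtration $0\to \mathcal{O}_C(-l'-1)\to E\otimes\mathcal{L}\to\mathcal{O}_C(l')\to 0$ (with $l=2l'+1$), which exhibits $H^1(E\otimes\mathcal{L})$ as the cokernel of $\eta_{\mathcal{L}}=\eta_{\mathcal{O}_D}+g(-,-)\colon H^0(\mathcal{O}_C(l'))\to H^0(\mathcal{O}_C(l'-1))^{\vee}$, where $g\in H^0(\mathcal{O}_C(2l'-1))^{\vee}$ is the class of $\mathcal{L}$ and $g(-,-)$ is the multiplication pairing; generic surjectivity is then an elementary catalecticant-type statement, and this is the one-good-twist verification your semicontinuity argument needs. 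Without this reduction, your plan has no entry point for the decisive computation.
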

  \begin{proof}
      We first reduce to the case that $E$ is a line bundle on $D$. 
      We write $E$ as a rank two $L$-twisted Higgs bundle $(F, \theta)$
      where $\theta \colon F \to F\otimes L$. 
      As $E$ is supported on the zero section, the Higgs field $\theta$ is nilpotent. 
      By Lemma~\ref{lem:ideal},
      $\theta$ is generically of rank one, and $E$ is a line bundle on $D$ 
      if and only if $\theta|_{x}$ is rank one for any $x \in C$. 
      Let $Z \subset C$ be the scheme theoretic zero locus 
      of $\theta$, which is a zero-dimensional subscheme. Let $L'=\mathrm{Ker}(L \twoheadrightarrow L|_{Z})$, 
      which is a line bundle on $C$. 
      Let $\phi$ be the induced map 
      \begin{align*}
          \phi \colon S'=\mathrm{Tot}_C(L') \to S. 
      \end{align*}
      It restricts to the map $D' \to D$, where $D' \subset S'$ is the double of $C$
      as in (\ref{double}). 
      As $\theta|_{Z}=0$, it factors through 
      $\theta'\colon F \to F\otimes L'$ which does not vanish at every point in $C$. 
      Therefore $E=\phi_{\ast}E'$ for a line bundle $E'$ on $D'$. 
      Note that 
      $\phi_{\ast}E' \otimes \mathcal{L}=\phi_{\ast}(E'\otimes \phi^{\ast}\mathcal{L})$. 
      Moreover, the pull-back map 
      \begin{align}\label{back:Pic}
          \phi^{\ast} \colon \mathrm{Pic}^0(D) \to \mathrm{Pic}^0(D')
      \end{align}
      is surjective. 
      Indeed, from the exact sequence 
      \[0\to L^{-1} \to \mathcal{O}_D^{\ast} \to \mathcal{O}_C^{\ast} \to 1,\]
      we obtain $\mathrm{Pic}^0(D)=H^1(L^{-1})$ and the above map (\ref{back:Pic})
      is identified with the surjection 
      $H^1(L^{-1}) \twoheadrightarrow H^1(L'^{-1})$ induced by 
      the inclusion $L' \subset L$. 
      Moreover $E'$ is obviously stable on $D'$, and 
      $\deg L'>0$ as otherwise there is no such $E'$ by Lemma~\ref{lem:ideal}. 
      Therefore we can replace $E$ with 
$E'$ and assume that $E$ is a line bundle on $D$. 

As $E$ is a line bundle on $D$, 
there is an exact sequence 
\begin{align*}
    0 \to E|_{C}\otimes L^{-1} \to E \to E|_{C} \to 0. 
\end{align*}
As $\chi(E)=1$, it follows that 
$E|_{C}=\mathcal{O}_C(l')$ where $l=2l'+1$ 
(here the existence of such a line bundle $E$ implies that $l$ 
is odd). 
Then for any $\mathcal{L} \in \mathrm{Pic}^0(D)$, we have the exact sequence
\begin{align*}
    0 \to \mathcal{O}_C(-l'-1) \to E\otimes \mathcal{L} \to \mathcal{O}_C(l') \to 0. 
\end{align*}
Then $H^1(E \otimes \mathcal{L})$ is isomorphic to the cokernel of the induced map
\begin{align*}
  \eta_{\mathcal{L}} \colon H^0(\mathcal{O}_C(l')) \to H^1(\mathcal{O}_C(-l'-1))=H^0(\mathcal{O}_C(l'-1))^{\vee}
\end{align*}
which is a map from $\mathbb{C}^{l'+1}$ to $\mathbb{C}^{l'}$. 
The line bundle $\mathcal{L}$ corresponds to an element 
\begin{align*}
    g \in \mathrm{Pic}^0(D)=H^1(L^{-1})=H^0(\mathcal{O}_C(2l'-1))^{\vee}
\end{align*}
and we have $\eta_{\mathcal{L}}=\eta_{\mathcal{O}_D}+g(-, -)$, where $g(-, -)$ is 
\begin{align*}
    g(-, -) \colon H^0(\mathcal{O}_C(l')) \otimes H^0(\mathcal{O}_C(l'-1)) \stackrel{\cup}{\to}
    H^0(\mathcal{O}_C(2l'-1)) \stackrel{g}{\to} \mathbb{C}. 
\end{align*}
Then it is straightforward to check that 
$\eta_{\mathcal{L}}$ is surjective for a generic $g$, 
therefore $H^1(E\otimes \mathcal{L})=0$ for generic $\mathcal{L} \in \mathrm{Pic}^0(D)$. 
  \end{proof}   

Denote by
\begin{align*}
\mathrm{Hilb}^d(\mathcal{C}/B) \to B
\end{align*}
the relative 
Hilbert scheme of $d$-points on $\mathcal{C}/B$. 
Over a point $b \in B$ corresponding to the spectral curve $\mathcal{C}_b \subset S$, 
the fiber of the above map parametrizes zero-dimensional 
closed subschemes $Z \subset \mathcal{C}_b$
with length $d$. Define 
\begin{align*}
    \mathrm{Hilb}^d(\mathcal{C}/B)^{\circ} \subset \mathrm{Hilb}^d(\mathcal{C}/B)
\end{align*}
to be 
the open subset consisting of $Z \subset \mathcal{C}_b$
such that $I_{\mathcal{C}_b, Z}^{\vee}$ is stable. 
Consider the Abel-Jacobi map 
\begin{align*}
\mathrm{AJ} \colon 
    \mathrm{Hilb}^{l-1}(\mathcal{C}/B)^{\circ} \to M^L(2, 1)
\end{align*}
which sends $Z \subset \mathcal{C}_b$ to $I_{\mathcal{C}_b, Z}^{\vee}$. 
\begin{lemma}\label{lem:AJM}
For each point $x \in M^L(2, 1)$ over $b \in B$, 
there is an étale map $(B', b') \to (B, b)$ and 
a line bundle $\mathcal{L} \in \mathrm{Pic}^0(\mathcal{C}')$
on $\mathcal{C}'=\mathcal{C}\times_B B'$
such that the twisted Abel-Jacobi map 
\begin{align}\label{map:AJM}
    \mathrm{AJ}_{\mathcal{L}}(-):=(-)\otimes \mathcal{L} \circ \mathrm{AJ}  \colon 
    \mathrm{Hilb}^{l-1}(\mathcal{C}/B)^{\circ}\times_B B' \to 
    M^L(2, 1)\times_B B'
\end{align}
is étale and surjective at a neighborhood of $(x, b')$. 
\end{lemma}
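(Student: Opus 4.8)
The plan is to realize the stable sheaf attached to $x$ as a twist of some $I_{\mathcal{C}_b,Z}^\vee$ after a generic $\mathrm{Pic}^0$-shift, and then to deduce étaleness from the smoothness of the Abel--Jacobi map over the locus where $H^1$ vanishes. Concretely, fix $x \in M^L(2,1)$ over $b$, corresponding to a stable rank one torsion-free sheaf $E$ on the spectral curve $\mathcal{C}_b$ with $\chi(E)=1$. For $\mathcal{L} \in \mathrm{Pic}^0(\mathcal{C}_b)$ the twist $E \otimes \mathcal{L}^{-1}$ is again stable with $\chi=1$, and by (the argument of) Lemma~\ref{lem:h1} a generic $\mathcal{L}_b \in \mathrm{Pic}^0(\mathcal{C}_b)$ satisfies $H^1(E \otimes \mathcal{L}_b^{-1})=0$; since $\chi=1$ this forces $h^0(E \otimes \mathcal{L}_b^{-1})=1$. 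Exactly as in the proof of Lemma~\ref{lem:ideal}, the unique section $\mathcal{O}_{\mathcal{C}_b} \to E \otimes \mathcal{L}_b^{-1}$ cannot have one-dimensional image (a sub-line bundle on a reduced component $C' \cong \mathbb{P}^1$ would have $\chi(\mathcal{O}_{C'})=1 > 1/2$, contradicting stability), so it is injective with zero-dimensional cokernel of length $l-1$. Hence $E \otimes \mathcal{L}_b^{-1} \cong I_{\mathcal{C}_b,Z}^\vee$ for a subscheme $Z \subset \mathcal{C}_b$ of length $l-1$, and $Z \in \mathrm{Hilb}^{l-1}(\mathcal{C}/B)^\circ$ because $I_{\mathcal{C}_b,Z}^\vee$ is stable.

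Next I would spread out $\mathcal{L}_b$. Since $\mathcal{C} \to B$ is a proper flat family of curves, the relative Picard scheme $\mathcal{P}ic^0(\mathcal{C}/B) \to B$ is smooth (as $H^2$ vanishes on the fibers), so after passing to an étale neighborhood $(B',b') \to (B,b)$ there is a line bundle $\mathcal{L} \in \mathrm{Pic}^0(\mathcal{C}')$ restricting to $\mathcal{L}_b$ at $b'$. By construction $\mathrm{AJ}_{\mathcal{L}}(Z) = I_{\mathcal{C}_b,Z}^\vee \otimes \mathcal{L}_b = E = x$, so $x$ lies in the image; as étale maps are open it then suffices to prove that $\mathrm{AJ}_{\mathcal{L}}$ is étale at $(Z,b')$. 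Since tensoring by the fixed degree-zero $\mathcal{L}$ is an automorphism of $M^L(2,1)\times_B B'$, this is equivalent to étaleness of the untwisted $\mathrm{AJ}$ at $Z$.

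For the étaleness itself I would invoke the standard theory of the Abel map for families of curves: the fibers of $\mathrm{AJ}$ over a sheaf $E'$ are the linear systems $\mathbb{P}(H^0(\mathcal{C}_b,E'))$, and the Abel map is smooth of relative dimension $h^0(E')-1$ at points where $H^1(\mathcal{C}_b,E')=0$ (Altman--Kleiman). Our $Z$ satisfies $H^1(I_{\mathcal{C}_b,Z}^\vee)=0$ and $h^0(I_{\mathcal{C}_b,Z}^\vee)=1$, so $\mathrm{AJ}$ is smooth of relative dimension $0$, i.e.\ étale on the fiber $\mathcal{C}_b$, at $Z$. This holds in a neighborhood, and both $\mathrm{Hilb}^{l-1}(\mathcal{C}/B)^\circ$ and $M^L(2,1)$ are flat over $B$ (the latter by \cite[Proposition~2.2.6]{MLi}); a morphism of flat $B$-schemes which is étale on fibers is étale, so $\mathrm{AJ}_{\mathcal{L}}$ is étale, hence open, at $(x,b')$.

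The main obstacle is the cohomological vanishing together with the smoothness of the Abel map at the most degenerate spectral curves, where $\mathcal{C}_b$ may be non-reduced (e.g.\ $\mathcal{C}_b = 2C$) and both the Hilbert scheme and the compactified Jacobian are singular. For the non-reduced central curve this is exactly the content of Lemma~\ref{lem:h1}, which reduces $E$ to a line bundle on a double curve and computes the relevant cohomology explicitly. For a general $b$ one argues that the locus $\{\mathcal{L} : H^1(E\otimes\mathcal{L}) \neq 0\} \subset \mathrm{Pic}^0(\mathcal{C}_b)$ is the degeneracy locus of a map of vector bundles of equal rank, hence either all of $\mathrm{Pic}^0$ or a proper theta-type divisor, and the former is excluded since a generic twist on the Gorenstein (planar) curve $\mathcal{C}_b$ has $h^0=1$. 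Once the uniform $H^1$-vanishing and the smoothness of $\mathrm{AJ}$ over the $H^1=0$ locus are secured for singular planar spectral curves, the matching relative dimensions ($l-1 = g^{\rm{sp}}$ on both sides) and flatness over $B$ make étaleness and local surjectivity formal.
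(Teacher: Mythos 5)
Your strategy coincides with the paper's: kill $H^1$ by a generic degree-zero twist, identify the twisted sheaf with $I_{\mathcal{C}_b,Z}^\vee$ via the section argument of Lemma~\ref{lem:ideal}, extend the twisting line bundle over an \'etale neighborhood of $b$ using smoothness of $\mathcal{P}ic^0(\mathcal{C}/B)\to B$, and deduce \'etaleness from $h^0=1$, $h^1=0$ (the paper phrases this last step as: the fiber of $\mathrm{AJ}_{\mathcal{L}}$ through $x$ is $\mathbb{P}(H^0(E\otimes\mathcal{L}_0^{-1}))$, a single reduced point). Up to the packaging of the \'etaleness step, this is the same proof.

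The genuine gap is in your treatment of reduced spectral curves. Your exclusion of the possibility that $\{\mathcal{L}\,:\,H^1(E\otimes\mathcal{L})\neq 0\}$ equals all of $\mathrm{Pic}^0(\mathcal{C}_b)$ reads ``the former is excluded since a generic twist has $h^0=1$''; but since $\chi(E\otimes\mathcal{L})=1$, having $h^0=1$ for generic $\mathcal{L}$ is \emph{equivalent} to having $h^1=0$ for generic $\mathcal{L}$, which is exactly the statement being proved, so the argument is circular. This is not a routine point: for a reducible spectral curve (two sections of $S\to C$ meeting in $l$ points), twisting by $\mathcal{L}\in\mathrm{Pic}^0$ preserves the multidegree, and an unstable rank-one sheaf with very unbalanced degrees on the two components (e.g. $\mathcal{O}_{C_1}(a)\oplus\mathcal{O}_{C_2}(b)$ with $a\leq -2$, $a+b+2=1$) has $h^1\neq 0$ for \emph{every} degree-zero twist. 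Hence any correct argument must use stability of $E$, which your sketch never does; the paper sidesteps this by quoting \cite[Proposition~2.5]{MRF3} for reduced (possibly reducible) planar curves and proves from scratch only the non-reduced case (Lemmas~\ref{lem:ideal} and \ref{lem:h1}). Relatedly, your appeal to Altman--Kleiman smoothness of the Abel map is not literally available: their theory concerns integral curves, while the point $x$ may lie over a non-reduced or reducible spectral curve, and your fallback (fiberwise \'etale plus flat implies \'etale) additionally requires flatness of $\mathrm{Hilb}^{l-1}(\mathcal{C}/B)^{\circ}$ over $B$, which you assert but do not justify. Both issues disappear if one argues as the paper does, directly from the scheme-theoretic fiber being a reduced point together with smoothness of $M^L(2,1)$.
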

\begin{proof}
We treat the case when $\mathcal{C}_b$ is a double curve. 
The case when $\mathcal{C}_b$ is a reduced divisor follows from the 
same arguments, using~\cite[Proposition~2.5]{MRF3} instead of Lemmas~\ref{lem:ideal} and \ref{lem:h1} in the argument below. 
We may assume that $b=0 \in B$ so that $\mathcal{C}_b=D$ as in Lemma~\ref{lem:ideal}.

Let $x \in M^L(2, 1)$ be a closed point corresponding to a stable sheaf $E$ on $D$. 
By Lemma~\ref{lem:h1},
there is $\mathcal{L}_0 \in \mathrm{Pic}^0(D)$ such that $H^1(E\otimes \mathcal{L}_0^{-1})=0$. 
Since \[\mathcal{P}ic^0(\mathcal{C}/B) \to B\] is smooth, 
there is an étale map $(B', b') \to (B, 0)$ such that 
the line bundle $\mathcal{L}_0$ can be extended to a line bundle $\mathcal{L}$ on 
$\mathcal{C}\times_B B'$. 
Then the $\mathcal{L}$-twisted Abel-Jacobi
map (\ref{map:AJM}) is surjective by Lemma~\ref{lem:ideal}. 
From the proof of Lemma~\ref{lem:ideal}, 
the fiber of (\ref{map:AJM}) at $(x, b')$ is given 
\begin{align}\label{fiber:AJL}
\mathrm{AJ}_{\mathcal{L}}^{-1}(x, b')=\mathbb{P}(H^0(E \otimes \mathcal{\mathcal{L}}_0^{-1})).
\end{align}
As $\chi(E)=1$ and $H^{>0}(E \otimes \mathcal{L}_0^{-1})=0$, 
we have $H^0(E\otimes \mathcal{L}_0^{-1})=\mathbb{C}$, hence 
the fiber (\ref{fiber:AJL}) is a reduced one point. 
Hence the map (\ref{map:AJM}) is étale at $(x, b')$. 
\end{proof}

Let $\mathcal{M}=\mathcal{M}^L(2, 2-l)$, and 
$\mathcal{E} \in \Coh(\mathcal{C}\times_B \mathcal{M})$ be the universal 
sheaf. We set 
\begin{align*}
    \mathcal{E}^{[d]} :=\bigotimes_{i=1}^{d}p_{i}^{\ast}\mathcal{E} \in \Coh(\mathcal{C}^{\times_B d}\times_B \mathcal{M}), 
\end{align*}
where $p_{i}$ is the projection 
from $\mathcal{C}^{\times_B d}\times_B \mathcal{M}$
onto the $i$-th factor $\mathcal{C}\times_B \mathcal{M}$. 
Before we prove Theorem~\ref{thm:CMext}, we discuss the following preliminary result.
\begin{lemma}\label{lem:EinT}
For $d=l-1$, we have 
\begin{align*}
    \mathcal{E}^{[l-1]} \in (\mathbb{T}^L(2, 2-l)_{l-1})_{\mathcal{C}^{\times_B (l-1)}}. 
\end{align*}
\end{lemma}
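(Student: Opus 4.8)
The plan is to check the defining weight inequality of Definition~\ref{def:qbps}, in the relative form of Lemma~\ref{lem:qbps} over the base $\mathcal{C}^{\times_B(l-1)}$, for the coherent sheaf $\mathcal{E}^{[l-1]}$. Each factor $p_i^{\ast}\mathcal{E}$ has diagonal $\mathbb{C}^{\ast}$-weight $1$, so $\mathcal{E}^{[l-1]}$ has weight $l-1$, matching the weight of the target category. When $l$ is odd, $(2,2-l)$ is coprime, $d=1$, and by Remark~\ref{rmk:coprime} the quasi-BPS category is all of $D^b(\mathcal{M}^L(2,2-l))_{l-1}$, so the weight computation already finishes the lemma. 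Thus I would focus on $l=2m$ even, where $(2,2-l)=2(1,1-m)$, so $d=2$ and the window condition is genuinely restrictive.

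By the étale-local nature of the construction over $M^L(2,2-l)$ (Remark~\ref{rmk:open}) and Lemma~\ref{lem:equiver}, it suffices to verify (\ref{ineq:n}) at closed points $y$ of the deepest stratum, for all points $z_\bullet=(z_1,\ldots,z_{l-1})$ of the corresponding spectral curve. For $r=2$, $d=2$ this stratum consists of polystable sheaves $E=V\otimes E_0$ with $\dim V=2$ and $E_0$ stable of rank one, whose spectral curve is the double curve $\mathcal{C}_b=2C'$ of Lemma~\ref{lem:ideal}; here $\Aut(y)=GL(V)$ and the Ext-quiver has one vertex with $l+1$ loops, so $R(\bm{d})=\mathfrak{gl}(V)^{\oplus(l+1)}$. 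For the diagonal torus $T\subset GL(V)$ and $\lambda=(1,0)$ one computes $n_\lambda=l$, while $\delta_y=(\det V)^{\otimes 2}$, so $\frac{w}{d}\delta_y$ has $T$-weight $(l-1,l-1)$. Together with the symmetric cocharacter $(0,1)$, the inequalities (\ref{ineq:n}) then restrict the admissible $T$-weights $(a,b)$ (with $a+b=l-1$ and $a,b\geq 0$) to the two central weights $(l/2,l/2-1)$ and $(l/2-1,l/2)$.

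The crux is therefore to show that every $T$-weight of the derived fiber $\nu^{\ast}\mathcal{E}^{[l-1]}$ at $(z_\bullet,y)$ is one of these two central weights. The naive underived fiber $\bigotimes_{i=1}^{l-1}(\mathcal{E}_{z_i}|_y)$ equals the full tensor power $V^{\otimes(l-1)}$, whose extremal weights $(l-1,0)$ and $(0,l-1)$ lie far outside the window, so the necessary concentration of weights must come entirely from the derived structure of restricting the universal sheaf to points of the non-reduced curve $2C'$. Concretely I would use that $E=V\otimes E_0$ is the pushforward $i_{\ast}(V\otimes E_0)$ along $i\colon C'\hookrightarrow 2C'$, together with the $2$-periodic resolution of such pushforwards over $\mathcal{O}_{2C'}$ governed by the nilpotent Higgs field (multiplication by the tautological section $\zeta$ with $\zeta^2=0$), to compute the $\mathcal{E}_{z_i}|_y$ and their derived tensor product, and show that the resulting $GL(V)$-equivariant complex has all of its weights equal to $(l/2,l/2-1)$ or $(l/2-1,l/2)$ (equivalently, that it is built from the two-dimensional representation $(\det V)^{\otimes(m-1)}\otimes V$, which has the correct weight $l-1$). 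The explicit description of stable sheaves on $2C'$ as duals of ideal sheaves of length $l-1$ subschemes (Lemma~\ref{lem:ideal}) should control the bookkeeping.

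The main obstacle is precisely this derived fiber computation over the non-reduced spectral curve: the underived answer violates the window, so I must track the Tor-contributions and their $GL(V)$-weights and prove that only the central weights survive. Two further points require care. First, (\ref{ineq:n}) must be checked for all cocharacters $\lambda$; by Weyl invariance it suffices to treat those whose perpendicular is parallel to a codimension-one face of the window polytope, which are of the type $(1,0)$ handled above, exactly as in the reduction in the proof of Proposition~\ref{prop:Px}. Second, I must confirm that shallower strata and points $z_\bullet$ over reduced spectral curves in $B^{\rm{red}}$ impose no stronger constraint; over $B^{\rm{red}}$ the restrictions $\mathcal{E}_{z_i}|_y$ are ordinary fibers and the required inequalities follow from the same weight estimate used in Proposition~\ref{prop:Px}, while the codimension bound of Lemma~\ref{lem:codim} (which holds unconditionally for $C=\mathbb{P}^1$) confirms that the double curve is the only genuinely new case.
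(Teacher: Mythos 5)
Your overall framework is fine (reduce to $l$ even via Remark~\ref{rmk:coprime}, then verify the weight inequality (\ref{ineq:n}) of Lemma~\ref{lem:qbps} at strictly semistable points, where the Ext-quiver has one vertex and $l+1$ loops and $n_\lambda=l\lvert\lambda_1-\lambda_2\rvert$), but there is a genuine error in your computation of the window: you take $\delta_y=(\det V)^{\otimes 2}$, whereas Lemma~\ref{lem:qbps} gives $\delta_y=\bigotimes_i\det(V_i)^{\otimes m_i}$ with $m_i$ determined by $(r_i,\chi_i)=m_i(r_0,\chi_0)$. At a point $y=V\otimes E_0$ of the deepest stratum, $E_0$ is stable of class $(r_0,\chi_0)=(1,1-l/2)$, so $k=1$, $m_1=1$ and $\delta_y=\det V$. (Sanity check: $\delta$ has diagonal $\mathbb{C}^{\ast}$-weight $d=2$, which is the diagonal weight of $\det V$; your $(\det V)^{\otimes 2}$ has diagonal weight $4$.) With the correct $\delta_y$, the window for $\lambda=(\lambda_1,\lambda_2)$ is centered at $\tfrac{l-1}{2}(\lambda_1+\lambda_2)$ with half-width $\tfrac{l}{2}\lvert\lambda_1-\lambda_2\rvert$, and a weight $a\beta_1+(l-1-a)\beta_2$ of $V^{\otimes(l-1)}$ pairs against $\lambda$ to give $\bigl(a-\tfrac{l-1}{2}\bigr)(\lambda_1-\lambda_2)$, of absolute value at most $\tfrac{l-1}{2}\lvert\lambda_1-\lambda_2\rvert<\tfrac{l}{2}\lvert\lambda_1-\lambda_2\rvert$. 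So \emph{every} weight of $V^{\otimes(l-1)}$ lies strictly inside the window, and since $Rp_{\mathcal{M}\ast}(F\boxtimes\mathcal{E}^{[l-1]})|_y$ is generated by $V^{\otimes(l-1)}$ as a complex of $GL(V)$-representations, the lemma follows immediately via Lemma~\ref{lem:Cix}; this one-line weight estimate is exactly the paper's proof.

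Your wrong center $(l-1,l-1)$ is what forces the conclusion that only the two central weights $(l/2,l/2-1)$ and $(l/2-1,l/2)$ are admissible, hence your belief that the extremal weights of $V^{\otimes(l-1)}$ "lie far outside the window" and must be cancelled by the derived structure of restriction to the non-reduced spectral curve. That step, which you yourself flag as the crux and leave unproved, is not only unnecessary but cannot work: the derived fiber of $i_{C'\ast}(V\otimes\mathcal{O}_{C'}(-l/2))$ at a point of the double curve is a $2$-periodic complex all of whose terms are copies of $V$, so every term of the derived tensor product is a direct sum of copies of $V^{\otimes(l-1)}$, and the weights of its cohomology form a subset of the weights of $V^{\otimes(l-1)}$ with no mechanism concentrating them onto the two central ones. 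In short, the claimed concentration is false, the proposed Tor bookkeeping is an artifact of the miscomputed $\delta_y$, and once $\delta_y=\det V$ is used the entire difficulty evaporates.
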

\begin{proof}
    We may assume that $l$ is even, and write $l=2l'$ for $l' \in \mathbb{Z}$. 
    Let $y \in \mathcal{M}$ be a closed point which is strictly semistable, so it 
    corresponds to either $V \otimes \mathcal{O}_{C'}(-l')$ for $\dim V=2$ 
    and a section $C'$ of $S \to C$, or 
    $\mathcal{O}_{C'}(-l') \oplus \mathcal{O}_{C''}(-l')$
    for sections $C', C''$ of $S \to C$ such that $C' \neq C''$. 
    Denote by $p_\mathcal{M}\colon \mathcal{C}^{\times_B (l-1)}\times_B \mathcal{M}\to\mathcal{M}$ the natural projection.
We show that 
for any $F \in \mathrm{Perf}(\mathcal{C}^{\times_B (l-1)})$, the object 
\begin{align}\label{obj:x}
    Rp_{\mathcal{M}\ast}(F\boxtimes \mathcal{E}^{[l-1]})|_{y}
\end{align}
satisfies the weight condition 
in Lemma~\ref{lem:qbps}. 
Then the desired claim follows from Lemma~\ref{lem:Cix}. 
We only treat the case when $y$ corresponds to $V\otimes \mathcal{O}_{C'}(-l')$, 
so that $\mathrm{Aut}(y)=GL(V)$. The other (easier) case can be treated similarly. 

    We may assume that $C'=C$ is the zero section. 
    Note that the Ext-quiver associated with $V\otimes\mathcal{O}_{C}(-l')$ 
    has one vertex and $(2l'+1)$-loops. Therefore the stack $\mathcal{M}$ 
    and its good moduli space $\mathcal{M} \to M$
    is isomorphic (étale locally on $M$ at $y$) to 
    \begin{align*}\mathfrak{gl}(V)^{\oplus (2l'+1)}/GL(V) \to 
    \mathfrak{gl}(V)^{\oplus (2l'+1)}\ssslash GL(V).
    \end{align*}
    Since 
    $\mathcal{E}|_{\mathcal{C}\times_B \{y\}}=V\otimes \mathcal{O}_C(-l')$, 
    the object (\ref{obj:x})
    is generated by $V^{\otimes (l-1)}$ as a complex of $GL(V)$-representations. 
    Let $T=(\mathbb{C}^{\ast})^2 \subset GL(V)$ be the maximal torus 
    and let $\lambda \colon \mathbb{C}^{\ast} \to T$ be the one parameter subgroup 
    given by $t\mapsto (t^{\lambda_1}, t^{\lambda_2})$. It is enough to show that 
    for any $T$-weight $\chi$ of $V^{\otimes (l-1)}$, we have 
    \begin{align}\label{wt:cond}
        -\frac{1}{2}n_{\lambda} \leq \left\langle 
        \lambda, \chi-\frac{l-1}{2}\det V \right\rangle \leq \frac{1}{2}n_{\lambda}. 
    \end{align}
    In the above, $n_{\lambda}$ is 
    \begin{align*}
        n_{\lambda}:=\Big\langle \lambda, 
        \det\big((\mathfrak{gl}(V)^{\oplus (2l'+1)}\big)^{\lambda>0}) -
        \det\big(\mathfrak{gl}(V)^{\lambda>0}\big)\Big\rangle 
        =2l'\lvert \lambda_1-\lambda_2\rvert. 
    \end{align*}
    Let $M=\mathbb{Z}\beta_1+\mathbb{Z}\beta_2$ be the weight lattice of 
    $T$. Then a weight $\chi$ of $V^{\otimes (l-1)}$ can be written as 
    \begin{align*}\chi=a\beta_1+(l-1-a)\beta_2, \ 0\leq a\leq l-1.
    \end{align*}
    Then we have 
    \begin{align*}
        \left\langle \lambda, \chi-\frac{l-1}{2}\det V  \right\rangle
        =(\lambda_1-\lambda_2)\left(a-\frac{l-1}{2}\right). 
    \end{align*}
    Therefore the inequalities (\ref{wt:cond}) hold.     
\end{proof}

\begin{thm}\label{thm:CMext}
Assumption~\ref{conj:CMext} is satisfied for $C=\mathbb{P}^1$. 
\end{thm}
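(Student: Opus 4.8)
The plan is to reduce to the situation of Lemma~\ref{lem:EinT} via the twisted Abel--Jacobi map, and then to transfer the quasi-BPS membership étale locally over $M^L(2,1)$.

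\textbf{Step 1 (Normalization and étale reduction).} By Lemma~\ref{lem:period} the statement is invariant under the periodicity equivalences, which preserve both the box product and the BPS categories, so it suffices to treat one representative. Since $(2,\chi,w)$ satisfies the BPS condition, I may assume $w=g^{\rm{sp}}=l-1$ and $\chi=1-g^{\rm{sp}}=2-l$. Then $\mathcal{M}'=\mathcal{M}^L(2,1)$ has coprime invariants, so $\mathbb{T}^L(2,1)_{0}=D^b(\mathcal{M}^L(2,1))_0$ is the full weight-zero category and the first factor imposes no condition, while $\mathcal{M}=\mathcal{M}^L(2,2-l)$ is exactly the stack appearing in Lemmas~\ref{lem:ideal}--\ref{lem:EinT}. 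Thus Assumption~\ref{conj:CMext} reduces to the claim that $\mathcal{P}$ lies in the base-changed category $(\mathbb{T}^L(2,2-l)_{l-1})_{M^L(2,1)}$ over $T=M^L(2,1)$ (I treat $l$ even; $l$ odd is analogous, as in the proof of Theorem~\ref{thm:r=21}). This membership is étale local over $M^L(2,1)$ (Remark~\ref{rmk:open}, Lemma~\ref{lem:Cix}) and insensitive to tensoring by line bundles pulled back from the base, as it is cut out by the weight inequalities of Lemma~\ref{lem:qbps}. By Lemma~\ref{lem:AJM}, after an étale base change $B'\to B$ the map $\mathrm{AJ}_{\mathcal{L}}\colon \mathrm{Hilb}^{l-1}(\mathcal{C}/B)^{\circ}\times_B B'\to M^L(2,1)\times_B B'$ is étale and surjective, so it suffices to prove that $(\mathrm{AJ}_{\mathcal{L}}\times\mathrm{id})^{\ast}\mathcal{P}$ lies in $(\mathbb{T}^L(2,2-l)_{l-1})_{\mathrm{Hilb}^{l-1}(\mathcal{C}/B)^{\circ}}$.

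\textbf{Step 2 (Computing the pullback).} Over the locus of reduced subschemes $Z=\{x_1,\dots,x_{l-1}\}$ of distinct Cartier points of $\mathcal{C}_b$, the sheaf $F=I_{\mathcal{C}_b,Z}^{\vee}\otimes\mathcal{L}$ is a line bundle, the defining formula (\ref{line:2}) applies, and $(\mathrm{AJ}_{\mathcal{L}}\times\mathrm{id})^{\ast}\mathcal{P}$ is the line bundle already controlled by Proposition~\ref{prop:Px}. The content lies at the special locus, where $F$ fails to be a line bundle and $\mathcal{P}$ is genuinely the Cohen--Macaulay extension of Theorem~\ref{thm:mao}, so the pullback jumps in rank. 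Using the ideal sequence $0\to I_{\mathcal{C}_b,Z}\to\mathcal{O}_{\mathcal{C}_b}\to\mathcal{O}_Z\to 0$ of the universal subscheme $\mathcal{Z}\subset\mathcal{C}\times_B\mathrm{Hilb}^{l-1}$, together with the generic vanishing of Lemma~\ref{lem:h1} and the description of stable sheaves as $I_{\mathcal{C}_b,Z}^{\vee}$ from Lemma~\ref{lem:ideal}, I will express $(\mathrm{AJ}_{\mathcal{L}}\times\mathrm{id})^{\ast}\mathcal{P}$ through the restriction of the universal sheaf $\mathcal{E}$ of $\mathcal{M}^L(2,2-l)$ to $\mathcal{Z}$, showing that it is built (by extensions and summands, up to line-bundle twists from the factors) from the pushforward of $\mathcal{E}^{[l-1]}$ along the symmetrization $\mathcal{C}^{\times_B(l-1)}\to\mathrm{Hilb}^{l-1}(\mathcal{C}/B)$.

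\textbf{Step 3 (Conclusion).} Lemma~\ref{lem:EinT} shows $\mathcal{E}^{[l-1]}\in(\mathbb{T}^L(2,2-l)_{l-1})_{\mathcal{C}^{\times_B(l-1)}}$, i.e. for each closed point $y\in\mathcal{M}$ with $\mathrm{Aut}(y)=GL(V)$ its $\mathrm{Aut}(y)$-weights, namely those of $V^{\otimes(l-1)}$, satisfy the window inequalities (\ref{ineq:n}) for all $\lambda$. These inequalities are preserved under passage to sub-multisets of weights, base line-bundle twists, and finite pushforward, so Step 2 yields the same bounds for $(\mathrm{AJ}_{\mathcal{L}}\times\mathrm{id})^{\ast}\mathcal{P}$. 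Hence it lies in $(\mathbb{T}^L(2,2-l)_{l-1})_{\mathrm{Hilb}^{l-1}(\mathcal{C}/B)^{\circ}}$, which by Step 1 proves $\mathcal{P}\in(\mathbb{T}^L(2,2-l)_{l-1})_{M^L(2,1)}$, and hence Assumption~\ref{conj:CMext} in general.

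The main obstacle is Step 2: controlling the Cohen--Macaulay extension $\mathcal{P}$ along the non-regular locus of the first factor after pullback, and matching its $GL(V)$-weights with those of $\mathcal{E}^{[l-1]}$. Note the scalar weight $l-1$ is odd when $l$ is even, so the pullback must involve the standard representation $V$, not merely powers of $\det V$; this confirms that $\mathcal{P}$ gains rank at these points. The delicate point is to verify that \emph{no} weight outside the window of $V^{\otimes(l-1)}$ can appear, which is precisely where the determinantal formula (\ref{line:2}), the genericity of $\mathcal{L}$ in $\mathrm{Pic}^0$ from Lemma~\ref{lem:h1}, and the local structure of $I_{\mathcal{C}_b,Z}^{\vee}$ from Lemma~\ref{lem:ideal} must be combined.
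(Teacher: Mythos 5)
Your Step 1 reproduces the paper's reduction (normalize to $w=l-1$, $\chi=2-l$, descend $\mathcal{P}^{\sharp}$ to $M'\times_B \mathcal{M}$, and use the twisted Abel--Jacobi map of Lemma~\ref{lem:AJM} together with Lemma~\ref{lem:Cix} to test membership after an étale surjective base change), and the principle behind your Step 3 --- that membership in $(\mathbb{T}^L(2,2-l)_{l-1})_{(-)}$ is stable under the functorial operations of Lemma~\ref{lem:C:funct} and under $\mathrm{Pic}^0$-twists --- is also how the paper concludes. But Step 2, which you yourself flag as the main obstacle, is a genuine gap, and the route you sketch would not go through. First, the map you want to push $\mathcal{E}^{[l-1]}$ along, a ``symmetrization'' $\mathcal{C}^{\times_B(l-1)}\to \mathrm{Hilb}^{l-1}(\mathcal{C}/B)$, does not exist: the fiber power only maps to the symmetric product, and the whole point of the Arinkin--Li construction is to replace it by the correspondence through Haiman's isospectral Hilbert scheme
$\widetilde{\mathrm{Hilb}}^{l-1}(S)=(\mathrm{Hilb}^{l-1}(S)\times_{\mathrm{Sym}^{l-1}(S)}S^{l-1})^{\rm{red}}$,
with projections $\psi$ and $\sigma$. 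One forms
$\mathcal{Q}^{[l-1]}=\bigl((\psi\times\id)_{\ast}L(\sigma\times\id)^{\ast}\iota_{\ast}\mathcal{E}^{[l-1]}\bigr)^{\mathrm{sign}}\otimes p_1^{\ast}\det(\mathcal{O}_{\mathcal{Z}})^{-1}$,
where $\iota$ is the closed immersion of $\mathcal{C}^{\times_B(l-1)}\times_B\mathcal{M}$ into $S^{l-1}\times\mathcal{M}$, and it is this sign-isotypic pushforward (coherent and Cohen--Macaulay by Arinkin and Li) that inherits the window bounds from Lemma~\ref{lem:EinT} via Lemma~\ref{lem:C:funct}.

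Second, your plan to ``express'' $(\mathrm{AJ}_{\mathcal{L}}\times\id)^{\ast}\mathcal{P}$ at the non-regular locus by a direct fiberwise analysis with the ideal sequence is exactly the computation the paper is designed to avoid, and it is unclear it can be carried out: $\mathcal{P}$ is characterized only abstractly as a maximal Cohen--Macaulay extension, so there is no formula for its fibers where the rank jumps. The missing idea is to invoke the uniqueness of maximal Cohen--Macaulay extensions instead. Both $\eta^{\ast}\mathcal{Q}^{[l-1]}$ and the pullback of $\mathcal{P}$ under the twisted Abel--Jacobi map are maximal Cohen--Macaulay (pullback along an étale map preserves this), and they agree over the preimage of the sharp locus $(M'\times_B\mathcal{M})^{\sharp}$ by Li's comparison; since replacing $\mathcal{E}'$ by $\mathcal{L}\boxtimes\mathcal{E}'$ changes $\mathcal{P}^{\sharp}$ only by an element of $\mathrm{Pic}^0(\mathcal{M}\times_B B')$ after étale base change (Lemma~\ref{replace}), uniqueness forces
$\eta^{\ast}\mathcal{Q}^{[l-1]}\cong(\mathrm{AJ}_{\mathcal{L}}\times\id_{B'})^{\ast}(\eta^{\ast}\mathcal{P}\boxtimes\mathcal{L}')$
for some $\mathcal{L}'\in\mathrm{Pic}^0(\mathcal{M}\times_B B')$. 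It is this identification --- not any weight estimate for $\mathcal{P}$ at bad points --- that transports Lemma~\ref{lem:EinT} to $\mathcal{P}$; in particular the delicate verification you worry about (``no weight outside the window can appear'') never has to be performed on $\mathcal{P}$ itself. Without these two ingredients, the isospectral-Hilbert-scheme construction of $\mathcal{Q}^{[l-1]}$ and the Cohen--Macaulay uniqueness argument, your Steps 2--3 cannot be completed as written.
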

\begin{proof}
 As in the proof of Theorem~\ref{thm:r=21}, we treat the case of $w=g^{\rm{sp}}=l-1$, $\chi=1-g^{\rm{sp}}=2-l$
 and $l=2l'$ is even.
 Then $\mathcal{M}'=\mathcal{M}^L(2, 1)$ and 
 $\mathcal{M}=\mathcal{M}^L(2, 2-2l')$.
Note that $\mathcal{M}'\to M':=M^L(2, 1)$ is a trivial $\mathbb{C}^{\ast}$-gerbe. 
Since the line bundle $\mathcal{P}^{\sharp}$ in (\ref{line:Preg}) is of 
bi-weight $(0, l-1)$, it 
 descends to a line bundle on $M'\times_B \mathcal{M}$ which 
is also denoted by $\mathcal{P}^{\sharp}$.
The Cohen–Macaulay extension 
\begin{align}\label{CM:ext:P}\mathcal{P} \in \Coh(M'\times_B \mathcal{M})
\end{align}
of $\mathcal{P}^{\sharp}$ is constructed
in~\cite{MLi} using Abel-Jacobi maps
from $\mathrm{Hilb}^d(\mathcal{C}/B)$ for $d\gg 0$, 
which generalizes the construction of Arinkin~\cite{Ardual}. 
Our aim is to show that the sheaf (\ref{CM:ext:P}) 
is an object in $(\mathbb{T}^L(2, 2-l)_{l-1})_{M'}$. 
A key point of our proof is to show that $d=l-1$ works,
hence we can use Lemma~\ref{lem:EinT}, 
by replacing Abel-Jacobi maps with twisted Abel-Jacobi maps. 
An idea of using twisted Abel-Jacobi map also appeared in~\cite{MRVF2}. 

The isospectral Hilbert scheme of Haiman~\cite{Ha} is defined as the reduced scheme of the fiber product
\begin{align*}
\widetilde{\mathrm{Hilb}}^{d}(S):=(\mathrm{Hilb}^d(S)\times_{\mathrm{Sym}^d(S)}S^{d})^{\rm{red}}.
\end{align*}
There are 
projection maps 
\begin{align*}
    \mathrm{Hilb}^d(S) \stackrel{\psi}{\leftarrow} \widetilde{\mathrm{Hilb}}^d(S) 
    \stackrel{\sigma_d}{\to} S^d. 
\end{align*}
Consider the diagram 
\begin{align*}
    \xymatrix{
\mathrm{Hilb}^d(S) &  & \mathcal{C}^{\times_B d} \times_B \mathcal{M} \ar[d]^-{l^d\times \id} \\
\mathrm{Hilb}^d(S)\times \mathcal{M} \ar[u]_-{p_1} & \ar[l]_-{\psi\times \id} \widetilde{\mathrm{Hilb}}^d(S) \times\mathcal{M}
\ar[r]^-{\sigma_d \times \id} & S^{d}\times \mathcal{M},     
    }
\end{align*}
where $l \colon \mathcal{C} \hookrightarrow S \times B$ is the closed immersion and
$p_1$ is the natural projection. 
Set
\begin{align}\label{def:Qd}
    \mathcal{Q}^{[d]}:=((\psi\times\id)_{\ast}L(\sigma_d \times \id)^{\ast}(l^d \times \id)_{\ast}\mathcal{E}^{[d]})^{\rm{sign}}
    \otimes p_1^{\ast}\det(\mathcal{O}_{\mathcal{Z}})^{-1},
\end{align}
where
$(-)^{\mathrm{sign}}$ is the semi-invariant 
part with respect to the character $\mathrm{sign} \colon \mathfrak{S}_d \to \{\pm 1\}$ 
and $\mathcal{Z}$ is the universal subschemes of $S$ over $\mathrm{Hilb}^d(S)$. 
Then $\mathcal{Q}^{[d]}$ is a coherent sheaf on $\mathrm{Hilb}^d(S)\times \mathcal{M}$, see~\cite[Proposition~4.2]{Ardual}. 
As in~\cite[Proposition~3.2.2]{MLi} (which itself relies on~\cite{Ardual})
the sheaf $\mathcal{Q}^{[d]}$ is a Cohen-Macaulay sheaf on 
the closed subscheme 
\begin{align*}\mathrm{Hilb}^d(\mathcal{C}/B)\times_{B} \mathcal{M}
\hookrightarrow \mathrm{Hilb}^d(S) \times \mathcal{M}.
\end{align*}
In~\cite{MLi, Ardual}, it is proved that for $d\gg 0$, the 
sheaf $\mathcal{Q}^{[d]}$ on $\mathrm{Hilb}^d(\mathcal{C}/B)\times_{B} \mathcal{M}$ descends to a sheaf on $M'\times_B \mathcal{M}$ 
via the Abel-Jacobi map, 
which gives a Cohen–Macaulay extension $\mathcal{P}$. 

Instead of $d\gg 0$, we consider $d=l-1$ and twisted Abel-Jacobi maps. 
By Lemma~\ref{lem:AJM}, the twisted Abel-Jacobi maps (\ref{map:AJM}) induce
surjective étale maps. 
Let $\eta \colon B' \to B$ be an étale map and $\mathcal{L}$ a line bundle 
on $\mathcal{C}$ as in Lemma~\ref{lem:AJM}. 
Let $\mathcal{P}_{\mathcal{L}}^{\sharp}$ be the line bundle on $(\mathcal{M}'\times_B \mathcal{M})^{\sharp}$ as in Lemma~\ref{replace}. 
It descends to a line bundle on $(M'\times_B \mathcal{M})^{\sharp}$, which 
is also denoted by $\mathcal{P}_{\mathcal{L}}^{\sharp}$. 
Then the pull-back of $\mathcal{P}_{\mathcal{L}}^{\sharp}$ under the map 
\begin{align*}
   \mathrm{AJ}_{\mathcal{L}} \times \id_{B'} \colon 
    (\mathrm{Hilb}^{l-1}(\mathcal{C}/B)\times_B \mathcal{M})\times_B B' 
    \to (M'\times_B \mathcal{M})\times_B B'
\end{align*}
at the preimage of $(M'\times_B \mathcal{M})^{\sharp}$ coincides with 
$\eta^{\ast}\mathcal{Q}^{[l-1]}$, see~\cite[Proposition~3.2.2]{MLi}. 
Therefore by the uniqueness of Cohen–Macaulay extension and using Lemma~\ref{replace}, we see 
that $\eta^{\ast}\mathcal{Q}^{[l-1]}$ coincides with the pull-back of
the Cohen–Macaulay extension $\mathcal{P}$ up to a tensor product of an element in 
$\mathrm{Pic}^0(\mathcal{M}\times_B B')$, i.e. there is an isomorphism 
\begin{align*}
    \eta^{\ast}\mathcal{Q}^{[l-1]} \cong (\mathrm{AJ}_{\mathcal{L}} \times \id_{B'})^{\ast}
    (\eta^{\ast}\mathcal{P} \boxtimes \mathcal{L}')
\end{align*}
for some $\mathcal{L}' \in \mathrm{Pic}^0(\mathcal{M}\times_B B')$. 
On the other hand, by Lemma~\ref{lem:EinT}, Lemma~\ref{lem:C:funct}, and the construction of 
$\mathcal{Q}^{[l-1]}$ in (\ref{def:Qd}), we have
\begin{align}\label{QinT}
    \eta^{\ast}\mathcal{Q}^{[l-1]} \in (\mathbb{T}^L(2, 2-l)_{l-1})_{\mathrm{Hilb}^{l-1}(\mathcal{C}/B)\times_B B'}. 
\end{align}
Since a tensor product with an element in 
$\mathrm{Pic}^0(\mathcal{M}\times_B B')$ 
preserves the above condition, 
it follows that $(\mathrm{AJ}_{\mathcal{L}} \times \id_{B'})^{\ast}\eta^{\ast}\mathcal{P}$
is an object in the right hand side of (\ref{QinT}). 
Using Lemma~\ref{lem:Cix}, 
we conclude that 
$\mathcal{P}$ lies in 
$(\mathbb{T}^L(2, 2-l)_{l-1})_{M'}$. 
\end{proof}

By Theorem~\ref{thm:r=21} and Theorem~\ref{thm:CMext}, 
we obtain the following corollary:
\begin{cor}\label{cor:CMext}
Conjecture~\ref{conj:T0} is true for $C=\mathbb{P}^1$ and $r=2$. 
\end{cor}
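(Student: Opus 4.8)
The plan is to assemble Corollary~\ref{cor:CMext} directly from Theorem~\ref{thm:r=21} and Theorem~\ref{thm:CMext}, after first simplifying the numerical hypotheses to the case $C=\mathbb{P}^1$. Since $C=\mathbb{P}^1$ means $g=0$, the quantities controlling the statement collapse: we have $\mathrm{max}\{3g-2,2g\}=0$, so the condition $l>\mathrm{max}\{3g-2,2g\}$ appearing in Theorem~\ref{thm:r=21} becomes simply $l>0$, and likewise the hypothesis $l>2g$ of Theorem~\ref{thm:mao} (under which Li's maximal Cohen-Macaulay extension $\mathcal{P}$ exists) becomes $l>0$. Recall also $g^{\rm{sp}}=l-1$ in this case.

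First I would separate the admissible range of $l$. For $g=0$ the standing assumption $l=\deg L>2g-2=-2$ permits $\deg L\in\{-1,0\}$ in addition to $\deg L>0$. In the two boundary cases $\deg L=-1,0$, the equivalence asserted in Conjecture~\ref{conj:T0} holds by the direct computation already recorded in Example~\ref{exam:1}, so these require no further work. It therefore remains only to treat $l>0$, which is precisely the regime in which both of the input theorems are stated.

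For $l>0$ the argument is pure bookkeeping. By Theorem~\ref{thm:CMext}, Assumption~\ref{conj:CMext} is satisfied for $C=\mathbb{P}^1$; that is, for every tuple $(2,\chi,w)$ satisfying the BPS condition, the maximal Cohen-Macaulay sheaf $\mathcal{P}$ of Theorem~\ref{thm:mao} lies in $\mathbb{T}^L(2,w+1-g^{\rm{sp}})_{\chi+g^{\rm{sp}}-1}\boxtimes_B\mathbb{T}^L(2,\chi)_w$ and hence induces the Fourier-Mukai functor $\Phi_{\mathcal{P}}$. Since $l>0=\mathrm{max}\{3g-2,2g\}$, Theorem~\ref{thm:r=21} now applies verbatim and shows that $\Phi_{\mathcal{P}}$ is the sought $B$-linear equivalence $\mathbb{T}^L(2,w+1-g^{\rm{sp}})_{-\chi+1-g^{\rm{sp}}}\xrightarrow{\sim}\mathbb{T}^L(2,\chi)_w$ extending the equivalence over $B^{\rm{sm}}$, which is exactly the content of Conjecture~\ref{conj:T0}. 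Combining this with the boundary cases above yields the corollary for all admissible $L$.

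As a proof of the corollary itself there is no genuine obstacle: all of the substantive content has already been established. The real difficulty is packaged into Theorem~\ref{thm:CMext}, where one verifies that the Cohen-Macaulay extension lands in the product of BPS categories via the isospectral Hilbert scheme and twisted Abel-Jacobi maps (using $d=l-1$ together with Lemma~\ref{lem:EinT} and Lemma~\ref{lem:AJM}), and into Theorem~\ref{thm:r=21}, where the reduced-curve duality of Melo-Rapagnetta-Viviani is globalized across the nilpotent cone via Proposition~\ref{prop:CMQ}. The only point demanding care here is the reconciliation of the numerical hypotheses for $g=0$ and the separate treatment of the degenerate degrees $\deg L\le 0$.
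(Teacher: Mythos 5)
Your proposal is correct and follows essentially the same route as the paper, which obtains the corollary by combining Theorem~\ref{thm:r=21} (taking $\mathrm{max}\{3g-2,2g\}=0$ for $g=0$) with Theorem~\ref{thm:CMext}. Your explicit separation of the degenerate degrees $\deg L\in\{-1,0\}$ via Example~\ref{exam:1} is a reasonable extra precaution that the paper leaves implicit, but it does not constitute a different argument.
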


\section{Equivalences via dimension estimates}\label{s6}
In this section, we prove some technical results which generalize
the arguments in~\cite{Ardual} to the non-commutative setting. The results in this 
section were used in the previous section.

\subsection{Base change of semiorthogonal decompositions}\label{subsec:basechange}
The general setting of this section is as follows. Let $C$ be an arbitrary smooth projective curve, let $r\geq 1$, let $\chi, w\in\mathbb{Z}$, and let $L$ be a line bundle of degree $l>2g-2$ and $l>0$.
For simplicity, we write $\mathcal{M}=\mathcal{M}^L(r, \chi)$
and $M=M^L(r, \chi)$. 
Note that $\mathcal{M}$ is smooth and $M$ 
has at worst Gorenstein singularities, see Lemma~\ref{lem:gorenstein}. 
Recall that there are morphisms: 
\begin{align*}
    h_{\mathcal{M}} \colon \mathcal{M} \stackrel{\pi}{\to} M \stackrel{h_M}{\to} B. 
\end{align*}
Let $T$ be a quasi-projective variety with a
morphism 
\[g\colon T \to B.\] 
By base change, there is a diagram 
of classical stacks
\begin{align}\label{dia:MMT}
    \xymatrix{
\mathcal{M}_T  \ar@/^18pt/[rr]^-{h_{\mathcal{M}_T}}\ar[r]^-{\pi_T} \ar[d]_-{g_{\mathcal{M}}} & M_T \ar[r]^-{h_T} \ar[d]_-{g_M} & T \ar[d]_-{g} \\
\mathcal{M}  \ar@/_18pt/[rr]_-{h_{\mathcal{M}}}\ar[r]^-{\pi} & M \ar[r]^-{h_M} & B.     
    }
\end{align}
Below we assume that $\mathcal{M}_T$ is equivalent 
to the derived fiber product. For example, this 
holds if $g$ is flat or $h_{\mathcal{M}}$ is flat (the latter 
is known to hold for $r=2$ by~\cite[Proposition~2.2.6]{MLi}).

Let $\mathcal{C} \subset D^b(\mathcal{M})$ be a pre-triangulated dg-subcategory. 
The category $\mathcal{C}$ is called \textit{$B$-linear} if it is closed under tensoring with objects in $h_\mathcal{M}^*(\mathrm{Perf}(B))$. 
Following~\cite{MR2801403}, we define the subcategories
\begin{align*}
    \mathcal{C}_{T, \rm{perf}} \subset \mathrm{Perf}(\mathcal{M}_T), \ 
    \mathcal{C}_{T, \rm{qcoh}} \subset D_{\rm{qcoh}}(\mathcal{M}_T),
\end{align*}
where $\mathcal{C}_{T, \rm{perf}}$ is the smallest pre-triangulated 
subcategory which contains $\mathrm{Perf}(T) \boxtimes \mathcal{C}$
and is closed under taking direct summands, and 
$\mathcal{C}_{T, \rm{qcoh}}$ is the smallest pre-triangulated 
subcategory which contains $\mathrm{Perf}(T) \boxtimes \mathcal{C}$
and is closed under taking arbitrary direct sums and direct summands. 
We also define 
\begin{align}\label{def:minuscategory}
    \mathcal{C}_{T}^{-} :=\mathcal{C}_{T, \rm{qcoh}} \cap D^{-}(\mathcal{M}_T), \ 
    \mathcal{C}_T:=\mathcal{C}_{T}^- \cap D^b(\mathcal{M}_T). 
\end{align}
When $T=B$ and $g=\id$, we omit $T$ and write 
$\mathcal{C}_{B, \rm{qcoh}}=\mathcal{C}_{\rm{qcoh}}$ etc. 
\begin{lemma}\label{lem:sdo}
Let $D^b(\mathcal{M})=\langle \mathcal{C}_i \,|\, i \in I \rangle$
be a $B$-linear semiorthogonal decomposition. 
Then there are induced semiorthogonal decomposition
\begin{align*}
    D_{\rm{qcoh}}(\mathcal{M}_T)=\langle 
    (\mathcal{C}_i)_{T, \rm{qcoh}} \,|\, i \in I\rangle, \ 
      \mathrm{Perf}(\mathcal{M}_T)=\langle 
    (\mathcal{C}_i)_{T, \rm{perf}} \,|\, i \in I\rangle. 
\end{align*}
Moreover, the first semiorthogonal decomposition 
restricts to the semiorthogonal decompositions
\begin{align}\label{SOD:D-}
    D^{-}(\mathcal{M}_T)=\langle 
    (\mathcal{C}_i)_T^{-} \,|\, i \in I\rangle. 
\end{align}
   
\end{lemma}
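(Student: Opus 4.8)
The plan is to follow the base-change formalism for semiorthogonal decompositions of Kuznetsov \cite{MR2801403}, adapted to the present stacky setting. By hypothesis $\mathcal{M}_T$ is the derived fibre product $\mathcal{M}\times_B T$, so the projection $g_{\mathcal{M}}\colon \mathcal{M}_T\to\mathcal{M}$ satisfies the derived base-change and projection formulas relative to $g\colon T\to B$; this (Tor-independent) base change is the technical input that makes everything work. For each of the three asserted decompositions I would verify the two defining properties separately: generation of the ambient category by the listed pieces, and semiorthogonality in the given order.

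For generation, the key observation is that $\mathcal{M}_T=\mathcal{M}\times_B T$ together with the fact that the relevant stacks are perfect (in the sense of Ben-Zvi--Francis--Nadler) yields an identification of $D_{\mathrm{qcoh}}(\mathcal{M}_T)$ with the relative tensor product $D_{\mathrm{qcoh}}(\mathcal{M})\otimes_{D_{\mathrm{qcoh}}(B)}D_{\mathrm{qcoh}}(T)$. Consequently $D_{\mathrm{qcoh}}(\mathcal{M}_T)$ is generated, as a cocomplete triangulated category closed under direct sums, by the external products $\mathrm{Perf}(T)\boxtimes D^b(\mathcal{M})$. Since $D^b(\mathcal{M})=\langle \mathcal{C}_i\mid i\in I\rangle$, the products $\mathrm{Perf}(T)\boxtimes\mathcal{C}_i$ jointly generate, so the subcategories $(\mathcal{C}_i)_{T,\mathrm{qcoh}}$ generate $D_{\mathrm{qcoh}}(\mathcal{M}_T)$; the statement for $\mathrm{Perf}(\mathcal{M}_T)$ follows by passing to compact objects.

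For semiorthogonality it suffices to test vanishing on generators, i.e. on objects $P_i\boxtimes A_i$ and $P_j\boxtimes A_j$ with $P_i,P_j\in\mathrm{Perf}(T)$, $A_i\in\mathcal{C}_i$, $A_j\in\mathcal{C}_j$. By adjunction and the projection formula, the corresponding $\mathrm{RHom}$ on $\mathcal{M}_T$ is computed from $g^{\ast}$ of the relative Hom object $Rh_{\mathcal{M}\ast}R\mathcal{H}om(A_i,A_j)\in D_{\mathrm{qcoh}}(B)$, convolved with $\mathrm{RHom}_T(P_i,P_j)$. Now $B$-linearity of the ambient decomposition together with ordinary semiorthogonality forces $\mathrm{Hom}(A_i,A_j\otimes h_{\mathcal{M}}^{\ast}b)=0$ for all $b\in\mathrm{Perf}(B)$ when $i$ comes after $j$; since $B=\bigoplus_i H^0(L^{\otimes i})$ is smooth and perfect complexes generate $D_{\mathrm{qcoh}}(B)$, this is equivalent to the vanishing of the relative Hom object itself. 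Because $R\mathcal{H}om$ and pushforward commute with the derived base change $g$, this vanishing pulls back to $T$, giving the required semiorthogonality of the $(\mathcal{C}_i)_{T,\mathrm{qcoh}}$.

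Finally, to see that the decomposition of $D_{\mathrm{qcoh}}(\mathcal{M}_T)$ restricts to $D^{-}(\mathcal{M}_T)$ and to $\mathrm{Perf}(\mathcal{M}_T)$, I would invoke the smoothness of $\mathcal{M}$: the projection functors attached to the $B$-linear decomposition of $D^b(\mathcal{M})$ are Fourier--Mukai functors with perfect kernels, hence of bounded cohomological amplitude, and they base change to the projection functors on $\mathcal{M}_T$. Bounded amplitude guarantees that the projections preserve boundedness-above, boundedness and perfectness, so that the components of an object of $D^{-}(\mathcal{M}_T)$ (resp.\ $\mathrm{Perf}(\mathcal{M}_T)$) again lie in $(\mathcal{C}_i)_T^{-}$ (resp.\ $(\mathcal{C}_i)_{T,\mathrm{perf}}$). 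I expect the main obstacle to be exactly this amplitude control in tandem with checking that the base-change formulas remain valid across the good moduli space maps of these (possibly non-flat) stacks; once the derived fibre-product hypothesis on $\mathcal{M}_T$ is in force, however, these steps reduce to standard applications of Tor-independent base change and smoothness.
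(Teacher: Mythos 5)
Your overall route is the same as the paper's: the proof given there is a one-line citation of Kuznetsov's base-change machinery \cite[Propositions~4.2, 4.3, 5.1]{MR2801403}, which is exactly the formalism you reconstruct. Your generation step (identifying $D_{\rm{qcoh}}(\mathcal{M}_T)$ with a relative tensor product, legitimate here because these are perfect stacks and $\mathcal{M}_T$ is by assumption the derived fibre product) and your semiorthogonality step (projection formula, $B$-linearity, and the fact that $\mathrm{Perf}(B)$ generates $D_{\rm{qcoh}}(B)$) are correct. The restriction to $\mathrm{Perf}(\mathcal{M}_T)$ is also fine and is best justified the way you hint at in your second paragraph: each piece $(\mathcal{C}_i)_{T,\rm{qcoh}}$ is generated by objects that are compact in the ambient category (external products of perfect complexes), so the projection functors preserve compact objects by the standard thick-subcategory argument.

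The gap is in your justification of the restriction to $D^{-}(\mathcal{M}_T)$. You assert that the projection functors of the $B$-linear decomposition of $D^b(\mathcal{M})$ are Fourier--Mukai functors with \emph{perfect} kernels. This is not available: $\mathcal{M}$ is a smooth but non-proper stack, and kernel representability of projection functors is known (by Kuznetsov) essentially only for smooth projective varieties, where it is deduced \emph{from} the base-change theorem, so invoking it to prove base change is circular. What continuity and $B$-linearity of the extended projections give for free is a kernel in $D_{\rm{qcoh}}(\mathcal{M}\times_B\mathcal{M})$, but a quasi-coherent kernel yields no amplitude control, and even a perfect kernel would not yield boundedness without properness of its support over the second projection. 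What is actually needed is finite cohomological amplitude of the original projection functors; in the present situation this holds because the quasi-BPS semiorthogonal decomposition is cut out by explicit weight (window) conditions, so étale-locally over the good moduli space the projections are weight-truncation functors of uniformly bounded amplitude (this is the input that makes \cite[Proposition~5.1]{MR2801403} applicable). With that amplitude bound in hand, your approximation idea does work: write $E\in D^{-}(\mathcal{M}_T)$ as a limit of perfect complexes agreeing with $E$ in any prescribed range of degrees, and conclude that each component is again bounded above with coherent cohomology.
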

\begin{proof}
    The arguments of~\cite[Proposition~4.2, 4.3, 5.1]{MR2801403} apply. 
\end{proof}

For $\mathcal{P} \in D^-(\mathcal{M}_T)$ and 
a closed point $x \in T$, set 
\begin{align*}
    \mathcal{P}_x :=Rg_{\mathcal{M}\ast}(\mathcal{P}\otimes^L Lh_{\mathcal{M}_T}^{\ast}\mathcal{O}_x)
    \in D^-(\mathcal{M}). 
\end{align*}

\begin{lemma}\label{lem:Cix}
In the situation of Lemma~\ref{lem:sdo}, 
suppose that there exists $i\in I$ such that $\mathcal{P}_x \in \mathcal{C}_i$
for all $x \in T$. 
Then $\mathcal{P} \in (\mathcal{C}_i)_T^-$. 
\end{lemma}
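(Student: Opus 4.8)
The plan is to run everything through the base-changed semiorthogonal decompositions of Lemma~\ref{lem:sdo} and to reduce the statement to a fibrewise vanishing. Applying Lemma~\ref{lem:sdo} to the given $B$-linear decomposition $D^b(\mathcal{M})=\langle\mathcal{C}_i\mid i\in I\rangle$, both
\[
D^{-}(\mathcal{M})=\langle (\mathcal{C}_j)^{-}_{B}\mid j\in I\rangle
\quad\text{(the case }T=B,\ g=\id)
\]
and $D^{-}(\mathcal{M}_T)=\langle (\mathcal{C}_j)^{-}_T\mid j\in I\rangle$ are semiorthogonal decompositions. Decomposing $\mathcal{P}$ accordingly produces components $\mathcal{P}^{(j)}\in(\mathcal{C}_j)^{-}_T$, and it suffices to prove $\mathcal{P}^{(j)}=0$ for $j\neq i$, since then $\mathcal{P}\cong\mathcal{P}^{(i)}\in(\mathcal{C}_i)^{-}_T$.

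The heart of the proof is to show that the functor $(-)_x\colon D^{-}(\mathcal{M}_T)\to D^{-}(\mathcal{M})$ is compatible with these decompositions, i.e.\ it carries $(\mathcal{C}_j)^{-}_T$ into $(\mathcal{C}_j)^{-}_B$. Since $(-)_x$ is triangulated and commutes with direct sums and summands, I would test this on the generators $F\boxtimes_B E$ with $F\in\mathrm{Perf}(T)$ and $E\in\mathcal{C}_j$. Writing $b=g(x)$, $i_x\colon\{x\}\hookrightarrow T$, and $c_x\colon \mathcal{M}_{T,x}=\mathcal{M}\times_B\{b\}\hookrightarrow\mathcal{M}$ for the inclusion of the Hitchin fibre, derived base change together with two applications of the projection formula should yield
\[
(F\boxtimes_B E)_x\cong \big(E\Ltensor Lh_{\mathcal{M}}^{*}\mathcal{O}_b\big)\otimes_{\mathbb{C}}Li_x^{*}F.
\]
At this point I would invoke that the Hitchin base $B=\bigoplus_i H^0(C,L^{\otimes i})$ is an affine space, hence smooth, so that $\mathcal{O}_b\in\mathrm{Perf}(B)$ and $Lh_{\mathcal{M}}^{*}\mathcal{O}_b\in h_{\mathcal{M}}^{*}\mathrm{Perf}(B)$; the $B$-linearity of $\mathcal{C}_j$ then gives $E\Ltensor Lh_{\mathcal{M}}^{*}\mathcal{O}_b\in\mathcal{C}_j$, and tensoring with the perfect complex of vector spaces $Li_x^{*}F$ keeps the result in $\mathcal{C}_j\subset(\mathcal{C}_j)^{-}_B$, as required.

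Granting this compatibility, applying $(-)_x$ to the semiorthogonal filtration of $\mathcal{P}$ gives a filtration of $\mathcal{P}_x$ whose $j$-th piece is $(\mathcal{P}^{(j)})_x\in(\mathcal{C}_j)^{-}_B$; by uniqueness of the decomposition in $D^{-}(\mathcal{M})$, this piece is exactly the $j$-th component of $\mathcal{P}_x$. The hypothesis $\mathcal{P}_x\in\mathcal{C}_i$ forces every component of $\mathcal{P}_x$ other than the $i$-th to vanish, so $(\mathcal{P}^{(j)})_x=0$ for all closed $x\in T$ and all $j\neq i$. To conclude $\mathcal{P}^{(j)}=0$, I would note that $\mathcal{P}^{(j)}\Ltensor Lh_{\mathcal{M}_T}^{*}\mathcal{O}_x\cong \iota_{x*}L\iota_x^{*}\mathcal{P}^{(j)}$, where $\iota_x\colon\mathcal{M}_{T,x}\hookrightarrow\mathcal{M}_T$, so that $(\mathcal{P}^{(j)})_x=Rc_{x*}\big(L\iota_x^{*}\mathcal{P}^{(j)}\big)$ with $c_x$ a closed immersion; as $Rc_{x*}$ is conservative, $L\iota_x^{*}\mathcal{P}^{(j)}=0$ for every $x$. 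A standard Nakayama argument for $\mathcal{M}_T\to T$, inspecting the top non-vanishing cohomology sheaf of the bounded-above complex $\mathcal{P}^{(j)}$ over a point of its support, then gives $\mathcal{P}^{(j)}=0$.

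The main obstacle is the compatibility step: correctly identifying $(F\boxtimes_B E)_x$ through derived base change and the projection formulas, and—crucially—exploiting the smoothness of $B$ so that $\mathcal{O}_b$ is perfect and the $B$-linearity of $\mathcal{C}_j$ becomes applicable. A technical point to keep in mind is that $T$ may be singular, so $\mathcal{O}_x$ need not be perfect on $T$; this is harmless because in the generators $F\boxtimes_B E$ the perfectness is supplied by $F$, and it is $Li_x^{*}F$ (a finite complex of vector spaces) that appears in the displayed formula.
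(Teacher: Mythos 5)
Your proof is correct and follows essentially the same route as the paper's: decompose $\mathcal{P}$ via the semiorthogonal decomposition \eqref{SOD:D-}, deduce from the hypothesis that $(\mathcal{P}_j)_x=0$ for $j\neq i$, identify $(\mathcal{P}_j)_x$ with the push-forward of the derived restriction of $\mathcal{P}_j$ to the fiber $h_{\mathcal{M}}^{-1}(b)\times\{x\}$, and conclude by a Nakayama-type argument. The paper's proof is much terser and leaves implicit precisely the compatibility step (that $(-)_x$ sends $(\mathcal{C}_j)_T^-$ into the $j$-th component over $B$, via base change, the projection formula, and $B$-linearity) which you spell out correctly.
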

\begin{proof}
    Let $\mathcal{P}_j \in D^{-}(\mathcal{M}_T)$ be the semiorthogonal 
    component of $\mathcal{P}$ with respect to 
    the semiorthogonal decomposition (\ref{SOD:D-}). 
    The assumption implies that
    $(\mathcal{P}_j)_x=0$ for any $x \in T$ and $j\neq i$. 
    It follows that 
    for any $b \in B$
    and a point $x \in T_b:=g^{-1}(b)$ we have
    $\mathcal{P}_{j}|_{h_{\mathcal{M}}^{-1}(b) \times \{x\}}=0$. 
      Therefore $\mathcal{P}_j=0$ for $j\neq i$. 
\end{proof}

The next lemma follows from 
the definition of $\mathcal{C}_{T, \rm{qcoh}}$, 
$\mathcal{C}_{T}^-$ and $\mathcal{C}_T$. 
\begin{lemma}\label{lem:C:funct}
    For a morphism $\alpha \colon T_1 \to T_2$ over $B$, 
the pull-back $L\alpha^{\ast}$ induces 
functors 
\begin{align*}
L\alpha^{\ast} \colon \mathcal{C}_{T_2, \rm{qcoh}} \to 
\mathcal{C}_{T_1, \rm{qcoh}}, \ 
L\alpha^{\ast} \colon \mathcal{C}_{T_2}^{-} \to \mathcal{C}_{T_1}^{-}. 
\end{align*}
Moreover, if we have $R\alpha_{\ast}\mathrm{Perf}(T_1) \subset 
\mathrm{Perf}(T_2)$, for example if $\alpha$ is proper
and $T_2$ is smooth, the push-forward $R\alpha_{\ast}$ induces functors 
\begin{align*}
    R \alpha_{\ast} \colon \mathcal{C}_{T_1, \rm{qcoh}} \to 
    \mathcal{C}_{T_2, \rm{qcoh}}, \ 
    R \alpha_{\ast} \colon \mathcal{C}_{T_1}^- \to \mathcal{C}_{T_2}^-, \ R \alpha_{\ast} \colon 
    \mathcal{C}_{T_1} \to \mathcal{C}_{T_2}. 
\end{align*}
\end{lemma}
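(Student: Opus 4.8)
The plan is to exploit the fact that the base-changed categories are generated, under cones, arbitrary direct sums, and direct summands, by the external products $F \boxtimes E = q_i^\ast F \otimes p_i^\ast E$ with $F \in \mathrm{Perf}(T_i)$ and $E \in \mathcal{C}$; here $q_i \colon \mathcal{M}_{T_i} \to T_i$ and $p_i \colon \mathcal{M}_{T_i} \to \mathcal{M}$ denote the projections. Writing $\alpha_{\mathcal{M}} = \alpha \times_B \mathrm{id}_{\mathcal{M}} \colon \mathcal{M}_{T_1} \to \mathcal{M}_{T_2}$ for the base change of $\alpha$ (so that $L\alpha^\ast$ and $R\alpha_\ast$ in the statement mean $L\alpha_{\mathcal{M}}^\ast$ and $R\alpha_{\mathcal{M}\ast}$), I would first record the two structural facts I need: the square with vertices $\mathcal{M}_{T_1}, \mathcal{M}_{T_2}, T_1, T_2$ is derived Cartesian (by the running assumption that $\mathcal{M}_{T_i}$ is the derived fibre product), and the projections to $\mathcal{M}$ satisfy $p_2 \circ \alpha_{\mathcal{M}} = p_1$. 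Since $L\alpha_{\mathcal{M}}^\ast$, and likewise $R\alpha_{\mathcal{M}\ast}$ (using that $\alpha_{\mathcal{M}}$ is quasi-compact and quasi-separated), is exact and commutes with direct sums and summands, the subcategory of objects landing in the desired target is pre-triangulated and closed under sums and summands; so it is enough to verify each inclusion on the generators $F \boxtimes E$.

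For the pull-back statement I would use monoidality of $L\alpha_{\mathcal{M}}^\ast$ together with $q_2 \circ \alpha_{\mathcal{M}} = \alpha \circ q_1$ and $p_2 \circ \alpha_{\mathcal{M}} = p_1$ to obtain $L\alpha_{\mathcal{M}}^\ast(F \boxtimes E) \cong (L\alpha^\ast F) \boxtimes E$, and then invoke that $L\alpha^\ast F$ is again perfect. This places the image in $\mathrm{Perf}(T_1) \boxtimes \mathcal{C}$, giving the inclusion into $\mathcal{C}_{T_1, \mathrm{qcoh}}$; since $L\alpha_{\mathcal{M}}^\ast$ is right $t$-exact it preserves bounded-above complexes, so it also maps $\mathcal{C}_{T_2}^-$ into $\mathcal{C}_{T_1}^-$.

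For the push-forward statement I would rewrite $p_1^\ast E \cong \alpha_{\mathcal{M}}^\ast p_2^\ast E$ and then apply the projection formula and derived base change along the Cartesian square to get $R\alpha_{\mathcal{M}\ast}(F \boxtimes E) \cong (R\alpha_\ast F) \boxtimes E$; the hypothesis $R\alpha_\ast \mathrm{Perf}(T_1) \subset \mathrm{Perf}(T_2)$ ensures $R\alpha_\ast F$ is perfect, yielding the inclusion into $\mathcal{C}_{T_2, \mathrm{qcoh}}$. For the remaining two functors on $\mathcal{C}_{T_1}^-$ and $\mathcal{C}_{T_1}$ I would additionally invoke, in the situation of the ``for example'' clause where $\alpha$ (hence $\alpha_{\mathcal{M}}$) is proper, that $R\alpha_{\mathcal{M}\ast}$ has bounded cohomological amplitude and preserves coherence, so it preserves $D^-$ and $D^b$; combined with the qcoh inclusion this gives the two bounded statements.

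I expect the computation itself to be routine; the only inputs that are not purely formal are the derived base-change isomorphism $R\alpha_{\mathcal{M}\ast} q_1^\ast \cong q_2^\ast R\alpha_\ast$ and the compatibility of $R\alpha_{\mathcal{M}\ast}$ with direct sums, and these hold precisely because $\mathcal{M}_{T_i}$ is taken to be the derived fibre product, which is the running hypothesis. Thus the main (and only mild) obstacle is to make sure the projection formula and base change are applied on the level of the derived Cartesian square rather than its classical truncation; the rest follows from the closure properties built into the definitions of $\mathcal{C}_{T, \mathrm{qcoh}}$, $\mathcal{C}_T^-$, and $\mathcal{C}_T$.
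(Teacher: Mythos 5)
Your proof is correct and is precisely the routine verification the paper leaves implicit: the paper's entire "proof" of this lemma is the one-line remark that it follows from the definitions of $\mathcal{C}_{T,\rm{qcoh}}$, $\mathcal{C}_T^-$ and $\mathcal{C}_T$, which is exactly the generator argument you spell out (reduce to objects $F\boxtimes E$, use monoidality of $L\alpha_{\mathcal{M}}^\ast$ for pullback, and the projection formula plus derived base change along the derived Cartesian square for pushforward). The only minor deviation is that for the $D^b$-level functor you invoke properness of $\alpha$ rather than arguing from the stated hypothesis $R\alpha_\ast\mathrm{Perf}(T_1)\subset\mathrm{Perf}(T_2)$, but this matches the paper's ``for example'' clause and every application of the lemma in the paper (and for finite-type morphisms the hypothesis in fact forces properness, by Lipman--Neeman), so there is no real gap.
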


\subsection{Equivalences with sheaves of non-commutative algebras}
Below we assume that $g \colon T \to B$ is 
a flat and lci morphism. Note that then both of $M_T$ and 
$\mathcal{M}_T$ have at worst Gorenstein singularities, 
though $\mathcal{M}_T$ is not necessary smooth. 

Suppose that $(r, \chi, w)$ satisfies the BPS condition. 
We consider the vector bundle $\mathbb{V}\to \mathcal{M}$ and 
the sheaf of algebra $\mathscr{A}$ on $M$ as in Proposition~\ref{prop:TA}. 
Below we write $\mathbb{T}=\mathbb{T}(r, \chi)_w$
for simplicity. 
\begin{lemma}\label{lem:AT}
Let $\mathbb{V}_T:=g_{\mathcal{M}}^{\ast}\mathbb{V}$ and set 
\begin{align*}
    \mathscr{A}_T :=\pi_{T\ast}\mathcal{E}nd(\mathbb{V}_T). 
\end{align*}
Then $\mathscr{A}_T=g_M^{\ast}\mathscr{A}$ and it is a maximal Cohen-Macaulay $\mathcal{O}_{M_T}$-module. 
Moreover, there is an equivalence
\begin{align}\label{upA}
   \Upsilon_{\mathscr{A}_T} := 
   \pi_{T\ast}\mathcal{H}om(\mathbb{V}_T, -)\colon 
    \mathbb{T}_{T, \rm{qcoh}}\stackrel{\sim}{\to} 
    D_{\rm{qcoh}}(\mathscr{A}_T), 
\end{align}
which restricts to the equivalences 
\begin{align*}\Upsilon_{\mathscr{A}_T} \colon \mathbb{T}_T^{-} \stackrel{\sim}{\to} D^{-}(\mathscr{A}_T), \ 
\Upsilon_{\mathscr{A}_T} \colon \mathbb{T}_T \stackrel{\sim}{\to} D^{b}(\mathscr{A}_T).
\end{align*}
  \end{lemma}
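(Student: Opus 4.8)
The plan is to deduce both assertions from Proposition~\ref{prop:TA} by flat base change along $g_M \colon M_T \to M$, and the first step is to identify $\mathscr{A}_T$ with $g_M^{\ast}\mathscr{A}$. Since $\mathcal{M}$ is smooth (as $l>2g-2$), the classical truncation of the square formed by $\pi,\pi_T,g_{\mathcal{M}},g_M$ in \eqref{dia:MMT} is Cartesian, and because $\pi$ is a good moduli space morphism its base change $\pi_T$ is again one by \cite{MR3237451}; in particular $\pi_T$ is cohomologically affine and $\pi_{T\ast}$ is exact. As $\mathbb{V}$ is a vector bundle, $g_{\mathcal{M}}^{\ast}\mathcal{E}nd(\mathbb{V}) \cong \mathcal{E}nd(\mathbb{V}_T)$, and flat base change for the Cartesian square (valid since $g_M$ is flat and $\pi$ is quasi-compact quasi-separated) gives $\mathscr{A}_T = \pi_{T\ast}\mathcal{E}nd(\mathbb{V}_T) \cong g_M^{\ast}\pi_{\ast}\mathcal{E}nd(\mathbb{V}) = g_M^{\ast}\mathscr{A}$. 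For the maximal Cohen--Macaulay property I would use that $g_M$ is flat with Cohen--Macaulay fibres: its fibres coincide with those of the lci morphism $g$, hence are lci and Cohen--Macaulay, so pulling back the maximal Cohen--Macaulay module $\mathscr{A}$ preserves depth equal to dimension fibrewise, giving that $\mathscr{A}_T$ is maximal Cohen--Macaulay over the Gorenstein scheme $M_T$.

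For the equivalence I would set $\Phi_T := \Upsilon_{\mathscr{A}_T} = \pi_{T\ast}\mathcal{H}om(\mathbb{V}_T, -)$ and introduce its candidate inverse $\Phi_T^L := (-) \otimes_{\mathscr{A}_T} \mathbb{V}_T$, exactly as in the proof of Proposition~\ref{prop:TA}. One checks that $\Phi_T$ lands in $D_{\mathrm{qcoh}}(\mathscr{A}_T)$ and that $\Phi_T^L$ lands in $\mathbb{T}_{T,\mathrm{qcoh}}$: the latter because $\mathbb{V}_T = g_{\mathcal{M}}^{\ast}\mathbb{V} = \mathcal{O}_T \boxtimes_B \mathbb{V}$ lies in $\mathbb{T}_T$ directly from the definition \eqref{def:minuscategory}, $\Phi_T^L(\mathscr{A}_T) = \mathbb{V}_T$, and $\mathbb{T}_{T,\mathrm{qcoh}}$ is closed under the colimits and summands generated by $\mathbb{V}_T$. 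Both functors are linear over $\mathrm{Perf}(M_T)$ and preserve colimits, and the adjunction morphisms $\Phi_T^L\Phi_T \to \mathrm{id}$ and $\mathrm{id} \to \Phi_T \Phi_T^L$ are the flat base changes of the corresponding isomorphisms established in Proposition~\ref{prop:TA}.

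To verify that these adjunction morphisms are isomorphisms, I would note that $\mathscr{A}$ split-generates $D^b(\mathscr{A})$ (using its finite global dimension), so through the equivalence of Proposition~\ref{prop:TA} its preimage $\mathbb{V}$ split-generates $\mathbb{T}$; consequently $\mathbb{V}_T$ generates $\mathbb{T}_{T,\mathrm{qcoh}}$ over pullbacks from $T$, and $\mathscr{A}_T$ generates $D_{\mathrm{qcoh}}(\mathscr{A}_T)$. By $\mathrm{Perf}(M_T)$-linearity and colimit-preservation it then suffices to evaluate the two transformations on $\mathbb{V}_T$ and on $\mathscr{A}_T$, where $\Phi_T(\mathbb{V}_T) = \mathscr{A}_T$ and $\Phi_T^L(\mathscr{A}_T) = \mathbb{V}_T$ reduce the claim to the local isomorphisms of Proposition~\ref{prop:TA} pulled back along $g_M$. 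This yields the equivalence $\mathbb{T}_{T,\mathrm{qcoh}} \xrightarrow{\sim} D_{\mathrm{qcoh}}(\mathscr{A}_T)$. To obtain the restrictions, I would use that $\mathcal{H}om(\mathbb{V}_T,-)$ and $\pi_{T\ast}$ are exact (vector bundle and cohomologically affine morphism), so $\Phi_T$ preserves coherence and cohomological amplitude, while $\Phi_T^L = (-)\otimes_{\mathscr{A}_T}\mathbb{V}_T$ is exact because $\mathbb{V}_T$ is projective over $\mathscr{A}_T$ (which is homologically homogeneous fibrewise); combined with the definitions $\mathbb{T}_T^- = \mathbb{T}_{T,\mathrm{qcoh}}\cap D^-(\mathcal{M}_T)$ and $\mathbb{T}_T = \mathbb{T}_T^-\cap D^b(\mathcal{M}_T)$ this shows the equivalence carries the bounded-above and bounded subcategories onto $D^-(\mathscr{A}_T)$ and $D^b(\mathscr{A}_T)$.

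The main obstacle I anticipate is the careful bookkeeping of the base-changed module categories: showing that the abstractly defined $\mathbb{T}_{T,\mathrm{qcoh}}$ and $D_{\mathrm{qcoh}}(\mathscr{A}_T)$ are genuinely the $g_M$-base changes of $\mathbb{T}_{\mathrm{qcoh}}$ and $D_{\mathrm{qcoh}}(\mathscr{A})$ compatibly with $\Phi_T$, and that the unit and counit commute with flat base change. One must also be attentive to the derived structure: the standing hypothesis that $\mathcal{M}_T$ agrees with the derived fibre product is precisely what forces $\mathscr{A}_T$ and the base-changed adjunctions to behave as in the classical flat case, and I expect the flatness and lci hypotheses on $g$ to be used exactly at this point.
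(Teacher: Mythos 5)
The first half of your argument --- flat base change giving $\mathscr{A}_T \cong g_M^{\ast}\mathscr{A}$ and the MCM property, then a generator/Morita-style argument for the equivalences on $D_{\mathrm{qcoh}}$ and $D^{-}$ --- matches the paper, which runs the argument of \cite[Lemma~3.3]{TU} with $\mathbb{V}_T$ as a local generator over $M_T$. The genuine gap is in your last step, the restriction to the bounded categories, and it occurs exactly at the point the paper flags as delicate. You assert that $\Phi_T^L = (-)\otimes^L_{\mathscr{A}_T}\mathbb{V}_T$ preserves boundedness ``because $\mathbb{V}_T$ is projective over $\mathscr{A}_T$ (which is homologically homogeneous fibrewise).'' This is not correct: $T$ is only assumed flat and lci over $B$, so $T$ (hence $M_T$ and $\mathcal{M}_T$) may be singular, and homological homogeneity (finite global dimension) of $\mathscr{A}$ is \emph{not} inherited by $\mathscr{A}_T = g_M^{\ast}\mathscr{A}$ --- already in the Morita-trivial case $\mathscr{A}_T$ is Morita-equivalent to $\mathcal{O}_{M_T}$, which has infinite global dimension whenever $M_T$ is singular. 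Moreover, appealing to ``exactness'' of $\Phi_T^L$ is not a meaningful shortcut here: $\mathbb{T}_T$ carries no given t-structure (it is a weight-condition subcategory of $D^b(\mathcal{M}_T)$, not the derived category of a heart), so the statement that has to be proven is precisely that $N\otimes^L_{\mathscr{A}_T}\mathbb{V}_T$ has bounded cohomology for every $N \in D^b(\mathscr{A}_T)$; since projective resolutions over $\mathscr{A}_T$ need not terminate, no direct argument of the kind you sketch is available.

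The paper closes this gap with a mechanism your proposal is missing: the claim is local on $T$, so one may factor $g$ as a regular closed immersion $T\hookrightarrow B'$ followed by a smooth morphism $B' \to B$. Smoothness \emph{does} preserve homological homogeneity (this is where \cite[Theorem~1.5.1]{SVdB} enters), so $\Upsilon^{-1}_{\mathscr{A}_{B'}}$ takes $D^b(\mathscr{A}_{B'})$ into $\mathbb{T}_{B'}$; one then pushes forward along the closed immersion, uses the commutative square comparing $\Upsilon_{\mathscr{A}_T}$ with $\Upsilon_{\mathscr{A}_{B'}}$, and concludes because pushforward along a closed immersion is conservative and detects boundedness. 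Your proof needs this step (or some substitute establishing finite Tor-dimension of $\mathbb{V}_T$ over $\mathscr{A}_T$); as written, it fails.
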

\begin{proof}
The first statement follows from base change and the assumption that 
$T \to B$ is flat and lci. 
For the second statement, since 
$\mathbb{V}_T$ is a generator of $\mathbb{T}_T^-$ local over $M_T$, 
the argument of~\cite[Lemma~3.3]{TU} applies
to show the equivalences of $\Upsilon_{\mathscr{A}_T}$
for $\mathbb{T}_{T, \rm{qcoh}}$ and $\mathbb{T}_T^-$. 
However, we need a little more care for the equivalence of $\Upsilon_{\mathrm{A}_T}$
for $\mathbb{T}_T$, as a t-structure on it is 
not a priori given and the argument of~\cite[Lemma~3.3]{TU}
does not apply. 

It is enough to show that 
the equivalence $\mathbb{T}_{T}^- \stackrel{\sim}{\to} 
D^{-}(\mathscr{A}_T)$ preserves the subcategories 
$\mathbb{T}_T$ and $D^b(\mathscr{A}_T)$.
It is obvious that $\Upsilon_{\mathscr{A}_T}$ takes 
$\mathbb{T}_T$ to $D^b(\mathscr{A}_T)$. It is enough 
to show that 
$\Upsilon_{\mathscr{A}_T}^{-1}(-)=(-)\otimes^L_{\mathscr{A}_T} \mathbb{V}_T$
takes $D^b(\mathscr{A}_T)$ to $\mathbb{T}_T$. 
This is a 
local condition for $T$, so we may assume that 
$g \colon T \to B$ factors through 
$T \hookrightarrow B' \to B$ where $B' \to B$ is a smooth 
morphism and $T \hookrightarrow B'$ is a regular 
closed immersion. 
There is a commutative diagram 
\begin{align}\label{com:TA}
    \xymatrix{
\mathbb{T}_T^{-} \ar[r]_-{\sim}^-{\Upsilon_{\mathscr{A}_T}} \ar[d] & D^{-}(\mathscr{A}_T) \ar[d] \\
\mathbb{T}_{B'}^{-} \ar[r]_-{\sim}^-{\Upsilon_{\mathscr{A}_{B'}}} & 
D^{-}(\mathscr{A}_{B'}).     
    }
\end{align}
Here the vertical arrows are push-forward by the 
closed immersion $T\hookrightarrow B'$. 
Since $\mathscr{A}$ is homologically homogeneous by~\cite[Theorem~1.5.1]{SVdB}
and $B' \to B$ is smooth, the sheaf of algebra 
$\mathscr{A}_{B'}$ is also homologically homogeneous. 
Therefore $\Upsilon_{\mathscr{A}_{B'}}^{-1}$
takes $D^b(\mathscr{A}_{B'})$ to $\mathbb{T}_{B'}$. 
Since the vertical arrows in (\ref{com:TA}) are conservative, 
we conclude that $\Upsilon_{\mathscr{A}_T}^{-1}$
takes $D^b(\mathscr{A}_T)$ to $\mathbb{T}_T$. 
\end{proof}

We will use the short hand notation \[\mathbb{T}'=\mathbb{T}(r, \chi)_{-w}.\]
We next discuss the compatibility of $\Upsilon_{\mathscr{A}_T}$ with dualizing functors. 
\begin{lemma}\label{lem:com:dia}
The following diagram commutes
\begin{align*}
    \xymatrix{
\mathbb{T}_T \ar[r]^-{\sim}_-{\Upsilon_{\mathscr{A}_T}}
\ar[d]_-{\mathbb{D}_{\mathcal{M}_T}=R\mathcal{H}om(-, \mathcal{O}_{\mathcal{M}_T})} & D^b(\mathscr{A}_T) 
\ar[d]^-{\mathbb{D}_{\mathscr{A}_T}=R\mathcal{H}om_{\mathscr{A}_T}(-, \mathscr{A}_T)} \\
\mathbb{T}_T'^{\rm{op}} \ar[r]^-{\sim}_-{\Upsilon_{\mathscr{A}_T^{\rm{op}}}} & D^b(\mathscr{A}_T^{\rm{op}})^{\rm{op}},     
    }
\end{align*}  
where 
$\mathscr{A}_T^{\rm{op}}:=\pi_{T\ast}\mathcal{E}nd(\mathbb{V}_T^{\vee})$ and 
$\Upsilon_{\mathscr{A}_T^{\rm{op}}}:=\pi_{T\ast}\mathcal{H}om(\mathbb{V}_T^{\vee}, -)$. 
\end{lemma}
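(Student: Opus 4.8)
The plan is to exhibit an explicit natural transformation between the two composite functors and to check that it is an isomorphism on the single generator $\mathbb{V}_T$. First I would record that both vertical arrows are defined on the indicated categories. Dualizing flips the scalar $\mathbb{C}^\ast$-weight from $w$ to $-w$, and since the window in Definition~\ref{def:qbps} is symmetric about $\tfrac{w}{d}\,\mathrm{wt}(\nu^\ast\delta)$, the functor $\mathbb{D}_{\mathcal{M}_T}=R\mathcal{H}om(-,\mathcal{O}_{\mathcal{M}_T})$ carries $\mathbb{T}_T$ into $(\mathbb{T}'_T)^{\rm{op}}$; in particular $\mathbb{V}_T^\vee$ is a tilting generator of $\mathbb{T}'_T$, so $\Upsilon_{\mathscr{A}_T^{\rm{op}}}$ is an equivalence by Lemma~\ref{lem:AT} applied to the weight $-w$. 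On the module side, $\mathbb{D}_{\mathscr{A}_T}=R\mathcal{H}om_{\mathscr{A}_T}(-,\mathscr{A}_T)$ preserves $D^b$ because $\mathscr{A}_T$ is maximal Cohen--Macaulay of finite global dimension (Lemma~\ref{lem:AT}).

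Since $\mathbb{V}_T$ is locally free, $\mathcal{H}om(\mathbb{V}_T,-)=\mathbb{V}_T^\vee\otimes(-)$ and $\mathcal{H}om(\mathbb{V}_T^\vee,-)=\mathbb{V}_T\otimes(-)$, so that for $\mathcal{E}\in\mathbb{T}_T$ one has $\Upsilon_{\mathscr{A}_T}(\mathcal{E})=\pi_{T\ast}(\mathbb{V}_T^\vee\otimes\mathcal{E})$ (a right $\mathscr{A}_T$-module) and $\Upsilon_{\mathscr{A}_T^{\rm{op}}}(\mathbb{D}_{\mathcal{M}_T}(\mathcal{E}))=\pi_{T\ast}(\mathbb{V}_T\otimes\mathcal{E}^\vee)$ (a right $\mathscr{A}_T^{\rm{op}}$-module, i.e. a left $\mathscr{A}_T$-module). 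I would build the comparison map from the evaluation pairing: the composition
\[
(\mathbb{V}_T^\vee\otimes\mathcal{E})\otimes(\mathbb{V}_T\otimes\mathcal{E}^\vee)\longrightarrow \mathbb{V}_T^\vee\otimes\mathbb{V}_T=\mathcal{E}nd(\mathbb{V}_T),
\]
obtained by contracting $\mathcal{E}\otimes\mathcal{E}^\vee\to\mathcal{O}_{\mathcal{M}_T}$, pushes forward to a pairing into $\mathscr{A}_T=\pi_{T\ast}\mathcal{E}nd(\mathbb{V}_T)$ which is balanced for the $\mathscr{A}_T$-actions (both of which factor through the $\mathbb{V}_T^{(\vee)}$-factors). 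This yields a natural transformation
\[
\theta_{\mathcal{E}}\colon \Upsilon_{\mathscr{A}_T^{\rm{op}}}(\mathbb{D}_{\mathcal{M}_T}(\mathcal{E}))\longrightarrow \mathbb{D}_{\mathscr{A}_T}(\Upsilon_{\mathscr{A}_T}(\mathcal{E})),
\]
and establishing that the diagram commutes amounts to showing $\theta_{\mathcal{E}}$ is an isomorphism.

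On the generator $\mathbb{V}_T$ this is immediate: $\mathbb{D}_{\mathscr{A}_T}\Upsilon_{\mathscr{A}_T}(\mathbb{V}_T)=\mathbb{D}_{\mathscr{A}_T}(\mathscr{A}_T)=\mathscr{A}_T$, regarded as a right $\mathscr{A}_T^{\rm{op}}$-module this is the free rank-one module $\mathscr{A}_T^{\rm{op}}$, while $\Upsilon_{\mathscr{A}_T^{\rm{op}}}\mathbb{D}_{\mathcal{M}_T}(\mathbb{V}_T)=\Upsilon_{\mathscr{A}_T^{\rm{op}}}(\mathbb{V}_T^\vee)=\pi_{T\ast}\mathcal{E}nd(\mathbb{V}_T^\vee)=\mathscr{A}_T^{\rm{op}}$, and $\theta_{\mathbb{V}_T}$ is the canonical identification. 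To propagate, note that the full subcategory of $\mathcal{E}\in\mathbb{T}_T$ for which $\theta_{\mathcal{E}}$ is an isomorphism is thick and triangulated (both composites are exact) and is closed under $\otimes\,\pi_T^\ast\mathrm{Perf}(M_T)$ (both composites are $\mathrm{Perf}(M_T)$-linear up to $P\mapsto P^\vee$, compatibly), hence, containing $\mathbb{V}_T$ which classically generates $\mathbb{T}_T$ relative to $\mathrm{Perf}(M_T)$ (as $\Upsilon_{\mathscr{A}_T}(\mathbb{V}_T)=\mathscr{A}_T$ generates $D^b(\mathscr{A}_T)$), it is all of $\mathbb{T}_T$. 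Alternatively, since every functor in sight is local over $M_T$ and compatible with $\pi_{T\ast}$, one may check that $\theta_{\mathcal{E}}$ is an isomorphism étale-locally on $M_T$, where $\mathscr{A}_T$ is Morita equivalent to the explicit endomorphism algebra of Proposition~\ref{prop:TA} and the self-duality follows from homological homogeneity. The main obstacle I anticipate is purely organizational rather than conceptual: carefully tracking the left/right $\mathscr{A}_T$- and $\mathscr{A}_T^{\rm{op}}$-module structures (and the two $(-)^{\rm{op}}$'s) so that the pairing above is genuinely $\mathscr{A}_T$-balanced, together with justifying the ``isomorphism on a generator implies isomorphism everywhere'' step, which relies on the exactness and $\mathrm{Perf}(M_T)$-compatibility of both duality functors noted in the first paragraph.
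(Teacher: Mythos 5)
Your construction of the comparison map $\theta_{\mathcal{E}}$, its verification on $\mathbb{V}_T$, and the observation that dualizing flips the $\mathbb{C}^{\ast}$-weight so that $\mathbb{D}_{\mathcal{M}_T}$ lands in $\mathbb{T}_T'$ are all fine. The gap is in the propagation step, which is the heart of your argument. You claim that $\mathbb{V}_T$ classically generates $\mathbb{T}_T$ relative to $\mathrm{Perf}(M_T)$, equivalently that $\mathscr{A}_T\otimes\mathrm{Perf}(M_T)$ thickly generates $D^b(\mathscr{A}_T)$. This requires $\mathscr{A}_T$ to have finite global dimension, which is not available here: in this section $g\colon T\to B$ is only assumed flat and lci, so $T$ (hence $M_T$ and $\mathcal{M}_T$) may be singular, and then $\mathscr{A}_T=g_M^{\ast}\mathscr{A}$ is maximal Cohen--Macaulay but in general \emph{not} of finite global dimension (your appeal to Lemma~\ref{lem:AT} for this is a misreading; that lemma asserts only the Cohen--Macaulay property and the equivalence). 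Over such a base, the thick closure of $\mathscr{A}_T\otimes\mathrm{Perf}(M_T)$ is only the subcategory of perfect $\mathscr{A}_T$-complexes, strictly smaller than $D^b(\mathscr{A}_T)$ --- exactly as $\mathrm{Perf}(X)\subsetneq D^b(X)$ for singular $X$; for instance $\mathbb{V}_T\otimes\pi_T^{\ast}\mathcal{O}_Z$, for $Z$ a point in the singular locus of $M_T$, lies in $\mathbb{T}_T$ but not in that thick closure. Note that the proof of Lemma~\ref{lem:AT} itself has to work around this very issue (it factors $T\hookrightarrow B'\to B$ with $B'\to B$ smooth precisely because no finite-resolution argument is available over singular $T$). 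Your alternative \'etale-local suggestion suffers the same defect as stated, since homological homogeneity of $\mathscr{A}_T$ again fails over singular $T$.

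The repair --- and the paper's actual proof --- requires no generation argument at all: for $E\in\mathbb{T}_T$ one has the chain of natural isomorphisms
\begin{align*}
R\mathcal{H}om_{\mathscr{A}_T}(\pi_{T\ast}\mathcal{H}om(\mathbb{V}_T,E),\mathscr{A}_T)
&\cong R\mathcal{H}om_{\mathscr{A}_T}\bigl(\pi_{T\ast}\mathcal{H}om(\mathbb{V}_T,E),\pi_{T\ast}\mathcal{H}om(\mathbb{V}_T,\mathbb{V}_T)\bigr)\\
&\cong \pi_{T\ast}R\mathcal{H}om_{\mathcal{M}_T}(E,\mathbb{V}_T)\\
&\cong \pi_{T\ast}R\mathcal{H}om_{\mathcal{M}_T}\bigl(\mathbb{V}_T^{\vee},\mathbb{D}_{\mathcal{M}_T}(E)\bigr),
\end{align*}
where the first step is the definition of $\mathscr{A}_T$, the middle isomorphism is the sheaf-level full faithfulness of $\Upsilon_{\mathscr{A}_T}$ --- valid because the equivalence of Lemma~\ref{lem:AT} is local over $M_T$ --- and the last swaps the locally free $\mathbb{V}_T$ to the other side of the Hom. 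The right-hand side is $\Upsilon_{\mathscr{A}_T^{\mathrm{op}}}(\mathbb{D}_{\mathcal{M}_T}(E))$, which is the assertion of the lemma; all inputs are already established for arbitrary flat lci $T$. If you want to keep your natural-transformation formulation, you should check that $\theta_{\mathcal{E}}$ agrees with this composite, after which its invertibility is automatic rather than something to be propagated from a generator.
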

\begin{proof}
For any $E \in \mathbb{T}_T$, we have 
\begin{align*}
&R\mathcal{H}om_{\mathscr{A}_T}(\pi_{T\ast}\mathcal{H}om(\mathbb{V}_T, E), \mathscr{A}_T)  \\
&\cong R\mathcal{H}om_{\mathscr{A}_T}(\pi_{T\ast}\mathcal{H}om(\mathbb{V}_T, E), \pi_{T\ast}\mathcal{H}om(\mathbb{V}_T, \mathbb{V}_T)) \\
&\cong \pi_{T\ast}R\mathcal{H}om_{\mathcal{M}_T}(E, \mathbb{V}_T) \\
&\cong \pi_{T\ast}R \mathcal{H}om_{\mathcal{M}_T}(\mathbb{V}_T^{\vee}, \mathbb{D}_{\mathcal{M}_T}(E)). 
\end{align*} 
Here the second isomorphism holds since 
the equivalence (\ref{upA}) is local on $M_T$. 
\end{proof}

There is a Cartesian diagram 
\begin{align}\label{dia:MMA}
    \xymatrix{
(M_T, \mathscr{A}_T) \ar[r]_-{r_{\mathscr{A}_T}} \ar[d]_-{g_{\mathscr{A}}} \ar@/^18pt/[rr]^-{h_{\mathscr{A}_T}}& M_T \ar[r]_-{h_{M_T}} \ar[d]_-{g_M} & T \ar[d]_-{g} \\
(M, \mathscr{A}) \ar[r]^-{r_{\mathscr{A}}} \ar@/_18pt/[rr]_-{h_{\mathscr{A}}}& M \ar[r]^-{h_M} & B.     
    }
\end{align}
Here $(M, \mathscr{A})$ is the 
ringed space with underlying space $M$ and sheaf of algebras $\mathscr{A}$. 
The morphism $r_{\mathscr{A}}$ is induced by the natural morphism of sheaves of algebras 
$\mathcal{O}_M \to \mathscr{A}$. 
\begin{lemma}\label{lem:duality}
For any $E \in D^b(\mathscr{A}_T)$, there is a natural isomorphism 
\begin{align*}
    Rh_{\mathscr{A}_T \ast}R\mathcal{H}om_{\mathscr{A}_T}(E, \mathscr{A}_T[g^{\rm{sp}}]) \cong 
    R\mathcal{H}om_T(Rh_{\mathscr{A}_T \ast}E, \mathcal{O}_T). 
\end{align*}
    \end{lemma}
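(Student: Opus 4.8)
The plan is to read the claim as a relative Grothendieck--Serre duality for the non-commutative space $(M_T, \mathscr{A}_T)$ over $T$, with relative dualizing complex $\mathscr{A}_T[g^{\rm{sp}}]$, and to prove it by factoring the structure morphism as $h_{\mathscr{A}_T} = h_{M_T} \circ r_{\mathscr{A}_T}$ as in the diagram (\ref{dia:MMA}). Since $r_{\mathscr{A}_T\ast}$ is exact restriction of scalars along $\mathcal{O}_{M_T} \to \mathscr{A}_T$, we have $Rh_{\mathscr{A}_T\ast} = Rh_{M_T\ast}\circ r_{\mathscr{A}_T\ast}$. First I would reduce to two separate duality statements, one for the finite morphism $r_{\mathscr{A}_T}$ and one for the Hitchin map $h_{M_T}$, and then compose them.

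The non-commutative part is the assertion that, for $E \in D^b(\mathscr{A}_T)$,
\begin{align}\label{eq:Aselfdual}
r_{\mathscr{A}_T\ast} R\mathcal{H}om_{\mathscr{A}_T}(E, \mathscr{A}_T) \cong R\mathcal{H}om_{M_T}(r_{\mathscr{A}_T\ast}E, \mathcal{O}_{M_T}).
\end{align}
This is exactly the statement that the bimodule $\mathscr{A}_T$ is self-dual over the Gorenstein base $M_T$, and I would obtain it from the triviality of the relative Serre functor over $M$ established in Theorem~\ref{thm:proper0}. Concretely, the isomorphism (\ref{isom:STM}) base-changes to $M_T$ (using that $g$ is flat and lci, that $\mathscr{A}_T = g_M^{\ast}\mathscr{A}$ is maximal Cohen--Macaulay by Lemma~\ref{lem:AT}, and that flat-lci base change over Gorenstein bases commutes with the relevant $R\mathcal{H}om$'s). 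Transporting the base-changed identity through the $M_T$-linear equivalence $\Upsilon_{\mathscr{A}_T}\colon \mathbb{T}_T \xrightarrow{\sim} D^b(\mathscr{A}_T)$ of Lemma~\ref{lem:AT}, and taking the second argument to be $\mathbb{V}_T$ (which corresponds to the free module $\mathscr{A}_T = \Upsilon_{\mathscr{A}_T}(\mathbb{V}_T)$), the left side of (\ref{isom:STM}) becomes $r_{\mathscr{A}_T\ast}R\mathcal{H}om_{\mathscr{A}_T}(E, \mathscr{A}_T)$ while its right side becomes $R\mathcal{H}om_{M_T}(r_{\mathscr{A}_T\ast}E, \mathcal{O}_{M_T})$, since $r_{\mathscr{A}_T\ast}R\mathcal{H}om_{\mathscr{A}_T}(\mathscr{A}_T, E)\cong r_{\mathscr{A}_T\ast}E$; this yields (\ref{eq:Aselfdual}). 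The bookkeeping of $\mathscr{A}_T$ versus $\mathscr{A}_T^{\rm{op}}$ is precisely that recorded in Lemma~\ref{lem:com:dia}, and finite global dimension of $\mathscr{A}_T$ keeps all objects in $D^b$.

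The commutative part is Grothendieck duality for the proper flat morphism $h_{M_T}\colon M_T \to T$. As established in the proof of Corollary~\ref{cor:proper0}, $M$ is Gorenstein with trivial relative dualizing sheaf by Lemma~\ref{lem:gorenstein}, so $h_M$ is Gorenstein of relative dimension $g^{\rm{sp}}$ with $h_M^{!}\mathcal{O}_B \cong \mathcal{O}_M[g^{\rm{sp}}]$; base changing along the flat lci $g$ gives $h_{M_T}^{!}\mathcal{O}_T \cong \mathcal{O}_{M_T}[g^{\rm{sp}}]$, whence
\begin{align}\label{eq:GDhM}
Rh_{M_T\ast} R\mathcal{H}om_{M_T}(F, \mathcal{O}_{M_T}[g^{\rm{sp}}]) \cong R\mathcal{H}om_T(Rh_{M_T\ast}F, \mathcal{O}_T)
\end{align}
for $F \in D^b(M_T)$. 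Taking $F = r_{\mathscr{A}_T\ast}E$ and feeding (\ref{eq:Aselfdual}) into (\ref{eq:GDhM}) gives
\begin{align*}
Rh_{\mathscr{A}_T\ast} R\mathcal{H}om_{\mathscr{A}_T}(E, \mathscr{A}_T[g^{\rm{sp}}]) &\cong Rh_{M_T\ast} R\mathcal{H}om_{M_T}(r_{\mathscr{A}_T\ast}E, \mathcal{O}_{M_T}[g^{\rm{sp}}]) \\ &\cong R\mathcal{H}om_T(Rh_{\mathscr{A}_T\ast}E, \mathcal{O}_T),
\end{align*}
which is the claim.

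The step I expect to be the main obstacle is (\ref{eq:Aselfdual}): one must check carefully that the relative Serre duality of Theorem~\ref{thm:proper0} base-changes to $M_T$ (where $\mathcal{M}_T$ need no longer be smooth), and that transporting it through $\Upsilon_{\mathscr{A}_T}$ produces precisely the free-module duality with the correct sides and the correct $\mathscr{A}_T$- and $\mathscr{A}_T^{\rm{op}}$-module structures. The Gorenstein hypothesis on $M_T$ and the finite global dimension of $\mathscr{A}_T$ are what keep all the complexes bounded and make the dualities exact. By contrast, the duality (\ref{eq:GDhM}) for $h_{M_T}$ is standard once the relative dualizing complex has been identified.
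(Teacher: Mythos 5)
Your overall skeleton is the same as the paper's: factor $h_{\mathscr{A}_T}$ through the finite morphism $r_{\mathscr{A}_T}$, establish the self-duality
\begin{align*}
r_{\mathscr{A}_T\ast} R\mathcal{H}om_{\mathscr{A}_T}(E, \mathscr{A}_T) \cong R\mathcal{H}om_{M_T}(r_{\mathscr{A}_T\ast}E, \mathcal{O}_{M_T}),
\end{align*}
and then conclude by Grothendieck duality for the proper map $M_T \to T$ using $\omega_{M_T/T}\cong\mathcal{O}_{M_T}$ (Lemma~\ref{lem:gorenstein}). The second step matches the paper exactly. The problem is the first step: you derive it from the triviality of the relative Serre functor in Theorem~\ref{thm:proper0}, but in the paper that statement is not independent of the present lemma. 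The proof of Theorem~\ref{thm:proper0} defers the identity $S_{\mathbb{T}/M}\cong\id$ entirely to Lemma~\ref{assum:perf}, and the proof of Lemma~\ref{assum:perf} verifies the case $E_2=\mathbb{V}_T$ precisely by citing the isomorphism (\ref{isom:rA}) established inside the proof of Lemma~\ref{lem:duality}. So your route runs the paper's logical chain backwards: you obtain (\ref{isom:rA}) from Theorem~\ref{thm:proper0}, while the paper obtains Theorem~\ref{thm:proper0} from (\ref{isom:rA}). As an argument to be inserted at this point of the paper, it is circular; the actual mathematical content of the lemma is never proven.

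What is missing is an independent proof that the trace pairing identifies $\mathscr{A}_T$ with its $\mathcal{O}_{M_T}$-dual. The paper does this directly: the pushed-forward trace $\pi_{T\ast}\mathrm{tr}\colon \mathscr{A}_T \to \mathcal{O}_{M_T}$ induces a map $\alpha \mapsto \bigl(\beta \mapsto \pi_{T\ast}\mathrm{tr}(\beta\cdot\alpha)\bigr)$ from $\mathscr{A}_T$ to $\mathcal{H}om_{M_T}(\mathscr{A}_T, \mathcal{O}_{M_T})$; both sides are maximal Cohen--Macaulay (so $R\mathcal{H}om=\mathcal{H}om$), the map is an isomorphism over the open locus where $\mathcal{M}_T \to M_T$ is a $\mathbb{C}^{\ast}$-gerbe, whose complement has codimension at least two (with one low-rank exceptional case checked by hand), and a map of maximal Cohen--Macaulay sheaves that is an isomorphism off a codimension-two closed subset is an isomorphism. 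Your correspondence under $\Upsilon_{\mathscr{A}_T}$ between the free-module duality and the Serre-type statement with second argument $\mathbb{V}_T$ is correct as a translation --- it is exactly how the paper passes from (\ref{isom:rA}) to Lemma~\ref{assum:perf} --- but to make your proof non-circular you would have to replace the appeal to Theorem~\ref{thm:proper0} by an argument of this kind (or some other independent proof that the trace pairing on $\mathscr{A}_T$ is perfect).
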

\begin{proof}
    We first show that there is an isomorphism in $\Coh(\mathscr{A}_T)$
    \begin{align}\label{isom:AT}
        \mathscr{A}_T \stackrel{\cong}{\to} R\mathcal{H}om_{M_T}(\mathscr{A}_T, \mathcal{O}_{M_T}). 
    \end{align}
Consider the trace map $\mathrm{tr} \colon \mathcal{E}nd(\mathbb{V}_T) \to \mathcal{O}_{\mathcal{M}_T}$. 
By taking its push-forward, we obtain 
a trace map $\pi_{T\ast}\mathrm{tr} \colon \mathscr{A}_T \to \mathcal{O}_{M_T}$.
It determines the morphism 
\begin{align}\label{isom:tr}
    \mathscr{A}_T \to \mathcal{H}om_{M_T}(\mathscr{A}_T, \mathcal{O}_{M_T}), \ 
    \alpha \mapsto (\beta \mapsto \pi_{T\ast}\mathrm{tr}(\beta \cdot \alpha)). 
\end{align}
Since $\mathscr{A}_T$ is maximal Cohen–Macaulay module over $\mathcal{O}_{M_T}$, we have 
\begin{align*}
  R\mathcal{H}om_{M_T}(\mathscr{A}_T, \mathcal{O}_{M_T})=
   \mathcal{H}om_{M_T}(\mathscr{A}_T, \mathcal{O}_{M_T})
\end{align*}
and it is a maximal Cohen–Macaulay module over $\mathcal{O}_{M_T}$. 
The morphism (\ref{isom:tr}) is an isomorphism over the open locus 
where $\mathcal{M}_T \to M_T$ is a $\mathbb{C}^{\ast}$-gerbe
whose complement has codimension bigger than or equal to two
(except the case that $r=2$, $l=1$ and $\chi$ even where  
(\ref{isom:tr}) is directly checked to be an isomorphism as 
$\mathcal{A}_T$ is the endomorphism algebra of some twisted vector bundle). 
Therefore (\ref{isom:tr}) is an isomorphism, 
as a maximal Cohen–Macaulay sheaf is determined by its restriction to the complement of a closed 
subset of codimension bigger than or equal to two. 
Therefore there is an isomorphism (\ref{isom:AT}). 
It follows that
\begin{align}\notag
    r_{\mathscr{A}_T \ast}R\mathcal{H}om_{\mathscr{A}_T}(E, \mathscr{A}_T) & \cong 
   \label{isom:rA} r_{\mathscr{A}_T \ast}R\mathcal{H}om_{\mathscr{A}_T}(E, R\mathcal{H}om_{M_T}(\mathscr{A}_T, \mathcal{O}_{M_T})) \\
    &\cong R\mathcal{H}om_{M_T}(r_{\mathscr{A}_T \ast}E, \mathcal{O}_{M_T}). 
\end{align}
Then the lemma holds by applying Grothendieck duality 
for the map $M_T \to T$
for the right hand side of the above isomorphism 
and noticing that $\omega_{M_T/T}\cong \mathcal{O}_{M_T}$.     
\end{proof}

\subsection{Fourier-Mukai functors to BPS categories}\label{subsec:fmBPS}
Given an object $\mathcal{P} \in \mathbb{T}_T$, the diagram (\ref{dia:MMT}) induces the Fourier-Mukai functor 
\begin{align}\label{funct:PhiP}
    \Phi_{\mathcal{P}} :=Rg_{\mathcal{M}\ast}(\mathcal{P} \otimes^L Lh_{\mathcal{M}_T}^{\ast}(-))
    \colon D_{\rm{qcoh}}(T) \to \mathbb{T}_{\rm{qcoh}}. 
\end{align}
Let $\mathcal{P}_{\mathscr{A}}:=\Upsilon_{\mathscr{A}_T}(\mathcal{P})$ 
in $D^b(\mathscr{A}_T)$. 
The diagram (\ref{dia:MMA})
induces the functor 
\begin{align*}
    \Phi_{\mathcal{P}_{\mathscr{A}}} :=Rg_{\mathscr{A}\ast}(\mathcal{P}_{\mathscr{A}} \otimes^L_{\mathcal{O}_{M_T}} Lh_{M_T}^{\ast}(-))
    \colon D_{\rm{qcoh}}(T) \to D_{\rm{qcoh}}(\mathscr{A}). 
\end{align*}
In this subsection, we describe its left adjoint 
functor. 

\begin{lemma}\label{lem:funct:com}
The following diagram commutes
\begin{align}\label{dia:qcoh}
    \xymatrix{
D_{\rm{qcoh}}(T) \ar[r]^-{\Phi_{\mathcal{P}}} \ar@{=}[d] & \mathbb{T}_{\rm{qcoh}} \ar[d]_-{\sim}^-{\Upsilon_A} \\
D_{\rm{qcoh}}(T) \ar[r]_-{\Phi_{\mathcal{P}_{\mathscr{A}}}} & D_{\rm{qcoh}}(\mathscr{A}).     
    }
\end{align}
\end{lemma}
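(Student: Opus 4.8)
The plan is to prove the stronger statement that the two composite functors $\Upsilon_{\mathscr{A}}\circ\Phi_{\mathcal{P}}$ and $\Phi_{\mathcal{P}_{\mathscr{A}}}$ from $D_{\rm{qcoh}}(T)$ to $D_{\rm{qcoh}}(\mathscr{A})$ are naturally isomorphic, by exhibiting an explicit chain of natural isomorphisms evaluated on an object $E\in D_{\rm{qcoh}}(T)$. Since $\Upsilon_{\mathscr{A}}=\pi_{\ast}\mathcal{H}om(\mathbb{V},-)$ (with $\pi$ cohomologically affine, so $\pi_{\ast}=R\pi_{\ast}$) and $\mathbb{V}$ is locally free, the whole computation is essentially bookkeeping combining the projection formula with the commutativity of the base-change squares (\ref{dia:MMT}) and (\ref{dia:MMA}); the only genuine content lies in keeping track of the $\mathscr{A}$-module structures.

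First I would rewrite $\Upsilon_{\mathscr{A}}\bigl(\Phi_{\mathcal{P}}(E)\bigr)=\pi_{\ast}\bigl(\mathbb{V}^{\vee}\otimes Rg_{\mathcal{M}\ast}(\mathcal{P}\otimes^L Lh_{\mathcal{M}_T}^{\ast}E)\bigr)$, using $\mathcal{H}om(\mathbb{V},-)=\mathbb{V}^{\vee}\otimes(-)$. Because $\mathbb{V}^{\vee}$ is perfect, the projection formula for $g_{\mathcal{M}}$ together with $g_{\mathcal{M}}^{\ast}\mathbb{V}=\mathbb{V}_T$ moves it inside the pushforward, yielding $\pi_{\ast}Rg_{\mathcal{M}\ast}\bigl(\mathcal{H}om(\mathbb{V}_T,\mathcal{P})\otimes^L Lh_{\mathcal{M}_T}^{\ast}E\bigr)$. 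The commuting square (\ref{dia:MMT}) gives $\pi g_{\mathcal{M}}=g_M\pi_T$, hence $\pi_{\ast}Rg_{\mathcal{M}\ast}=Rg_{M\ast}R\pi_{T\ast}$; and the factorization $h_{\mathcal{M}_T}=h_{M_T}\pi_T$ lets me write $Lh_{\mathcal{M}_T}^{\ast}E=L\pi_T^{\ast}Lh_{M_T}^{\ast}E$. Applying the projection formula for the (cohomologically affine) good moduli space map $\pi_T$ then pulls $Lh_{M_T}^{\ast}E$ out of $R\pi_{T\ast}$ and leaves precisely $R\pi_{T\ast}\mathcal{H}om(\mathbb{V}_T,\mathcal{P})=\mathcal{P}_{\mathscr{A}}$. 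The resulting object $Rg_{M\ast}\bigl(\mathcal{P}_{\mathscr{A}}\otimes^L_{\mathcal{O}_{M_T}}Lh_{M_T}^{\ast}E\bigr)$ is, after identifying $Rg_{M\ast}$ on underlying complexes with $Rg_{\mathscr{A}\ast}$ of diagram (\ref{dia:MMA}), exactly $\Phi_{\mathcal{P}_{\mathscr{A}}}(E)$.

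The part requiring care—and the main (if modest) obstacle—is to verify that every isomorphism in this chain is $\mathscr{A}$-linear rather than merely $\mathcal{O}_M$-linear, so that the final identification is one of $\mathscr{A}$-modules. For this I would work throughout with the sheaf of algebras $\mathcal{E}nd(\mathbb{V})$ on $\mathcal{M}$ (whose pushforward is $\mathscr{A}$), observing that $\mathbb{V}^{\vee}\otimes(-)$, the projection formulas, and the pushforwards are all naturally modules over $\pi_{\ast}\mathcal{E}nd(\mathbb{V})$, and that $\mathscr{A}_T=g_M^{\ast}\mathscr{A}$ by Lemma~\ref{lem:AT}, so that $Rg_{\mathscr{A}\ast}$ does produce $\mathscr{A}$-modules out of $\mathscr{A}_T$-modules. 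A secondary point is the validity of the projection formula for $\pi_T$ in the derived and stacky setting, which holds because $\pi_T$ is cohomologically affine and the relevant factor is pulled back along $\pi_T$; here the standing assumption that $\mathcal{M}_T$ is the derived fibre product guarantees that base change is well-behaved. Finally, naturality in $E$ is automatic, since each step of the chain is a natural transformation, which upgrades the pointwise isomorphism to an isomorphism of functors and thus gives the asserted commutativity of the diagram.
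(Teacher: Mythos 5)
Your proposal is correct and follows essentially the same chain of isomorphisms as the paper's proof: the projection formula for $g_{\mathcal{M}}$ (using $g_{\mathcal{M}}^{\ast}\mathbb{V}=\mathbb{V}_T$), the identities $\pi\circ g_{\mathcal{M}}=g_M\circ\pi_T$ and $h_{\mathcal{M}_T}=h_{M_T}\circ\pi_T$ from the base-change diagram, the projection formula for the good moduli space map $\pi_T$, and finally the identification $\pi_{T\ast}\mathcal{H}om(\mathbb{V}_T,\mathcal{P})=\mathcal{P}_{\mathscr{A}}$. The paper's proof is exactly this computation written as a four-step display, phrasing your first step as the adjunction $\mathcal{H}om(\mathbb{V},Rg_{\mathcal{M}\ast}(-))\cong Rg_{\mathcal{M}\ast}\mathcal{H}om(\mathbb{V}_T,-)$ rather than via $\mathbb{V}^{\vee}\otimes(-)$, which is the same thing since $\mathbb{V}$ is locally free.
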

\begin{proof}
For $E \in D_{\rm{qcoh}}(T)$, we have 
\begin{align*}
    \pi_{\ast}\mathcal{H}om(\mathbb{V}, Rg_{\mathcal{M}\ast}(Lh_{\mathcal{M}_T}^{\ast}E \otimes^L \mathcal{P}))
    &\cong \pi_{\ast}Rg_{\mathcal{M}\ast}\mathcal{H}om(\mathbb{V}_T, Lh_{\mathcal{M}_T}^{\ast}E \otimes^L \mathcal{P}) \\
     &\cong Rg_{M\ast}\pi_{T\ast}\mathcal{H}om(\mathbb{V}_T, \pi_T^{\ast}Lh_{M_T}^{\ast}E \otimes^L \mathcal{P}) \\
    & \cong Rg_{M\ast}(\pi_{T\ast}\mathcal{H}om(\mathbb{V}_T, \mathcal{P})\otimes^L Lh_{M_T}^{\ast}E) \\
    &\cong Rg_{\mathscr{A}\ast}(\mathcal{P}_{\mathscr{A}} \otimes^L Lh_{M_T}^{\ast}E). 
\end{align*}
Therefore the diagram (\ref{dia:qcoh}) commutes. 
\end{proof}

\begin{lemma}\label{lem:lad}
The functor $\Phi_{\mathcal{P}_{\mathscr{A}}}$ admits a left adjoint 
given by 
\begin{align*}
    \Phi^{L}_{\mathcal{P}_{\mathscr{A}}}=Rh_{M_T \ast}(g_{\mathscr{A}}^{\ast}(-) \otimes^L \mathcal{P}_{\mathscr{A}}^{\vee})[g^{\rm{sp}}],
\end{align*}
where $\mathcal{P}_{\mathscr{A}}^{\vee}:=R\mathcal{H}om_{\mathscr{A}_T}(\mathcal{P}_{\mathscr{A}}, \mathscr{A}_T)$. 
\end{lemma}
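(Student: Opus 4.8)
The plan is to verify directly the adjunction isomorphism
\[
\Hom_T(\Phi^L_{\mathcal{P}_{\mathscr{A}}}(F), E) \cong \Hom_{\mathscr{A}}(F, \Phi_{\mathcal{P}_{\mathscr{A}}}(E)),
\]
natural in $F \in D_{\rm{qcoh}}(\mathscr{A})$ and $E \in D_{\rm{qcoh}}(T)$; a functor carrying this universal property is the left adjoint, and the explicit formula is then the one claimed. Since $Rg_{\mathscr{A}\ast}$ and $Rh_{M_T\ast}$ commute with arbitrary direct sums (the relevant morphisms being proper) and a left adjoint automatically preserves coproducts, it suffices to establish the isomorphism when $E$ is perfect on $T$ and $F$ ranges over compact generators of $D_{\rm{qcoh}}(\mathscr{A})$; this is what will allow me to commute $R\mathcal{H}om$ past the pullback $Lh_{M_T}^{\ast}E$ in the middle of the argument, and then extend to all $E$ by continuity.

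First I would peel off the outer pushforward and the shift using Grothendieck--Serre duality for $h_{M_T}\colon M_T \to T$. By Lemma~\ref{lem:gorenstein}, applied after the flat lci base change $g\colon T\to B$ (which preserves Gorenstein-ness and triviality of the relative dualizing sheaf), we have $\omega_{M_T/T}\cong \mathcal{O}_{M_T}$, and the relative dimension of $h_{M_T}$ equals $g^{\rm{sp}}$; hence $h_{M_T}^{!}(-)\cong Lh_{M_T}^{\ast}(-)[g^{\rm{sp}}]$. Writing $G := \mathcal{P}_{\mathscr{A}}^{\vee}\otimes^L_{\mathscr{A}_T}g_{\mathscr{A}}^{\ast}F$ (the balanced tensor product appearing in the statement), the shift $[g^{\rm{sp}}]$ built into $\Phi^L_{\mathcal{P}_{\mathscr{A}}}$ cancels the shift coming from $h_{M_T}^{!}$, giving
\[
\Hom_T(Rh_{M_T\ast}(G)[g^{\rm{sp}}], E) \cong \Hom_{M_T}(G, Lh_{M_T}^{\ast}E).
\]

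Next I would transfer the dual kernel across the module structure. Tensor--hom adjunction over $\mathscr{A}_T$ converts the left entry into a Hom of left $\mathscr{A}_T$-modules,
\[
\Hom_{M_T}(\mathcal{P}_{\mathscr{A}}^{\vee}\otimes^L_{\mathscr{A}_T}g_{\mathscr{A}}^{\ast}F, Lh_{M_T}^{\ast}E) \cong \Hom_{\mathscr{A}_T}(g_{\mathscr{A}}^{\ast}F, R\mathcal{H}om_{\mathcal{O}_{M_T}}(\mathcal{P}_{\mathscr{A}}^{\vee}, Lh_{M_T}^{\ast}E)).
\]
The crucial input is the identification $R\mathcal{H}om_{\mathcal{O}_{M_T}}(\mathcal{P}_{\mathscr{A}}^{\vee}, \mathcal{O}_{M_T})\cong \mathcal{P}_{\mathscr{A}}$ in $\Coh(\mathscr{A}_T)$. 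Since $\mathscr{A}_T$ has finite global dimension (Proposition~\ref{prop:TA}, the property surviving the base change as in Lemma~\ref{lem:AT}), $\mathcal{P}_{\mathscr{A}}\in D^b(\mathscr{A}_T)$ is perfect over $\mathscr{A}_T$, so this claim is local and reduces to the free case $\mathcal{P}_{\mathscr{A}}=\mathscr{A}_T$, where it is exactly the self-duality isomorphism $\mathscr{A}_T\cong R\mathcal{H}om_{\mathcal{O}_{M_T}}(\mathscr{A}_T, \mathcal{O}_{M_T})$ established as equation~(\ref{isom:AT}) in the proof of Lemma~\ref{lem:duality}. With $E$ perfect this upgrades to $R\mathcal{H}om_{\mathcal{O}_{M_T}}(\mathcal{P}_{\mathscr{A}}^{\vee}, Lh_{M_T}^{\ast}E)\cong \mathcal{P}_{\mathscr{A}}\otimes^L_{\mathcal{O}_{M_T}}Lh_{M_T}^{\ast}E$, and applying the adjunction $Lg_{\mathscr{A}}^{\ast}\dashv Rg_{\mathscr{A}\ast}$ yields
\[
\Hom_{\mathscr{A}_T}(g_{\mathscr{A}}^{\ast}F, \mathcal{P}_{\mathscr{A}}\otimes^L_{\mathcal{O}_{M_T}}Lh_{M_T}^{\ast}E) \cong \Hom_{\mathscr{A}}(F, \Phi_{\mathcal{P}_{\mathscr{A}}}(E)),
\]
completing the chain.

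The main obstacle is the self-duality step, namely that the $\mathcal{O}_{M_T}$-linear dual of the $\mathscr{A}_T$-linear dual of $\mathcal{P}_{\mathscr{A}}$ returns $\mathcal{P}_{\mathscr{A}}$ itself; this is precisely where the maximal Cohen--Macaulay property of $\mathscr{A}_T$ and the symmetric-algebra isomorphism~(\ref{isom:AT}) enter, and it is the non-commutative shadow of the relative Serre duality packaged in Lemma~\ref{lem:duality}. The only other delicate point is the passage between $D^b$ and $D_{\rm{qcoh}}$: one must confirm that the $R\mathcal{H}om$ and tensor manipulations above, justified for $E$ perfect and $\mathcal{P}_{\mathscr{A}}$ perfect over $\mathscr{A}_T$, propagate to all $E\in D_{\rm{qcoh}}(T)$ via the continuity reduction recorded at the outset.
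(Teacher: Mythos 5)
Your proof is correct and follows essentially the same route as the paper: the paper's (very terse) proof deduces the adjunction from the relative Serre duality of Lemma~\ref{lem:duality} — whose content is precisely your chain of Grothendieck duality for $h_{M_T}$ with $\omega_{M_T/T}\cong\mathcal{O}_{M_T}$ and shift $[g^{\rm{sp}}]$, the self-duality isomorphism (\ref{isom:AT}), and tensor-hom adjunction over $\mathscr{A}_T$ — combined with the same compact-generation/colimit argument you record for passing to $D_{\rm{qcoh}}$. Your write-up simply makes explicit the kernel manipulations (in particular the double duality $R\mathcal{H}om_{\mathcal{O}_{M_T}}(\mathcal{P}_{\mathscr{A}}^{\vee},\mathcal{O}_{M_T})\cong\mathcal{P}_{\mathscr{A}}$) that the paper leaves as ``standard,'' and it flags the same delicate point about unbounded complexes that the paper also treats only by the colimit remark.
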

\begin{proof}
    The lemma follows from a standard argument using the relative Serre
    duality for $h_{\mathscr{A}_T}$ established in Lemma~\ref{lem:duality}. 
    Indeed the functor
    \begin{align*}
        Rh_{\mathscr{A}_T \ast}[g^{\rm{sp}}] \colon 
        D_{\rm{qcoh}}(\mathscr{A}_T) \to D_{\rm{qcoh}}(T)
    \end{align*}
    give a left adjoint of $Lh_{\mathcal{M}_T}^{\ast}$
    by Lemma~\ref{lem:duality}
    and noticing that objects in $D_{\rm{qcoh}}(\mathscr{A}_T)$, 
    $D_{\rm{qcoh}}(T)$ are given by colimits of 
    objects from $\mathrm{Perf}(\mathscr{A}_T)$, 
    $\mathrm{Perf}(T)$ respectively, and 
    $Rh_{\mathscr{A}_{T\ast}}$ preserves 
    colimits. 
\end{proof}
Let $\Phi_{\mathcal{P}}^L \colon \mathbb{T}_{\rm{qcoh}} \to D_{\rm{qcoh}}(T)$ be given by 
\begin{align}\label{PhiPL}
    \Phi_{\mathcal{P}}^L(-)=Rh_{\mathcal{M}_T \ast}(\mathcal{P}^{\vee}\otimes^L g_{\mathcal{M}}^{\ast}(-))[g^{\rm{sp}}], 
\end{align}
where $\mathcal{P}^{\vee}=R\mathcal{H}om(\mathcal{P}, \mathcal{O}_{\mathcal{M}_T})$. 
\begin{lemma}\label{lem:PhiL}
The functor $\Phi_{\mathcal{P}}^L$ is a left adjoint 
functor of $\Phi_{\mathcal{P}}$ in (\ref{funct:PhiP}). 
\end{lemma}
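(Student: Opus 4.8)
The plan is to transport the adjunction, which is already established on the side of the sheaf of noncommutative algebras, back to the geometric side through the tilting equivalence of Lemma~\ref{lem:AT}. By Lemma~\ref{lem:funct:com} the diagram (\ref{dia:qcoh}) commutes, so that $\Upsilon_{\mathscr{A}}\circ \Phi_{\mathcal{P}}\cong \Phi_{\mathcal{P}_{\mathscr{A}}}$, where $\Upsilon_{\mathscr{A}}\colon \mathbb{T}_{\rm{qcoh}}\xrightarrow{\sim} D_{\rm{qcoh}}(\mathscr{A})$ is an equivalence by Lemma~\ref{lem:AT} (the case $T=B$). Consequently $\Phi_{\mathcal{P}}\cong \Upsilon_{\mathscr{A}}^{-1}\circ \Phi_{\mathcal{P}_{\mathscr{A}}}$, and since $\Phi_{\mathcal{P}_{\mathscr{A}}}$ admits the left adjoint $\Phi^{L}_{\mathcal{P}_{\mathscr{A}}}$ of Lemma~\ref{lem:lad}, the functor $\Phi_{\mathcal{P}}$ admits the left adjoint
\begin{align*}
\Phi^{L}_{\mathcal{P}_{\mathscr{A}}}\circ \Upsilon_{\mathscr{A}}\colon \mathbb{T}_{\rm{qcoh}}\to D_{\rm{qcoh}}(T),
\end{align*}
using that the left adjoint of the equivalence $\Upsilon_{\mathscr{A}}^{-1}$ is $\Upsilon_{\mathscr{A}}$ itself. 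It therefore remains to identify this composite with the explicit formula (\ref{PhiPL}) for $\Phi_{\mathcal{P}}^{L}$.

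For the identification I would evaluate $\Phi^{L}_{\mathcal{P}_{\mathscr{A}}}\circ \Upsilon_{\mathscr{A}}$ on an object $E\in \mathbb{T}_{\rm{qcoh}}$ and rewrite it step by step. First, base change along the Cartesian square (\ref{dia:MMA}), together with the definition of $\Upsilon$, gives $g_{\mathscr{A}}^{\ast}\Upsilon_{\mathscr{A}}(E)\cong \Upsilon_{\mathscr{A}_T}(g_{\mathcal{M}}^{\ast}E)=\pi_{T\ast}\mathcal{H}om(\mathbb{V}_T, g_{\mathcal{M}}^{\ast}E)$, where the flatness and lci hypotheses on $g$ guarantee that the relevant pullbacks and pushforwards commute. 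Second, from the definition $\mathcal{P}_{\mathscr{A}}^{\vee}=R\mathcal{H}om_{\mathscr{A}_T}(\mathcal{P}_{\mathscr{A}}, \mathscr{A}_T)$ and Lemma~\ref{lem:com:dia}, the noncommutative dual of $\mathcal{P}_{\mathscr{A}}$ is $\Upsilon_{\mathscr{A}_T^{\rm{op}}}(\mathcal{P}^{\vee})=\pi_{T\ast}\mathcal{H}om(\mathbb{V}_T^{\vee}, \mathcal{P}^{\vee})$. Third, I would invoke the Morita/projection identity
\begin{align*}
\pi_{T\ast}\mathcal{H}om(\mathbb{V}_T, A)\otimes^{L}_{\mathscr{A}_T}\pi_{T\ast}\mathcal{H}om(\mathbb{V}_T^{\vee}, B)\cong \pi_{T\ast}(A\otimes^{L}B),
\end{align*}
valid \'etale locally on $M_T$ because there $\mathscr{A}_T$ is Morita equivalent to $\mathcal{O}_{M_T}$ with $\mathbb{V}_T$ the implementing bimodule; applying it with $A=g_{\mathcal{M}}^{\ast}E$ and $B=\mathcal{P}^{\vee}$, and then pushing forward along $h_{M_T}$ while using $h_{\mathcal{M}_T}=h_{M_T}\circ \pi_T$, converts the noncommutative expression into $Rh_{\mathcal{M}_T\ast}(\mathcal{P}^{\vee}\otimes^{L}g_{\mathcal{M}}^{\ast}E)[g^{\rm{sp}}]$, which is precisely (\ref{PhiPL}).

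The main obstacle is the third step: making the Morita/projection identity precise and globalizing it. Locally on $M_T$ the equivalence $\Upsilon_{\mathscr{A}_T}$ is an honest Morita equivalence, so the contraction $(-)\otimes^{L}_{\mathscr{A}_T}(-)$ matches the derived tensor product on $\mathcal{M}_T$ after pushforward; but one must check that these local identifications are $\mathcal{O}_{M_T}$-linear and glue, that the shift $[g^{\rm{sp}}]$ produced by the relative Serre duality of Lemma~\ref{lem:duality} is carried along consistently, and that $\mathcal{P}^{\vee}$ (a derived dual of a maximal Cohen--Macaulay sheaf, hence not perfect in general) is handled throughout. This last point is exactly the reason the argument is routed through $\mathscr{A}_T$, where Lemma~\ref{lem:duality} supplies the duality that is unavailable directly on the possibly singular $M_T$. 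Once these compatibilities are in place the composite agrees with $\Phi^{L}_{\mathcal{P}}$ as functors, completing the proof.
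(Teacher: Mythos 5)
Your proposal is correct and takes essentially the same route as the paper: the paper also transports the adjunction of Lemma~\ref{lem:lad} through the tilting equivalence $\Upsilon_{\mathscr{A}}$, and your ``identification step'' (including the Morita/projection identity with $A=g_{\mathcal{M}}^{\ast}E$, $B=\mathcal{P}^{\vee}$) is precisely the commutativity of the paper's diagram (\ref{dia:qcoh2}), which the paper establishes via Lemma~\ref{lem:com:dia} and a computation similar to Lemma~\ref{lem:funct:com}.
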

\begin{proof}
Using Lemma~\ref{lem:com:dia}, 
a computation similar to Lemma~\ref{lem:funct:com}
shows that the following diagram commutes
\begin{align}\label{dia:qcoh2}
    \xymatrix{
    \mathbb{T}_{\rm{qcoh}} \ar[r]^-{\Phi_{\mathcal{P}}^L} & D_{\rm{qcoh}}(T) \ar@{=}[d] \\
    D_{\rm{qcoh}}(\mathscr{A}) \ar[r]_-{\Phi_{\mathcal{P}_{\mathscr{A}}}^L} \ar[u]_-{\Upsilon_A^{-1}} & D_{\rm{qcoh}}(T). 
    }
\end{align}
Then the lemma follows from Lemma~\ref{lem:lad}. 
\end{proof}

We consider the following diagram 
\begin{align*}
    \xymatrix{
& (M_{T\times_B T}, \mathscr{A}_{T\times_B T}) \ar[ldd]_-{p_1} 
\ar[rd] & & (M_{T\times_B T}, \mathscr{A}_{T\times_B T}^{\rm{op}}) \ar[ld] \ar[rdd]_-{p_2} & \\
 & & M_{T\times_B T} \ar[d]_-{h_{T\times_B T}} & &  \\
(M_T, \mathscr{A}_T)& & T\times_B T.& &  (M_T, \mathscr{A}_T^{\rm{op}})  
    }
\end{align*}
The composition $\Phi_{\mathcal{P}_{\mathscr{A}}}^L \circ \Phi_{\mathcal{P}_{\mathscr{A}}}$ is given 
by the following kernel object
\begin{align*}
   \mathcal{Q}=Rh_{T\times_B T\ast}(p_{1}^{\ast}\mathcal{P}_{\mathscr{A}} \otimes^L p_{2}^{\ast}\mathcal{P}_{\mathscr{A}}^{\vee})[g^{\rm{sp}}]
   \in D^{-}(T\times_B T). 
\end{align*}
By the commutative diagrams (\ref{dia:qcoh}), (\ref{dia:qcoh2}), we also have that
\begin{align}\label{isom:Q}
    \mathcal{Q}=Rp_{13 \ast}(p_{12}^{\ast}\mathcal{P} \otimes^L p_{23}^{\ast}\mathcal{P}^{\vee})[g^{\rm{sp}}] \in D^{-}(T\times_B T). 
\end{align}
Here $p_{ij}$ are the projections
\begin{align}\label{dia:Mpij}
\xymatrix{
& T\times_B \mathcal{M}\times_B T \ar[ld]_-{p_{12}} \ar[d]_-{p_{13}} \ar[rd]^-{p_{23}} & \\
T\times_B \mathcal{M} & T\times_B T & \mathcal{M}\times_B T. 
}    
\end{align}

\subsection{Criterion for fully-faithfullness}
We continue with the notation from the previous subsection.
In this subsection, we give a criterion for the fully-faithfulness of
the functor \eqref{funct:PhiP}  
by generalizing the arguments in~\cite{Ardual}. 
We first discuss some preliminary lemmas. 
Recall the notation in the diagram (\ref{dia:Mpij}). 
\begin{lemma}\label{assum}
For $E_1 \in \mathbb{T}_T$ and $E_2 \in \mathbb{T}_T'$, there is an isomorphism 
\begin{align}\label{isom:assum}
    \pi_{T\times_B T \ast}\mathbb{D}_{\mathcal{M}_{T\times_B T}}(p_{12}^{\ast}E_1\otimes^L p_{23}^{\ast}E_2)
    \cong  
    \mathbb{D}_{M_{T\times_B T}}(\pi_{T\times_B T \ast}(p_{12}^{\ast}E_1\otimes^L p_{23}^{\ast}E_2)). 
\end{align}
    \end{lemma}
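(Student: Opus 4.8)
The plan is to translate (\ref{isom:assum}) into the language of the noncommutative resolution $\mathscr{A}$ and to deduce it from the self-duality of $\mathscr{A}$ as a maximal Cohen--Macaulay module, which is exactly the isomorphism (\ref{isom:AT}) (equivalently (\ref{isom:rA})) established inside Lemma~\ref{lem:duality}. Throughout I would use that $T\times_B T\to B$ is again flat and lci, being a composition of base changes of $g$; hence Proposition~\ref{prop:TA}, Lemma~\ref{lem:AT} and Lemma~\ref{lem:duality} all apply with $T$ replaced by $T\times_B T$. In particular $\mathscr{A}_{T\times_B T}=g_M^{\ast}\mathscr{A}$ is maximal Cohen--Macaulay over $M_{T\times_B T}$, of finite global dimension, and self-dual via (\ref{isom:AT}).

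First I would dispose of the formal half. Since $\mathcal{M}$ is smooth, every object of $\mathbb{T}_T$ and $\mathbb{T}'_T$ is perfect, and $\mathbb{D}_{\mathcal{M}_{T\times_B T}}=R\mathcal{H}om(-,\mathcal{O})$ is monoidal and commutes with the flat pullbacks $p_{12}^{\ast},p_{23}^{\ast}$, so that
\[
\mathbb{D}_{\mathcal{M}_{T\times_B T}}(p_{12}^{\ast}E_1\otimes^L p_{23}^{\ast}E_2)\cong p_{12}^{\ast}\mathbb{D}_{\mathcal{M}_T}(E_1)\otimes^L p_{23}^{\ast}\mathbb{D}_{\mathcal{M}_T}(E_2).
\]
By Lemma~\ref{lem:com:dia} we have $\mathbb{D}_{\mathcal{M}_T}(E_1)\in\mathbb{T}'_T$ and $\mathbb{D}_{\mathcal{M}_T}(E_2)\in\mathbb{T}_T$ (applying the lemma with $w$ and with $-w$). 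Thus both sides of (\ref{isom:assum}) are $\pi_{T\times_B T\ast}$ of objects of the same shape, with the roles of $\mathbb{T}$ and $\mathbb{T}'$ interchanged.

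Next I would push everything through $\mathscr{A}$. Let $\mathcal{E}_1,\mathcal{E}_2$ be the $\mathscr{A}_{T\times_B T}$- and $\mathscr{A}_{T\times_B T}^{\mathrm{op}}$-modules corresponding under Lemma~\ref{lem:AT} to $p_{12}^{\ast}E_1\in\mathbb{T}_{T\times_B T}$ and $p_{23}^{\ast}E_2\in\mathbb{T}'_{T\times_B T}$ (these pullbacks lie in the respective categories by Lemma~\ref{lem:C:funct}); the inverse equivalences give $p_{12}^{\ast}E_1\cong\mathcal{E}_1\otimes^L_{\mathscr{A}}\mathbb{V}$ and $p_{23}^{\ast}E_2\cong\mathcal{E}_2\otimes^L_{\mathscr{A}^{\mathrm{op}}}\mathbb{V}^{\vee}$. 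Since $p_{12}^{\ast}\mathbb{V}\otimes^L p_{23}^{\ast}\mathbb{V}^{\vee}\cong\mathcal{E}nd(\mathbb{V})$ and $\pi_{\ast}\mathcal{E}nd(\mathbb{V})=\mathscr{A}$ as an $(\mathscr{A},\mathscr{A})$-bimodule, the projection formula yields $\pi_{T\times_B T\ast}(p_{12}^{\ast}E_1\otimes^L p_{23}^{\ast}E_2)\cong\mathcal{E}_1\otimes^L_{\mathscr{A}}\mathcal{E}_2$, an $\mathcal{O}_{M_{T\times_B T}}$-module with $\mathcal{E}_1$ a right and $\mathcal{E}_2$ a left $\mathscr{A}$-module. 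Running the identical computation on the dual objects, and using Lemma~\ref{lem:com:dia} to write $\Upsilon_{\mathscr{A}^{\mathrm{op}}}(\mathbb{D}E_1)=R\mathcal{H}om_{\mathscr{A}}(\mathcal{E}_1,\mathscr{A})$ and $\Upsilon_{\mathscr{A}}(\mathbb{D}E_2)=R\mathcal{H}om_{\mathscr{A}^{\mathrm{op}}}(\mathcal{E}_2,\mathscr{A}^{\mathrm{op}})$, gives
\[
\pi_{T\times_B T\ast}\mathbb{D}_{\mathcal{M}_{T\times_B T}}(p_{12}^{\ast}E_1\otimes^L p_{23}^{\ast}E_2)\cong R\mathcal{H}om_{\mathscr{A}^{\mathrm{op}}}(\mathcal{E}_2,\mathscr{A}^{\mathrm{op}})\otimes^L_{\mathscr{A}}R\mathcal{H}om_{\mathscr{A}}(\mathcal{E}_1,\mathscr{A}).
\]

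It then remains to match this with $R\mathcal{H}om_M(\mathcal{E}_1\otimes^L_{\mathscr{A}}\mathcal{E}_2,\mathcal{O}_M)$. Using that $\mathcal{E}_1$ is perfect over $\mathscr{A}$ (finite global dimension, Proposition~\ref{prop:TA}) together with tensor--Hom adjunction over $\mathscr{A}$, the right-hand side above rewrites as $R\mathcal{H}om_{\mathscr{A}}(\mathcal{E}_1,R\mathcal{H}om_{\mathscr{A}^{\mathrm{op}}}(\mathcal{E}_2,\mathscr{A}^{\mathrm{op}}))$, while adjunction over $\mathscr{A}$ gives $R\mathcal{H}om_M(\mathcal{E}_1\otimes^L_{\mathscr{A}}\mathcal{E}_2,\mathcal{O}_M)\cong R\mathcal{H}om_{\mathscr{A}}(\mathcal{E}_1,R\mathcal{H}om_M(\mathcal{E}_2,\mathcal{O}_M))$. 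These coincide precisely because of the self-duality isomorphism (\ref{isom:rA}), namely $R\mathcal{H}om_M(\mathcal{E}_2,\mathcal{O}_M)\cong r_{\mathscr{A}^{\mathrm{op}}\ast}R\mathcal{H}om_{\mathscr{A}^{\mathrm{op}}}(\mathcal{E}_2,\mathscr{A}^{\mathrm{op}})$, which rests on $\mathscr{A}$ being maximal Cohen--Macaulay, on $M$ being Gorenstein with trivial dualizing sheaf (Lemma~\ref{lem:gorenstein}), and on the codimension $\geq 2$ argument. The main obstacle is not any single deep input but the systematic bookkeeping of left/right $\mathscr{A}$-module structures and their compatibility with the adjunction isomorphisms between $\otimes^L_{\mathscr{A}}$ and $R\mathcal{H}om_{\mathscr{A}}$; in particular the one genuinely careful point is to verify $\pi_{\ast}\mathcal{E}nd(\mathbb{V})=\mathscr{A}$ as a bimodule, that the projection-formula manipulations respect these structures, and that the self-duality (\ref{isom:AT}), originally proved over $M$, is preserved under the base change to $M_{T\times_B T}$ (which holds since $T\times_B T\to B$ is flat and lci and $\mathscr{A}_{T\times_B T}=g_M^{\ast}\mathscr{A}$).
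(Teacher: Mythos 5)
Your opening claim is where the argument breaks: you assert that since $\mathcal{M}$ is smooth, every object of $\mathbb{T}_T$ and $\mathbb{T}'_T$ is perfect. But the relevant stack is $\mathcal{M}_T=\mathcal{M}\times_B T$, and the paper's standing assumption in this section is only that $g\colon T\to B$ is flat and lci; the text states explicitly that $\mathcal{M}_T$ is \emph{not} necessarily smooth (it is merely Gorenstein). So $D^b(\mathcal{M}_T)\neq \mathrm{Perf}(\mathcal{M}_T)$, and in fact the objects this lemma is ultimately applied to (in Lemma~\ref{lem:dualQ}, with $E_1=\mathcal{P}$ and $E_2=\mathcal{P}^{\vee}$ the maximal Cohen--Macaulay Poincar\'e kernel and its dual) are precisely non-perfect objects. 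Everything downstream of your first step relies on dualizability: the monoidality $\mathbb{D}(A\otimes^L B)\cong \mathbb{D}(A)\otimes^L\mathbb{D}(B)$ and the compatibility of $\mathbb{D}$ with the pullbacks $p_{12}^{\ast}, p_{23}^{\ast}$ fail for non-perfect complexes. The same problem recurs on the noncommutative side: $\mathcal{E}_1$ being ``perfect over $\mathscr{A}$'' does not follow from Proposition~\ref{prop:TA}, because finite global dimension of $\mathscr{A}$ over $M$ is not inherited by $\mathscr{A}_{T\times_B T}=g_M^{\ast}\mathscr{A}$ when $M_{T\times_B T}$ is singular --- Lemma~\ref{lem:AT} only gives maximal Cohen--Macaulayness after base change, and its proof has to work around exactly this issue (factoring $T\hookrightarrow B'\to B$ with $B'\to B$ smooth). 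Hence your tensor--Hom adjunction steps over $\mathscr{A}$ are not justified either.

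The missing idea is a reduction to the perfect case. The paper first proves the statement when $E_2$ is perfect (Lemma~\ref{assum:perf}): the trace map of the perfect object induces a \emph{natural morphism} between the two sides of (\ref{isom:assum}) --- note that your proposal never constructs a map, only asserts isomorphisms --- and this morphism is checked to be an isomorphism \'etale locally on the good moduli space by reducing to $E_2=\mathbb{V}_T$, where it is exactly (\ref{isom:rA}) combined with the local equivalence (\ref{upA}). Then, for general $E_2\in\mathbb{T}'_T$, one writes $E_2=\colim_{i}P_i$ with $P_i\in\mathbb{T}'_T$ perfect and commutes the resulting limit past $\pi_{T\times_B T\ast}$. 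Your route through $\mathscr{A}$-bimodules and the self-duality (\ref{isom:AT}) is close in spirit to the paper's perfect-case argument, but without the continuity/colimit mechanism (or some other device for handling non-perfect objects) it cannot reach the statement as it is actually used.
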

    \begin{proof}
        The isomorphism (\ref{isom:assum}) 
        holds when $E_2$ is perfect
        by Lemma~\ref{assum:perf} below. 
        In general, 
        we write $E_2=\colim_{i\in I} P_i$
        for $P_i \in \mathbb{T}_T'$ which is 
        perfect. Then we are reduced to showing that there is an isomorphism:
        \begin{align*}
            \pi_{T\times_B T \ast}\lim_{i\in I}
            \mathbb{D}_{\mathcal{M}_{T\times_B T}}(p_{12}^{\ast}E_1 \otimes^L p_{23}^{\ast}P_i)
            \cong \lim _{i\in I} \mathbb{D}_{M_{T\times_B T}}\pi_{T\times_B T \ast}(p_{12}^{\ast}E_1 \otimes^L p_{23}^{\ast}P_i)
        \end{align*}
        The above isomorphism follows from 
        the isomorphism (\ref{isom:assum}) for 
        $E_2=P_i$ and using that the limit commutes with 
        $\pi_{T\times_B T \ast}$. 
    \end{proof}

\begin{lemma}\label{assum:perf}
For $E_1, E_2 \in \mathbb{T}_T$, suppose that $E_2$ is perfect. 
Then there is a natural isomorphism 
\begin{align}\notag
    \pi_{T\ast}R\mathcal{H}om(E_1, E_2) \stackrel{\cong}{\to} R\mathcal{H}om(\pi_{T\ast}R\mathcal{H}om(E_2, E_1), \mathcal{O}_{M_T}). 
\end{align}
\end{lemma}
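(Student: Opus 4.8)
The plan is to transport the entire statement from the (possibly singular) stack $\mathcal{M}_T$ to modules over the sheaf of algebras $\mathscr{A}_T$, where the maximal Cohen--Macaulay self-duality of $\mathscr{A}_T$ makes the desired symmetry formal. Write $\Upsilon=\Upsilon_{\mathscr{A}_T}\colon \mathbb{T}_T\xrightarrow{\sim}D^b(\mathscr{A}_T)$ from Lemma~\ref{lem:AT}, set $M_i:=\Upsilon(E_i)$, and abbreviate $\mathbb{D}_M=R\mathcal{H}om_{M_T}(-,\mathcal{O}_{M_T})$ and $\mathbb{D}_{\mathscr{A}}=R\mathcal{H}om_{\mathscr{A}_T}(-,\mathscr{A}_T)$. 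Since $\Upsilon$ is an $\mathcal{O}_{M_T}$-linear equivalence that is local over $M_T$ and is implemented by the vector bundle $\mathbb{V}_T$, it identifies the relative $\mathcal{H}om$-sheaves, giving $\pi_{T\ast}R\mathcal{H}om(E_i,E_j)\cong R\mathcal{H}om_{\mathscr{A}_T}(M_i,M_j)$ in $D(\mathcal{O}_{M_T})$. Thus the claim reduces to the purely algebraic statement
\begin{align*}
    R\mathcal{H}om_{\mathscr{A}_T}(M_1,M_2)\cong \mathbb{D}_M\,R\mathcal{H}om_{\mathscr{A}_T}(M_2,M_1),
\end{align*}
where $M_2$ is perfect over $\mathscr{A}_T$ precisely because $E_2$ is perfect over $\mathcal{M}_T$ (as $\mathbb{V}_T$ is a vector bundle with $\Upsilon(\mathbb{V}_T)=\mathscr{A}_T$, the equivalence $\Upsilon$ matches the thick subcategory generated by $\mathbb{V}_T$ with $\mathrm{Perf}(\mathscr{A}_T)$).

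Next I would record the two structural inputs about $\mathscr{A}_T$. First, the trace pairing provides the $(\mathscr{A}_T,\mathscr{A}_T)$-bimodule self-duality $\mathscr{A}_T\cong R\mathcal{H}om_{M_T}(\mathscr{A}_T,\mathcal{O}_{M_T})$ recorded in the isomorphism (\ref{isom:AT}) inside the proof of Lemma~\ref{lem:duality}; combined with the coinduction adjunction along $\mathcal{O}_{M_T}\to\mathscr{A}_T$ this yields a natural isomorphism of dualities $\mathbb{D}_M\cong\mathbb{D}_{\mathscr{A}}$ on all of $D^b(\mathscr{A}_T)$, which is the content of Lemma~\ref{lem:com:dia} read through $\Upsilon$. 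Second, on $\mathrm{Perf}(\mathscr{A}_T)$ the functor $\mathbb{D}_{\mathscr{A}}$ is a duality with $\mathbb{D}_{\mathscr{A}}^2\cong\mathrm{id}$, and for perfect $N$ the functor $R\mathcal{H}om_{\mathscr{A}_T}(N,-)$ is computed by tensoring with the $\mathscr{A}_T$-dual $\mathbb{D}_{\mathscr{A}}(N)$.

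With these in hand the computation is a short chain of adjunctions. I would use perfectness of $M_2$ to write $R\mathcal{H}om_{\mathscr{A}_T}(M_2,M_1)\cong M_1\otimes^L_{\mathscr{A}_T}\mathbb{D}_{\mathscr{A}}(M_2)$; apply $\mathbb{D}_M$ and move it inside by the tensor--$\mathcal{H}om$ adjunction to reach $R\mathcal{H}om_{\mathscr{A}_T}(\mathbb{D}_{\mathscr{A}}(M_2),\mathbb{D}_M(M_1))$; replace $\mathbb{D}_M(M_1)$ by $\mathbb{D}_{\mathscr{A}}(M_1)$ using the self-duality; and finally invoke that $\mathbb{D}_{\mathscr{A}}$ is a perfect-module duality to identify the result with $R\mathcal{H}om_{\mathscr{A}_T}(M_1,M_2)$, checking the resulting natural transformation on the generator $\mathscr{A}_T$ (where both sides compute $M_2$). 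The recurring care is left/right bookkeeping: $\mathbb{D}_{\mathscr{A}}$ interchanges right and left $\mathscr{A}_T$-modules, so each adjunction must be set up over the correct side, and the opposite algebra $\mathscr{A}_T^{\rm{op}}$ from Lemma~\ref{lem:com:dia} is what keeps this consistent.

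I expect the main obstacle to be the two interlocking finiteness points rather than the formal chain. The first is justifying that $E_2$ perfect over the possibly singular $\mathcal{M}_T$ corresponds exactly to $M_2$ perfect over $\mathscr{A}_T$: after the non-smooth base change $T\hookrightarrow B'$ the algebra $\mathscr{A}_T$ need not have finite global dimension, so perfectness is a genuine restriction and is exactly where the hypothesis on $E_2$ enters. The second is verifying that $\mathbb{D}_M\cong\mathbb{D}_{\mathscr{A}}$ and the final identification persist for the \emph{non}-perfect object $M_1$; I would handle this by reducing each natural transformation to its value on the generator $\mathscr{A}_T$ and invoking exactness together with the fact that coherent $\mathscr{A}_T$-modules admit finite free resolutions locally on $M_T$.
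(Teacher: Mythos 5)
Your proposal is, in substance, the paper's own proof in different packaging: the paper also reduces everything to the algebra $\mathscr{A}_T$ via the tilting bundle $\mathbb{V}_T$, uses the trace-pairing self-duality $\mathscr{A}_T\cong R\mathcal{H}om_{M_T}(\mathscr{A}_T,\mathcal{O}_{M_T})$ (the isomorphisms (\ref{isom:AT}), (\ref{isom:rA})), and handles a general perfect $E_2$ by the fact that, étale locally on $M_T$, such an $E_2$ is a direct summand of a bounded complex of copies of $\mathbb{V}_T$. The only organizational difference is that the paper first constructs the global natural map directly on the geometric side, using the trace map $R\mathcal{H}om(E_2,E_2)\to\mathcal{O}_{\mathcal{M}_T}$ of the perfect complex $E_2$, and then checks it is an isomorphism for $E_2=\mathbb{V}_T$ via (\ref{isom:rA}) and (\ref{upA}); you instead assemble the isomorphism as a chain of formal dualities and adjunctions on the $\mathscr{A}_T$-side and check it on the generator. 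Both arguments rest on exactly the same two inputs, and your left/right bookkeeping via $\mathscr{A}_T^{\rm{op}}$ matches Lemma~\ref{lem:com:dia}.

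One supporting claim in your last paragraph is false, though fortunately not needed. You propose to treat the non-perfect object $M_1$ by "the fact that coherent $\mathscr{A}_T$-modules admit finite free resolutions locally on $M_T$"; this contradicts your own (correct) earlier observation that after a non-smooth base change $\mathscr{A}_T$ can have infinite global dimension, so coherent $\mathscr{A}_T$-modules need not admit finite projective resolutions. The repair is that no dévissage in $M_1$ is required at all: the identification $\mathbb{D}_M(M_1)\cong\mathbb{D}_{\mathscr{A}}(M_1)$ is the coinduction adjunction combined with (\ref{isom:AT}) and holds for arbitrary $M_1\in D^b(\mathscr{A}_T)$ (this is precisely how (\ref{isom:rA}) is proved in Lemma~\ref{lem:duality}), and the tensor--Hom adjunction step likewise imposes no finiteness on $M_1$. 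The only dévissage needed is in the perfect variable $M_2$, where all functors in sight are exact in $M_2$ and agree on the generator $\mathscr{A}_T$ étale locally on $M_T$ --- which is the same local-generation fact the paper invokes.
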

\begin{proof}
As $E_2$ is perfect, there is a trace map 
$\mathrm{tr} \colon R\mathcal{H}om(E_2, E_2) \to \mathcal{O}_{\mathcal{M}_T}$, 
hence we obtain a natural map 
\begin{align}\label{isom:natisom}
    \pi_{T\ast}\mathcal{H}om(E_2, E_2) \to \mathcal{O}_{M_T}. 
\end{align}
    It induces a map 
    \begin{align*}
      \pi_{T\ast}R\mathcal{H}om(E_1, E_2) \to R\mathcal{H}om(\pi_{T\ast}R\mathcal{H}om(E_2, E_1), \mathcal{O}_{M_T}).
    \end{align*}
    The above map is an isomorphism for $E_2=\mathbb{V}_T$ 
    by the isomorphism (\ref{isom:rA}) and the equivalence (\ref{upA})
    which holds locally on $M_T$. 
    In general, \'etale locally on $M_T$ the object $E_2$ is a direct summand of a bounded 
    complex of direct sums of $\mathbb{V}_T$. 
    Hence (\ref{isom:natisom}) is an isomorphism locally on $M_T$, hence 
    it is an isomorphism. 
\end{proof}

\begin{lemma}\label{lem:dualQ}
There is an isomorphism  
\begin{align*}
\mathbb{D}_{T\times_B T}(\mathcal{Q}) \cong 
Rh_{\mathcal{M}_{T\times_B T \ast}}\mathbb{D}_{\mathcal{M}_{T\times_B T}}(p_{12}^{\ast}\mathcal{P}\otimes^L p_{23}^{\ast}\mathcal{P}^{\vee}). 
\end{align*}
\end{lemma}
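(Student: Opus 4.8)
The plan is to deduce the identity from Grothendieck--Serre duality for the (schematic) Hitchin morphism, combined with Lemma~\ref{assum}, which commutes the naive dualizing functor with the good moduli space pushforward. Throughout, all dualizing functors are $R\mathcal{H}om(-,\mathcal{O})$; this is legitimate because $M$, and hence $M_{T\times_B T}$ (using that $g$ is flat lci), is Gorenstein with trivial dualizing sheaf by Lemma~\ref{lem:gorenstein}, while $\mathcal{M}$ is smooth with trivial canonical bundle. Write $G:=p_{12}^{\ast}\mathcal{P}\otimes^L p_{23}^{\ast}\mathcal{P}^{\vee}$, so that by \eqref{isom:Q} we have $\mathcal{Q}=Rp_{13\ast}(G)[g^{\rm{sp}}]$; note that $p_{13}$ is exactly the structure morphism $h_{\mathcal{M}_{T\times_B T}}$, which factors as $\mathcal{M}_{T\times_B T}\xrightarrow{\pi_{T\times_B T}}M_{T\times_B T}\xrightarrow{h_{T\times_B T}}T\times_B T$.

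First I would record that the relative dualizing complex of the schematic morphism $h_{T\times_B T}$ is $\mathcal{O}_{M_{T\times_B T}}[g^{\rm{sp}}]$. Indeed, the Hitchin map $h_M\colon M\to B$ has trivial relative dualizing sheaf and relative dimension $g^{\rm{sp}}$ (this is the content used in Corollary~\ref{cor:proper0}, via Lemma~\ref{lem:gorenstein}), so $h_M^{!}\mathcal{O}_B\cong\mathcal{O}_M[g^{\rm{sp}}]$; since $g\colon T\to B$ is flat and lci, base change yields $\omega_{h_{T\times_B T}}\cong\mathcal{O}_{M_{T\times_B T}}[g^{\rm{sp}}]$.

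Next I would compute directly. Since $h_{T\times_B T}$ is projective and $\pi_{T\times_B T\ast}G$ has bounded coherent cohomology (because $\mathcal{P}\in\mathbb{T}_T$ and $\mathcal{P}^{\vee}\in\mathbb{T}'_T$ are bounded), Grothendieck duality for $h_{T\times_B T}$ gives
\[\mathbb{D}_{T\times_B T}(\mathcal{Q})=R\mathcal{H}om(Rh_{T\times_B T\ast}(\pi_{T\times_B T\ast}G),\mathcal{O})[-g^{\rm{sp}}]\cong Rh_{T\times_B T\ast}R\mathcal{H}om(\pi_{T\times_B T\ast}G,\omega_{h_{T\times_B T}})[-g^{\rm{sp}}].\]
Substituting $\omega_{h_{T\times_B T}}\cong\mathcal{O}[g^{\rm{sp}}]$, the two shifts cancel and the right-hand side becomes $Rh_{T\times_B T\ast}\mathbb{D}_{M_{T\times_B T}}(\pi_{T\times_B T\ast}G)$. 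I would then apply Lemma~\ref{assum} with $E_1=\mathcal{P}\in\mathbb{T}_T$ and $E_2=\mathcal{P}^{\vee}\in\mathbb{T}'_T$ (the latter membership by Lemma~\ref{lem:com:dia}), giving $\mathbb{D}_{M_{T\times_B T}}(\pi_{T\times_B T\ast}G)\cong\pi_{T\times_B T\ast}\mathbb{D}_{\mathcal{M}_{T\times_B T}}(G)$. Composing pushforwards, $Rh_{T\times_B T\ast}\circ\pi_{T\times_B T\ast}=Rp_{13\ast}=Rh_{\mathcal{M}_{T\times_B T}\ast}$, which produces precisely the claimed isomorphism.

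The main obstacle I anticipate is the careful verification that Grothendieck duality applies in this relative, derived, and mildly singular setting: one must check that $\pi_{T\times_B T\ast}G$ is bounded with coherent cohomology so that the duality isomorphism genuinely holds (rather than being merely a natural morphism), and that base change along $g$ really produces the trivial relative dualizing complex. Both of these rest on the flat lci hypothesis on $g$ and on the Gorenstein property of $M$. Once Lemma~\ref{assum} is invoked to carry the dualizing functor past the stacky good moduli morphism $\pi_{T\times_B T}$, the remaining manipulations are formal.
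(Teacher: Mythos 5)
Your proof is correct and follows essentially the same route as the paper's: both apply Lemma~\ref{assum} with $E_1=\mathcal{P}$, $E_2=\mathcal{P}^{\vee}$ to commute the dualizing functor past $\pi_{T\times_B T\ast}$, and combine this with Grothendieck duality for $M_{T\times_B T}\to T\times_B T$ together with the triviality of its relative dualizing complex (Lemma~\ref{lem:gorenstein}), so that the shift $[g^{\rm{sp}}]$ in \eqref{isom:Q} cancels. The only difference is that you perform the duality step before invoking Lemma~\ref{assum} and spell out the verifications (properness, boundedness, base change of the dualizing complex along the flat lci map $g$) that the paper leaves implicit.
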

\begin{proof}
Note that $\mathcal{Q}$ is given by (\ref{isom:Q}). 
    The lemma holds by applying $Rh_{M_{T\times_B T}\ast}$ to the 
    isomorphism in Lemma~\ref{assum}
    for $E_1=\mathcal{P}$, 
    $E_2=\mathcal{P}^{\vee}$, and 
    using the Grothendieck duality 
    for $h_{M_{T\times_B T}}$. 
\end{proof}

\begin{lemma}\label{lem:MCM}
Let $\mathcal{P}$ be a maximal Cohen–Macaulay sheaf on $\mathcal{M}_T$ so that 
$\mathcal{P}^{\vee}$ is also a maximal Cohen–Macaulay sheaf. 
Assume that $\mathcal{P}$ and $\mathcal{P}^{\vee}$ are flat over $\mathcal{M}$. 
Then we have 
\begin{align}\label{TBT}
    p_{12}^{\ast}\mathcal{P}\otimes^L p_{23}^{\ast}\mathcal{P}^{\vee}=
       p_{12}^{\ast}\mathcal{P}\otimes p_{23}^{\ast}\mathcal{P}^{\vee}
\end{align}
and $p_{12}^{\ast}\mathcal{P}\otimes p_{23}^{\ast}\mathcal{P}^{\vee}$ is a maximal Cohen–Macaulay sheaf on $\mathcal{M}_{T\times_B T}$. 
\end{lemma}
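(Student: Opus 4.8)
The plan is to reduce the whole statement to an affine computation over a smooth base, where it becomes a $\mathrm{Tor}$-vanishing together with a standard fact about external tensor products of maximal Cohen--Macaulay modules. First I would record the geometric input. Since $g\colon T\to B$ is flat and lci, so is $T\times_B T\to B$, and (flatness forcing derived $=$ classical) the stacks $\mathcal{M}_T$ and $\mathcal{M}_{T\times_B T}$ coincide with the classical fibre products; in particular
\[\mathcal{M}_{T\times_B T}=\mathcal{M}_T\times_{\mathcal{M}}\mathcal{M}_T,\]
with $p_{12},p_{23}$ the two projections to the factors $\mathcal{M}_T$, and $\mathcal{M}_{T\times_B T}\to\mathcal{M}$ is flat and lci (base change of $T\times_B T\to B$). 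As $\mathcal{M}$ is smooth, $\mathcal{M}_{T\times_B T}$ is then lci over $\mathbb{C}$, hence Cohen--Macaulay, so the notion of a maximal Cohen--Macaulay sheaf on it makes sense. All assertions in the lemma are smooth-local on $\mathcal{M}$, so I may pass to a smooth atlas and assume $\mathcal{M}=\Spec A$ with $A$ regular, $\mathcal{M}_T=\Spec S$ with $S$ flat and lci over $A$, and $\mathcal{M}_{T\times_B T}=\Spec(S\otimes_A S)$.

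For the identity (\ref{TBT}), write $S_1,S_2$ for the two copies of $S$, so that $p_{12}^{\ast}\mathcal{P}=\mathcal{P}\otimes_A S_2$ and $p_{23}^{\ast}\mathcal{P}^{\vee}=S_1\otimes_A\mathcal{P}^{\vee}$ as $S_1\otimes_A S_2$-modules. Since $S_2$ is flat over $A$, the ring $S_1\otimes_A S_2$ is flat over $S_1$, and the base-change/associativity isomorphism gives
\[(\mathcal{P}\otimes_A S_2)\otimes^L_{S_1\otimes_A S_2}(S_1\otimes_A\mathcal{P}^{\vee})\cong \mathcal{P}\otimes^L_{S_1}(S_1\otimes_A\mathcal{P}^{\vee}).\]
Because $\mathcal{P}^{\vee}$ is flat over $A$, the module $S_1\otimes_A\mathcal{P}^{\vee}$ is flat over $S_1$ (for any $S_1$-module $N$ one has $N\otimes_{S_1}(S_1\otimes_A\mathcal{P}^{\vee})=N\otimes_A\mathcal{P}^{\vee}$, and $-\otimes_A\mathcal{P}^{\vee}$ is exact). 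Hence the derived tensor product has no higher $\mathrm{Tor}$, coincides with the underived one, and is identified with the external product $\mathcal{P}\otimes_A\mathcal{P}^{\vee}$; this is exactly (\ref{TBT}).

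For the maximal Cohen--Macaulay claim I would argue fibrewise over $\mathcal{M}$. As $\mathcal{P}$ and $\mathcal{P}^{\vee}$ are flat over $A$, so is $\mathcal{P}\otimes_A\mathcal{P}^{\vee}$, hence $\mathcal{F}:=p_{12}^{\ast}\mathcal{P}\otimes p_{23}^{\ast}\mathcal{P}^{\vee}$ is flat over $\mathcal{M}$. Using the depth formula for the flat local homomorphism $A\to S\otimes_A S$ (equivalently, the criterion that over a regular base a flat coherent sheaf is maximal Cohen--Macaulay precisely when its fibres are) and the regularity of $A$, the maximal Cohen--Macaulayness of $\mathcal{F}$ is equivalent to that of its fibres over $\mathcal{M}$. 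It suffices to check this at closed points $m\in\mathcal{M}$, where $k(m)=\mathbb{C}$; there the fibre of $\mathcal{M}_{T\times_B T}\to\mathcal{M}$ is $(\mathcal{M}_T)_m\times_{\mathbb{C}}(\mathcal{M}_T)_m$ and $\mathcal{F}_m\cong\mathcal{P}_m\boxtimes_{\mathbb{C}}\mathcal{P}^{\vee}_m$. The same flat-family criterion applied to $\mathcal{M}_T\to\mathcal{M}$ shows $\mathcal{P}_m$ and $\mathcal{P}^{\vee}_m$ are maximal Cohen--Macaulay on $(\mathcal{M}_T)_m$. Thus the problem reduces to: the external tensor product over a field of two maximal Cohen--Macaulay sheaves is maximal Cohen--Macaulay, which I would deduce from the additivity of both dimension and depth under $\boxtimes_{\mathbb{C}}$.

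The only genuinely delicate point — the one I would flag as the main obstacle — is this last reduction, namely that $\boxtimes_{\mathbb{C}}$ preserves the maximal Cohen--Macaulay property. Additivity of dimension is routine, but additivity of depth requires care; I would obtain it from the Künneth formula for local cohomology,
\[H^n_{\mathfrak m}(M\otimes_{\mathbb{C}}N)=\bigoplus_{i+j=n}H^i_{\mathfrak m_1}(M)\otimes_{\mathbb{C}}H^j_{\mathfrak m_2}(N),\]
which forces the lowest nonvanishing local cohomology degrees (i.e.\ the depths) to add, so that depth equals dimension for the product once it does for each factor. No base-change pathology arises because $\mathbb{C}$ is perfect. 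Everything else — the flat-lci bookkeeping, the $\mathrm{Tor}$-vanishing, and the passage to fibres via the flat-family Cohen--Macaulay criterion — is standard commutative algebra once the statement has been localized on $\mathcal{M}$.
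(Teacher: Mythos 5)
Your proof is correct, and it follows the paper's overall strategy of working fibrewise over $\mathcal{M}$, but the key technical inputs differ at every step. For (\ref{TBT}) the paper checks that the derived restriction of $p_{12}^{\ast}\mathcal{P}\otimes^{L}p_{23}^{\ast}\mathcal{P}^{\vee}$ to the fibre $T_b\times\{x\}\times T_b$ over each closed point $x\in\mathcal{M}$ is the sheaf $Lj_1^{\ast}\mathcal{P}\boxtimes Lj_2^{\ast}\mathcal{P}^{\vee}$, and concludes by the fibrewise criterion; your affine argument (flatness of $p_{23}^{\ast}\mathcal{P}^{\vee}=S_1\otimes_A\mathcal{P}^{\vee}$ over the first factor) kills the higher Tor directly and is equally valid. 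For maximal Cohen--Macaulayness of the fibres $j_1^{\ast}\mathcal{P}$, $j_2^{\ast}\mathcal{P}^{\vee}$, the paper uses that dualization commutes with restriction, $Lj_1^{\ast}R\mathcal{H}om(\mathcal{P},\mathcal{O}_{\mathcal{M}_T})\cong R\mathcal{H}om(Lj_1^{\ast}\mathcal{P},\mathcal{O}_{T_b})$, so flatness of $\mathcal{P}^{\vee}$ over $\mathcal{M}$ forces the derived dual of the fibre to be a sheaf on the Gorenstein scheme $T_b$; you instead use the depth formula for modules flat over the regular (in fact Cohen--Macaulay suffices) base $\mathcal{M}$. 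For the final step the paper pushes the sheaf forward to the product $(T\times_B T)\times\mathcal{M}$ in order to quote Arinkin's flat-family criterion (Lemma~\ref{lem:MM}), while you run the depth formula once more, directly on $\mathcal{M}_{T\times_B T}$, avoiding that detour and its codimension bookkeeping. The trade-off is this: the paper's route is shorter because it outsources the commutative algebra to \cite{Ardual}, but in applying Lemma~\ref{lem:MM} it silently uses that an external tensor product over $\mathbb{C}$ of Cohen--Macaulay sheaves is again Cohen--Macaulay (this is what makes the fibres of the family Cohen--Macaulay); your proof isolates exactly this point and establishes it via the K\"unneth formula for local cohomology, which makes the argument more self-contained and renders explicit the one genuinely non-formal ingredient that both proofs rest on.
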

\begin{proof}
    For a closed point $x \in \mathcal{M}$ over $b \in B$, 
    let $T_b=g^{-1}(b)$ and 
    denote by $i_x$, $j_1$ and $j_2$ the closed immersions 
    \begin{align*}
        i_x \colon T_b \times \{x\} \times T_b \hookrightarrow 
        \mathcal{M}_{T\times_B T}, \ 
        j_1 \colon T_b \times \{x\} \hookrightarrow 
        T\times_B T, \ 
        j_2 \colon \{x\} \times T_b \hookrightarrow T\times_B T. 
    \end{align*}
    Then we have 
    \begin{align*}
        Li_x^{\ast}(p_{12}^{\ast}\mathcal{P}\otimes^L p_{23}^{\ast}\mathcal{P}^{\vee})
        \cong Lj_1^{\ast}\mathcal{P}\boxtimes Lj_2^{\ast}\mathcal{P}^{\vee} 
    \end{align*}
    which is a coherent sheaf by the flatness assumption of $\mathcal{P}$ and $\mathcal{P}^{\vee}$ over 
    $\mathcal{M}$. As the above isomorphism holds for any closed point $x \in \mathcal{M}$, 
    we have the isomorphism (\ref{TBT}). 

    We have the isomorphism 
    \begin{align*}
        Lj_1^{\ast}R\mathcal{H}om(\mathcal{P}, \mathcal{O}_{\mathcal{M}_T})
        \cong R\mathcal{H}om(Lj_1^{\ast}\mathcal{P}, \mathcal{O}_{T_b})
    \end{align*}
    and the left hand side is a sheaf by the assumption that $\mathcal{P}^{\vee}$ is flat 
    over $\mathcal{M}$. 
    Therefore $Lj_1^{\ast}\mathcal{P}$ is a maximal Cohen–Macaulay sheaf on $T_b$, 
    and similarly $Lj_2^{\ast}\mathcal{P}^{\vee}$ is also a maximal Cohen–Macaulay sheaf. 
    Therefore, the push-forward of the sheaf (\ref{TBT}) to $(T\times_B T) \times \mathcal{M}$ 
    is a flat family of Cohen-Macaulay sheaf over $\mathcal{M}$, hence 
    it is a Cohen-Macaulay sheaf by Lemma~\ref{lem:MM} below. 
\end{proof}
We have used the following lemma from~\cite{Ardual}. 
\begin{lemma}\label{lem:MM}\emph{(\cite[Lemma~2.1]{Ardual})}
Let $X$ and $Y$ be schemes of pure dimension, and suppose that $Y$ is 
Cohen-Macaulay. Suppose that a coherent sheaf $F$ on $X \times Y$
is flat over $Y$ and that for every point $y \in Y$, the restriction $F|_{X\times \{y\}}$ is 
Cohen-Macaulay of some fixed codimension $d$. Then $F$ is Cohen-Macaulay of codimension $d$. 
\end{lemma}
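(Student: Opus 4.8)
The plan is to verify the Cohen--Macaulay property of $F$ pointwise, reducing to the standard behaviour of depth under flat base change. Since the assertion is local on $X \times Y$, it suffices to fix a point $z = (x, y) \in \operatorname{Supp}(F)$ and to show that the stalk $F_z$ is a Cohen--Macaulay module over $B := \mathcal{O}_{X \times Y, z}$ of the correct dimension. The projection $p_2 \colon X \times Y \to Y$ is flat, being the base change of the structure morphism of $X$, so it induces a flat local homomorphism $A := \mathcal{O}_{Y, y} \to B$, and by hypothesis $F_z$ is a finite $B$-module flat over $A$. Writing $\bar{B} := B \otimes_A \kappa(y) = \mathcal{O}_{X_{\kappa(y)}, x}$ and $\bar{F} := F_z \otimes_A \kappa(y)$, the module $\bar{F}$ is precisely the stalk at $x$ of the fibre sheaf $F|_{X \times \{y\}}$.

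First I would invoke the depth formula for flat local homomorphisms of Noetherian local rings: for a nonzero finite $B$-module $M$ flat over $A$ one has
\begin{align*}
  \operatorname{depth}_B M = \operatorname{depth}_A A + \operatorname{depth}_{\bar{B}} (M / \mathfrak{m}_A M).
\end{align*}
Applying this with $M = F_z$, the two hypotheses of the lemma feed in directly: the assumption that $Y$ is Cohen--Macaulay gives $\operatorname{depth}_A A = \dim A$, while the assumption that $F|_{X \times \{y\}}$ is Cohen--Macaulay of codimension $d$ gives $\operatorname{depth}_{\bar{B}} \bar{F} = \dim_{\bar{B}} \bar{F}$. The key input behind the displayed formula is that, for $M$ flat over $A$ with nonzero fibre, an $A$-regular sequence in $\mathfrak{m}_A$ remains $M$-regular, so the contribution of the base to the depth of $M$ is exactly $\operatorname{depth}_A A$.

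To conclude I would combine this with the additivity of dimension along the flat homomorphism, namely $\dim_B F_z = \dim A + \dim_{\bar{B}} \bar{F}$ and $\dim B = \dim A + \dim \bar{B}$, both valid because $F_z$ is finite over $B$ and flat over $A$. Substituting the two depth computations above yields $\operatorname{depth}_B F_z = \dim A + \dim_{\bar{B}}\bar{F} = \dim_B F_z$, so $F_z$ is Cohen--Macaulay; moreover $\operatorname{codim}_B \operatorname{Supp}(F_z) = \dim B - \dim_B F_z = \dim \bar{B} - \dim_{\bar{B}} \bar{F} = d$, which is the asserted codimension. The purity hypotheses on $X$ and $Y$ guarantee that these dimension counts are equidimensional, so the codimension $d$ is globally well defined. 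The main technical point to be careful about is the correct identification of $\bar{F}$ with the stalk of the fibre sheaf $F|_{X \times \{y\}}$ together with the precise bookkeeping in the depth and dimension additivity formulas for flat local homomorphisms; once these standard facts are marshalled, the argument is immediate.
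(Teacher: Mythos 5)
Your proof is correct, but note that the paper does not actually prove this lemma: it is imported verbatim from Arinkin (\cite[Lemma~2.1]{Ardual}), and your argument --- the depth formula $\operatorname{depth}_B M = \operatorname{depth}_A A + \operatorname{depth}_{\bar{B}}(M/\mathfrak{m}_A M)$ for a finite $B$-module flat over $A$, combined with dimension additivity along the flat projection --- is precisely the standard argument behind that citation, so there is no divergence of method to report. One small point worth making explicit in your write-up: since Cohen--Macaulayness of a coherent sheaf on a finite-type scheme can be checked at closed points (CM stalks generize), you may as well take $z$ closed, so that its image $y \in Y$ is closed and $\bar{B}$, $\bar{F}$ are literally the local ring and stalk of the fiber $X \times \{y\}$ appearing in the hypothesis; for non-closed $y$ the hypothesis has to be read as a statement about the pullback of $F$ to $X_{\kappa(y)}$, which your identification of $\bar{F}$ silently assumes.
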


Recall the object $\mathcal{Q}$ in (\ref{isom:Q}).  
\begin{lemma}\label{lem:Qvanish}
    In the setting of Lemma~\ref{lem:MCM}, 
    we have 
    \begin{align*}
\mathcal{H}^{>0}(\mathcal{Q})=0, \ 
\mathcal{H}^{>g^{\rm{sp}}}(\mathbb{D}_{T\times_B T}(\mathcal{Q}))=0. 
\end{align*}
\end{lemma}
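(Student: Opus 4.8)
The statement to prove is Lemma~\ref{lem:Qvanish}: under the hypotheses of Lemma~\ref{lem:MCM} (namely $\mathcal{P}$ maximal Cohen--Macaulay with $\mathcal{P}^\vee$ also maximal Cohen--Macaulay, both flat over $\mathcal{M}$), we have $\mathcal{H}^{>0}(\mathcal{Q})=0$ and $\mathcal{H}^{>g^{\rm{sp}}}(\mathbb{D}_{T\times_B T}(\mathcal{Q}))=0$. The plan is to read off each vanishing from the two already-established descriptions of $\mathcal{Q}$: the formula
\begin{align*}
\mathcal{Q}=Rp_{13\ast}(p_{12}^{\ast}\mathcal{P}\otimes^L p_{23}^{\ast}\mathcal{P}^{\vee})[g^{\rm{sp}}]
\end{align*}
from \eqref{isom:Q}, together with the identification of $\mathbb{D}_{T\times_B T}(\mathcal{Q})$ in Lemma~\ref{lem:dualQ}. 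The engine behind both estimates is that $p_{13}=h_{\mathcal{M}_{T\times_B T}}\colon \mathcal{M}_{T\times_B T}\to T\times_B T$ has fibers which are (up to the gerbe) Hitchin fibers of dimension $g^{\rm{sp}}$, so $R^ip_{13\ast}$ of a sheaf vanishes for $i>g^{\rm{sp}}$ by the theorem on cohomology and dimension of fibers.

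\textbf{First vanishing.} First I would invoke Lemma~\ref{lem:MCM}: its conclusion gives $p_{12}^{\ast}\mathcal{P}\otimes^L p_{23}^{\ast}\mathcal{P}^{\vee}=p_{12}^{\ast}\mathcal{P}\otimes p_{23}^{\ast}\mathcal{P}^{\vee}$, an \emph{honest sheaf} (concentrated in degree $0$) on $\mathcal{M}_{T\times_B T}$. Applying $Rp_{13\ast}$ to a sheaf placed in cohomological degree $0$, the higher direct images $R^ip_{13\ast}$ vanish for $i<0$ automatically and for $i>g^{\rm{sp}}$ by the fiber-dimension bound, since the relative dimension of $\pi_{T\times_B T}$ composed with $h$ is $g^{\rm{sp}}$ (the Hitchin fibers have dimension $g^{\rm{sp}}$, and the gerbe direction does not contribute to the weight-$w$ piece). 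Therefore $Rp_{13\ast}$ of this sheaf is concentrated in cohomological degrees $[0,g^{\rm{sp}}]$, and after the shift $[g^{\rm{sp}}]$ the object $\mathcal{Q}$ lives in degrees $[-g^{\rm{sp}},0]$. In particular $\mathcal{H}^{>0}(\mathcal{Q})=0$, which is the first claim.

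\textbf{Second vanishing.} For the dual, I would use Lemma~\ref{lem:dualQ}, which gives
\begin{align*}
\mathbb{D}_{T\times_B T}(\mathcal{Q})\cong Rh_{\mathcal{M}_{T\times_B T}\ast}\mathbb{D}_{\mathcal{M}_{T\times_B T}}(p_{12}^{\ast}\mathcal{P}\otimes^L p_{23}^{\ast}\mathcal{P}^{\vee}).
\end{align*}
By Lemma~\ref{lem:MCM} the tensor product is a maximal Cohen--Macaulay sheaf on $\mathcal{M}_{T\times_B T}$; since $\mathcal{M}_{T\times_B T}$ is Gorenstein (as $\mathcal{M}$ is smooth and the base change is lci, so $M_{T\times_B T}$ is Gorenstein) and the sheaf is maximal Cohen--Macaulay, its dualizing complex $\mathbb{D}_{\mathcal{M}_{T\times_B T}}$ is again concentrated in a single cohomological degree, namely it is an honest sheaf (up to the normalizing shift built into $\mathbb{D}$). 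Then I apply $Rh_{\mathcal{M}_{T\times_B T}\ast}$ to a sheaf and again use the fiber-dimension bound $g^{\rm{sp}}$, concluding that $\mathbb{D}_{T\times_B T}(\mathcal{Q})$ is concentrated in cohomological degrees $\leq g^{\rm{sp}}$, which is exactly $\mathcal{H}^{>g^{\rm{sp}}}(\mathbb{D}_{T\times_B T}(\mathcal{Q}))=0$.

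\textbf{Main obstacle.} The routine point that needs care, and which I expect to be the only real subtlety, is the precise normalization of the duality functor $\mathbb{D}_{\mathcal{M}_{T\times_B T}}=R\mathcal{H}om(-,\mathcal{O})$ versus the dualizing-complex convention, and verifying that a maximal Cohen--Macaulay sheaf on the Gorenstein (non-smooth) space $\mathcal{M}_{T\times_B T}$ really has $\mathbb{D}$ concentrated in degree $0$: this is where the maximal Cohen--Macaulay hypothesis, the Gorenstein property, and the triviality of the relative dualizing sheaf $\omega_{M_{T\times_B T}/ (T\times_B T)}\cong\mathcal{O}$ (Lemma~\ref{lem:gorenstein} and the lci base change) all have to be combined correctly so that no extra shift is introduced. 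Once the bookkeeping of shifts is pinned down, both vanishings are immediate consequences of the fiber-dimension bound for $h_{\mathcal{M}}$.
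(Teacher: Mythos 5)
Your proof is correct and follows essentially the same route as the paper: Lemma~\ref{lem:MCM} makes the kernel $p_{12}^{\ast}\mathcal{P}\otimes^L p_{23}^{\ast}\mathcal{P}^{\vee}$ an honest maximal Cohen--Macaulay sheaf, and both vanishings then come from the cohomological-dimension bound $g^{\rm{sp}}$ for the pushforward to $T\times_B T$ --- applied directly to get $\mathcal{H}^{>0}(\mathcal{Q})=0$ after the shift, and via Lemma~\ref{lem:dualQ} to the degree-zero dual of that sheaf for the second claim. The only point where the paper is more precise than you are is the dimension bound for pushforward from the \emph{stack}: it factors $Rh_{\mathcal{M}_{T\times_B T}\ast}$ as the exact good moduli space pushforward $\pi_{\mathcal{M}_{T\times_B T}\ast}$ followed by the schematic pushforward $Rh_{M_{T\times_B T}\ast}$ (which has fibers of dimension at most $g^{\rm{sp}}$), and this factorization is the rigorous form of your informal remark that the gerbe/stacky direction contributes nothing.
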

\begin{proof}
    The first vanishing holds as 
    $p_{12}^{\ast}\mathcal{P}\otimes^L p_{23}^{\ast}\mathcal{P}^{\vee}$ is a 
    coherent sheaf on $\mathcal{M}_{T\times_B T}$, and the fact that 
    for any coherent sheaf $F$ on $\mathcal{M}_{T\times_B T}$ we have 
    \begin{align*}
    R^i h_{\mathcal{M}_{T\times_B T}\ast}F \cong 
    R^i h_{M_{T\times_B T}\ast} \pi_{\mathcal{M}_{T\times_B T}\ast} F=0
    \end{align*}
    for $i>g^{\rm{sp}}$, 
    where the first isomorphism holds as 
    $\pi_{\mathcal{M}_{T\times_B T \ast}}$ is exact. 
    
    The second vanishing holds similarly by noting Lemma~\ref{lem:dualQ} together with 
    \begin{align*}
        \mathbb{D}_{\mathcal{M}_{T\times_B T}}(p_{12}^{\ast}\mathcal{P}\otimes^L p_{23}^{\ast}\mathcal{P}^{\vee}) \in \Coh(\mathcal{M}_{T\times_B T})
    \end{align*}
    by Lemma~\ref{lem:MCM}. 
\end{proof}
Let $\Delta_{T} \subset T\times_B T$ be the diagonal embedding. 
 
\begin{prop}\label{prop:CMQ}
 In the setting of Lemma~\ref{lem:MCM}, 
suppose furthermore that the morphism
$g \colon T \to B$ has relative dimension $g^{\rm{sp}}$ and that
\begin{enumerate}
    \item there is an open subset $B^{\circ} \subset B$ whose complement 
    has codimension at least two such that 
    $\mathcal{Q}|_{T^{\circ}\times_{B^{\circ}} T^{\circ}} \cong 
    \mathcal{O}_{\Delta_{T^{\circ}}}$, and 
    \item the subset 
    \begin{align*}
        \{(x_1, x_2) \in T\times_B T \setminus \Delta_T : \mathcal{Q}|_{(x_1, x_2)} \neq 0\}
        \subset T\times_B T
    \end{align*}
    has codimension bigger than $g^{\rm{sp}}$. 
\end{enumerate}
    Then we have $\mathcal{Q} \cong \mathcal{O}_{\Delta_T}$. 
\end{prop}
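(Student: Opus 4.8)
The plan is to promote $\mathcal{Q}$ from an a priori complex to the structure sheaf of the diagonal in three stages: first showing that $\mathcal{Q}$ is a coherent sheaf, then that it is Cohen--Macaulay of pure codimension $g^{\rm{sp}}$ supported on $\Delta_T$, and finally identifying it with $\mathcal{O}_{\Delta_T}$ by an $S_2$-extension argument. Throughout write $X:=T\times_B T$; since $g$ is flat and lci and the Hitchin base $B$ is smooth, $T$ is lci (hence Cohen--Macaulay and Gorenstein), and the second projection $X\to T$ is a flat lci base change of $g$, so $X$ is Gorenstein of dimension $\dim T+g^{\rm{sp}}$ with $\Delta_T\hookrightarrow X$ of codimension $g^{\rm{sp}}$. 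I use the naive dual $\mathbb{D}_X=R\lHom(-,\mathcal{O}_X)$, for which the standard Gorenstein facts hold: for a coherent sheaf $F$ one has $\lExt^p_X(F,\mathcal{O}_X)=0$ for $p<\mathrm{codim}\,\mathrm{Supp}(F)$, and $F$ is Cohen--Macaulay of pure codimension $c$ if and only if $\mathbb{D}_X(F)$ is a single sheaf placed in degree $c$. By Lemma~\ref{lem:MCM} the object $p_{12}^\ast\mathcal{P}\otimes^L p_{23}^\ast\mathcal{P}^\vee$ is an honest Cohen--Macaulay sheaf, and by Lemma~\ref{lem:Qvanish} we already know $\mathcal{Q}\in D^{\leq 0}(X)$ and $\mathbb{D}_X(\mathcal{Q})\in D^{\leq g^{\rm{sp}}}(X)$.

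The crux is the first stage: showing $\mathcal{H}^{-q}(\mathcal{Q})=0$ for all $q>0$. First I record that each such sheaf is supported in codimension $>g^{\rm{sp}}$. Over the open $U:=T^\circ\times_{B^\circ}T^\circ\subseteq X$, whose complement lies over $B\setminus B^\circ$, hypothesis (1) gives $\mathcal{Q}\cong\mathcal{O}_{\Delta_{T^\circ}}$, so $\mathcal{H}^{-q}(\mathcal{Q})|_U=0$ for $q>0$ and the support lies over $B\setminus B^\circ$; its off-diagonal part is contained in the locus of hypothesis (2), of codimension $>g^{\rm{sp}}$, while its part on $\Delta_T$ lies over $B\setminus B^\circ$, hence (as $g$ is flat and $\mathrm{codim}(B\setminus B^\circ)\geq 2$) has codimension $\geq g^{\rm{sp}}+2$ in $X$. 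Then I run the hyperext spectral sequence $E_2^{p,q}=\lExt^p_X(\mathcal{H}^{-q}(\mathcal{Q}),\mathcal{O}_X)\Rightarrow \mathcal{H}^{p+q}(\mathbb{D}_X\mathcal{Q})$. If some row $q=q_{\max}>0$ were nonzero, I would choose $p_{\max}$ maximal with $E_2^{p_{\max},q_{\max}}\neq 0$; the codimension estimate forces $p_{\max}\geq \mathrm{codim}>g^{\rm{sp}}$, and both the incoming differentials (from the vanishing rows $q>q_{\max}$) and the outgoing ones (into $\lExt^{>p_{\max}}=0$) vanish, so this corner term survives to $E_\infty$ and contributes a nonzero group to $\mathcal{H}^{p_{\max}+q_{\max}}(\mathbb{D}_X\mathcal{Q})$ in degree $p_{\max}+q_{\max}>g^{\rm{sp}}$, contradicting Lemma~\ref{lem:Qvanish}. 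Hence $\mathcal{Q}$ is a sheaf concentrated in degree $0$.

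With $\mathcal{Q}$ a sheaf, its support is contained in $\Delta_T$ together with the off-diagonal locus of hypothesis (2); since $\mathcal{Q}|_U\cong\mathcal{O}_{\Delta_{T^\circ}}$ has support of codimension exactly $g^{\rm{sp}}$, the minimal codimension of $\mathrm{Supp}(\mathcal{Q})$ equals $g^{\rm{sp}}$, so $\mathbb{D}_X(\mathcal{Q})\in D^{\geq g^{\rm{sp}}}$. Combined with $\mathbb{D}_X(\mathcal{Q})\in D^{\leq g^{\rm{sp}}}$, this concentrates $\mathbb{D}_X(\mathcal{Q})$ in degree $g^{\rm{sp}}$, i.e. $\mathcal{Q}$ is Cohen--Macaulay of pure codimension $g^{\rm{sp}}$. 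Purity now rules out any associated component of $\mathrm{Supp}(\mathcal{Q})$ in the off-diagonal locus of hypothesis (2), which has codimension $>g^{\rm{sp}}$, so $\mathrm{Supp}(\mathcal{Q})\subseteq\Delta_T$.

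Finally, $\mathcal{Q}$ and $\mathcal{O}_{\Delta_T}$ are both Cohen--Macaulay $\mathcal{O}_X$-modules (hence $S_2$) whose supports meet $X\setminus U$ in codimension $\geq 2$, since $\Delta_T\setminus U$ has codimension $\geq 2$ in $\Delta_T\cong T$. As an $S_2$ sheaf is recovered as the pushforward $j_\ast$ of its restriction to the complement of such a closed subset, with $j\colon U\hookrightarrow X$, the identification of hypothesis (1) extends to $\mathcal{Q}\cong j_\ast(\mathcal{Q}|_U)\cong j_\ast\mathcal{O}_{\Delta_U}\cong\mathcal{O}_{\Delta_T}$, which is the assertion. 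The main obstacle is the spectral sequence step of the second paragraph, where the geometric codimension input of hypothesis (2) must be converted, through Gorenstein duality, into the vanishing of the negative cohomology of $\mathcal{Q}$; this is the relative and non-commutative analogue of the dimension estimates of Arinkin~\cite{Ardual}.
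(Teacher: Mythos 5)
Your overall skeleton is the same as the paper's: convert the codimension estimates and the two vanishings of Lemma~\ref{lem:Qvanish} into Cohen--Macaulayness of $\mathcal{Q}$, use purity of a Cohen--Macaulay sheaf to confine the support to $\Delta_T$, and conclude by the $S_2$-extension property (the paper phrases this last step via \cite[Lemma~8.2]{MRVF} and the fact that a maximal Cohen--Macaulay sheaf is determined outside codimension two, which is equivalent to your $j_\ast$ argument). Your second and third stages are correct. The gap is in the first stage, the spectral sequence proof that $\mathcal{H}^{<0}(\mathcal{Q})=0$. In $E_2^{p,q}=\lExt^p_X(\mathcal{H}^{-q}(\mathcal{Q}),\mathcal{O}_X)\Rightarrow \mathcal{H}^{p+q}(\mathbb{D}_X\mathcal{Q})$, the outgoing differentials from your corner term are $d_r\colon E_r^{p_{\max},q_{\max}}\to E_r^{p_{\max}+r,\,q_{\max}-r+1}$, whose targets lie in the rows $q<q_{\max}$; the vanishing ``$\lExt^{>p_{\max}}=0$'' that you invoke holds only in the row $q=q_{\max}$, because $p_{\max}$ was chosen maximal for that row alone. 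In particular $d_{q_{\max}+1}$ lands in $\lExt_X^{p_{\max}+q_{\max}+1}(\mathcal{H}^{0}(\mathcal{Q}),\mathcal{O}_X)$, and nothing bounds the $\lExt$-degrees of $\mathcal{H}^0(\mathcal{Q})$ from above at this stage: such an upper bound is essentially the Cohen--Macaulayness one is trying to prove. So the corner term may be killed before reaching $E_\infty$, and no contradiction with Lemma~\ref{lem:Qvanish} is obtained. The geometric input only yields uniform \emph{lower} bounds ($E_2^{p,q}=0$ for $p<\mathrm{codim}\,\mathrm{Supp}\,\mathcal{H}^{-q}(\mathcal{Q})$) in every row, never upper bounds outside the chosen row.

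This is precisely the difficulty that Lemma~\ref{lem:CMcond} (Arinkin's \cite[Lemma~7.6]{Ardual}, which the paper simply quotes) is designed to avoid, and your data at this point --- $\mathcal{Q}\in D^{\leq 0}$, $\mathrm{codim}\,\mathrm{Supp}(\mathcal{Q})\geq g^{\rm{sp}}$ (the diagonal has codimension exactly $g^{\rm{sp}}$, the off-diagonal support more by hypothesis (2)), and $\mathcal{H}^{>g^{\rm{sp}}}(\mathbb{D}_X\mathcal{Q})=0$ --- are exactly its hypotheses. Its proof uses only the lower bounds, but twice, via biduality: since $E_2^{p,q}=0$ for $p<g^{\rm{sp}}$ in \emph{every} row, one gets $\mathbb{D}_X\mathcal{Q}\in D^{\geq g^{\rm{sp}}}$, hence by Lemma~\ref{lem:Qvanish} one has $\mathbb{D}_X\mathcal{Q}=G[-g^{\rm{sp}}]$ for a coherent sheaf $G$ supported in codimension $\geq g^{\rm{sp}}$; applying the same lower bound to $G$ gives $\mathcal{Q}\cong\mathbb{D}_X(G)[g^{\rm{sp}}]\in D^{\geq 0}$, so $\mathcal{Q}$ is a sheaf, and then $\mathbb{D}_X\mathcal{Q}$ being concentrated in degree $g^{\rm{sp}}$ is, by the Gorenstein characterization you yourself recall, the statement that $\mathcal{Q}$ is Cohen--Macaulay of pure codimension $g^{\rm{sp}}$. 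Replacing your second paragraph by this argument (or by a citation of Lemma~\ref{lem:CMcond}) repairs the proof, after which it coincides with the paper's.
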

\begin{proof}
  By the assumption (2), 
  we have $\mathrm{codim}(\mathrm{Supp}(\mathcal{Q})) \geq g^{\rm{sp}}$. It follows that by   
  Lemma~\ref{lem:Qvanish} and Lemma~\ref{lem:CMcond} below, 
  the sheaf $\mathcal{Q}$ is a Cohen-Macaulay sheaf of codimension $g^{\rm{sp}}$. 
  Then assumption (2) further implies that the support of $\mathcal{Q}$ is 
  $\Delta_T$. 
  Its scheme theoretic support is generically reduced by (1), hence 
  it is a push-forward of a maximal Cohen–Macaulay sheaf on $\Delta_T$ by~\cite[Lemma~8.2]{MRVF}. 
As it is isomorphic to $\Delta_T$ outside a closed subset of codimension 
bigger than or equal to two, and is maximal Cohen–Macaulay, we conclude that 
$\mathcal{Q} \cong \mathcal{O}_{\Delta}$.   
\end{proof}

We have used the following lemma from~\cite{Ardual}:
\begin{lemma}\label{lem:CMcond}\emph{(\cite[Lemma~7.6]{Ardual})}
Let $X$ be a scheme of pure dimension. 
Suppose that $F \in D^b(X)$
satisfies that $\mathrm{codim}(\mathrm{Supp}(F)) \geq d$, 
$\mathcal{H}^{>0}(F)=0$ and $\mathcal{H}^{>d}(\mathbb{D}_X(F))=0$. 
Then $F$ is a Cohen-Macaulay sheaf of codimension $d$. 
\end{lemma}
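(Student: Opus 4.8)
The plan is to deduce the statement from biduality, reducing everything to the single-sheaf case. On a Cohen--Macaulay scheme $X$ of pure dimension with dualizing sheaf $\omega_X$, the functor $\mathbb{D}_X=R\mathcal{H}om(-,\omega_X)$ is a contravariant involution on $D^b(X)$, and a coherent sheaf supported in codimension $d$ is Cohen--Macaulay of codimension $d$ precisely when $\mathcal{E}xt^i(-,\omega_X)=0$ for all $i\neq d$. In this language the two amplitude hypotheses read $F\in D^{\leq 0}$ and $\mathbb{D}_X F\in D^{\leq d}$, and the goal is to sharpen these into a concentration of $\mathbb{D}_X F$ in the single degree $d$, from which the assertion about $F$ follows formally.

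First I would prove the complementary lower bound $\mathbb{D}_X F\in D^{\geq d}$ using the hyperext spectral sequence
\[
E_2^{p,q}=\mathcal{E}xt^p(\mathcal{H}^{-q}(F),\omega_X)\ \Longrightarrow\ \mathcal{H}^{p+q}(\mathbb{D}_X F).
\]
Since $F\in D^{\leq 0}$ the columns with $q<0$ vanish, so only $q\geq 0$ contributes. Moreover $\mathrm{Supp}(\mathcal{H}^{-q}(F))\subseteq \mathrm{Supp}(F)$ has codimension at least $d$, so the Grothendieck grade inequality $\mathcal{E}xt^p(\mathcal{G},\omega_X)=0$ for $p<\mathrm{codim}\,\mathrm{Supp}(\mathcal{G})$ forces $E_2^{p,q}=0$ whenever $p<d$. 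Every surviving term therefore satisfies $p+q\geq d$, giving $\mathcal{H}^i(\mathbb{D}_X F)=0$ for $i<d$. Combined with the hypothesis $\mathcal{H}^{>d}(\mathbb{D}_X F)=0$, this shows $\mathbb{D}_X F\cong \mathcal{G}[-d]$ for the single sheaf $\mathcal{G}:=\mathcal{H}^d(\mathbb{D}_X F)$, whose support equals $\mathrm{Supp}(F)$ and hence has codimension at least $d$.

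Next I would dualize back. Biduality gives $F\cong \mathbb{D}_X(\mathcal{G})[d]$, so $\mathcal{H}^i(F)\cong\mathcal{E}xt^{i+d}(\mathcal{G},\omega_X)$. The hypothesis $\mathcal{H}^{>0}(F)=0$ then yields $\mathcal{E}xt^j(\mathcal{G},\omega_X)=0$ for $j>d$, while the grade inequality applied to the sheaf $\mathcal{G}$ gives $\mathcal{E}xt^j(\mathcal{G},\omega_X)=0$ for $j<d$. Hence $\mathcal{E}xt^j(\mathcal{G},\omega_X)$ is concentrated in degree $j=d$; as $\mathcal{G}\neq 0$ this nonvanishing pins down $\mathrm{codim}\,\mathrm{Supp}(\mathcal{G})=d$ and exhibits $\mathcal{G}$, and therefore also its dual $F\cong\mathcal{E}xt^d(\mathcal{G},\omega_X)$, as a Cohen--Macaulay sheaf of codimension $d$ placed in cohomological degree $0$, which is the claim.

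The only genuinely delicate point, which I would treat with care, is the commutative-algebra input rather than the diagram chase: that $X$ admits a dualizing sheaf $\omega_X$ making $\mathbb{D}_X=R\mathcal{H}om(-,\omega_X)$ involutive and validating the grade inequality in the stated form. This is exactly where pure-dimensionality together with the (implicit) Cohen--Macaulay/Gorenstein hypothesis is used; in all our applications it holds since the relevant fiber products are Gorenstein with $\omega\cong\mathcal{O}$. Granting this input, the argument is the two-step duality bookkeeping above, following~\cite[Lemma~7.6]{Ardual}.
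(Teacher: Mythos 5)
The paper offers no proof of this lemma at all: it is quoted from Arinkin \cite[Lemma~7.6]{Ardual}, and your argument is essentially the standard duality proof given in that reference. Your two-step bookkeeping is correct: the hyperext spectral sequence $E_2^{p,q}=\mathcal{E}xt^p(\mathcal{H}^{-q}(F),\omega_X)\Rightarrow \mathcal{H}^{p+q}(\mathbb{D}_X F)$ together with the grade inequality bounds $\mathbb{D}_X F$ below in degree $d$, the hypothesis bounds it above, so $\mathbb{D}_X F\cong\mathcal{G}[-d]$; bidualizing and running the same two bounds for $\mathcal{G}$ concentrates $F$ in degree $0$ and shows $\mathcal{E}xt^i(F,\omega_X)$ is nonzero only for $i=d$, which is the Cohen--Macaulay criterion, and the nonvanishing of $\mathcal{E}xt^d$ pins the codimension at exactly $d$.

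The caveat you flagged at the end deserves to be stressed rather than treated as a formality: the lemma as transcribed in the paper (an arbitrary scheme of pure dimension, with $\mathbb{D}_X=R\mathcal{H}om(-,\mathcal{O}_X)$ as this paper uses $\mathbb{D}$ elsewhere) is literally false without a Cohen--Macaulay/Gorenstein hypothesis. For instance, on a pure-dimensional non-Cohen--Macaulay $X$, the object $F=\mathcal{O}_X$ satisfies all three hypotheses with $d=0$ (its $\mathcal{O}_X$-linear dual is $\mathcal{O}_X$ itself) yet is not a Cohen--Macaulay sheaf. Both ingredients of your proof — the grade inequality and the involutivity of the duality functor — need this hypothesis, and it is what makes your substitution of $\omega_X$ for the paper's $\mathcal{O}_X$ harmless. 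Since every scheme to which the paper applies the lemma ($T\times_B T$ with $T\to B$ flat lci and $T$ Gorenstein, and its analogues) is Gorenstein, your proof covers exactly what is needed; it would be worth recording the Gorenstein hypothesis explicitly in the statement.
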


\subsection{Fourier-Mukai equivalences for BPS categories}
Suppose that $g \colon T\to B$ is proper. 
Then the functors $\Phi_{\mathcal{P}}$, $\Phi_{\mathcal{P}}^L$ in (\ref{funct:PhiP}), (\ref{PhiPL})
restrict to the 
functors 
\begin{align}\label{funct:PhiP2}
\Phi_{\mathcal{P}} \colon D^b(T) \to \mathbb{T}, \ 
\Phi_{\mathcal{P}}^L \colon \mathbb{T} \to D^b(T)
\end{align}
such that $\Phi_{\mathcal{P}}^L$ is the left adjoint 
of $\Phi_{\mathcal{P}}$. 
The following proposition is an analogue of~\cite[Theorem~5.4]{Brequiv}. 
\begin{prop}\label{prop:equivT}
Suppose that the functor $\Phi_{\mathcal{P}}$
in (\ref{funct:PhiP2}) is fully-faithful. Then $\Phi_{\mathcal{P}}$ is an equivalence. 
\end{prop}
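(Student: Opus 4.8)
The plan is to deduce the equivalence from fully-faithfulness by a relative Serre-duality argument combined with the indecomposability of BPS categories. Write $\Phi:=\Phi_{\mathcal{P}}\colon D^b(T)\to\mathbb{T}$ and let $\Phi^L=\Phi_{\mathcal{P}}^L$ be its left adjoint from \eqref{funct:PhiP2}. Since $\Phi$ is fully-faithful and admits a left adjoint, its essential image $\mathcal{I}:=\mathrm{Im}(\Phi)\subset\mathbb{T}$ is admissible and yields a $B$-linear semiorthogonal decomposition $\mathbb{T}=\langle\ker\Phi^L,\mathcal{I}\rangle$ with $\ker\Phi^L={}^{\perp}\mathcal{I}$. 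Thus it suffices to prove that $\ker\Phi^L=0$, equivalently that $\mathcal{I}=\mathbb{T}$.

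The key input is that both categories are smooth and proper over $B$ with relative Serre functor equal to the shift $[g^{\rm{sp}}]$. For $\mathbb{T}=\mathbb{T}^L(r,\chi)_w$ this is Corollary~\ref{cor:proper0} (and Corollary~\ref{cor:proper} in the reduced case $L=\Omega_C$). For $D^b(T)$ it follows from relative Grothendieck duality for the proper morphism $g\colon T\to B$: in the relevant situations $T$ is (an open subset of) a Higgs moduli space, so $g$ has relative dimension $g^{\rm{sp}}$ and trivial relative dualizing sheaf $\omega_{T/B}\cong\mathcal{O}_T$ by Lemma~\ref{lem:gorenstein}, whence $S_{T/B}\cong[g^{\rm{sp}}]$. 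Since $\Phi$ is exact and the two relative Serre functors are the same shift, $\Phi$ automatically commutes with them, $\Phi\circ S_{T/B}\cong S_{\mathbb{T}/B}\circ\Phi$. Consequently the image is $S_{\mathbb{T}/B}$-stable: $S_{\mathbb{T}/B}(\mathcal{I})=S_{\mathbb{T}/B}\Phi(D^b(T))\cong\Phi\,S_{T/B}(D^b(T))=\Phi(D^b(T))=\mathcal{I}$.

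Next I would use relative Serre duality to compare the two orthogonals of $\mathcal{I}$. For $c\in\mathbb{T}$ and $i\in\mathcal{I}$ one has $\mathcal{H}om(c,i)\cong\mathcal{H}om(i,S_{\mathbb{T}/B}c)$ and $\mathcal{H}om(i,c)\cong\mathcal{H}om(c,S_{\mathbb{T}/B}i)$, where $\mathcal{H}om$ denotes the $B$-linear inner hom; combined with the $S_{\mathbb{T}/B}$-stability of $\mathcal{I}$ this gives $\mathcal{I}^{\perp}={}^{\perp}\mathcal{I}=:\mathcal{K}$. Hence $\mathcal{K}$ is both left and right orthogonal to $\mathcal{I}$, so $\mathbb{T}=\mathcal{I}\oplus\mathcal{K}$ is a completely orthogonal, and in particular $\mathbb{C}$-linear direct sum, decomposition. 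By the indecomposability of BPS categories (Lemma~\ref{lem:sodT}), and since $\mathcal{I}\neq0$ (as $\Phi$ is fully-faithful and $D^b(T)\neq0$), we conclude $\mathcal{K}=0$, so $\mathcal{I}=\mathbb{T}$ and $\Phi$ is an equivalence.

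The main obstacle is the relative Serre-duality bookkeeping: one must justify $S_{T/B}\cong[g^{\rm{sp}}]$ (i.e. that $g$ has relative dimension $g^{\rm{sp}}$ and $\omega_{T/B}\cong\mathcal{O}_T$), and ensure that the $B$-linear orthogonality computed via relative inner homs descends to a genuine $\mathbb{C}$-linear orthogonal splitting to which Lemma~\ref{lem:sodT} applies. An equivalent route, which sidesteps the $S_{\mathbb{T}/B}$-stability of the image, is to note that $\Phi$ has a right adjoint $\Phi^R\cong S_{T/B}\circ\Phi^L\circ S_{\mathbb{T}/B}^{-1}\cong\Phi^L$ (the shifts cancel), so that $\ker\Phi^L=\ker\Phi^R$ is simultaneously ${}^{\perp}\mathcal{I}$ and $\mathcal{I}^{\perp}$, producing the same orthogonal decomposition $\mathbb{T}=\mathcal{I}\oplus\mathcal{K}$.
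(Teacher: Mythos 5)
Your proof is correct and takes essentially the same route as the paper: both extract the $B$-linear semiorthogonal decomposition of $\mathbb{T}$ into $\mathrm{Im}(\Phi_{\mathcal{P}})$ and $\ker \Phi_{\mathcal{P}}^L$ from fully-faithfulness plus the left adjoint, and then kill the complementary piece by the indecomposability of BPS categories (Lemma~\ref{lem:sodT}). Your intermediate upgrade to a two-sided orthogonal decomposition via $S_{\mathbb{T}/B}\cong[g^{\rm{sp}}]$ is exactly the first step inside the paper's own proof of Lemma~\ref{lem:sodT}, hence harmless but redundant; note also that only Corollary~\ref{cor:proper0} for $\mathbb{T}$ is needed for that step, so your claim $S_{T/B}\cong[g^{\rm{sp}}]$ (which the proposition's hypotheses on $g\colon T\to B$ do not guarantee) is never actually used in your main argument and should be dropped along with the alternative route that relies on it.
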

\begin{proof}
    Suppose that $\Phi_{\mathcal{P}}$ is fully-faithful. 
    Since $\Phi_{\mathcal{P}}^L$ is its left adjoint, we have the 
    semiorthogonal decomposition of the form 
    \begin{align*}
        \mathbb{T}=\langle \mathrm{Im}(\Phi_{\mathcal{P}}), \mathcal{C} \rangle.
    \end{align*}
    Since the kernel objects of $\Phi_{\mathcal{P}}$ and $\Phi_{\mathcal{P}}^L$ are 
    objects from $T\times_B \mathcal{M}$, the above semiorthogonal decomposition is 
    $B$-linear. Then by Lemma~\ref{lem:sodT} below, 
    we have $\mathcal{C}=0$, hence 
    $\Phi_{\mathcal{P}}$ is an equivalence. 
\end{proof}

\begin{lemma}\label{lem:sodT}
There is no non-trivial $B$-linear 
semiorthogonal decomposition \[\mathbb{T}=\langle \mathcal{C}_1, \mathcal{C}_2 \rangle.\] 
\end{lemma}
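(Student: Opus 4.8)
```latex
The plan is to show that the BPS category $\mathbb{T}=\mathbb{T}^L(r,\chi)_w$ admits no non-trivial $B$-linear semiorthogonal decomposition by exploiting the Calabi-Yau property over $B$ established in Corollary~\ref{cor:proper0} (resp. Corollary~\ref{cor:proper}). The key point is that a $B$-linear semiorthogonal decomposition $\mathbb{T}=\langle \mathcal{C}_1,\mathcal{C}_2\rangle$ forces the components $\mathcal{C}_1,\mathcal{C}_2$ to be \emph{orthogonal} after passing to generic fibers, and over the smooth locus $B^{\mathrm{sm}}$ the category $\mathbb{T}|_{B^{\mathrm{sm}}}$ is the derived category of (twisted sheaves on) a connected abelian variety fibration, which is indecomposable.

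First I would use the relative Serre functor. By Corollary~\ref{cor:proper0} (or Corollary~\ref{cor:proper} in the reduced case) the relative Serre functor $S_{\mathbb{T}/B}$ over $B$ is isomorphic to the shift $[g^{\mathrm{sp}}]$; in particular $\mathbb{T}$ is Calabi-Yau over $B$. A $B$-linear semiorthogonal decomposition is preserved by $S_{\mathbb{T}/B}$ (since the Serre functor is $B$-linear and carries any right-admissible $B$-linear subcategory to a left-admissible one), and when the Serre functor is just a shift, the decomposition must in fact be orthogonal: one gets $\mathcal{H}om_B(\mathcal{E}_2,\mathcal{E}_1)=0$ for $\mathcal{E}_i\in\mathcal{C}_i$ by combining the semiorthogonality $\mathcal{H}om(\mathcal{E}_2,\mathcal{E}_1)=0$ with the Serre-dual vanishing, forcing $\mathcal{H}om_B(\mathcal{E}_1,\mathcal{E}_2)=0$ as well. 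Thus the decomposition is a $B$-linear orthogonal (direct sum) decomposition $\mathbb{T}=\mathcal{C}_1\oplus\mathcal{C}_2$ of $B$-linear categories.

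Next I would restrict to the smooth locus $B^{\mathrm{sm}}\subset B$ of smooth connected spectral curves. By the equivalence~\eqref{rest:sm}, $\mathbb{T}|_{B^{\mathrm{sm}}}\simeq D^b(\mathrm{Pic}^e(\mathcal{C}^{\mathrm{sm}}/B^{\mathrm{sm}}),\alpha^w)$, the derived category of $\alpha^w$-twisted sheaves on a relative Picard scheme whose fibers are connected abelian varieties. An orthogonal decomposition of $\mathbb{T}$ restricts to an orthogonal decomposition of $\mathbb{T}|_{B^{\mathrm{sm}}}$, which would in turn induce, fiberwise over a closed point $b\in B^{\mathrm{sm}}$, an orthogonal decomposition of $D^b(\mathrm{Pic}^e(\mathcal{C}_b),\alpha^w|_b)$. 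Since $\mathrm{Pic}^e(\mathcal{C}_b)$ is a connected (projective, integral) abelian variety, its (twisted) derived category is indecomposable---it admits no non-trivial orthogonal decomposition, as the structure sheaf (or any simple twisted skyscraper) cannot split. Hence one of the two restricted components is zero on every fiber, so one of $\mathcal{C}_1|_{B^{\mathrm{sm}}}$, $\mathcal{C}_2|_{B^{\mathrm{sm}}}$ vanishes.

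Finally I would propagate this vanishing from $B^{\mathrm{sm}}$ to all of $B$. Say $\mathcal{C}_2|_{B^{\mathrm{sm}}}=0$. Because the decomposition is $B$-linear, an object $\mathcal{E}\in\mathcal{C}_2$ whose restriction to $B^{\mathrm{sm}}$ vanishes must vanish identically: over the complement $B\setminus B^{\mathrm{sm}}$ the argument of Lemma~\ref{lem:Cix} (the fiberwise support criterion) shows that a component supported set-theoretically off the generic locus is forced to be zero, using that $\mathbb{T}$ is proper and smooth over $B$ so that support is detected on fibers $h^{-1}(b)$. Concretely, for $\mathcal{E}\in\mathcal{C}_2$ and any $b\in B$, $\mathcal{H}om(\mathcal{E},\mathcal{E})$ is computed from its fibers, and vanishing of $\mathcal{E}|_{B^{\mathrm{sm}}}$ together with the $B$-linearity and properness forces $\mathrm{id}_{\mathcal{E}}=0$, whence $\mathcal{E}=0$. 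Therefore $\mathcal{C}_2=0$ and the decomposition is trivial. The main obstacle I anticipate is the last step: making precise that a $B$-linear semiorthogonal component which vanishes over a dense open set $B^{\mathrm{sm}}$ with complement of positive codimension must vanish globally; this requires the properness and relative smoothness of $\mathbb{T}$ over $B$ (Theorem~\ref{thm:proper0}/\ref{thm:proper}) and a careful fiberwise argument in the spirit of Lemma~\ref{lem:Cix} rather than a naive generic-flatness statement, since the singular Hitchin fibers over $B\setminus B^{\mathrm{sm}}$ must be controlled.
```
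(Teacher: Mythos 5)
Your first two steps coincide with the paper's proof: using the Calabi--Yau property over $B$ (Corollary~\ref{cor:proper0}) to upgrade the $B$-linear semiorthogonal decomposition to an orthogonal one, and then using indecomposability of the (twisted) derived category of the abelian fibration over $B^{\rm{sm}}$ to conclude that, say, $\mathcal{C}_2|_{B^{\rm{sm}}}=0$. The gap is in your final step. The principle you invoke --- that an object of $\mathbb{T}$ whose restriction to $B^{\rm{sm}}$ vanishes must vanish identically --- is false. For example, the skyscraper sheaf of $\mathbb{C}^{\ast}$-weight $w$ at a stable Higgs bundle whose spectral curve is singular is a nonzero object of $\mathbb{T}^L(r,\chi)_w$ (at a stable point the window condition \eqref{cond:qbps} reduces to the weight-$w$ condition), and it restricts to zero over $B^{\rm{sm}}$. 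Smoothness and properness of $\mathbb{T}$ over $B$ do not exclude such ``torsion'' objects, just as $D^b(X)$ for a smooth proper family $X\to B$ contains objects supported over proper closed subsets of $B$; so neither Lemma~\ref{lem:Cix} (which is about identifying the semiorthogonal summand an object lies in, not about detecting vanishing) nor any fiberwise $\mathcal{H}om$ computation can force $\mathcal{C}_2=0$ this way.

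What the paper does instead is to trade the arbitrary object $\mathcal{E}\in\mathcal{C}_2$ for a distinguished one with strong sheaf-theoretic properties. Via Proposition~\ref{prop:TA}, $\mathbb{T}\simeq D^b(\mathscr{A})$ where $\mathscr{A}=\pi_{\ast}\mathcal{E}nd(\mathbb{V})$ is a \emph{maximal Cohen--Macaulay} sheaf on $M$. The orthogonal decomposition splits the object $\mathscr{A}$ itself as $\mathscr{A}=\mathscr{A}_1\oplus\mathscr{A}_2$ with $\mathscr{A}_i\in\mathcal{C}_i$; each summand is again a maximal Cohen--Macaulay $\mathcal{O}_M$-module, and an MCM sheaf that vanishes on the dense open subset $M^{\rm{sm}}$ vanishes globally (it has no sections supported in positive codimension). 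Hence $\mathscr{A}_2=0$ and $\mathscr{A}\otimes_{\mathcal{O}_M}\mathcal{L}\in\mathcal{C}_1$ for every line bundle $\mathcal{L}$. Finally, any $E\in\Coh(\mathscr{A})$ admits a surjection $(\mathscr{A}\otimes_{\mathcal{O}_M}\mathcal{L})^{\oplus k}\twoheadrightarrow E$; composing with the projection onto the $\mathcal{C}_2$-component $E_2$ of $E$ gives a surjection that must vanish by orthogonality, so $E_2=0$, and generation gives $\mathcal{C}_2=0$. So the missing idea is precisely this use of the tilting bundle and the Cohen--Macaulay property to propagate vanishing from $B^{\rm{sm}}$ to all of $B$; without it (or some substitute), the passage from the smooth locus to the singular fibers does not go through.
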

\begin{proof}
Let $\mathbb{T}=\langle \mathcal{C}_1, \mathcal{C}_2 \rangle$
be a $B$-linear semiorthogonal decomposition.
We will show that either $\mathcal{C}_1=0$ or $\mathcal{C}_2=0$. 
Let $\mathcal{E}_i \in \mathcal{C}_i$ with $i=1, 2$. 
Because the semiorthogonal decomposition $\mathbb{T}=\langle \mathcal{C}_1, \mathcal{C}_2\rangle$ is $B$-linear, we have that
\[Rh_{\mathcal{M}\ast}\mathcal{H}om(\mathcal{E}_2, \mathcal{E}_1)=0.\] 
Then, by Corollary~\ref{cor:proper0},
we have $Rh_{\mathcal{M}\ast}\mathcal{H}om(\mathcal{E}_1, \mathcal{E}_2)=0$, hence 
we have an orthogonal decomposition 
\[\mathbb{T}=\mathcal{C}_1 \oplus \mathcal{C}_2.\] 

Since the above decomposition is $B$-linear, it restricts 
to the decomposition $D^b(\mathcal{M}^{\rm{sm}})_w=\mathcal{C}_1^{\rm{sm}} \oplus\mathcal{C}_2^{\rm{sm}}$
on the pull-back of $B^{\rm{sm}} \subset B$ corresponding to smooth 
spectral curves. We first show that either $\mathcal{C}_1^{\rm{sm}}=0$ or $\mathcal{C}_2^{\rm{sm}}=0$. 
Note that \[M^{\rm{sm}} \to B^{\rm{sm}}\] is a smooth abelian fibration
with connected fibers, 
and $D^b(\mathcal{M}^{\rm{sm}})_w$ is the derived
category of $\alpha^w$-twisted sheaves of a Brauer class $\alpha$ on $M^{\rm{sm}}$. 
Since $\alpha$ is trivial étale locally on $B$, by taking 
étale cover of $B^{\rm{sm}}$ we may assume that $\alpha^w$ is trivial. 
Then as $\mathcal{O}_{M^{\rm{sm}}}$ is indecomposable, we 
may assume that $\mathcal{O}_{M^{\rm{sm}}}\in \mathcal{C}_1^{\rm{sm}}$. 
Then for any point $x \in M^{\rm{sm}}$, we have $\mathcal{O}_x \in \mathcal{C}_1^{\rm{sm}}$
as it is indecomposable and admits a surjection from $\mathcal{O}_{M^{\rm{sm}}}$. 
Then any indecomposable object in $D^b(M^{\rm{sm}})$ lies in $\mathcal{C}_1^{\rm{sm}}$ as 
it admits a non-zero morphism from some shift of $\mathcal{O}_x$ for some $x \in M^{\rm{sm}}$. 
Therefore we have $\mathcal{C}_2^{\rm{sm}}=0$. 

By the equivalence $\mathbb{T} \stackrel{\sim}{\to} D^b(\mathscr{A})$ in Proposition~\ref{prop:TA}, 
the above decomposition may be written as \[D^b(\mathscr{A})=\mathcal{C}_1 \oplus \mathcal{C}_2.\] 
Let $\mathscr{A}=\mathscr{A}_1 \oplus \mathscr{A}_2$ be the corresponding decomposition. 
Then $\mathscr{A}_2|_{M^{\rm{sm}}}=0$ as $\mathcal{C}_2^{\rm{sm}}=0$, hence $\mathscr{A}_2=0$
since $\mathscr{A}$ is a maximal Cohen–Macaulay sheaf on $M$. Therefore 
$\mathscr{A} \in \mathcal{C}_1$, and similarly $\mathscr{A} \otimes_{\mathcal{O}_M} \mathcal{L} \in \mathcal{C}_1$
for any line bundle $\mathcal{L}$ on $M$. 
For any object $E \in \Coh(\mathscr{A})$, there is a surjection 
of the form \[(\mathscr{A}\otimes_{\mathcal{O}_M}\mathcal{L})^{\oplus k} \twoheadrightarrow E\]
for some integer $k >0$ and a line bundle $\mathcal{L}$ on $M$. 
Therefore $E \in \mathcal{C}_1$ and we conclude that $\mathcal{C}_2=0$. 
    \end{proof}

  \section{The conjectural $\mathrm{SL}/ \mathrm{PGL}$ duality of quasi-BPS categories}
  In this section, we propose a SL/PGL version 
  of duality of quasi-BPS categories inspired by the mirror symmetry for Higgs bundles of Hausel--Thaddeus~\cite{HauTha}, and prove 
  Theorems~\ref{thm:intro2} and \ref{thm:intro3}. 
  
    \subsection{SL-Higgs moduli stacks}
    Let $C$ be a smooth projective curve of genus $g$ and let $L$ be a line bundle on $C$ 
      such that $l=\deg L>2g-2$ or $L=\Omega_C$. 
For each decomposition $r=r_1+\cdots+r_k$, 
set 
\begin{align*}
    \mathrm{SL}(r_{\bullet}):=\Ker\left(\prod_{i=1}^k \GL(r_i) \stackrel{\det}{\to} \mathbb{C}^{\ast}\right), \ 
    (g_i)_{1\leq i\leq k} \stackrel{\det}{\mapsto} \prod_{i=1}^k \det g_i. 
\end{align*}
For a tuple of integers $\chi_{\bullet}=(\chi_1, \ldots, \chi_k)$, the moduli stack of 
$(L, \chi_{\bullet})$-twisted $\mathrm{SL}(r_{\bullet})$-Higgs bundles is given as follows. 
Consider the closed substack 
\begin{align*}
\mathcal{M}^L(r_{\bullet}, \chi_{\bullet})^{\rm{tr}=0}
    \subset \prod_{i=1}^k \mathcal{M}^L(r_i, \chi_i)
\end{align*}
given by the derived zero locus of 
\begin{align*}
  \mathrm{tr} \colon   \prod_{i=1}^k \mathcal{M}^L(r_i, \chi_i) \to H^0(C, L)^{\rm{der}}, \ 
  \{(F_i, \theta_i)\}_{1\leq i\leq k}
\to \sum_{i=1}^k \mathrm{tr}(\theta_i).
\end{align*}
Here $H^0(C, L)^{\rm{der}}$ is defined as follows: 
\begin{align*}
    H^0(C, L)^{\rm{der}}=\Spec \mathrm{Sym}(R\Gamma(C, L)^{\vee})
    =\begin{cases} H^0(C, L), & l>2g-2 \\
    H^0(C, \omega_C) \times \Spec \mathbb{C}[\epsilon], & L=\Omega_C \end{cases}
\end{align*}
where $\deg \epsilon=-1$.
There is a map 
\begin{align*}
    \det \colon \mathcal{M}^L(r_{\bullet}, \chi_{\bullet})^{\rm{tr}=0}
    \to \mathcal{P}ic(C), \ \{(F_i, \theta_i)\}_{1\leq i\leq k}
    \mapsto \bigotimes_{i=1}^k \det F_i. 
\end{align*}
In the above, $\mathcal{P}ic(C)$ is the Picard stack of 
line bundles on $C$. Its good moduli space
\begin{align*}
    \mathcal{P}ic(C) \to \mathrm{Pic}(C)
\end{align*} is a trivial $\mathbb{C}^{\ast}$-gerbe. 

We fix a line bundle $A$ on $C$ of degree $\chi+r(g-1)$. 
The $(L, \chi_{\bullet})$-twisted $\mathrm{SL}(r_{\bullet})$-Higgs moduli stack is 
defined as the Cartesian product 
\begin{align*}
    \xymatrix{
\mathcal{M}_{\mathrm{SL}(r_{\bullet})}^L(\chi_{\bullet})  \ar[r] \ar[d] & \mathcal{M}^L(r_{\bullet}, 
\chi_{\bullet})^{\rm{tr}=0} 
\ar[d]^-{\rm{det}} \\
\Spec \mathbb{C} \ar[r] & \mathcal{P}ic(C),
     }
\end{align*}
where the bottom arrow corresponds to $A$. 
The stack
$\mathcal{M}_{\mathrm{SL}(r_{\bullet})}^L(\chi_{\bullet})$ is smooth for $l>2g-2$ and it is quasi-smooth for $L=\Omega_C$. 
In particular, for $k=1$, we obtain 
the $(L, \chi)$-twisted $\mathrm{SL}(r)$-Higgs moduli stack $\mathcal{M}^L_{\mathrm{SL}(r)}(\chi)$.
\begin{remark}\label{rmk:SLbundle}
    Recall that a $L$-twisted $\mathrm{SL}(r)$-Higgs bundle 
    is a principal $\mathrm{SL}(r)$-bundle 
    $\mathcal{P} \to C$ together with a Higgs field 
    $\theta \in \Gamma(C, \mathrm{ad}(\mathcal{P}) \otimes L)$. It corresponds to a pair $(F, \theta)$ where $F \to C$
    is a vector bundle with $\det (F) \cong \mathcal{O}_C$
    and $\theta \in \Hom(F, F \otimes L)$ with 
    $\mathrm{tr}(F)=0$. In particular $\chi(F)=r(1-g)$, and 
    it corresponds to $(L, \chi=r(1-g))$-twisted 
    $\mathrm{SL}(r)$-Higgs bundle in our convention. 
\end{remark}

Note that the center of $\mathrm{SL}(r_{\bullet})$ is 
\begin{align*}
    Z(\mathrm{SL}(r_{\bullet}))=\Ker\left( (\mathbb{C}^{\ast})^k \to \mathbb{C}^{\ast} \right), 
    (t_i)_{1\leq i\leq k} \mapsto \prod_{i=1}^k t_i^{r_i}, 
\end{align*}
and we have 
\begin{align*}
    \Hom(Z(\mathrm{SL}(r_{\bullet})), \mathbb{C}^{\ast})
    =\mathbb{Z}^{\oplus k}/(r_1, \ldots, r_k)\mathbb{Z}. 
\end{align*}
Note that $\mathcal{M}_{\mathrm{SL}(r_{\bullet})}^L(\chi_{\bullet})$ is a $Z(\mathrm{SL}(r_{\bullet}))$-gerbe.
Thus there is a corresponding decomposition
\begin{align*}
    D^b(\mathcal{M}_{\mathrm{SL}(r_{\bullet})}^L(\chi_{\bullet}))=\bigoplus_{w_{\bullet} \in \mathbb{Z}^{\oplus k}/(r_1, \ldots, r_k)\mathbb{Z}}
    D^b(\mathcal{M}_{\mathrm{SL}(r_{\bullet})}^L(\chi_{\bullet}))_{w_{\bullet}}
\end{align*}
in summands corresponds to the weight $(w_1, \ldots, w_k)$-component with 
respect to the
action of $Z(\mathrm{SL}(r_{\bullet}))$.

\subsection{PGL-Higgs moduli stacks}
For each decomposition $r=r_1+\cdots+r_k$, set
\begin{align*}
    \mathrm{PGL}(r_{\bullet}):=\left(\prod_{i=1}^k \GL(r_i)\right)/\mathbb{C}^{\ast}. 
\end{align*}
There is a natural action 
of $\mathcal{P}ic(C)$ 
on the disjoint union of 
$\mathcal{M}^L(r_{\bullet}, \chi_{\bullet})^{\rm{tr}=0}$ for $\chi_{\bullet} \in \mathbb{Z}^k$
by the tensor product. 
The moduli stack of $L$-twisted $\mathrm{PGL}(r_{\bullet})$-Higgs bundles is given by
\begin{align*}
    \mathcal{M}^L_{\mathrm{PGL}(r_{\bullet})} &:=
    \left( \coprod_{\chi_{\bullet} \in \mathbb{Z}^k}\mathcal{M}^L(r_{\bullet}, \chi_{\bullet})^{\rm{tr}=0} \right)/\mathcal{P}ic(C) \\
    &=\coprod_{\chi_{\bullet} \in \mathbb{Z}^{\oplus k}/(r_1, \ldots, r_k)\mathbb{Z}}
    \mathcal{M}^L_{\mathrm{PGL}(r_{\bullet})}(\chi_{\bullet}),
\end{align*}
where each component is given by 
\begin{align}\label{quot:Pic}
   \mathcal{M}^L_{\mathrm{PGL}(r_{\bullet})}(\chi_{\bullet})
   =\mathcal{M}^L(r_{\bullet}, \chi_{\bullet})^{\rm{tr}=0}/\mathcal{P}ic^0(C). 
   \end{align}
In particular for $k=1$, we obtain the $L$-twisted $\mathrm{PGL}(r)$-Higgs moduli 
stack $\mathcal{M}_{\mathrm{PGL}(r)}^L(\chi)$ for $\chi \in \mathbb{Z}/r\mathbb{Z}$. 

By taking the quotient by $\mathrm{Pic}^0(C)$ in (\ref{quot:Pic}) instead 
of the quotient by $\mathcal{P}ic^0(C)$, we 
obtain the $\mathbb{C}^{\ast}$-gerbe
\begin{align}\label{pgl:gerbe}
     \widetilde{\mathcal{M}}^L_{\mathrm{PGL}(r_{\bullet})}(\chi_{\bullet})
     \to \mathcal{M}^L_{\mathrm{PGL}(r_{\bullet})}(\chi_{\bullet}). 
\end{align}
There is a decomposition of the derived category 
into the $\mathbb{C}^{\ast}$-weight components:
\begin{align}\label{decom:pgl}
  D^b(\widetilde{\mathcal{M}}^L_{\mathrm{PGL}(r_{\bullet})}(\chi_{\bullet}))=
  \bigoplus_{w\in \mathbb{Z}} D^b(\mathcal{M}^L_{\mathrm{PGL}(r_{\bullet})}(\chi_{\bullet}))_w.
\end{align}
Note that the $w=0$ component is equivalent to $D^b(\mathcal{M}^L_{\mathrm{PGL}(r_{\bullet})}(\chi_{\bullet}))$.

Note that the center of $\mathrm{PGL}(r_{\bullet})$ is 
\begin{align*}
    Z(\mathrm{PGL}(r_{\bullet}))=(\mathbb{C}^{\ast})^k/\mathbb{C}^{\ast}
\end{align*}
and we have 
\begin{align*}
    \Hom(Z(\mathrm{PGL}(r_{\bullet})), \mathbb{C}^{\ast})=
    \Ker\left(\mathbb{Z}^{\oplus k} \to \mathbb{Z}   \right), \ 
    (w_1, \ldots, w_k) \mapsto \sum_{i=1}^k w_i. 
\end{align*}
There is a corresponding decomposition 
\begin{align*}
    D^b(\mathcal{M}^L_{\mathrm{PGL}(r_{\bullet})}(\chi_{\bullet}))_w
    =\bigoplus_{w_1+\cdots+w_k=w} 
    D^b(\mathcal{M}^L_{\mathrm{PGL}(r_{\bullet})}(\chi_{\bullet}))_{w_{\bullet}} 
\end{align*}
where each summand corresponds to weight $(w_1, \ldots, w_k)$-component with 
respect to the
action of $Z(\mathrm{PGL}(r_{\bullet}))$. 

\subsection{Semiorthogonal decompositions of SL/PGL-Higgs moduli stacks}\label{subsec:SLPGL}
We next state the SL/PGL versions of the semiorthogonal decomposition from Theorem~\ref{thm:sod}.
Recall the Hitchin base
\[B=B_r:=\bigoplus_{i=1}^r
    H^0(C, L^{\otimes i})\]
    and consider its affine subspace
\[B_{\geq 2}=B_{\geq 2,r}:=\bigoplus_{i=2}^r
    H^0(C, L^{\otimes i}).\]    
\begin{thm}\label{thm:SL}
For each $r_{\bullet} \in \mathbb{Z}^k$, $\chi_{\bullet} \in \mathbb{Z}^k$ and 
$w_{\bullet} \in \mathbb{Z}^k/(r_1, \ldots, r_k)\mathbb{Z}$, 
there is a subcategory
\begin{align*}
    \mathbb{T}^L_{\mathrm{SL}(r_{\bullet})}(\chi_{\bullet})_{w_{\bullet}} \subset D^b(\mathcal{M}^L_{\mathrm{SL}(r_{\bullet})}(\chi_{\bullet}))_{w_{\bullet}}
\end{align*}
such that there is a semiorthogonal decomposition 
\begin{align}\label{sod:SL}
D^b(\mathcal{M}^L_{\mathrm{SL}(r)}(\chi))_w=
\left\langle \mathbb{T}^L_{\mathrm{SL}(r_{\bullet})}(\chi_{\bullet})_{w_{\bullet}} \,\Big|\,
\frac{v_1}{r_1}<\cdots<\frac{v_k}{r_k} \right\rangle. 
\end{align}
The right hand side is after all partitions $r=r_1+\cdots+r_k$, 
$\chi=\chi_1+\cdots+\chi_k$
such that $\chi_i/r_i=\chi/r$ for $1\leq i\leq k$, 
and 
$w_{\bullet} \in \mathbb{Z}^k/(r_1, \ldots, r_k)\mathbb{Z}$ with $w_1+\cdots+w_r=w$ in 
$\mathbb{Z}/r\mathbb{Z}$, and 
$v_{\bullet} \in \mathbb{Q}^{\oplus k}/(r_1, \ldots, r_k)\mathbb{Q}$ 
is determined by
\begin{align*}
    v_i=w_i-\frac{l}{2}r_i \left(\sum_{i>j}r_j-\sum_{i<j} r_j  \right). 
\end{align*}
The fully-faithful functor 
\begin{align*}
       \mathbb{T}^L_{\mathrm{SL}(r_{\bullet})}(\chi_{\bullet})_{w_{\bullet}} 
       \to D^b(\mathcal{M}^L_{\mathrm{SL}(r)}(\chi))_w
\end{align*}
is induced by the categorical Hall product. 
\end{thm}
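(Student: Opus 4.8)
The plan is to reduce the $\mathrm{SL}(r_{\bullet})$ statement to the already-established $\mathrm{GL}$-version (Theorem~\ref{thm:sod}) by exploiting the local description of $\mathcal{M}^L_{\mathrm{SL}(r)}(\chi)$ via the trace-zero and fixed-determinant conditions. First I would record the analogue of the local model: étale locally on the good moduli space, the $\mathrm{GL}(r)$-stack $\mathcal{M}^L(r,\chi)$ is modeled by a quotient $R_{Q_y}(\bm d)/G(\bm d)$ as in Subsection~\ref{subsec:loc}, and the constraints $\mathrm{tr}=0$ together with fixed $\det F \cong A$ cut out the $\mathrm{SL}$-stack. Concretely, the trace map $\mathrm{tr}\colon \prod_i \mathcal{M}^L(r_i,\chi_i)\to H^0(C,L)$ and the determinant map to $\mathcal{P}ic(C)$ split off a smooth factor (since $l>2g-2$ makes everything classical and the relevant cohomology vanishes appropriately), so that $\mathcal{M}^L_{\mathrm{SL}(r)}(\chi)$ is obtained from $\mathcal{M}^L(r,\chi)$ by removing the abelian factors $H^0(C,L)$ and $\mathcal{P}ic^0(C)$. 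The key structural point is that the stack of filtrations $\mathcal{F}il^L$ and the categorical Hall product respect these abelian factors compatibly, because the trace and determinant of a filtered object are additive in the associated graded.

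Next I would define $\mathbb{T}^L_{\mathrm{SL}(r_{\bullet})}(\chi_{\bullet})_{w_{\bullet}}$ by the same intrinsic window/magnitude condition as in Definition~\ref{def:qbps}, now applied to one-parameter subgroups $\nu\colon B\mathbb{C}^* \to \mathcal{M}^L_{\mathrm{SL}(r_{\bullet})}(\chi_{\bullet})$. The point is that the cocharacters relevant to the semiorthogonal decomposition are exactly the antidominant ones appearing in Theorem~\ref{thm:sod}, and the condition \eqref{cond:qbps} only depends on the weights of $\nu^*\mathbb{L}$ and $\nu^*\delta$. Passing to $\mathrm{SL}(r_{\bullet})$ changes the center from $\mathbb{C}^*$ (scalar automorphisms) to $Z(\mathrm{SL}(r_{\bullet}))=\mathbb{Z}^k/(r_1,\dots,r_k)\mathbb{Z}$, but the magnitudes $n_\lambda$ and the weights $v_i$ are computed from the same Ext-quiver data, so the combinatorics of the decomposition is identical. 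I would verify that the formula for $v_i$ is unchanged: the term $\tfrac{l}{2}r_i(\sum_{i>j}r_j-\sum_{i<j}r_j)$ comes from the determinant of the positive-weight part of the tangent complex along the filtration stratum, which is insensitive to the $\mathrm{SL}$-constraint because the trace-zero condition only removes a trivial direct summand from $R\Gamma$.

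The main technical step is the base-change compatibility. I would invoke Lemma~\ref{lem:sdo} applied to the Cartesian diagram defining $\mathcal{M}^L_{\mathrm{SL}(r_{\bullet})}(\chi_{\bullet})$ over $\mathcal{P}ic(C)$ and over $H^0(C,L)^{\rm der}$, to transport the $B$-linear (in fact $\mathcal{P}ic(C)\times H^0(C,L)$-linear) semiorthogonal decomposition of Theorem~\ref{thm:sod} to the fiber. Since taking the fiber over $\{A\}\in \mathcal{P}ic(C)$ and over $0\in H^0(C,L)$ (in the derived sense) is a pullback along a regular closed immersion, the decomposition restricts, and the fully-faithfulness of the Hall-product functors is preserved. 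The semiorthogonality inequalities $\frac{v_1}{r_1}<\cdots<\frac{v_k}{r_k}$ and the ordering of summands descend verbatim from \cite{PTK3, PT0}.

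The hard part will be checking that the $\mathrm{SL}$-window subcategory defined by the intrinsic condition genuinely coincides with the image of the $\mathrm{GL}$-window under the fiber-restriction, i.e.\ that no objects are lost or created when one passes from scalar $\mathbb{C}^*$-weights to $Z(\mathrm{SL}(r_{\bullet}))$-weights. This requires matching the weight lattice $M(\bm d)^{W}$ used in the $\mathrm{GL}$-computation with its $\mathrm{SL}$-quotient, and confirming that the polytope $\nabla_\delta$ of Remark~\ref{rmk:domi} intersects the $\mathrm{SL}$-cocharacter lattice in exactly the expected set. Concretely, I would reduce to the deepest stratum as in Lemma~\ref{lem:equiver}, where the quiver has one vertex and $\mathrm{SL}(r)$ replaces $\mathrm{GL}(r)$ by the determinant constraint; the verification there is the analogue of the local quiver statement \cite[Theorem~4.2]{PTquiver}, whose $\mathrm{SL}$/determinant-twisted version I would either cite or reprove by the same toric combinatorics. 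Once the local matching is established, gluing via the étale-local description and Lemma~\ref{lem:sdo} finishes the proof.
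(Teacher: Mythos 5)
Your core idea---deduce the $\mathrm{SL}$ statement from Theorem~\ref{thm:sod} by base change via Lemma~\ref{lem:sdo}---is indeed the engine of the paper's proof, but your proposal has two gaps, one of which would make the argument fail as written. First, Lemma~\ref{lem:sdo} concerns base change of a $B$-linear semiorthogonal decomposition along a morphism $T\to B$ of \emph{schemes}; you propose to apply it to the Cartesian diagram ``over $\mathcal{P}ic(C)$ and over $H^0(C,L)^{\mathrm{der}}$,'' i.e.\ along $\Spec\mathbb{C}\to\mathcal{P}ic(C)$, which is a map to a \emph{stack}. Taking the fiber over a point of the stack $\mathcal{P}ic(C)$ (rather than of its good moduli space $\mathrm{Pic}(C)$) changes the stabilizer groups: the scalar automorphisms $\mathbb{C}^{\ast}$ of the $\mathrm{GL}$ picture are cut down to $Z(\mathrm{SL}(r_{\bullet}))$, and the weight decomposition no longer transports naively. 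The paper resolves exactly this by a two-step maneuver you are missing: it base-changes the $(B_r\times\mathrm{Pic}(C))$-linear decomposition along $B_{\geq 2}\times\{A\}\hookrightarrow B_r\times\mathrm{Pic}(C)$ (a map into the good moduli space, where Lemma~\ref{lem:sdo} legitimately applies), obtaining a semiorthogonal decomposition for the auxiliary $\mathbb{C}^{\ast}$-gerbes $\widetilde{\mathcal{M}}^L_{\mathrm{SL}(r_{\bullet})}(\chi_{\bullet})$, and only then passes to the honest $\mathrm{SL}$ stacks via the identification $D^b(\widetilde{\mathcal{M}}^L_{\mathrm{SL}(r_{\bullet})}(\chi_{\bullet}))_{w_{\bullet}}=D^b(\mathcal{M}^L_{\mathrm{SL}(r_{\bullet})}(\chi_{\bullet}))_{\overline{w}_{\bullet}}$ of weight components, with $w_{\bullet}\in\mathbb{Z}^{\oplus k}$ mapping to its class $\overline{w}_{\bullet}\in\mathbb{Z}^{\oplus k}/(r_1,\ldots,r_k)\mathbb{Z}$.

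Second, your plan to define $\mathbb{T}^L_{\mathrm{SL}(r_{\bullet})}(\chi_{\bullet})_{w_{\bullet}}$ by an intrinsic window condition and then match it with the base-changed $\mathrm{GL}$ categories creates work that the theorem does not require and that you do not complete. The statement only asserts the \emph{existence} of subcategories fitting into the decomposition, so the paper simply \emph{defines} $\widetilde{\mathbb{T}}_{\mathrm{SL}(r_{\bullet})}(\chi_{\bullet})_{w_{\bullet}}$ as the smallest pre-triangulated, summand-closed subcategory containing the image of $\boxtimes_{i=1}^k\mathbb{T}^L(r_i,\chi_i)_{w_i}$ under base change; no matching statement is needed. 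Your intrinsic definition is moreover not obviously well posed: on the $\mathrm{SL}$ stack the generic stabilizer is finite, the weight $w_{\bullet}$ lives only in the quotient lattice $\mathbb{Z}^k/(r_1,\ldots,r_k)\mathbb{Z}$, and the fractional twist by $\frac{w}{d}\mathrm{wt}(\nu^{\ast}\delta)$ in condition (\ref{cond:qbps}) has no direct analogue, so even formulating your candidate category requires an argument. The ``cite or reprove'' step for an $\mathrm{SL}$/determinant-twisted version of the local quiver theorem has no reference in the literature cited by the paper, and reproving it is a nontrivial project in its own right---one that the paper's route entirely avoids.
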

\begin{proof}
For a decomposition $(r, \chi)=(r_1, \chi_1)+\cdots+(r_k, \chi_k)$ with $\chi_i/r_i=\chi/r$, 
    there is a commutative diagram 
    \begin{align}\label{dia:Mfil}
\xymatrix{
\mathcal{F}il^L(r_{\bullet}, \chi_{\bullet}) \ar[r]^-{p} \ar[d]_-{q} & 
\mathcal{M}^L(r, \chi) \ar[dd]^-{(h, \det)} \\
\times_{i=1}^k \mathcal{M}^L(r_i, \chi_i) \ar[d] & \\
\times_{i=1}^k (B_{r_i}\times \mathrm{Pic}(C)) \ar[r]^-{(+, \otimes)} & B_r \times 
\mathrm{Pic}(C). 
}    
    \end{align}
The pull-back along 
$B_{\geq 2} \times \{A\} \hookrightarrow B_r \times \mathrm{Pic}(C)$ gives the diagram 
\begin{align*}
    \xymatrix{
\mathcal{F}il^L_{\mathrm{SL}}(r_{\bullet}, \chi_{\bullet}) \ar[r]^-{p} \ar[d]_-{q} 
& \widetilde{\mathcal{M}}^L_{\mathrm{SL}(r)}(\chi) \ar[d] \\
\widetilde{\mathcal{M}}^L_{\mathrm{SL}(r_{\bullet})}(\chi_{\bullet}) \ar[r] &
B_{\geq 2}.     
    }
\end{align*}
Here $\widetilde{\mathcal{M}}^L_{\mathrm{SL}(r)}(\chi)$
and $\widetilde{\mathcal{M}}^L_{\mathrm{SL}(r_{\bullet})}(\chi_{\bullet})$
fit into Cartesian diagrams 
\begin{align*}
\xymatrix{
\mathcal{M}^L_{\mathrm{SL}(r)}(\chi) \ar[r] \ar[d] & 
\widetilde{\mathcal{M}}^L_{\mathrm{SL}(r)}(\chi) \ar[d] \\
\Spec \mathbb{C} \ar[r] & B\mathbb{C}^{\ast},
} \
\xymatrix{
\mathcal{M}^L_{\mathrm{SL}(r_{\bullet})}(\chi_{\bullet}) \ar[r] \ar[d] & 
\widetilde{\mathcal{M}}^L_{\mathrm{SL}(r_{\bullet})}(\chi_{\bullet}) \ar[d] \\
\Spec \mathbb{C} \ar[r] & B\mathbb{C}^{\ast}. 
}
\end{align*}
There is a decomposition
\begin{align}\notag
    D^b(\widetilde{\mathcal{M}}^L_{\mathrm{SL}(r_{\bullet})}(\chi_{\bullet}))
    =\bigoplus_{w_{\bullet} \in \mathbb{Z}^{\oplus k}}
    D^b(\widetilde{\mathcal{M}}^L_{\mathrm{SL}(r_{\bullet})}(\chi_{\bullet}))_{w_{\bullet}}
\end{align}
such that 
\begin{align}\label{equiv:SLtilde}
     D^b(\widetilde{\mathcal{M}}^L_{\mathrm{SL}(r_{\bullet})}(\chi_{\bullet}))_{w_{\bullet}}
     = D^b(\mathcal{M}^L_{\mathrm{SL}(r_{\bullet})}(\chi_{\bullet}))_{\overline{w}_{\bullet}},
\end{align}
    where $\overline{w}_{\bullet}$ is the class of $w_{\bullet}$ in $\mathbb{Z}^{\oplus k}/(r_1, \ldots, r_k)\mathbb{Z}$. 

For $w_{\bullet} \in \mathbb{Z}^{\oplus k}$, let 
\begin{align*}
    \widetilde{\mathbb{T}}_{\mathrm{SL}(r_{\bullet})}(\chi_{\bullet})_{w_{\bullet}}
    \subset D^b(\widetilde{\mathcal{M}}^L_{\mathrm{SL}(r_{\bullet})}(\chi_{\bullet}))_{w_{\bullet}}
\end{align*}
be the smallest pre-triangulated subcategory which 
contains the image of the base-change functor 
\begin{align*}
\boxtimes_{i=1}^k \mathbb{T}^L(r_i, \chi_i)_{w_i} \to 
D^b(\widetilde{\mathcal{M}}^L_{\mathrm{SL}(r_{\bullet})}(\chi_{\bullet}))_{w_{\bullet}}
    \end{align*}
and closed under taking direct summands. 
    By Theorem~\ref{thm:sod} and Lemma~\ref{lem:sdo}, 
    there is an induced semiorthogonal decomposition 
  \begin{align*} D^b(\widetilde{\mathcal{M}}^L_{\mathrm{SL}(r)}(\chi))_w=
\left\langle \widetilde{\mathbb{T}}^L_{\mathrm{SL}(r_{\bullet})}(\chi_{\bullet})_{w_{\bullet}} \,\Big|\,
\frac{v_1}{r_1}<\cdots<\frac{v_k}{r_k} \right\rangle. 
\end{align*}
Here the right hand side is after all partitions 
$(r, \chi, w)=(r_1, \chi_1, w_1)+\cdots+(r_k, \chi_k, w_k)$
with $\chi_i/r_i=\chi/r$.
Then the desired semiorthogonal decomposition (\ref{sod:SL}) follows from (\ref{equiv:SLtilde}). 
\end{proof}

\begin{thm}\label{thm:PGL}
For each $r_{\bullet} \in \mathbb{Z}^k$, 
$\chi_{\bullet} \in \mathbb{Z}^k/(r_1, \ldots, r_k)\mathbb{Z}$ 
and $w_{\bullet} \in \mathbb{Z}^k$, 
there is a subcategory 
\begin{align*}
      \mathbb{T}^L_{\mathrm{PGL}(r_{\bullet})}(\chi_{\bullet})_{w_{\bullet}} \subset D^b(\mathcal{M}^L_{\mathrm{PGL}(r_{\bullet})}(\chi_{\bullet}))_{w_{\bullet}}
\end{align*}
such that there is a semiorthogonal decomposition 
\begin{align}\label{sod:PGL}
    D^b(\mathcal{M}^L_{\mathrm{PGL}(r)}(\chi))_w=
\left\langle \mathbb{T}^L_{\mathrm{PGL}(r_{\bullet})}(\chi_{\bullet})_{w_{\bullet}} \,\Big|\,
\frac{v_1}{r_1}<\cdots<\frac{v_k}{r_k} \right\rangle. 
\end{align}
The right hand side is after all partitions $r=r_1+\cdots+r_k$, 
$\chi=\chi_1+\cdots+\chi_k$ 
such that 
$\chi_1/r_1=\cdots=\chi_k/r_k$
and $w_{\bullet} \in \mathbb{Z}^{\oplus k}$ with $w_1+\cdots+w_k=w$, 
and where $v_i\in \frac{1}{2}\mathbb{Z}$
is defined by
\begin{align*}
    v_i=w_i-\frac{l}{2}r_i \left(\sum_{i>j}r_j-\sum_{i<j} r_j  \right). 
\end{align*}
The fully-faithful functor 
\begin{align*}
       \mathbb{T}^L_{\mathrm{PGL}(r_{\bullet})}(\chi_{\bullet})_{w_{\bullet}} 
       \to D^b(\mathcal{M}^L_{\mathrm{PGL}(r)}(\chi))_w
\end{align*}
is induced by the categorical Hall product. 
\end{thm}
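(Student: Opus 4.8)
The plan is to follow the proof of Theorem~\ref{thm:SL} almost verbatim, replacing the operation of fixing the determinant by the dual operation of quotienting by $\mathrm{Pic}^0(C)$; these are precisely the two ways of cutting down the $\mathrm{Pic}(C)$-factor in the diagram~(\ref{dia:Mfil}), and this duality is what exchanges the roles of $\chi_\bullet$ (genuine in the $\mathrm{SL}$ case, taken modulo $(r_1,\ldots,r_k)$ in the $\mathrm{PGL}$ case) and $w_\bullet$ (taken modulo $(r_1,\ldots,r_k)$ in the $\mathrm{SL}$ case, genuine here). As in the $\mathrm{SL}$ case, it is convenient to work with the $\mathbb{C}^\ast$-gerbe $\widetilde{\mathcal{M}}^L_{\mathrm{PGL}(r_\bullet)}(\chi_\bullet)=\mathcal{M}^L(r_\bullet,\chi_\bullet)^{\mathrm{tr}=0}/\mathrm{Pic}^0(C)$ of~(\ref{pgl:gerbe}) and its weight decomposition~(\ref{decom:pgl}), and to establish the statement one $\mathbb{C}^\ast$-weight component at a time.

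Concretely, I would first pull back the diagram~(\ref{dia:Mfil}) along $B_{\geq 2}\times\mathrm{Pic}(C)\hookrightarrow B_r\times\mathrm{Pic}(C)$, imposing the trace-zero condition on both the flag stack $\mathcal{F}il^L(r_\bullet,\chi_\bullet)$ and on $\mathcal{M}^L(r,\chi)$, and then form the quotient by the fiberwise tensor action of $\mathrm{Pic}^0(C)$ on the source and target of the maps $p$ and $q$. Since the Hall product, the flag stacks, and the quasi-BPS subcategories of Definition~\ref{def:qbps} are all functorial and manifestly invariant under tensoring by a degree-zero line bundle, the morphisms $p,q$ descend and the categorical Hall product descends to the $\mathrm{PGL}$-flag stack. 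I would then define $\mathbb{T}^L_{\mathrm{PGL}(r_\bullet)}(\chi_\bullet)_{w_\bullet}$ as the descent of the smallest subcategory of $D^b(\widetilde{\mathcal{M}}^L_{\mathrm{PGL}(r_\bullet)}(\chi_\bullet))_{w_\bullet}$ split-generated by the image of $\boxtimes_{i=1}^k\mathbb{T}^L(r_i,\chi_i)_{w_i}$ under the base-changed Hall product, exactly as $\widetilde{\mathbb{T}}_{\mathrm{SL}(r_\bullet)}(\chi_\bullet)_{w_\bullet}$ was defined. The decomposition~(\ref{sod:PGL}) should then follow by combining Theorem~\ref{thm:sod} with the base-change statement of Lemma~\ref{lem:sdo}, and reading off the $Z(\mathrm{PGL}(r_\bullet))$-weight components via~(\ref{decom:pgl}).

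The main obstacle will be the descent of the semiorthogonal decomposition along the $\mathrm{Pic}^0(C)$-quotient, which replaces the clean base-change argument used in the $\mathrm{SL}$ case by a genuine descent argument. The key observation is that the tensor action of $\mathrm{Pic}^0(C)$ is fiberwise over the Hitchin base $B$ (it preserves rank, Euler characteristic, and the coefficients $\mathrm{tr}(\theta^i)$, shifting only the determinant by $\mathcal{L}^{\otimes r}$) and commutes with every construction entering Theorem~\ref{thm:sod}; hence the $B_{\geq 2}$-linear semiorthogonal decomposition produced by Lemma~\ref{lem:sdo} is automatically $\mathrm{Pic}^0(C)$-equivariant. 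What must be verified carefully is that an equivariant semiorthogonal decomposition descends to the quotient stack, that the ordering $v_1/r_1<\cdots<v_k/r_k$ is unaffected by the action, and that the slopes $v_i\in\tfrac12\mathbb{Z}$ and the weights $w_\bullet\in\mathbb{Z}^{\oplus k}$ with $\sum_i w_i=w$ are correctly matched with the summands. This last bookkeeping is routine and parallels the $\mathrm{SL}$ computation, the genuine content being the equivariant descent.
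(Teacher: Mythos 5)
Your proposal is correct and follows essentially the same route as the paper: pull back the diagram (\ref{dia:Mfil}) along $B_{\geq 2}\times\mathrm{Pic}(C)$, apply Theorem \ref{thm:sod} together with the base-change Lemma \ref{lem:sdo} to obtain a $\mathrm{Pic}^0(C)$-equivariant semiorthogonal decomposition of $D^b(\mathcal{M}^L(r,\chi)^{\mathrm{tr}=0})$, descend it to the quotient, and read off the weight components via (\ref{decom:pgl}). The one step you flag but leave unverified---that an equivariant semiorthogonal decomposition descends along the quotient---is precisely the paper's Lemma \ref{lem:sod:descend}, proved there by writing $D^b(\mathcal{M}/G)=\lim_{n\geq 0}D^b(\mathcal{M}\times G^{\times n})$ over the \v{C}ech nerve of the quotient and taking the limit of the semiorthogonal decompositions $D^b(\mathcal{M}\times G^{\times n})=\langle \mathcal{C}_i\boxtimes D^b(G^{\times n})\,|\,i\in I\rangle$.
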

\begin{proof}
    The pull-back of the diagram (\ref{dia:Mfil})
    by $B_{\geq 2} \times \mathrm{Pic}(C) \hookrightarrow B_r \times \mathrm{Pic}(C)$ 
    gives the diagram 
    \begin{align*}
        \xymatrix{
\mathcal{F}il^L(r_{\bullet}, \chi_{\bullet})^{\rm{tr}=0} \ar[r]^-{p} \ar[d]_-{q} & 
\mathcal{M}^L(r, \chi)^{\rm{tr}=0} \\
\mathcal{M}^L(r_{\bullet}, \chi_{\bullet})^{\rm{tr}=0}.         
        }
    \end{align*}
    By Theorem~\ref{thm:sod} and Lemma~\ref{lem:sdo}, there is a 
    semiorthogonal decomposition 
    \begin{align}\label{sod:tr=0}
        D^b(\mathcal{M}^L(r, \chi)^{\rm{tr}=0})
        =\left\langle \left(\boxtimes_{i=1}^k \mathbb{T}^L(r_i, \chi_i)_{w_i}\right)^{\rm{tr}=0} \,\Big|\,
        \frac{v_1}{r_1}<\cdots<\frac{v_k}{r_k} \right\rangle. 
    \end{align}
    Here the right hand side is after all partitions $(r, \chi)=(r_1, \chi_1)+\cdots+(r_k, \chi_k)$ with $\chi_i/r_i=\chi/r$ and $w_{\bullet} \in \mathbb{Z}^k$. The subcategory   
    \begin{align*}
    \left(\boxtimes_{i=1}^k \mathbb{T}^L(r_i, \chi_i)_{w_i}\right)^{\rm{tr}=0} 
    \subset D^b(\mathcal{M}^L(r_{\bullet}, \chi_{\bullet})^{\rm{tr}=0})_{w_{\bullet}}
    \end{align*}
    is the smallest pre-triangulated subcategory which contains the image of the 
    base-change functor 
    \begin{align*}
        \boxtimes_{i=1}^k \mathbb{T}^L(r_i, \chi_i)_{w_i} \to 
        D^b(\mathcal{M}^L(r_{\bullet}, \chi_{\bullet})^{\rm{tr}=0})_{w_{\bullet}} 
    \end{align*}
    and closed under taking the direct summands. 
    The semiorthogonal decomposition (\ref{sod:tr=0}) is preserved under 
    the action of $\mathrm{Pic}^0(C)$. Therefore by Lemma~\ref{lem:sod:descend} below, 
    it descends to the 
    semiorthogonal decomposition 
    \begin{align*}
        D^b(\widetilde{\mathcal{M}}_{\mathrm{PGL}(r)}(\chi))=
        \left\langle \mathbb{T}^L_{\mathrm{PGL}(r_{\bullet})}(\chi_{\bullet})_{w_{\bullet}} \,\Big|\, 
        \frac{v_1}{r_1}<\cdots<\frac{v_k}{r_k}\right\rangle. 
    \end{align*}
Here we set 
\begin{align*}
    \mathbb{T}^L_{\mathrm{PGL}(r_{\bullet})}(\chi_{\bullet})_{w_{\bullet}}
    =\left(\boxtimes_{i=1}^k \mathbb{T}^L(r_i, \chi_i)_{w_i}\right)^{\rm{tr}=0} /\mathrm{Pic}^0(C)
\end{align*}
in the notation of Lemma~\ref{lem:sod:descend}. 
Then the desired semiorthogonal decomposition (\ref{sod:PGL}) follows from (\ref{decom:pgl}).     
\end{proof}

We have used the following lemma. 
\begin{lemma}\label{lem:sod:descend}
Let $D^b(\mathcal{M})=\langle \mathcal{C}_i \,|\, i \in I\rangle$ be a 
semiorthogonal decomposition. Assume $G$ is an algebraic groups acting on 
$\mathcal{M}$ such that, for any $g \in G$ and $i\in I$, we have 
$g^{\ast}(\mathcal{C}_i) \subset \mathcal{C}_i$. 
Then the above semiorthogonal decomposition descends 
to the semiorthogonal decomposition 
\begin{align*}
    D^b(\mathcal{M}/G)=\langle \mathcal{C}_i/G \,|\, i \in I \rangle.
\end{align*}
Here $\mathcal{C}_i/G$ is the subcategory of $\mathcal{E} \in D^b(\mathcal{M}/G)$
such that $\pi^{\ast}\mathcal{E} \in \mathcal{C}_i$, where 
$\pi \colon \mathcal{M}\to \mathcal{M}/G$ is the quotient morphism. 
\end{lemma}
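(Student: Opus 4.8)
The plan is to realize $D^b(\mathcal{M}/G)$ as the category of $G$-equivariant objects of $D^b(\mathcal{M})$ via descent along the quotient morphism $\pi \colon \mathcal{M} \to \mathcal{M}/G$, under which $\pi^{\ast}$ becomes the (conservative, exact) forgetful functor, and then to transport the given semiorthogonal decomposition through this identification. Throughout I would use that morphism complexes downstairs are computed upstairs: for $\mathcal{E}, \mathcal{F} \in D^b(\mathcal{M}/G)$ the complex $R\Hom_{\mathcal{M}/G}(\mathcal{E}, \mathcal{F})$ is the totalization (homotopy fixed points for the $G$-action) of the \v{C}ech/bar resolution whose zeroth term is $R\Hom_{\mathcal{M}}(\pi^{\ast}\mathcal{E}, \pi^{\ast}\mathcal{F})$.

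First I would note that each $\mathcal{C}_i/G = \{\mathcal{E} \in D^b(\mathcal{M}/G) : \pi^{\ast}\mathcal{E} \in \mathcal{C}_i\}$ is a pre-triangulated subcategory, since $\pi^{\ast}$ is exact and $\mathcal{C}_i$ is pre-triangulated. Semiorthogonality is then immediate from the description of morphisms: if $\mathcal{E} \in \mathcal{C}_i/G$ and $\mathcal{F} \in \mathcal{C}_j/G$ with $\mathcal{C}_i$ to the right of $\mathcal{C}_j$, then $\pi^{\ast}\mathcal{E} \in \mathcal{C}_i$ and $\pi^{\ast}\mathcal{F} \in \mathcal{C}_j$, so $R\Hom_{\mathcal{M}}(\pi^{\ast}\mathcal{E}, \pi^{\ast}\mathcal{F}) = 0$ by the semiorthogonality on $\mathcal{M}$; since the entire resolution computing $R\Hom_{\mathcal{M}/G}(\mathcal{E}, \mathcal{F})$ is assembled from such vanishing terms, its totalization vanishes as well.

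The main step is generation. Here I would exploit that the projection functors $\beta_i \colon D^b(\mathcal{M}) \to \mathcal{C}_i$ of the semiorthogonal decomposition are canonical. By the hypothesis $g^{\ast}\mathcal{C}_i \subset \mathcal{C}_i$ (applied to $g$ and to $g^{-1}$, whence $g^{\ast}\mathcal{C}_i = \mathcal{C}_i$) and the uniqueness of semiorthogonal filtrations, the functor $g^{\ast}$ carries the canonical filtration of an object to the canonical filtration of its pullback; equivalently $\beta_i$ commutes with $g^{\ast}$ for all $g \in G$. Thus $\beta_i$ is compatible with descent, so for $\mathcal{E} \in D^b(\mathcal{M}/G)$ the component $\beta_i(\pi^{\ast}\mathcal{E})$ acquires a canonical $G$-equivariant structure and descends to an object $\mathcal{E}_i \in \mathcal{C}_i/G$; moreover the canonical tower of triangles expressing $\pi^{\ast}\mathcal{E}$ as an iterated extension of the $\beta_i(\pi^{\ast}\mathcal{E})$ is $G$-equivariant and descends to a tower on $\mathcal{M}/G$, exhibiting $\mathcal{E}$ as built from the $\mathcal{E}_i$. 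Together with the semiorthogonality above, this yields the asserted decomposition $D^b(\mathcal{M}/G) = \langle \mathcal{C}_i/G \mid i \in I \rangle$ in the same order.

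I expect the only genuine obstacle to be the equivariance of the projection functors at the level of the full (coherent) equivariant structure, rather than merely on underlying objects. This is precisely what the uniqueness of semiorthogonal decompositions resolves: because each object has a unique semiorthogonal filtration and the whole decomposition is $G$-invariant, the descent datum on $\pi^{\ast}\mathcal{E}$ restricts uniquely and compatibly to each component, so there is no higher cocycle obstruction to descending either the components $\mathcal{E}_i$ or the connecting triangles.
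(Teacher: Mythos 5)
Your proof is correct and is essentially the paper's own argument: the paper likewise realizes $D^b(\mathcal{M}/G)$ as the limit of the bar resolution $D^b(\mathcal{M}\times G^{\times n})$, equips each term with the product semiorthogonal decomposition $\langle \mathcal{C}_i\boxtimes D^b(G^{\times n}) \,|\, i\in I\rangle$, notes that the invariance hypothesis makes the limit diagram preserve these summands, and takes the limit of the decompositions, with $\mathcal{C}_i/G=\lim_{n\geq 0}(\mathcal{C}_i\boxtimes D^b(G^{\times n}))$. Your more explicit points---termwise vanishing of the Hom totalization for semiorthogonality, and the projection functors commuting with $g^{\ast}$ by uniqueness of semiorthogonal filtrations so that components and their connecting triangles descend coherently---are precisely the justification of the paper's step that the diagram preserves the summands, so the two proofs coincide in substance.
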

\begin{proof}
    We have
    \begin{align}\label{lim:M}
        D^b(\mathcal{M}/G)=\lim_{n\geq 0}D^b(\mathcal{M}\times G^{\times n})
    \end{align}
    and we have the semiorthogonal decomposition 
    \begin{align}\label{sod:Gn}
        D^b(\mathcal{M}\times G^{\times n})=\langle 
        \mathcal{C}_i\boxtimes D^b(G^{\times n}) \,|\, i \in I\rangle. 
    \end{align}
    By the assumption, the limit diagram in (\ref{lim:M})
    preserves $\mathcal{C}_i\boxtimes D^b(G^{\times n})$ and 
    \begin{align*}
\mathcal{C}_i/G=\lim_{n\geq 0}\,   (\mathcal{C}_i\boxtimes D^b(G^{\times n})). 
\end{align*}
The desired semiorthogonal decomposition holds by taking the limit of the 
semiorthogonal decompositions (\ref{sod:Gn}). 
\end{proof}

\subsection{The SL/PGL symmetry conjecture}
Consider the Hitchin maps 
\begin{align}\label{dia:SLPGL}
    \xymatrix{
 \mathcal{M}^L_{\mathrm{SL}(r)}(\chi) \ar[rd] & & \ar[ld] \mathcal{M}^L_{\mathrm{PGL}(r)}(\chi) \\
    & B_{\geq 2}, &
    }
\end{align}
where recall that $B_{\geq 2} =\bigoplus_{i=2}^r H^0(C, L^{\otimes i})$. 
The maps in (\ref{dia:SLPGL}) have relative dimension 
$g^{\rm{sp}}-g$. The following is a version of Conjecture~\ref{conj:T0} for the pair of Langlands dual groups SL/PGL: 

\begin{conj}\label{conj:PS}
Suppose the tuple $(r, \chi, w)$ satisfies the BPS condition.
Then there is an equivalence 
\begin{align*}
    \mathbb{T}^L_{\mathrm{PGL}(r)}(w+1-g^{\rm{sp}})_{-\chi+1-g^{\rm{sp}}} \stackrel{\sim}{\to} 
     \mathbb{T}^L_{\mathrm{SL}(r)}(\chi)_w.
\end{align*}
    \end{conj}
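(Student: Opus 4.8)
The plan is to deduce Conjecture~\ref{conj:PS} from the $\mathrm{GL}(r)$-equivalence of Conjecture~\ref{conj:T0}, together with the semiorthogonal decompositions of Theorems~\ref{thm:SL} and~\ref{thm:PGL}, by exploiting the abelian duality of the Jacobian $\mathrm{Pic}^0(C)$ of the base curve. Recall that $\mathcal{M}^L_{\mathrm{SL}(r)}(\chi)$ is the fiber of the determinant map $\det\colon \mathcal{M}^L(r,\chi)^{\mathrm{tr}=0}\to \mathcal{P}ic(C)$ over the chosen line bundle $A$, while $\mathcal{M}^L_{\mathrm{PGL}(r)}(\chi)$ is the quotient of $\mathcal{M}^L(r,\chi)^{\mathrm{tr}=0}$ by the tensoring action of $\mathcal{P}ic^0(C)$. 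The classical Fourier--Mukai transform on $\mathrm{Pic}^0(C)$ interchanges the operation of restricting to a fixed determinant with that of descending along the $\mathcal{P}ic^0(C)$-action, and this is precisely the mechanism which swaps the two Langlands dual groups. I therefore expect the $B_{\geq 2}$-linear $\mathrm{GL}(r)$-equivalence, restricted to the traceless locus and intertwined with this Jacobian Fourier--Mukai transform, to induce the desired $\mathrm{SL}/\mathrm{PGL}$ equivalence.

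First I would record, over the smooth locus $B_{\geq 2}^{\mathrm{sm}}$, the relative Fourier--Mukai equivalence for the dual Prym fibrations underlying $\mathcal{M}^L_{\mathrm{SL}(r)}$ and $\mathcal{M}^L_{\mathrm{PGL}(r)}$; this is the derived incarnation of the Hausel--Thaddeus mirror symmetry~\cite{HauTha}, built from the Poincaré bundle of dual Pryms exactly as in~\cite{Ardual}, and it plays the role of the open equivalence~(\ref{equiv:Bsm}). Next, under the BPS condition, I would verify that both $\mathbb{T}^L_{\mathrm{SL}(r)}(\chi)_w$ and $\mathbb{T}^L_{\mathrm{PGL}(r)}(w+1-g^{\mathrm{sp}})_{-\chi+1-g^{\mathrm{sp}}}$ are smooth, proper, and Calabi--Yau over $B_{\geq 2}$: since these categories are obtained from the $\mathrm{GL}$ BPS category by base change along $B_{\geq 2}\hookrightarrow B$ and fixing the determinant (as in Theorem~\ref{thm:SL}), respectively by descent along $\mathcal{P}ic^0(C)$ (Lemma~\ref{lem:sod:descend}, as in Theorem~\ref{thm:PGL}), the required properties should descend from Theorem~\ref{thm:proper0} and Corollary~\ref{cor:proper0}.

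The central step is to extend the smooth-locus equivalence over the whole base $B_{\geq 2}$. Granting the $\mathrm{GL}(r)$-equivalence of Conjecture~\ref{conj:T0}, which is $B$-linear and hence $B_{\geq 2}$-linear, I would show that its Fourier--Mukai kernel is compatible with the determinant grading and the $\mathcal{P}ic^0(C)$-action in the precise sense that it carries the fixed-determinant locus on one side to the $\mathcal{P}ic^0(C)$-quotient on the other; the $\det R\Gamma$ terms defining the Poincaré kernel~(\ref{line:2}) are what implement this interchange. One then descends the resulting equivalence along $\mathcal{P}ic^0(C)$ on the $\mathrm{PGL}$ side and restricts to the fixed-determinant locus on the $\mathrm{SL}$ side, matching the $Z(\mathrm{SL}(r))$- and $Z(\mathrm{PGL}(r))$-weight decompositions against the numerical exchange $(r,\chi,w)\leftrightarrow (r,\,w+1-g^{\mathrm{sp}},\,-\chi+1-g^{\mathrm{sp}})$. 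When Conjecture~\ref{conj:T0} is known --- for instance $r=2$ under the hypotheses of Corollary~\ref{cor:r=2}, or $C=\mathbb{P}^1$ --- this yields Conjecture~\ref{conj:PS} in the corresponding range.

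The hard part will be twofold. First, one must track the Brauer/gerbe twists: descending along $\mathcal{P}ic^0(C)$ and restricting the determinant introduce the nontrivial Brauer classes of the Hausel--Thaddeus picture, and matching them on the two sides (so that the equivalence is genuinely $\alpha^{-\chi}$-twisted versus untwisted) is delicate bookkeeping that must be reconciled with the weight conventions of the $\mathrm{SL}/\mathrm{PGL}$ decompositions. Second, and more seriously, the extension over the singular Hitchin fibers requires an $\mathrm{SL}/\mathrm{PGL}$ analogue of the Cohen--Macaulay extension of the Poincaré sheaf and of the autoduality of compactified Jacobians~\cite{MRVF, MRVF2}, now for the compactified Prym fibration; such a statement is not available in the literature in general, so outside the cases reducible to~\cite{MLi} via the $\mathrm{GL}$-equivalence one would first have to establish the relevant Prym autoduality and the codimension estimate (the Prym analogue of Lemma~\ref{lem:codim}) feeding into Proposition~\ref{prop:CMQ}.
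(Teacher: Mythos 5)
Your overall geometric mechanism is the paper's: fix the determinant on the $\mathrm{SL}$ side, quotient by $\mathcal{P}ic^0(C)$ on the $\mathrm{PGL}$ side, and let the Poincar\'e kernel implement the exchange, with fully-faithfulness checked over the reduced locus and extended by Cohen--Macaulay arguments. But your logical reduction contains a genuine gap: you propose to deduce Conjecture~\ref{conj:PS} from Conjecture~\ref{conj:T0} by restricting and descending ``its Fourier--Mukai kernel.'' Conjecture~\ref{conj:T0} only asserts an abstract $B$-linear equivalence extending (\ref{equiv:Bsm}); it gives no control over a kernel, and every operation your plan requires --- restriction to the traceless, fixed-determinant locus, $\mathcal{P}ic^0(C)$-equivariance and descent, and the machinery of Proposition~\ref{prop:CMQ} and Lemma~\ref{lem:MCM} --- is an operation on a concrete \emph{sheaf} kernel that is maximal Cohen--Macaulay and flat over both factors. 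This is precisely why the paper never claims the implication $\text{Conjecture~\ref{conj:T0}} \Rightarrow \text{Conjecture~\ref{conj:PS}}$: instead, Theorem~\ref{thm:SLPGL2} conditions on Assumption~\ref{conj:CMext}, i.e.\ on membership of Li's maximal Cohen--Macaulay extension $\mathcal{P}$ (Theorem~\ref{thm:mao}, valid for $r=2$, $l>2g$) in the product of quasi-BPS categories, and both the $\mathrm{GL}$ statement (Theorem~\ref{thm:r=21}) and the $\mathrm{SL}/\mathrm{PGL}$ statement are derived in parallel from that single geometric input, not one from the other.

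With the concrete kernel in hand, the step you describe only vaguely is Lemma~\ref{P:descend}: the restriction $i^{\ast}\mathcal{P}$ to $(\mathcal{M}')^{\mathrm{tr}=0}\times_{B_{\geq 2}}\mathcal{M}_{\mathrm{SL}}$ is maximal Cohen--Macaulay by \cite[Lemma~2.3]{Ardual}, it is $\mathcal{P}ic^0(C)$-equivariant over $B_{\geq 2}\cap B^{\mathrm{ell}}$ by \cite[Lemma~4.3]{GS}, and the equivariance propagates everywhere because a maximal Cohen--Macaulay sheaf is determined outside codimension two; it therefore descends to $\mathcal{M}'_{\mathrm{PGL}}\times_{B_{\geq 2}}\mathcal{M}_{\mathrm{SL}}$. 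Your second ``hard part'' (Prym autoduality over singular fibers) is a real issue but is resolved in the paper by citation rather than left open: fully-faithfulness over $B_{\geq 2}\cap B^{\mathrm{red}}$ follows from \cite[Theorem~B]{FHR}, the $\mathrm{SL}/\mathrm{PGL}$ analogue of \cite{MRVF,MRVF2}, and no autoduality is needed beyond the reduced locus --- the extension across non-reduced spectral curves is pure commutative algebra via the codimension estimate of Lemma~\ref{lem:codim} adapted to $B_{\geq 2}$ (against the relative dimension $g^{\mathrm{sp}}-g$), Proposition~\ref{prop:CMQ}, and Proposition~\ref{prop:equivT}. Finally, your Brauer-class bookkeeping concern is sidestepped in Corollary~\ref{thm:r=2:PS} by the parity hypothesis ($\chi$ and $w+l$ odd), which makes the quasi-BPS categories honest (twisted) derived categories so that Assumption~\ref{conj:CMext} holds automatically, and in Corollary~\ref{r=2:PS:P1} by verifying the Assumption directly for $C=\mathbb{P}^1$ (Theorem~\ref{thm:CMext}).
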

As before, we regard the above derived equivalence as an extension over the Hitchin base $B_{\geq 2}$ of the Poincaré equivalence for dual abelian group schemes over the locus of smooth spectral curves.

Note that, if the tuple $(r,\chi,w)$ satisfies the BPS condition, then the BPS categories $\mathbb{T}^L_{\mathrm{PGL}(r)}(w+1-g^{\rm{sp}})_{-\chi+1-g^{\rm{sp}}}$ and $\mathbb{T}^L_{\mathrm{SL}(r)}(\chi)_w$ are smooth, proper, Calabi-Yau categories over the Hitchin base $B_{\geq 2}$, see the proofs of Theorems \ref{thm:proper0} and \ref{thm:proper}.

In what follows, we give evidence of Conjecture~\ref{conj:PS} 
for $r=2$.
We write $\mathcal{M}=\mathcal{M}^L(r, \chi)$, 
$\mathcal{M}'=\mathcal{M}^L(r, w+1-g^{\rm{sp}})$. 
Recall the line bundle $\mathcal{P}^{\sharp}$ on $(\mathcal{M}'\times_B \mathcal{M})^{\sharp}$
as in (\ref{line:Preg}). 
We also set 
\begin{align*}\mathcal{M}_{\mathrm{SL}}=\mathcal{M}_{\mathrm{SL}(r)}^L(\chi), \ \mathcal{M}_{\mathrm{PGL}}'=\mathcal{M}_{\mathrm{PGL}(r)}^L(w+1-g^{\rm{sp}}).
\end{align*}

\begin{lemma}\label{P:descend}
Suppose that there is a maximal Cohen–Macaulay extension 
$\mathcal{P} \in \Coh(\mathcal{M}'\times_B \mathcal{M})$ of 
$\mathcal{P}^{\sharp}$. 
Then $\mathcal{P}$ descends to a maximal Cohen–Macaulay sheaf $\overline{\mathcal{P}}$ on 
$\mathcal{M}'_{\mathrm{PGL}} \times_{B_{\geq 2}} \mathcal{M}_{\mathrm{SL}}$.     
\end{lemma}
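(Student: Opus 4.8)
The plan is to produce $\overline{\mathcal{P}}$ from $\mathcal{P}$ in two stages: first restrict to the trace-free and fixed-determinant loci, then descend along the $\mathcal{P}ic^0(C)$-action on the $\mathcal{M}'$-factor. Recall that $\mathcal{M}_{\mathrm{SL}}$ is the closed substack of $\mathcal{M}$ cut out by $\mathrm{tr}=0$ and $\det=A$, while $\mathcal{M}'_{\mathrm{PGL}}$ is the quotient of the trace-free locus $\mathcal{M}'^{\rm{tr}=0}$ by the tensoring action of $\mathcal{P}ic^0(C)$. Since both trace-free loci map to $B_{\geq 2}$ and $B_{\geq 2}\hookrightarrow B$ is a monomorphism, the fibre product over $B$ of the trace-free loci coincides with the fibre product over $B_{\geq 2}$; thus there is a closed immersion $\mathcal{M}'^{\rm{tr}=0}\times_{B_{\geq 2}}\mathcal{M}_{\mathrm{SL}}\hookrightarrow \mathcal{M}'\times_B\mathcal{M}$ and a smooth $\mathcal{P}ic^0(C)$-torsor $\mathcal{M}'^{\rm{tr}=0}\times_{B_{\geq 2}}\mathcal{M}_{\mathrm{SL}}\to \mathcal{M}'_{\mathrm{PGL}}\times_{B_{\geq 2}}\mathcal{M}_{\mathrm{SL}}$ (up to the scaling gerbe). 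I would carry out restriction and descent along these two maps in turn, tracking the maximal Cohen–Macaulay property.

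First I would check that restriction preserves maximal Cohen–Macaulayness. As $l>2g-2$ the stacks $\mathcal{M}$ and $\mathcal{M}'$ are smooth, and the conditions $\mathrm{tr}=0$ and $\det=A$ are cut out by sections of vector bundles that are regular of the expected codimension. By Lemma~\ref{lem:CMP}, $\mathcal{P}$ is flat over both $\mathcal{M}$ and $\mathcal{M}'$, so these defining sections form a $\mathcal{P}$-regular sequence; hence $\mathcal{P}|_{\mathcal{M}'^{\rm{tr}=0}\times_{B_{\geq 2}}\mathcal{M}_{\mathrm{SL}}}$ is again maximal Cohen–Macaulay of the same codimension. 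Using flatness of $\mathcal{P}$ over $\mathcal{M}$, one reduces this fibrewise over the points of $\mathcal{M}$ to Lemma~\ref{lem:MM}.

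The main content is the equivariant descent along $\mathcal{P}ic^0(C)$, where I would invoke Lemma~\ref{replace}. Replacing the universal $\mathcal{M}'$-sheaf by its tensor with the pullback of $\mathcal{L}_0\in\mathrm{Pic}^0(C)$ to the universal spectral curve $\mathcal{C}$ — which is exactly the action of $\mathcal{L}_0$ on the $\mathcal{M}'$-factor — changes $\mathcal{P}$ only by the determinant line bundle $\bigotimes_j\big(\det(\mathcal{E}|_{\mathcal{D}_{1j}})\otimes\det(\mathcal{E}|_{\mathcal{D}_{2j}})^{-1}\big)\in\mathrm{Pic}^0(\mathcal{M})$ built from the universal $\mathcal{M}$-sheaf $\mathcal{E}$. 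These corrections are controlled by $\det(p_{\ast}\mathcal{E})$ of the underlying $\mathcal{M}$-bundle, so once the $\mathcal{M}$-factor is restricted to $\mathcal{M}_{\mathrm{SL}}$, where $\det(p_{\ast}\mathcal{E})\cong A$ is constant along the fibres, the corrections become canonically trivial along the $\mathcal{P}ic^0(C)$-orbits. This furnishes a descent datum: the pullbacks of $\mathcal{P}|_{\mathcal{M}'^{\rm{tr}=0}\times_{B_{\geq 2}}\mathcal{M}_{\mathrm{SL}}}$ along the action and projection maps agree up to a line pulled back from $\mathcal{P}ic^0(C)$, with cocycle identity following from the biadditivity and normalization of the determinant-of-cohomology pairing defining $\mathcal{P}^{\sharp}$. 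Since the quotient map is a smooth torsor, descent preserves the maximal Cohen–Macaulay property, yielding $\overline{\mathcal{P}}$ on $\mathcal{M}'_{\mathrm{PGL}}\times_{B_{\geq 2}}\mathcal{M}_{\mathrm{SL}}$ (strictly speaking, on the $\mathbb{C}^{\ast}$-gerbe over it, so that $\overline{\mathcal{P}}$ is naturally twisted of the appropriate weight).

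The hard part will be the third step: verifying that the $\mathrm{Pic}^0(\mathcal{M})$-corrections of Lemma~\ref{replace} genuinely trivialize after fixing the determinant, which requires comparing $\det$ of the universal sheaf on the spectral curve with the fixed $A$ on $C$ through the spectral correspondence, and then checking the cocycle condition (equivalently, that the equivariance is honest rather than merely projective). I expect the bookkeeping of the $\mathbb{C}^{\ast}$-weight coming from the scaling part of $\mathcal{P}ic^0(C)$ — and hence the Brauer twist appearing on $\mathcal{M}'_{\mathrm{PGL}}$ — to be the most delicate point.
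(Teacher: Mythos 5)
Your two-stage plan (restrict to $(\mathcal{M}')^{\mathrm{tr}=0}\times_{B_{\geq 2}}\mathcal{M}_{\mathrm{SL}}$, then descend along the action of $\mathcal{P}ic^0(C)$ on the first factor) is the same architecture as the paper's proof, and your first stage is essentially correct: the paper likewise notes $Li^{\ast}\mathcal{P}=i^{\ast}\mathcal{P}$ and that the restriction remains maximal Cohen--Macaulay, citing \cite[Lemma~2.3]{Ardual}, whose content is the flatness/regular-sequence argument you sketch (flatness over $\mathcal{M}'$ from Theorem~\ref{thm:mao}, over $\mathcal{M}$ from Lemma~\ref{lem:CMP}). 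The gap is in your second stage. Descent along the group scheme $\mathrm{Pic}^0(C)$ requires an honest equivariant structure on $i^{\ast}\mathcal{P}$: an isomorphism $a^{\ast}(i^{\ast}\mathcal{P})\cong \mathrm{pr}^{\ast}(i^{\ast}\mathcal{P})$ over $\mathrm{Pic}^0(C)\times (\mathcal{M}')^{\mathrm{tr}=0}\times_{B_{\geq 2}}\mathcal{M}_{\mathrm{SL}}$, constructed in the family over $\mathrm{Pic}^0(C)$, globally over $B_{\geq 2}$, and satisfying the cocycle identity. Lemma~\ref{replace} gives something much weaker: for a single fixed $\mathcal{L}$, and only after an unspecified \'etale base change $B'\to B$, an isomorphism up to an unidentified element of $\mathrm{Pic}^0(\mathcal{M}\times_B B')$. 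Your observation that for $\mathcal{L}$ pulled back from $\mathcal{O}_C(x_2-x_1)\in \mathrm{Pic}^0(C)$ the \'etale-local corrections multiply out to $\det\mathcal{F}|_{x_2}\otimes \det\mathcal{F}|_{x_1}^{-1}$ (the sections $\mathcal{D}_{ij}$ then sum to the full fibres $p^{-1}(x_i)$), hence trivialize once $\det\mathcal{F}\cong A$ is fixed, identifies the right mechanism; but promoting this to a global, family-version equivariance with cocycle condition is exactly what you defer as ``the hard part,'' so the decisive step is asserted, not proved. The paper does not reprove it either: it invokes \cite[Lemma~4.3]{GS} (Groechenig--Shen), which supplies the equivariant structure over the locus $B_{\geq 2}\cap B^{\mathrm{ell}}$ of irreducible spectral curves.

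There is a second, related omission. Even granting the trivialization, your computation lives only where the determinant-of-cohomology formula for $\mathcal{P}^{\sharp}$ makes sense, i.e.\ where one of the two factors is regular (or, in the paper's version, over $B_{\geq 2}\cap B^{\mathrm{ell}}$). To descend the Cohen--Macaulay extension itself you must extend the equivariance isomorphism, and the cocycle identity, from that open locus to all of $i^{\ast}\mathcal{P}$. This is precisely where the maximal Cohen--Macaulay property does its work in the paper: isomorphisms between maximal Cohen--Macaulay sheaves, and identities among them, extend uniquely across closed subsets of codimension at least two, which is how the paper passes from equivariance over $B_{\geq 2}\cap B^{\mathrm{ell}}$ to equivariance everywhere before descending. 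Your proposal never performs this extension, so at best it descends $\mathcal{P}^{\sharp}$ on the regular part rather than $\mathcal{P}$. (Your closing remark about the $\mathbb{C}^{\ast}$-gerbe is consistent with the paper: descent is along the group scheme $\mathrm{Pic}^0(C)$, and the descended object is a twisted sheaf of weight $\chi+g^{\mathrm{sp}}-1$ on the gerbe over $\mathcal{M}'_{\mathrm{PGL}}\times_{B_{\geq 2}}\mathcal{M}_{\mathrm{SL}}$, so that part is not the issue.)
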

\begin{proof}
    Let $i$ be the composition of the closed immersions
    \begin{align*}
        i \colon (\mathcal{M}')^{\mathrm{tr}=0}\times_{B_{\geq 2}} \mathcal{M}_{\mathrm{SL}}
        \hookrightarrow  (\mathcal{M}')^{\mathrm{tr}=0}\times_{B_{\geq 2}}
        \mathcal{M}^{\mathrm{tr}=0} \hookrightarrow \mathcal{M}'\times_B \mathcal{M}. 
    \end{align*}
    Then $Li^{\ast}\mathcal{P}=i^{\ast}\mathcal{P}$ 
    and it is a maximal Cohen–Macaulay sheaf by~\cite[Lemma~2.3]{Ardual}. 
    Over the locus $B_{\geq 2} \cap B^{\rm{ell}}$, the 
    sheaf $i^{\ast}\mathcal{P}$ is equivariant under the 
    action of $\mathcal{P}ic^0(C)$ on the first factor of 
    $(\mathcal{M}')^{\mathrm{tr}=0}\times_{B_{\geq 2}} \mathcal{M}_{\mathrm{SL}}$, 
    see~\cite[Lemma~4.3]{GS}. 
    Since $i^{\ast}\mathcal{P}$ is a maximal Cohen–Macaulay sheaf and 
    the complement of the locus over $B_{\geq 2}\cap B^{\rm{ell}}$ is of codimension 
    bigger than or equal to two, the sheaf $i^{\ast}\mathcal{P}$
    is equivariant under the action of $\mathcal{P}ic^0(C)$. 
    Therefore it descends to a maximal Cohen–Macaulay sheaf on 
    $\mathcal{M}'_{\mathrm{PGL}} \times_{B_{\geq 2}} \mathcal{M}_{\mathrm{SL}}$. 
\end{proof}

The following result is proved along with the 
same argument of Theorem~\ref{thm:r=21}. 
\begin{thm}\label{thm:SLPGL2}
Under Assumption~\ref{conj:CMext}, 
Conjecture~\ref{conj:PS} holds for $r=2$ and $l>2g$. 
\end{thm}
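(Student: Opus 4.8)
The plan is to mirror the proof of Theorem~\ref{thm:r=21}, replacing the kernel $\mathcal{P}$ by its descent to the SL/PGL moduli spaces. By Lemma~\ref{P:descend}, the maximal Cohen--Macaulay extension $\mathcal{P}$ of Theorem~\ref{thm:mao} descends to a maximal Cohen--Macaulay sheaf $\overline{\mathcal{P}}$ on $\mathcal{M}'_{\mathrm{PGL}}\times_{B_{\geq 2}}\mathcal{M}_{\mathrm{SL}}$. Under Assumption~\ref{conj:CMext} the sheaf $\mathcal{P}$ lies in $\mathbb{T}^L(2, w+1-g^{\rm{sp}})_{\chi+g^{\rm{sp}}-1}\boxtimes_B \mathbb{T}^L(2, \chi)_w$; since the SL/PGL quasi-BPS categories are built from the $GL$ ones by base change along $B_{\geq 2}\hookrightarrow B$ and, for PGL, descent by $\mathcal{P}ic^0(C)$ (Theorems~\ref{thm:SL} and~\ref{thm:PGL}, Lemma~\ref{lem:sod:descend}, together with the functoriality of Lemma~\ref{lem:C:funct}), the descended sheaf $\overline{\mathcal{P}}$ lands in the corresponding product of SL and PGL BPS categories. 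It therefore induces a Fourier--Mukai functor
\[
\Phi_{\overline{\mathcal{P}}}\colon \mathbb{T}^L_{\mathrm{PGL}(2)}(w+1-g^{\rm{sp}})_{-\chi+1-g^{\rm{sp}}}\to \mathbb{T}^L_{\mathrm{SL}(2)}(\chi)_w,
\]
which I claim realizes the equivalence of Conjecture~\ref{conj:PS}. As in Theorem~\ref{thm:r=21}, I would reduce, using the periodicity and the BPS condition, to the case where the PGL side is coprime-like, and set $T$ to be the relevant (twisted) PGL moduli space so that $\Phi_{\overline{\mathcal{P}}}$ is a functor out of $D^b(T,\beta)$.

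Next I would establish full-faithfulness through the criterion of Section~\ref{s6}. Writing $\mathcal{Q}$ for the kernel of $\Phi_{\overline{\mathcal{P}}}^L\circ\Phi_{\overline{\mathcal{P}}}$ over $T\times_{B_{\geq 2}}T$ (an honest complex by~\eqref{isom:Q} and Lemmas~\ref{lem:CMP},~\ref{lem:MCM}), I would show the adjunction map $\mathcal{Q}\to\mathcal{O}_{\Delta_T}$ is an isomorphism via the SL/PGL analogue of Proposition~\ref{prop:CMQ}. Over the locus $B_{\geq 2}^{\rm{red}}$ of reduced spectral curves this reduces, after passing to the finely perturbed moduli stacks and applying the window equivalence of Lemma~\ref{lem:eq:window}, to the fibrewise Fourier--Mukai duality between the fixed-determinant (Prym) and the trace-zero compactified Jacobians of reduced planar curves. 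This is the SL/PGL counterpart of the input~\cite[Theorem~A]{MRVF2} used in Theorem~\ref{thm:r=21}, obtained by restricting the $GL$ duality to the $\det$-fibre and quotienting by $\mathcal{P}ic^0(C)$ as in Lemma~\ref{P:descend} and~\cite[Lemma~4.3]{GS}. The key numerical point is that the Hitchin maps in~\eqref{dia:SLPGL} have relative dimension $g^{\rm{sp}}-g$, so the relevant version of Proposition~\ref{prop:CMQ} only requires the bad locus to have codimension exceeding $g^{\rm{sp}}-g$; since $B\setminus B^{\rm{red}}$ is invariant under translation in the $H^0(C,L)$-direction, its codimension in $B_{\geq 2}$ equals that in $B$, and the estimate of Lemma~\ref{lem:codim} gives for $r=2$
\[
\mathrm{codim}(B_{\geq 2}\setminus B_{\geq 2}^{\rm{red}})-(g^{\rm{sp}}-g)\geq l-2g+2>0
\]
whenever $l>2g$. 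This relaxation of the codimension requirement by $g$ is exactly why $l>2g$ suffices in place of $l>\max\{3g-2,2g\}$.

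Finally, once $\Phi_{\overline{\mathcal{P}}}$ is fully faithful, I would conclude it is an equivalence by the $B_{\geq 2}$-linear analogue of Proposition~\ref{prop:equivT}: the Hitchin maps~\eqref{dia:SLPGL} are proper, so $\Phi_{\overline{\mathcal{P}}}$ and its adjoint are $B_{\geq 2}$-linear, and the target admits no nontrivial $B_{\geq 2}$-linear semiorthogonal decomposition by the SL-version of Lemma~\ref{lem:sodT}, which uses the smoothness, properness and Calabi--Yau property of the BPS categories over $B_{\geq 2}$ recorded after Conjecture~\ref{conj:PS}. The main obstacle is the reduced-locus input: one must set up the fibrewise Prym/dual-Jacobian duality over \emph{all} of $B_{\geq 2}^{\rm{red}}$, including reducible spectral curves, and check that each ingredient of the proof of Theorem~\ref{thm:r=21}---the perturbed stability and window equivalence, the identification of adjoints in Subsection~\ref{subsec:fmBPS}, and the kernel formula~\eqref{isom:Q}---descends compatibly along the $\mathcal{P}ic^0(C)$-quotient. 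In particular, the flatness and lci properties of the SL Hitchin map (the fixed-determinant analogue of~\cite[Proposition~2.2.6]{MLi}) on which Proposition~\ref{prop:CMQ} relies would need to be verified in this setting.
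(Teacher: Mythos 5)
Your proposal follows the same architecture as the paper's proof: descend $\mathcal{P}$ via Lemma~\ref{P:descend} under Assumption~\ref{conj:CMext}, obtain the Fourier--Mukai functor $\Phi_{\overline{\mathcal{P}}}$ between the PGL and SL BPS categories, check the codimension estimate relative to the fibre dimension $g^{\rm{sp}}-g$ of the Hitchin maps in (\ref{dia:SLPGL}), and conclude by Proposition~\ref{prop:CMQ} and Proposition~\ref{prop:equivT}. Your numerics are also right: for $r=2$ one gets $\mathrm{codim}(B_{\geq 2}\setminus B_{\geq 2}^{\rm{red}})-(g^{\rm{sp}}-g)\geq l+2-2g$, so the codimension condition only needs $l>2g-2$, and the constraint $l>2g$ comes from Theorem~\ref{thm:mao} (existence of the Cohen--Macaulay extension), which is exactly why the bound relaxes from $\max\{3g-2,2g\}$ to $2g$.

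However, there is one genuine gap, and it sits at the crucial input of the whole argument. Over the locus $B_{\geq 2}\cap B^{\rm{red}}$ you assert that the fibrewise duality between the fixed-determinant (Prym-type) compactified Jacobians and the $\mathcal{P}ic^0(C)$-quotient compactified Jacobians is ``the SL/PGL counterpart of \cite[Theorem~A]{MRVF2}, obtained by restricting the GL duality to the $\det$-fibre and quotienting by $\mathcal{P}ic^0(C)$.'' This deduction is not valid as stated: Fourier--Mukai equivalences do not restrict to fibres of the determinant/norm map, nor do they descend along $\mathcal{P}ic^0(C)$-quotients, in any formal way. Under duality, subgroups and quotients are \emph{exchanged} (already for smooth spectral curves, the statement that the Prym and the quotient $\mathrm{Pic}/\mathrm{Pic}^0(C)$ are dual abelian varieties is a theorem about the interaction of the Poincar\'e bundle with the norm and pullback maps, not a restriction of the GL autoduality), and for compactified Jacobians of reduced but singular planar curves the corresponding SL/PGL duality --- including the gerbe/Brauer-class bookkeeping you need for $D^b(T,\beta)$ --- is a substantial theorem in its own right. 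This is precisely why the paper invokes \cite[Theorem~B]{FHR} at this step, rather than deriving it from \cite[Theorem~A]{MRVF2}; the equivariance statement from \cite[Lemma~4.3]{GS} that enters Lemma~\ref{P:descend} gives descent of the \emph{kernel}, not of the property of being an equivalence. Without this external input (or a genuine proof of it), your verification of the isomorphism $\mathcal{Q}\to\mathcal{O}_{\Delta_T}$ over the reduced locus, and hence the full-faithfulness of $\Phi_{\overline{\mathcal{P}}}$, does not go through.
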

\begin{proof}
Suppose that Assumption~\ref{conj:CMext} holds. 
By Lemma~\ref{P:descend}, $\mathcal{P}$ descends to the object 
\begin{align*}
    \mathcal{P} \in \mathbb{T}_{\mathrm{PGL}(r)}^L(w+1-g^{\rm{sp}})_{\chi+g^{\rm{sp}}-1}\boxtimes 
    \mathbb{T}_{\mathrm{SL}(r)}^L(\chi)_w
\end{align*}
which is of maximal Cohen–Macaulay. 
It defines the Fourier-Mukai functor 
\begin{align}\label{funct:Pbar}
    \Phi_{\overline{\mathcal{P}}} \colon 
    \mathbb{T}_{\mathrm{PGL}(r)}^L(w+1-g^{\rm{sp}})_{-\chi+1-g^{\rm{sp}}+1}
    \to \mathbb{T}_{\mathrm{SL}(r)}^L(\chi)_w. 
\end{align}
The computation as in Lemma~\ref{lem:codim} shows that, for $r=2$ and $l>2g$, we have 
\begin{align*}
    \mathrm{codim}(B_{\geq 2} \setminus (B_{\geq 2} \cap B^{\rm{red}}))>g^{\rm{sp}}-g,
    \end{align*}
    where the right hand side is the relative dimensions of Hitchin morphisms in (\ref{dia:SLPGL}). 
    The argument in the proof of Theorem~\ref{thm:r=21} 
    and \cite[Theorem B]{FHR} show that 
    the functor (\ref{funct:Pbar}) is an equivalence 
    over the locus $B_{\geq 2} \cap B^{\rm{red}}$. 
    Then similarly to the proof of Theorem~\ref{thm:r=21}, 
    we can apply Proposition~\ref{prop:CMQ}
    and Proposition~\ref{prop:equivT} to show that 
       the functor (\ref{funct:Pbar}) is an equivalence. 
\end{proof}

Let $(r, \chi)$ be coprime. Then 
the SL-Higgs moduli stack $\mathcal{M}^L_{\mathrm{SL}(r)}(\chi)$ is a 
trivial $\mu_r$-gerbe
\begin{align*}
    \mathcal{M}^L_{\mathrm{SL}(r)}(\chi) \to M^L_{\mathrm{SL}(r)}(\chi)
\end{align*}
for a smooth quasi-projective variety $M^L_{\mathrm{SL}(r)}(\chi)$. 
The PGL-moduli space is the smooth Deligne-Mumford stack 
\begin{align*}
    M_{\mathrm{PGL}(r)}^L(\chi)=\mathcal{M}_{\mathrm{PGL}(r)}^L(\chi)=M^L_{\mathrm{SL}(r)}(\chi)/\Gamma[r]
\end{align*}
where $\Gamma[r] \subset \mathrm{Pic}^0(C)$ is the subgroup of $r$-torsion points. As a corollary 
of Theorem~\ref{thm:SLPGL2}, we obtain the following: 

\begin{cor}\label{thm:r=2:PS}
Assume $r=2$, $l>2g$, and $\chi$ and $w+l$ are odd. Then
Conjecture~\ref{conj:PS} is true, 
namely there are equivalences 
\begin{align*}
    &D^b(M_{\mathrm{PGL}(2)}^L(1), \alpha) \stackrel{\sim}{\to} D^b(M_{\mathrm{SL}(2)}^L(1)) \,\text{ if } 
    l \mbox{ is even}, \\
  &D^b(M_{\mathrm{PGL}(2)}^L(1)) \stackrel{\sim}{\to} D^b(M_{\mathrm{SL}(2)}^L(1)) \,\text{ if } 
    l \mbox{ is odd}.  
\end{align*}
In the above, $\alpha$ is the Brauer class corresponding to the gerbe (\ref{pgl:gerbe}). 
\end{cor}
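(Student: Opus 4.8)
The plan is to deduce Corollary~\ref{thm:r=2:PS} directly from Theorem~\ref{thm:SLPGL2} by checking that, under the hypotheses $r=2$, $l>2g$, and $\chi,\, w+l$ both odd, Assumption~\ref{conj:CMext} is automatically satisfied and that the abstract equivalence of BPS categories specializes to the claimed equivalences of (twisted) derived categories of the genuine moduli spaces. First I would verify the BPS condition: since $r=2$ and $\chi$ is odd, the pair $(2,\chi)$ is coprime, so a fortiori $(r,\chi,w)$ is primitive and the BPS condition of Definition~\ref{def:bps} holds; by Remark~\ref{rmk:replace} the dual tuple $(2,\, w+1-g^{\rm{sp}},\, -\chi+1-g^{\rm{sp}})$ also satisfies it. The point of the parity hypotheses is exactly the observation recorded in Corollary~\ref{cor:r=2}: because $\chi$ and $w+l$ are odd (and $g^{\rm{sp}}=l+2g-1$), both $\chi$ and $w+1-g^{\rm{sp}}$ are coprime to $2$, so the quasi-BPS categories on both sides coincide with the full derived categories of the corresponding stacks, and hence Assumption~\ref{conj:CMext} is vacuously true. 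Theorem~\ref{thm:SLPGL2} then yields an equivalence
\[
\mathbb{T}^L_{\mathrm{PGL}(2)}(w+1-g^{\rm{sp}})_{-\chi+1-g^{\rm{sp}}} \stackrel{\sim}{\to} \mathbb{T}^L_{\mathrm{SL}(2)}(\chi)_w.
\]

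Next I would translate each BPS category into the derived category of the moduli space, using the periodicity of Lemma~\ref{lem:period} (applied via the SL/PGL analogue built into Theorems~\ref{thm:SL} and~\ref{thm:PGL}) to normalize the numerical invariants to $\chi=1$. On the SL-side, since $(2,\chi)$ is coprime, $\mathcal{M}^L_{\mathrm{SL}(2)}(\chi)$ is a trivial $\mu_2$-gerbe over the smooth quasi-projective variety $M^L_{\mathrm{SL}(2)}(1)$, so $\mathbb{T}^L_{\mathrm{SL}(2)}(\chi)_w \simeq D^b(M^L_{\mathrm{SL}(2)}(1))$ (the weight $w$ component of the gerbe, which is a nontrivial component only when the $\mu_2$-character is nontrivial, but still equivalent to $D^b$ of the coarse space since the gerbe is trivial). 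On the PGL-side, $M_{\mathrm{PGL}(2)}^L(\chi) = M^L_{\mathrm{SL}(2)}(1)/\Gamma[2]$ is a smooth Deligne--Mumford stack, and $\mathbb{T}^L_{\mathrm{PGL}(2)}(w+1-g^{\rm{sp}})_{-\chi+1-g^{\rm{sp}}}$ is the derived category of $\alpha$-twisted sheaves on it, where $\alpha$ is the Brauer class of the gerbe~(\ref{pgl:gerbe}) raised to the appropriate weight. The crucial parity dichotomy is that the relevant power of $\alpha$ is nontrivial precisely when $l$ is even and trivial when $l$ is odd; this is where I would carry out the bookkeeping of the $Z(\mathrm{PGL}(2)) = \mu_2$-character attached to the weight $-\chi+1-g^{\rm{sp}}$, noting that modulo $2$ this character equals the residue of $\chi+g^{\rm{sp}}-1 = \chi+l+2g-2$, whose parity is governed by that of $l$ (as $\chi$ is odd).

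The main obstacle I anticipate is the careful identification of the twist: namely showing that for $l$ even the equivalence lands in $D^b(M_{\mathrm{PGL}(2)}^L(1),\alpha)$ with $\alpha$ genuinely nontrivial, whereas for $l$ odd the twist trivializes and one obtains an untwisted equivalence $D^b(M_{\mathrm{PGL}(2)}^L(1)) \simeq D^b(M_{\mathrm{SL}(2)}^L(1))$. This requires matching the $\mathbb{C}^*$-weight appearing in the decomposition~(\ref{decom:pgl}) with the $Z(\mathrm{SL}(2))$-weight in the SL-decomposition under the Langlands swap $(\chi,w)\mapsto(w+1-g^{\rm{sp}},-\chi+1-g^{\rm{sp}})$, and tracking how the descent of the Cohen--Macaulay sheaf $\overline{\mathcal{P}}$ along $\mathcal{P}ic^0(C)$ (as in Lemma~\ref{P:descend}) interacts with the gerbe class. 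Once this parity analysis is settled, the statement follows formally, and the Fourier--Mukai kernel inducing the equivalence is the descent $\overline{\mathcal{P}}$ of the maximal Cohen--Macaulay extension of the Poincaré line bundle, restricting on smooth spectral curves to the classical Poincaré equivalence of dual abelian schemes.
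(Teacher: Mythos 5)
Your proposal is correct and follows essentially the same route as the paper: the parity hypotheses (together with $g^{\rm{sp}}=l+2g-1$) make both $\chi$ and $w+1-g^{\rm{sp}}$ odd, so Assumption~\ref{conj:CMext} holds automatically exactly as in Corollary~\ref{cor:r=2}, Theorem~\ref{thm:SLPGL2} applies, and coprimality identifies the SL/PGL BPS categories with the (possibly $\alpha$-twisted) derived categories of the moduli spaces, with the twist governed by the parity of $-\chi+1-g^{\rm{sp}}\equiv \chi+l \pmod 2$. Your weight bookkeeping matches the paper's statement, so nothing further is needed.
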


Using Theorem~\ref{thm:CMext}, we obtain
the following:

\begin{cor}\label{r=2:PS:P1}
Conjecture~\ref{conj:PS} is true for
$r=2$ and $C=\mathbb{P}^1$. 
\end{cor}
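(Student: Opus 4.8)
The plan is to combine the two main inputs that are already available for $C=\mathbb{P}^1$: the verification of Assumption~\ref{conj:CMext} in this case (Theorem~\ref{thm:CMext}) and the conditional $\mathrm{SL}/\mathrm{PGL}$ equivalence of Theorem~\ref{thm:SLPGL2}. First I would check that the hypotheses of Theorem~\ref{thm:SLPGL2} are met for $C=\mathbb{P}^1$. The relevant bounds are $r=2$ and $l>2g$; since $g=0$ here, the condition $l>2g$ reads simply $l>0$, which is exactly the standing assumption $l=\deg L>2g-2=-2$ together with positivity of $L$ in the rank-two genus-zero setting (recall $g^{\mathrm{sp}}=l-1$, so a nontrivial spectral geometry requires $l>0$). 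Thus the numerical inequalities pose no difficulty.

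Next I would invoke Theorem~\ref{thm:CMext}, which establishes Assumption~\ref{conj:CMext} for $C=\mathbb{P}^1$ and $r=2$. This is precisely the input that Theorem~\ref{thm:SLPGL2} requires: the maximal Cohen--Macaulay extension $\mathcal{P}$ of the Poincar\'e line bundle $\mathcal{P}^{\sharp}$ lies in the product of BPS categories $\mathbb{T}^L(2, w+1-g^{\mathrm{sp}})_{\chi+g^{\mathrm{sp}}-1}\boxtimes_B \mathbb{T}^L(2, \chi)_w$. Given this, Lemma~\ref{P:descend} guarantees that $\mathcal{P}$ descends to a maximal Cohen--Macaulay sheaf $\overline{\mathcal{P}}$ on $\mathcal{M}'_{\mathrm{PGL}}\times_{B_{\geq 2}}\mathcal{M}_{\mathrm{SL}}$, which in turn yields the Fourier--Mukai functor $\Phi_{\overline{\mathcal{P}}}$ of \eqref{funct:Pbar}.

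With Assumption~\ref{conj:CMext} now unconditionally verified for $\mathbb{P}^1$, the conclusion of Theorem~\ref{thm:SLPGL2} holds without any remaining hypothesis, giving the equivalence asserted in Conjecture~\ref{conj:PS} for $r=2$ and $C=\mathbb{P}^1$. The proof is therefore a short citation of the two theorems: Theorem~\ref{thm:CMext} supplies the assumption, and Theorem~\ref{thm:SLPGL2} (whose proof uses Proposition~\ref{prop:CMQ}, Proposition~\ref{prop:equivT}, the codimension estimate of Lemma~\ref{lem:codim}, and the reduced-curve duality of \cite{MRVF2} together with \cite[Theorem~B]{FHR}) converts it into the equivalence. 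I would state this as a one- or two-sentence deduction.

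The only genuine subtlety — and hence the step I would double-check most carefully — is that the bounds in Theorem~\ref{thm:SLPGL2} and in Theorem~\ref{thm:CMext} are mutually compatible over $\mathbb{P}^1$. Theorem~\ref{thm:SLPGL2} is stated for $l>2g$, while some of the auxiliary fully-faithfulness results (e.g.\ Theorem~\ref{thm:ff}) require the stronger $l>3g-2$; in genus zero both collapse to $l>0$, so there is no tension, but I would make explicit that the codimension computation $\mathrm{codim}(B_{\geq 2}\setminus(B_{\geq 2}\cap B^{\mathrm{red}}))>g^{\mathrm{sp}}-g$ used inside the proof of Theorem~\ref{thm:SLPGL2} indeed holds for $g=0$. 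Since that estimate is a specialization of Lemma~\ref{lem:codim}, which was already verified in the relevant range, this is routine, and I expect no real obstacle beyond bookkeeping of the genus-zero numerics.
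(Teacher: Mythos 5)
Your proposal is correct and is essentially the paper's own argument: the corollary is deduced by combining Theorem~\ref{thm:CMext} (which verifies Assumption~\ref{conj:CMext} for $C=\mathbb{P}^1$) with the conditional equivalence of Theorem~\ref{thm:SLPGL2}, whose hypothesis $l>2g$ becomes $l>0$ in genus zero. Your extra check that the genus-zero numerics (e.g.\ the codimension estimate behind Proposition~\ref{prop:CMQ}) cause no tension is exactly the bookkeeping implicit in the paper's one-line deduction.
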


\subsection{Deformations of $\mathbb{Z}/2$-periodic derived categories}
In the rest of this section, we prove Theorem~\ref{thm:intro4}. 

We first discuss deformations of the categories of matrix factorizations 
via deformations of potentials. 
Let $M$ be a 
smooth variety (or Deligne-Mumford stack) and let $f \colon M \to \mathbb{C}$ 
be a regular function such that 
the critical locus
\begin{align*}
    Y=\mathrm{Crit}(f) \subset M
\end{align*}
is smooth. 

Let $D^{\mathbb{Z}/2}(Y)$ be the (idempotent completed)
$\mathbb{Z}/2$-periodic 
derived category of coherent sheaves on $Y$: 
\begin{align*}
    D^{\mathbb{Z}/2}(Y)=\mathrm{MF}(Y, 0). 
\end{align*}
\begin{remark}\label{rmk:idemp}
Up to idempotent completion, 
the category $\mathrm{MF}(Y, 0)$ 
is equivalent to the quotient category, see~\cite{T4}:
\begin{align*}
    D^b(Y \times \Spec \mathbb{C}[\epsilon])/\mathcal{C}_{Y\times \{0\}}.
\end{align*}
In the above, $\deg \epsilon=-1$ and $\mathcal{C}_{Y\times \{0\}}$ is the 
subcategory of objects with singular support contained in
\begin{align*}
    Y\times \{0\} \subset Y\times \mathbb{A}^1 =\Omega_{Y\times \Spec \mathbb{C}[\epsilon]}[-1]^{\rm{cl}}. 
\end{align*}
\end{remark}
Let $N=N_{Y/M}$ be the normal bundle. 
The Hessian of $f$ determines the non-degenerate 
symmetric bilinear  
form 
\begin{align*}
    q_{N} \colon N\otimes_{\mathcal{O}_Y} N \to \mathcal{O}_Y. 
\end{align*}
Denote by $w_i(N, q_N) \in H^{i}(Y, \mathbb{Z}/2)$ the Stiefel-Whitney class
associated with the quadratic vector bundle $(N, q_N)$. 
\begin{thm}\emph{(\cite[Theorem~2]{Teleman})}
Suppose that $N$ is of even rank and assume $w_i(N, q_N)=0$
for $i=1, 2$. Then $D^{\mathbb{Z}/2}(Y)$ and $\mathrm{MF}(M, f)$ are 
deformation equivalent. 
\end{thm}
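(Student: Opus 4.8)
The plan is to flatten $f$ to its quadratic part along $Y$ and then apply a Clifford-linear version of Knörrer periodicity, so that the Stiefel--Whitney hypotheses enter exactly through the triviality of a Clifford algebra. First I would exploit that $\mathrm{MF}(M,f)$ depends only on a neighborhood of $\mathrm{Crit}(f)=Y$, combined with the deformation to the normal bundle of $Y\subset M$. Rescaling $f$ along the fibers of the deformation to the normal cone by $f_t:=t^{-2}f$ produces a one-parameter family of potentials whose special fiber is the fiberwise nondegenerate quadratic function $q_N$ on $\mathrm{Tot}(N)$ (this is the Hessian form, nondegenerate transverse to $Y$ precisely because $Y=\mathrm{Crit}(f)$ is smooth, i.e. the Morse--Bott hypothesis). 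Since $\mathrm{Crit}(f_t)=Y$ for every $t$, the categories $\mathrm{MF}(M_t,f_t)$ assemble into a family over the connected base $\mathbb{A}^1$, yielding a deformation equivalence
\[ \mathrm{MF}(M, f) \simeq \mathrm{MF}(\mathrm{Tot}(N), q_N). \]

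Second, I would invoke the global Knörrer periodicity for quadratic bundles: pushing forward along $\mathrm{Tot}(N)\to Y$ identifies $\mathrm{MF}(\mathrm{Tot}(N),q_N)$ with the $\mathbb{Z}/2$-graded derived category of modules over the sheaf of Clifford algebras $\mathrm{Cl}(N,q_N)$ on $Y$, that is $\mathrm{MF}(\mathrm{Tot}(N),q_N)\simeq D^{\mathbb{Z}/2}(Y,\mathrm{Cl}(N,q_N))$. Because $N$ has even rank, $\mathrm{Cl}(N,q_N)$ is a $\mathbb{Z}/2$-graded Azumaya algebra and thus has a well-defined class in the sheafified graded Brauer--Wall group of $Y$, whose successive quotients sit in $H^0(Y,\mathbb{Z}/2)$, $H^1(Y,\mathbb{Z}/2)$, and $\mathrm{Br}(Y)$. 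The classical computation of the Clifford invariant identifies these three components with the rank parity of $N$, the class $w_1(N,q_N)$, and the image of $w_2(N,q_N)$.

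Third, under the standing hypotheses ($N$ of even rank and $w_1(N,q_N)=w_2(N,q_N)=0$) this graded Brauer--Wall class vanishes, so $\mathrm{Cl}(N,q_N)$ is graded Morita trivial over $\mathcal{O}_Y$ and hence $D^{\mathbb{Z}/2}(Y,\mathrm{Cl}(N,q_N))\simeq D^{\mathbb{Z}/2}(Y)$. Composing this honest equivalence with the deformation equivalence of the first step gives that $\mathrm{MF}(M,f)$ and $D^{\mathbb{Z}/2}(Y)$ are deformation equivalent, as claimed.

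The genuinely delicate step is the second one: establishing the global, Clifford-linear Knörrer periodicity with precise control of the twist and matching it with the Stiefel--Whitney data. Locally on $Y$ this is the standard Knörrer statement, but globalizing requires tracking the Azumaya (equivalently, gerbe) data coherently, and the identification of the graded Brauer--Wall class with $(\mathrm{rank}\,N \bmod 2,\, w_1,\, w_2)$ is exactly what makes the stated hypotheses the correct ones. A secondary technical point is verifying that the deformation to the normal bundle keeps the critical locus equal to $Y$ throughout, with the resulting family of categories proper and flat over $\mathbb{A}^1$, so that the interpolation is a bona fide deformation equivalence rather than merely a family of possibly jumping categories.
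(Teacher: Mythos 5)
First, a point of framing: the paper does not prove this statement --- it is quoted from Teleman, whose own argument runs through curved $A_\infty$/Maurer--Cartan deformation theory; what the paper supplies instead (Proposition \ref{prop:MF2} and the pencil $H=tg_1+(1-t)g_2$) is an explicit superpotential deformation in the special case where $f=q_{\mathcal{V}}(s,s)$ for a regular section $s$ of a quadratic bundle $(\mathcal{V},q_{\mathcal{V}})$. Your route is genuinely different from both, and it is correct in outline. You prove the general Morse--Bott statement by (i) degenerating $(M,f)$ to $(\mathrm{Tot}(N),q_N)$ via deformation to the normal cone, using that $t^{-2}\pi^{\ast}f$ is regular on the total space (since $f\in I_Y^2$ after normalizing $f|_Y=0$ on each component) and restricts to the Hessian form on the special fiber, and (ii) trivializing $\mathrm{MF}(\mathrm{Tot}(N),q_N)$ by Clifford-linear Kn\"orrer periodicity together with vanishing of the graded Brauer--Wall class of $\mathrm{Cl}(N,q_N)$. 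Observe that step (ii), i.e.\ your second and third steps combined, is precisely \cite[Theorem~1]{Teleman} applied to $(N,q_N)$ over $Y$ --- the same input the paper invokes in proving Proposition \ref{prop:MF2} --- so your argument is a geometric reduction of Teleman's Theorem~2 to his Theorem~1, replacing the Maurer--Cartan analysis by the normal-cone degeneration. As for what each approach buys: yours works for arbitrary $f$ with smooth critical locus (not only those globally of the form $q_{\mathcal{V}}(s,s)$) and makes transparent that $w_1,w_2$ enter exactly as the obstruction to graded Morita triviality of the Clifford algebra; the paper's interpolation keeps both endpoint potentials on the same ambient space $\mathcal{V}$, so the deformation is a literal pencil of superpotentials, which is the concrete form needed in the proof of Theorem \ref{thm:eq:can}; Teleman's method yields the finer assertion that $\mathrm{MF}(M,f)$ is realized as a curved deformation of modules over (a Clifford twist of) $\mathcal{O}_Y$.

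Two points deserve care, though both sit at the same level of rigor as the paper's own construction. The relative critical locus of the rescaled family must equal $Y\times\mathbb{A}^1$ scheme-theoretically; this holds because scheme-smoothness of $\mathrm{Crit}(f)$ forces nondegeneracy of the Hessian on $N$, as you note. And ``deformation equivalent'' is being used in the weak sense of an $\mathbb{A}^1$-linear (or $\Delta$-linear) family of categories with the two prescribed fibers; the flatness/local-constancy issue you flag at the end is likewise not addressed for the family $\mathrm{MF}(\mathcal{V}\times\Delta,H)$ in the paper, so your argument is not at a deficit there.
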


In~\cite{Teleman}, the deformation of $D^{\mathbb{Z}/2}(Y)$ into $\mathrm{MF}(M, f)$ 
is 
described in terms of solutions of Maurer-Cartan equations. 
Here we give another descriptions of the deformation equivalence of these categories
via deformations of super-potentials in the following special case. 
Let $\mathcal{V} \to M$ be a vector bundle with a non-degenerate 
quadratic form 
\begin{align*}
    q_{\mathcal{V}} \colon \mathcal{V} \otimes_{\mathcal{O}_M} \mathcal{V} \to \mathcal{O}_M. 
\end{align*}
Assume that the rank of $\mathcal{V}$ is 
even and that $w_i(\mathcal{V}, q_{\mathcal{V}})=0$ 
for $i=1, 2$. 
Let $s \in \Gamma(M, \mathcal{V})$ be a regular 
section such that 
the zero locus 
\begin{align*}
    Y=s^{-1}(0) \subset M
\end{align*}
is smooth. Let $f$ be the composition 
\begin{align*}
    f \colon M \stackrel{s}{\to} \mathcal{V} \xrightarrow{q_{\mathcal{V}}(-, -)} \mathbb{C}. 
\end{align*}
Then we also have that $Y=\mathrm{Crit}(f)$. 
We consider two regular functions 
$g_i \colon \mathcal{V} \to \mathbb{C}$ defined as follows: 
\begin{align*}
    g_1(x, v)=q_{\mathcal{V}}(s(x), v), \ 
    g_2(x, v)=f(x)+q_{\mathcal{V}}(v, v)
\end{align*}
for $x \in M$ and $v \in \mathcal{V}|_{x}$. 
Note that 
\begin{align*}
    \mathrm{Crit}(g_1)=\mathrm{Crit}(g_2)=Y 
\end{align*}
and that $Y$ lies in the zero section of $\mathcal{V} \to M$. 
\begin{prop}\label{prop:MF2}
There are equivalences 
\begin{align*}
    D^{\mathbb{Z}/2}(Y)\stackrel{\sim}{\to} \mathrm{MF}(\mathcal{V}, g_1), \ 
    \mathrm{MF}(M, f) \stackrel{\sim}{\to} \mathrm{MF}(\mathcal{V}, g_2). 
\end{align*}
\end{prop}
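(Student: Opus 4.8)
The plan is to prove Proposition~\ref{prop:MF2} by establishing the two equivalences independently: the first by dimensional reduction for a potential that is linear along the fibres of $\mathcal{V}\to M$, and the second by a global Knörrer periodicity for the quadratic bundle $(\mathcal{V},q_{\mathcal{V}})$.

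For the first equivalence, I would exploit that $g_1(x,v)=q_{\mathcal{V}}(s(x),v)$ is linear along the fibres of $\pi\colon \mathcal{V}\to M$. The non-degenerate form gives an isomorphism of bundles $q_{\mathcal{V}}^{\flat}\colon \mathcal{V}\xrightarrow{\sim}\mathcal{V}^{\vee}$, $v\mapsto q_{\mathcal{V}}(v,-)$, under which $g_1$ is identified with the tautological linear function $W_s(x,\xi)=\langle \xi, s(x)\rangle$ on the total space of $\mathcal{V}^{\vee}$ attached to the section $s\in\Gamma(M,\mathcal{V})$. The $\mathbb{Z}/2$-periodic dimensional reduction (the ungraded analogue of the Koszul equivalence (\ref{equiv:koszul}); see \cite{I, Hirano, T}) then identifies $\mathrm{MF}(\mathcal{V}^{\vee},W_s)$ with the $\mathbb{Z}/2$-folding of the derived category of the derived zero locus of $s$. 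Since $s$ is a regular section and $Y=s^{-1}(0)$ is smooth, its Koszul complex is a resolution and the derived zero locus coincides with the classical scheme $Y$; hence this folding is $D^{\mathbb{Z}/2}(Y)=\mathrm{MF}(Y,0)$. Composing with $q_{\mathcal{V}}^{\flat}$ yields $D^{\mathbb{Z}/2}(Y)\simeq \mathrm{MF}(\mathcal{V},g_1)$.

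For the second equivalence, the function $g_2=\pi^{\ast}f+q_{\mathcal{V}}$ is the pullback of $f$ plus the fibrewise non-degenerate quadratic form. Étale-locally on $M$ the quadratic space $(\mathcal{V},q_{\mathcal{V}})$ is trivial and hyperbolic, and local Knörrer periodicity gives $\mathrm{MF}(\mathcal{V},\pi^{\ast}f+q_{\mathcal{V}})\simeq \mathrm{MF}(M,f)$, the equivalence being tensoring along the fibres with a Clifford (half-spinor) module. The point is to glue these local equivalences into a single global one: the relevant kernel is a global module over the even Clifford algebra bundle of $(\mathcal{V},q_{\mathcal{V}})$, whose existence amounts to a spin structure. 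Here I would use that $\mathcal{V}$ has even rank and $w_1(\mathcal{V},q_{\mathcal{V}})=w_2(\mathcal{V},q_{\mathcal{V}})=0$, so that the orthogonal structure group reduces to $\mathrm{Spin}$ and the fibrewise half-spinor representation globalizes to a vector bundle $\mathbb{S}$ on $M$; tensoring with $\mathbb{S}$ supplies the global Knörrer kernel and the desired equivalence $\mathrm{MF}(M,f)\simeq \mathrm{MF}(\mathcal{V},g_2)$.

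The main obstacle is exactly this globalization in the second equivalence: while local Knörrer periodicity is standard, producing a matrix-factorization kernel valid over all of $M$ (rather than étale-locally) requires the global spinor bundle, and it is the vanishing of $w_1$ and $w_2$ together with the evenness of the rank that removes the obstruction; one must also track the $\mathbb{Z}/2$-parity of the Clifford module to match the gradings on the two sides. Finally, together with the one-parameter family of potentials $g_t(x,v)=q_{\mathcal{V}}(s(x),v)+t\,q_{\mathcal{V}}(v,v)$ on $\mathcal{V}$, which specialises to $g_1$ at $t=0$ and is equivalent, after completing the square in the fibre variable and rescaling, to $g_2$ for every $t\neq 0$, these two equivalences recover the deformation equivalence between $D^{\mathbb{Z}/2}(Y)$ and $\mathrm{MF}(M,f)$ of \cite{Teleman}.
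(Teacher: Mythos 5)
Your proposal is correct, and on the first equivalence it coincides with the paper's proof: the paper likewise deduces $D^{\mathbb{Z}/2}(Y)\simeq \mathrm{MF}(\mathcal{V},g_1)$ from the $\mathbb{Z}/2$-periodic Koszul (dimensional reduction) equivalence of \cite{T4}, with the same implicit identification $\mathcal{V}\cong\mathcal{V}^{\vee}$ via $q_{\mathcal{V}}$ and the same point that regularity of $s$ and smoothness of $Y$ make the derived and classical zero loci agree. On the second equivalence your route differs in packaging rather than substance: the paper cites \cite[Theorem~1]{Teleman}, which under exactly your hypotheses (even rank, $w_1=w_2=0$) gives a $D^{\mathbb{Z}/2}(M)$-linear equivalence $\mathrm{MF}(\mathcal{V},q_{\mathcal{V}})\simeq D^{\mathbb{Z}/2}(M)$, and then obtains $\mathrm{MF}(M,f)\simeq\mathrm{MF}(\mathcal{V},g_2)$ by applying $\otimes_{D^{\mathbb{Z}/2}(M)}\mathrm{MF}(M,f)$ together with the Thom--Sebastiani theorem of \cite{Preygel}. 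What you propose, namely constructing the global half-spinor bundle $\mathbb{S}$ from the reduction of the structure group to $\mathrm{Spin}$, using Clifford multiplication as an explicit Knörrer kernel, and gluing the étale-local Knörrer equivalences, is in effect a re-proof of Teleman's theorem, and your ``tensor with $\mathbb{S}$'' step is the Thom--Sebastiani step in concrete form. Your version buys an explicit kernel and makes geometrically visible where $w_1$, $w_2$ and the evenness of the rank enter; the paper's version is shorter and, by working $D^{\mathbb{Z}/2}(M)$-linearly, sidesteps the one point in your sketch that needs extra care: concluding that a globally defined functor which is étale-locally an equivalence is itself an equivalence requires étale descent for matrix factorization categories (true, but a nontrivial input), whereas $D^{\mathbb{Z}/2}(M)$-linearity plus Thom--Sebastiani yields the global statement directly.
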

\begin{proof}
    The first equivalence follows from the $\mathbb{Z}/2$-periodic version of 
    Koszul equivalence~\cite{T4}. 
    As for the second equivalence, 
    by the assumption that $\mathcal{V}$ is of even rank and $w_i(\mathcal{V}, q_{\mathcal{V}})=0$ for $i=1, 2$, 
    there is a
    $D^{\mathbb{Z}/2}(M)$-linear equivalence by~\cite[Theorem~1]{Teleman}
    \begin{align*}
        \mathrm{MF}(\mathcal{V}, q_{\mathcal{V}}) 
        \simeq D^{\mathbb{Z}/2}(M).
    \end{align*}
    Then the desired equivalence follows by 
    applying $\otimes_{D^{\mathbb{Z}/2}(M)} \mathrm{MF}(M, f)$ to the equivalence above and by using the Thom-Sebastiani theorem for categories of matrix factorizations \cite{Preygel}. 
\end{proof}

Let $H$ be the function 
\begin{align*}
    H \colon \mathcal{V} \times \mathbb{C} \to \mathbb{C}, \ 
    H((x, v), t)=t g_1(x, v)+(1-t)g_2(x, v). 
\end{align*}
Here $x \in M$, $v \in \mathcal{V}|_{x}$ and $t \in \mathbb{C}$. 
Let $\Delta \subset \mathbb{A}^1$ be the open 
subset of $t \in \mathbb{A}^1$ such that the critical locus 
of $h_t(-):=H(-, t)$ equals to $Y$. 
By Proposition~\ref{prop:MF2}, the category over $\Delta$
\begin{align*}
    \mathrm{MF}(\mathcal{V} \times \Delta, H)
\end{align*}
gives a deformation of $D^{\mathbb{Z}/2}(Y)$ into 
$\mathrm{MF}(M, f)$. 

\subsection{A Categorical deformation equivalence for SL/PGL Higgs moduli spaces}

We now provide some evidence towards Conjecture~\ref{conj:PS}.

\begin{thm}\label{thm:eq:can}
    The categories 
    $D^{\mathbb{Z}/2}(M_{\mathrm{PGL}(2)}(1), \alpha)$
    and $D^{\mathbb{Z}/2}(M_{\mathrm{SL}(2)}(1))$
    are deformation equivalent. 
\end{thm}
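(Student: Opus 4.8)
The plan is to realize the two $\mathbb{Z}/2$-periodic categories as deformations of categories of matrix factorizations living on the \emph{smooth} moduli spaces of $\Omega_C(D)$-twisted $\mathrm{SL}(2)$- and $\mathrm{PGL}(2)$-Higgs bundles, for a suitably chosen effective divisor $D$, and then to transport the $\mathrm{SL}/\mathrm{PGL}$ derived equivalence of Corollary~\ref{thm:r=2:PS} across these matrix factorization categories. The two inputs are the deformation machinery of Proposition~\ref{prop:MF2} (which rests on Teleman's theorem~\cite{Teleman}) and the equivalence $D^b(M^{\Omega_C(D)}_{\mathrm{PGL}(2)}(1), \alpha) \stackrel{\sim}{\to} D^b(M^{\Omega_C(D)}_{\mathrm{SL}(2)}(1))$. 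This is the categorical analogue of the critical-locus approach to $\chi$-independence in~\cite{MSendscopic, KinjoKoseki}.

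First I would fix an effective divisor $D$ on $C$ of even degree with $\deg D \geq 4$, so that $l := \deg \Omega_C(D) = 2g - 2 + \deg D$ is even and strictly bigger than $2g$; the moduli spaces $M^{\Omega_C(D)}_{\mathrm{SL}(2)}(1)$ and $M^{\Omega_C(D)}_{\mathrm{PGL}(2)}(1)$ are then smooth (the latter as a Deligne--Mumford stack), and the hypotheses of Corollary~\ref{thm:r=2:PS} (namely $\chi = 1$ and $w + l$ odd) are satisfied. Following the construction recalled in Subsection~\ref{subsec:qbps:nc} and Remark~\ref{rmk:SLbundle}, the $\Omega_C$-twisted space $M_{\mathrm{SL}(2)}(1)$ embeds into $M^{\Omega_C(D)}_{\mathrm{SL}(2)}(1)$ as the zero locus $s^{-1}(0)$ of the section $s$ recording the evaluation of the Higgs field along $D$, valued in the rank-$3\deg D$ bundle $\mathcal{V}_{\mathrm{SL}}$ of traceless endomorphisms restricted to $D$ and tensored with $(\Omega_C(D)/\Omega_C)|_D$; the trace form equips $\mathcal{V}_{\mathrm{SL}}$ with a non-degenerate quadratic form $q_{\mathcal{V}}$, and $f_{\mathrm{SL}} := q_{\mathcal{V}}(s, s)$ satisfies $M_{\mathrm{SL}(2)}(1) = \mathrm{Crit}(f_{\mathrm{SL}})$. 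The same construction, performed $\mathcal{P}ic^0(C)$-equivariantly as in Lemma~\ref{P:descend}, produces the quadratic bundle $\mathcal{V}_{\mathrm{PGL}}$, its section, and the potential $f_{\mathrm{PGL}}$ on $M^{\Omega_C(D)}_{\mathrm{PGL}(2)}(1)$. I would then check that $\mathcal{V}_{\mathrm{SL}}$ and $\mathcal{V}_{\mathrm{PGL}}$ have even rank (forced by $\deg D$ even) and vanishing Stiefel--Whitney classes $w_1, w_2$, so that Proposition~\ref{prop:MF2} applies.

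Granting this, Proposition~\ref{prop:MF2} together with the interpolating potential $H$ produces deformation equivalences
\begin{align*}
D^{\mathbb{Z}/2}(M_{\mathrm{SL}(2)}(1)) &\simeq \mathrm{MF}(M^{\Omega_C(D)}_{\mathrm{SL}(2)}(1), f_{\mathrm{SL}}), \\
D^{\mathbb{Z}/2}(M_{\mathrm{PGL}(2)}(1), \alpha) &\simeq \mathrm{MF}(M^{\Omega_C(D)}_{\mathrm{PGL}(2)}(1), \alpha, f_{\mathrm{PGL}}),
\end{align*}
where in the second line one uses the $\alpha$-twisted variants of the Koszul and Thom--Sebastiani equivalences. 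It then suffices to produce a genuine equivalence $\mathrm{MF}(M^{\Omega_C(D)}_{\mathrm{PGL}(2)}(1), \alpha, f_{\mathrm{PGL}}) \simeq \mathrm{MF}(M^{\Omega_C(D)}_{\mathrm{SL}(2)}(1), f_{\mathrm{SL}})$ and to concatenate, using that deformation equivalence is transitive and absorbs genuine equivalences.

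The main obstacle is this last matching of matrix factorization categories: I must show that the Fourier--Mukai equivalence $\Phi_{\overline{\mathcal{P}}}$ of Corollary~\ref{thm:r=2:PS} intertwines the two potentials, so that it lifts. The point to exploit is that $f_{\mathrm{SL}}$ and $f_{\mathrm{PGL}}$ are both pulled back from a single linear functional $\ell$ on the common Hitchin base $B_{\geq 2} = H^0(\Omega_C(D)^{\otimes 2})$: indeed $q_{\mathcal{V}}(s,s) = \mathrm{tr}((\theta|_D)^2) = \mathrm{tr}(\theta^2)|_D$ depends only on the characteristic coefficient $a_2 = \mathrm{tr}(\theta^2)$, so $f_{\bullet} = \ell \circ h_{\bullet}$ for the respective Hitchin maps $h_{\bullet}$ and a fixed $D$-evaluation functional $\ell$. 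Since $\Phi_{\overline{\mathcal{P}}}$ is constructed as a relative Fourier--Mukai transform over $B_{\geq 2}$ and is therefore $B_{\geq 2}$-linear, it commutes with multiplication by $\ell \circ h_{\bullet}$, hence is linear over $\mathrm{Perf}(\mathbb{A}^1_\ell)$ and descends to an equivalence of the associated matrix factorization categories. Making this precise is where the real work lies: one must reconcile the quadratic-form normalization of Proposition~\ref{prop:MF2} with the Hitchin-base description of $f_{\bullet}$, verify the Stiefel--Whitney vanishing for the adjoint quadratic bundles, and control the interaction of the Brauer twist $\alpha$ with the matrix factorization and deformation constructions.
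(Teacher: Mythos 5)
Your proposal follows essentially the same route as the paper's proof: choose $L=\Omega_C(p_1+\cdots+p_{2k})$ for an even-degree divisor with $l>2g$, realize $M_{\mathrm{SL}(2)}(1)$ (and its $\alpha$-twisted PGL analogue) as the critical locus of the potential $f=\sum_i \mathrm{tr}(\theta|_{p_i}^2)$ on the smooth $L$-twisted moduli space, apply Proposition~\ref{prop:MF2} (Teleman) after checking even rank and vanishing of $w_1,w_2$, and transport the $B_{\geq 2}$-linear equivalence of Corollary~\ref{thm:r=2:PS} to matrix factorization categories using that $f$ factors through the Hitchin base. The only cosmetic differences are that the paper takes $2k$ distinct points, so that the Stiefel--Whitney vanishing follows from the total class being a $2k$-th power of a single class, and implements the descent by tensoring the $B_{\geq 2}$-linear equivalence with $\mathrm{MF}(B_{\geq 2},g)$ rather than via $\mathbb{A}^1$-linearity, which amounts to the same mechanism.
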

\begin{proof}
    Let $p_1, \ldots, p_{2k} \in C$ be 
    distinct points and 
    set $L=\Omega_C(p_1+\cdots+p_{2k})$. 
    The natural embedding 
    $\Omega_C \hookrightarrow L$ induces the 
    closed immersion 
    \begin{align*}
        M_{\mathrm{SL}(2)}(1) \hookrightarrow 
        M^L_{\mathrm{SL}(2)}(1)
    \end{align*}
    given by 
    \begin{align*}
        (F \to F \otimes \Omega_C) \mapsto 
        (F \to F\otimes \Omega_C \hookrightarrow F \otimes L). 
    \end{align*}
Below we fix isomorphisms $L|_{p_i} \cong \mathbb{C}$. 
Let $\mathcal{F} \in \Coh(C \times M^L_{\mathrm{SL}(2)}(1))$
be a universal sheaf, and 
set $\mathcal{F}_p:=\mathcal{F}|_{\{p\} \times 
M^L_{\mathrm{SL}(2)}(1)}$. 
The vector bundle 
\begin{align*}
\mathcal{V}=
    \bigoplus_{i=1}^{2k}\mathcal{E}nd(\mathcal{F}_{p_i})_0
    \to M_{\mathrm{SL}(2)}^L(1)
\end{align*}
admits a regular section $s$
such that $s(F, \theta)=\{\theta|_{p_i}\}$, 
and its zero locus coincides with 
$M_{\mathrm{SL}(2)}(1)$. 
The vector bundle $\mathcal{V}$ admits a non-degenerate 
quadratic form $q_{\mathcal{V}}$
given by 
\begin{align*}
q_{\mathcal{V}} \in \mathrm{Sym}^2(\mathcal{V}^{\vee}), \ 
    q_{\mathcal{V}}((v_1, \ldots, v_{2k}), (v_1', \ldots, v_{2k}'))
    =\sum_{i=1}^{2k}\mathrm{tr}(v_i v_i'). 
\end{align*}
We see that $w_i(\mathcal{V}, q_{\mathcal{V}})=0$ for 
$i=1, 2$ for $k\geq 2$. Indeed, 
the quadratic vector bundles 
$(\mathcal{E}nd(\mathcal{F}_{p_i}), \mathrm{tr}(v_i v_i'))$
are deformation equivalent, so they have the 
Stiefel-Whitney class $1+w_1+w_2+\cdots$ independent of the choice of $p_i$. 
Therefore from the equality 
\begin{align*}
    1+w_1(\mathcal{V}, q_{\mathcal{V}})
    +w_2(\mathcal{V}, q_{\mathcal{V}})+\cdots
    =(1+w_1+w_2+\cdots)^{2k}
\end{align*}
in $H^{\ast}(M, \mathbb{Z}/2)$, we have 
$w_i(\mathcal{V}, q_{\mathcal{V}})=0$ for $i=1, 2$.

The function 
$f=q_{\mathcal{V}} \circ s$ on
$M^L_{\mathrm{SL}(2)}(1)$ is given by 
\begin{align*}
    f \colon M^L_{\mathrm{SL}(2)}(1) \to \mathbb{C}, \ 
    (F, \theta) \mapsto \sum_{i=1}^{2k}\mathrm{tr}(\theta|_{p_i}^2). 
\end{align*}
    The above function factors through the 
    Hitchin map 
    \begin{align*}
        f \colon 
        M_{\mathrm{SL}(2)}^L(1) \to         
        B^L_{\geq 2}:=H^0(L^{\otimes 2}) 
        \stackrel{g}{\to} \mathbb{C}
    \end{align*}
    where $g(u)=\sum_{i=1}^k u|_{p_i}$
    under the fixed isomorphisms 
    $L|_{p_i} \cong \mathbb{C}$. 

Recall the $B^L_{\geq 2}$-linear equivalence from Corollary~\ref{thm:r=2:PS}: 
\begin{align*}
    D^b(M^L_{\mathrm{PGL}(2)}(1), \alpha)
    \stackrel{\sim}{\to} D^b(M_{\mathrm{SL}(2)}^L(1)).
\end{align*}
By taking the tensor product of the above categories with 
$\mathrm{MF}(B, g)$, 
we obtain the equivalence: 
\begin{align*}
    \mathrm{MF}((M^L_{\mathrm{PGL}(2)}(1), \alpha), f)
    \stackrel{\sim}{\to} \mathrm{MF}(M^L_{\mathrm{SL}(2)}(1), f). 
\end{align*}
By Proposition~\ref{prop:MF2} for $\mathcal{M}_{\mathrm{PGL}(2)}(1)$, there is an equivalence:
\[\mathrm{MF}((M^L_{\mathrm{PGL}(2)}(1), \alpha), f)\cong D^{\mathbb{Z}/2}(M_{\mathrm{PGL}(2)}(1), \alpha).\]
By Proposition~\ref{prop:MF2} for $M_{\mathrm{SL}(2)}(1)$, 
the category $\mathrm{MF}(M^L_{\mathrm{SL}(2)}(1), f)$ is deformation equivalent to 
$D^{\mathbb{Z}/2}(M_{\mathrm{SL}(2)}(1))$. The conclusion then follows.
\end{proof}

 \bibliographystyle{amsalpha}
\bibliography{math}

\medskip

\textsc{\small Tudor P\u adurariu: Sorbonne Université and Université Paris Cité, CNRS, IMJ-PRG, F-75005 Paris, France.}\\
\textit{\small E-mail address:} \texttt{\small padurariu@imj-prg.fr}\\

\textsc{\small Yukinobu Toda: Kavli Institute for the Physics and Mathematics of the Universe (WPI), University of Tokyo, 5-1-5 Kashiwanoha, Kashiwa, 277-8583, Japan.}\\
\textit{\small E-mail address:} \texttt{\small yukinobu.toda@ipmu.jp}\\

 \end{document}